\numberwithin{equation}{section}
\useunder{\uline}{\ul}{}
 \theoremstyle{plain}
\newtheorem{Th}{Theorem}[section]
\newtheorem{Lemma}[Th]{Lemma}
\newtheorem{Prop}[Th]{Proposition}
\newtheorem{assumption}{Assumption}
\theoremstyle{definition}
\newtheorem{Def}[Th]{Definition}
\newtheorem{Rem}{Remark}
\newtheorem{?}[Th]{Problem}
\newcommand\relphantom[1]{\mathrel{\phantom{#1}}}
\newcommand{\E}{\mathbb{E}}
\newcommand\tm{\tilde{m}}
\newcommand\tb{\tilde{b}}
\newcommand\tc{\tilde{c}}
\newcommand{\ts}{\tilde{s}}
\newcommand{\tX}{\tilde{X}}
\newcommand\mI{\mathcal{I}}
\newcommand\mD{\mathcal{D}}
\newcommand\mC{\mathcal{C}}
\newcommand\mX{\mathcal{X}}
\newcommand\mY{\mathcal{Y}}
\newcommand\mZ{\mathcal{Z}}
\newcommand\mV{\mathcal{V}}
\newcommand\mU{\mathcal{U}}
\newcommand{\mS}{\mathcal{S}}
\newcommand{\mT}{\mathcal{T}}
\newcommand{\hf}{\hat{f}}
\newcommand{\hQ}{\hat{Q}}
\newcommand{\hG}{\hat{G}}
\newcommand{\hH}{\hat{H}}
\newcommand{\bba}{\bar{a}}
\newcommand{\bbi}{\bar{i}}
\newcommand{\bbj}{\bar{j}}
\newcommand{\bx}{\mathbf{x}}
\newcommand{\by}{\mathbf{y}}
\newcommand{\bQ}{\mathbf{Q}}
\newcommand{\bR}{\mathbf{R}}
\newcommand{\bW}{\mathbf{W}}
\newcommand{\bX}{\mathbf{X}}
\newcommand{\bY}{\mathbf{Y}}
\begin{document}

\title{Tracy-Widom law for the extreme eigenvalues of large signal-plus-noise matrices}
\author{Zhixiang Zhang and Guangming Pan \vspace{0.5cm}\\
School of Physical and Mathematical Sciences \\
 Nanyang Technological University\\
 Singapore}
\date{}

\maketitle

\begin{abstract}
Let $\bY =\bR+\bX$ be an $M\times N$ matrix, where $\bR$ is a rectangular diagonal matrix and $\bX$ consists of $i.i.d.$ entries. This is a signal-plus-noise type model. Its signal matrix could be full rank, which is rarely studied in literature compared with the low rank cases. This paper is to study the extreme eigenvalues of $\bY\bY^*$. We show that under the high dimensional setting ($M/N\rightarrow c\in(0,1]$) and some regularity conditions on $\bR$ the rescaled extreme eigenvalue converges in distribution to Tracy-Widom distribution ( $TW_1$).

\bigskip\textit{Key words: Extreme eigenvalues, Signal plus noise matrix, Tracy-Widow law }
\end{abstract}

\section{Introduction}
Consider a signal-plus-noise data matrix. 
 It takes the form of \begin{eqnarray}\label{splusnmo}
\bY= \bR+ \bX,
\end{eqnarray} where $\bR$ is the signal matrix and $\bX$ is the noise matrix.  This model is popular in many fields such as machine learning \cite{yang2016rate}, matrix denoising \cite{nadakuditi2014optshrink} or signal processing \cite{vallet2012improved}.  A lot of research has been devoted to the spectral properties of signal-plus-nose type matrices.  Such a signal-plus-noise model without a finite rank structure on $\bR$ has been investigated in several papers. For example, \cite{dozier2007onempirical} studied the limiting spectral distribution(LSD) of  $\bY\bY^*$ and \cite{dozier2007empirical} provided analytic properties of the stieltjes transform of LSD. \cite{banna2020clt} established a central limit theorem for the linear spectral statistics of $\bY\bY^*$.

There are also a lot of work focusing on the cases with low rank signals. For example,  \cite{loubaton2011almost} derived the almost localization of the spiked eigenvalues, \cite{ding2020high} obtained the convergent limits and rates of the leading eigenvalues and eigenvectors, \cite{bao2018singular} showed the distributions of the principal singular vectors and singular subspaces. More works can be found in \cite{hachem2007deterministic,benaych2011eigenvalues,vallet2012improved,cape2019signal}.

\indent However, it may be the case that the signals are not low rank matrices, such as the direction of arrival (DOA) estimation \cite{mani2010direction}. Extreme eigenvalues are of primary interest in statistics due to their roles in PCA or factor analysis. Hence in this paper, we consider the extreme eigenvalues of $\bY\bY^*$ for the case when $\bR$ is rectangular diagonal (not necessarily low rank) and $\bX$ consists of i.i.d entries.  We show that under suitable conditions on $\bR$ the rescaled extreme eigenvalues of $\bY\bY^*$ converge in distribution to the celebrated Tracy-Widom distribution.




 Tracy widom distribution has been widely established for different types of random matrices. Tracy and Widom \cite{tracy1994level, tracy1996orthogonal} firstly derived the distribution of extreme eigenvalue for GOE and GUE and named it Tracy-Widom distribution.  Johnstone established the Tracy-Widom distribution for the standard Wishart matrices. These works utilize the joint distribution of eigenvalues of studying model. In the last decade, significant development has been made on universality property of random matrices theory, such as \cite{tao2010random} on Wigner matrices and  \cite{erdHos2012rigidity} on sample covariance matrices. These results show that the the limiting behaviours of the eigenvalue statistics do not depend on the specific distribution of matrix entries, also referred to as universality property.   A necessary and sufficient condition to guarantee the Tracy-Widom distribution is provided in \cite{lee2014necessary} for Wigner matrices and \cite{ding2018necessary} for sample covariance matrices.  Extensive research including \cite{lee2015edge, han2016tracy, fan2017tracy, bao2019canonical} have signified that extreme eigenvalues of a large group of random matrices have Tracy-Widom distributions. In addition, Tracy-Widom distribution has been used frequently in hypothesis testing, such as testing  the number of factors \cite{onatski2009testing}, testing the covariance structure of covariance \cite{han2016tracy}, testing linear independence between high dimensional vectors \cite{bao2019canonical}.

The method used in this paper follows from \cite{lee2015edge, lee2016tracy}. A dynamic signal matrix is constructed to link the target matrix with a sample covariance matrix, and then we make a continuous Green function comparison over the time flow. The analysis requires a local law as an ingredient, that is, an estimation of the difference between a Green function of the target matrix to a fixed limit uniformly on a domain of the complex plane. We establish an entrywise local law and apply it to the proof of the Tracy-Widom distribution. We mention that usually the usefulness of the local law of a random matrix model is not restricted to study of the edge behaviour.  It is a powerful tool to study rigidity of eigenvalues,  universality of  correlation function, or eigenvector distributions, including but not limited to these, see the summary in \cite{benaych2016lectures, knowles2017anisotropic}.  Once the local law for our model is obtained,  the remaining challenge is to track the rate of change of Green functions over time flow accurately. Optical theorems provide some cancel mechanisms to obtain derivative of Green functions with respect to time parameter with a desired order.

We remark that a direct application of our results yields the Tracy-Widom distribution for general $\bR$ (not necessarily diagonal) with gaussian $\bX$. We believe from the universality property of random matrices that the Tracy-Widom law holds for general $\bX$ by using a Green function comparison strategy similar to \cite{erdHos2012rigidity} and \cite{bao2015universality}. It is also possible to obtain a necessary and sufficient condition on the entries of $\bX$  by using method in \cite{lee2014necessary}. We will pursue these in the coming work. The results in this paper also lead to the study of a spiked model that $\bR$ has some spiked singular values. Spiked sample eigenvalues may carry valuable information about the structure of signals which is worth to study.

 We also notice that a model similar to ours was studied in very recent works \cite{ding2020edge, ding2020tracy}. However, they considered the case when the variance of the entries of noise matrix tends to $0$, which is very different from our case.

The rest of the paper is organized as follows. In Section 2 we state our main results regarding to the Tracy-Widom distribution of extreme eigenvalues and the local law of $\bY\bY^*$.  Section 3 includes some preliminaries. In Section 4 we rescale the target model and prove the Tracy-Widom distribution by providing the standard Green function comparison argument. In Section 5 we interpolate the target matrix with the sample covariance matrix over a time flow and analyze the rate of change of Green functions assuming the validity of local law and Optical theorems, which complete the proof of Green function comparison in Section 4. In Section 6 we prove the local law. And in the Section 7, we derive the Optical theorems which involve tedious calculations.


\section{Main Results}
In this section, we state the main results of the paper. First we give the result on the Tracy-Widom distribution of the extreme eigenvalues of the signal plus noise matrices. Then we state the local law for the signal plus noise model, which serves as the main technical input in the proof of Tracy-Widom distribution.
\subsection{Tracy widom law for signal plus noise matrices}
We consider a matrix $\bY$ of the form \eqref{splusnmo} under the high dimensional setting, where $\bR$ is an $M\times N$ deterministic matrix contains non-zero entries $d_1\geq d_2 \geq \cdots \geq d_M\geq 0$ at main-diagonal position, i.e.  \begin{equation}\bR=\left( \begin{array} { c c c c c } { d_ { 1 } }
 & { \dots } & { 0 } & { \cdots } & { 0 } \\ { 0 } & { \ddots } & { 0 } & { \cdots } & { 0 } \\ { 0 } & { \cdots } & { d_ {M} } & { \cdots } & { 0 } \end{array} \right),
\end{equation}
and $\bX$ contains $i.i.d.$ entries.  To be more specific, we assume the following.
 \begin{assumption}\label{assump1}We assume that $\bX=(x_{ij}) $ is an $M\times N$ matrix, whose entries $\{x_{ij}:1\leq i \leq M, 1\leq j \leq N\}$ are independent real random variables satisfying $$Ex_{ij}=0, \quad E|x_{ij}|^2 =\frac{1}{N}.$$
Moreover, we assume that for all $p\in \mathbb{N}$, there is $C_p$ such that $$E|\sqrt{N}x_{ij}|^p\leq C_p.$$\end{assumption}
 \begin{assumption}\label{assump2}
 $c_N:=M/N \rightarrow c \leq 1$.
 \end{assumption}

Let $\bQ:=\bY\bY^*=(\bR+\bX)(\bR+\bX)^*$. Denote the empirical spectral distribution(ESD) of  $\bR\bR^*$ by $$\hat{\rho} :=\frac{1}{M}\sum_{i=1}^M  \delta_{d_i^2},$$ where $\delta_x$ is the Dirac measure at point $x$.
Use $\mu_1\geq \mu_2, \cdots, \geq \mu_M$ to denote the ordered eigenvalues of $\bQ$. The ESD of $\bQ$ is $$F^{\bQ}:=\frac{1}{M}\sum_{i=1}^M  \delta_{\mu_i}.$$
The Stieltjes transform of $F^{\bQ}$ is \begin{eqnarray}
s_N(z)=s_{\bQ}(z):=\int \frac{dF^{\bQ}(t)}{t-z}.
\end{eqnarray}
If $\hat{\rho} \rightarrow \rho$ and $M/N\rightarrow c$, it was shown in \cite{dozier2007onempirical} that $F^{\bQ}$ converges to a deterministic distribution $F^{c,\rho}$  whose stieltjes transform $s_0(z)$ is defined through a self-consistent equation: \begin{eqnarray}\label{lslaw}s_0 = \int  \frac{d\rho(t)}{\frac{t}{1+c s_0}-z(1+cs_0)+(1-c)}, \quad s_0\in \mathcal{C}^+, \Im(zs_0)\geq 0.\end{eqnarray}
 In the following, we need the non-asymptotic version of $s_0$, denoted by $s$, which is the unique solution in $\mathcal{C}^+$ satisfying \eqref{lslaw} where we replace $c$ and $\rho$ by $c_N$ and $\hat{\rho}$ respectively.  According to Theorem 2.1 in \cite{dozier2007empirical},  $\lim_{z\rightarrow E, z\in \mathcal{C}_+}s(z)$ exists for $E\in \mathcal{R}/\{0\}$, so the definition of $s(z)$ can be extended to $z\in \mathcal{C}_+\cup \mathcal{R}/\{0\} $, and we still denote it by $s$.  By the well known inverse formula of the Stieltjes transform,  we know that the non asymptotic distribution $F^{c,\rho}$ has a continuous density $\rho_0$  with \begin{eqnarray}
\rho_0 (E)=\lim_{\eta\rightarrow 0} \frac{1}{\pi}\Im m(E+i \eta)=\frac{1}{\pi}\Im m(E),  \quad E\in \mathcal{R}/\{0\}.
\end{eqnarray}
For convenience, we also need to introduce the $N\times N$ conjugate matrix $\tilde{\bQ}=\bY^*\bY$ which shares the same non-zero eigenvalues of $\bQ$.
We have that $s_{\tilde{\bQ}}=-\frac{1-c_N}{z}+c_N s_{\bQ}.$
If we use $\ts$ to denote the non-asymptotic version of limit of $s_{\tilde{\bQ}}$, it is easy to verify that
\begin{eqnarray}
\ts = -\frac{1-c}{z}+cs.
\end{eqnarray}

We introduce the following notations that were used in \cite{loubaton2011almost}, but omit the subscript $N$ for ease of notation:
\begin{eqnarray}\label{Linkfun}
\begin{aligned}
&f(w) =\frac{1}{M} Tr(\bR \bR^*-w I_M)^{-1},\\
&\phi(w)=w(1-cf(w))^2+(1-c)(1-cf(w)),\\
&w(z)=z(1+cs(z))^2-(1-c)(1+cs(z)).
\end{aligned}
\end{eqnarray}
Let $\xi_r$ be the largest solution to $\phi^{\prime} (w)=0$ and satisfy $1-cf(w)>0$.  Denote \begin{eqnarray}\label{defedge}\lambda_r = \phi(\xi_r).\end{eqnarray}
 By Theorem 3 in \cite{loubaton2011almost}, $\lambda_r$  is the rightmost boundary of support of $F^{c,\rho}$.  By Lemma 2 in \cite{loubaton2011almost}, we also have $\xi_r=\lim_{\eta\rightarrow 0}w(\lambda_r+i\eta)=w(\lambda_r).$
Denote \begin{eqnarray}\label{defb}b(z)=1+cs(z),  \quad \text{and}  \quad  b\equiv b(\lambda_r)=1+cs(\lambda_r).\end{eqnarray}
Then according to the last equation in \eqref{Linkfun}, we have the relation \begin{eqnarray}\label{defxi}
\xi_r=\lambda_r b^2-(1-c)b.
\end{eqnarray}
From \eqref{lslaw}, and according to Theorem 2.1 in \cite{dozier2007onempirical}, we can infer that \begin{eqnarray}\label{relaofbxi}
1-cf(\xi_r)=\frac{1}{b}.
\end{eqnarray}
Since \begin{eqnarray}\phi'(w)=(1-cf(w))^2-2cw(1-cf(w))f'(w)-c(1-c)f'(w),\end{eqnarray}
by the fact that $\phi'(\xi_r)=0$ together with \eqref{defb}, \eqref{defxi} and \eqref{relaofbxi},
 we have \begin{eqnarray}\label{firstorderrelation}
\frac{1}{N}\sum_{i=1}^M\frac{b(\lambda_r b^2+\xi_r)}{(d_i^2-\xi_r)^2}=1.
\end{eqnarray}

To ensure that there is a square root behaviour near the edge $\lambda_r$, we need the following assumption.

\begin{assumption} \label{assump3}There exists positive constant $\hat{c}$, such that $$\limsup_{N\rightarrow \infty}d_1^2-w(\lambda_r) < -\hat{c}.$$
\end{assumption}
\begin{Rem} We consider two examples to see that this assumption is reasonable.\\
Example 1. If $d>0$ and $d_i=d$ for all $i$, then by some calculations, we find that $\phi'(w) = 0$ is equivalent to  $g(w):=-w^3+3d^2 w^2-(3d^4-2cd^2-c)w+d^6-2cd^4+(2c^2-c)d^2=0$. Observing that $g(d^2)=2c^2d^2>0$ and $g(+\infty)<0$, by the mean value theorem, there exists at least one $w>d^2$, s.t. $\phi'(w)=0$. Denote the largest one by $\xi_r$, and it is easy to verify that $1-cf(\xi_r)>0$ since $f(\xi_r)<0$. Therefore, we conclude that there exists a positive constant $\hat{c}$, such that $d^2-\xi_r<\hat{c}.$ \\
Example 2. If $c=1$, and  $RR^*=\mbox{diag}(2,2-1/(p-1),2-2/(p-1),\cdots, 1+1/(p-1), 1)$, then the limiting spectral distribution of $RR^*$ is a uniform distribution on the interval $[1,2]$. We find the largest solution to $\phi'(w)=0$ is approximately 3.89 and $1-f(3.89)>0$. The Assumption 2 also holds obviously.
\end{Rem}

Recall that $b=b(\lambda_r)=1+cs(\lambda_r)$.
Denote $\gamma_0$ to be the solution of the following equation,
\begin{eqnarray}\label{defgamma}
\frac{1}{\gamma_0^3}\frac{1}{N}\sum_{i=1}^M\frac{b^2}{(d_i^2-\xi_r)^2}=-\frac{1}{b^3}-\frac{1}{N}\sum_{i=1}^M\frac{(2\lambda_r b-(1-c))^2}{(d_i^2-\xi_r)^3}-\frac{1}{N}\sum_{i=1}^M\frac{\lambda_r}{(d_i^2-\xi_r)^2}.
\end{eqnarray}
We are ready to give the main theorem of this paper.
\begin{Th}\label{Th1twlaw}
Suppose that Assumptions 1,2,3 hold. Let $\mu_1$ be the largest eigenvalue of $\bQ$. Then there exists $\gamma_0$ defined in \eqref{defgamma} such that the distribution of the rescaled eigenvalue of $\bQ$ converges to the  type-1 Tracy-Widom distribution, i.e.,
\begin{eqnarray}
\lim_{N\rightarrow \infty} P\left(\gamma_0 N^{2/3}(\mu_1 - \lambda_r)\leq t\right) = F_1(t).
\end{eqnarray}
\end{Th}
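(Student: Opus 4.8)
\emph{Strategy.} The plan is to run the dynamical Green function comparison of \cite{lee2015edge, lee2016tracy}. I introduce an interpolating family of signal matrices $\bR_t$, $t\in[0,1]$, with $\bR_1=\bR$ the target and $\bR_0$ chosen so that $\bQ_0:=(\bR_0+\bX)(\bR_0+\bX)^*$ is a model whose edge universality is already available; in the simplest case $\bR_0=0$, so $\bQ_0=\bX\bX^*$ is a sample covariance matrix and the Tracy--Widom law with the $N^{2/3}$ rescaling at its right edge is classical (see e.g. \cite{lee2016tracy}). Writing $\bY_t:=\bR_t+\bX$, $\bQ_t=\bY_t\bY_t^*$, $\bG_t(z)=(\bQ_t-z)^{-1}$ and $\tilde{\bG}_t(z)=(\bY_t^*\bY_t-z)^{-1}$, and letting $\lambda_r(t)$, $\xi_r(t)$, $b(t)$, $\gamma_0(t)$ be the quantities of \eqref{lslaw}, \eqref{defedge}, \eqref{defb}, \eqref{defgamma} attached to $\hat\rho_t$ (the ESD of $\bR_t\bR_t^*$), the whole statement reduces to showing that the law of $\gamma_0(t)N^{2/3}\big(\mu_1(\bQ_t)-\lambda_r(t)\big)$ is unchanged, up to $o(1)$, as $t$ runs from $0$ to $1$; I would also check that Assumptions \ref{assump2} and \ref{assump3} (the square-root edge) are preserved along the flow for the chosen $\bR_0$.

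\emph{Reduction to a comparison near the edge, and the local law.} Following the standard recipe I rewrite
\[
P\big(\gamma_0(t)N^{2/3}(\mu_1(\bQ_t)-\lambda_r(t))\le x\big)
\]
as a smooth functional of the counting function $\mathcal{N}_t(E)=\#\{i:\mu_i(\bQ_t)\ge E\}$ in an $O(N^{-2/3+\epsilon})$-window around $\lambda_r(t)$, and then approximate $\mathcal{N}_t$ by a mollified integral of $\Im\operatorname{Tr}\bG_t(E+i\eta)$ at scale $\eta=N^{-2/3-\epsilon}$, exactly as in \cite{lee2015edge}. Thus it suffices to prove, for a bounded smooth test function $h$ and a suitable cutoff $q$ built from the mollified counting function,
\[
\frac{d}{dt}\,\E\, h\!\Big(N\!\int q(E)\,\Im\operatorname{Tr}\bG_t(E+i\eta)\,dE\Big)=o(1).
\]
The key analytic input that makes this window and these Green functions meaningful is the entrywise local law of Section 2, which I apply uniformly in $t\in[0,1]$ (the constants in Assumptions \ref{assump1}--\ref{assump3} being uniform): it controls $(\bG_t)_{ij}$ and $(\tilde{\bG}_t)_{ij}$ by their deterministic counterparts, determined by $s$, $\ts$ and $\bR_t$, down to scales $\eta\gg N^{-1}$, and in particular yields the edge rigidity $|\mu_1(\bQ_t)-\lambda_r(t)|\prec N^{-2/3}$.

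\emph{Differentiating along the flow; the Optical theorems and the role of $\gamma_0$.} Differentiating $\bQ_t$ in $t$ and using the resolvent identity expresses $\partial_t\operatorname{Tr}\bG_t$ as a polynomial in entries of $\bG_t$, $\tilde{\bG}_t$, $\bX$ and $\dot{\bR}_t$; taking expectation in the argument of $h$ and running a cumulant expansion in the entries of $\bX$ turns these into products of deterministic resolvent entries plus fluctuation remainders. A term-by-term bound via the local law loses a factor $N^{\epsilon}$; the content of the Optical theorems of Section 7 is to exhibit the algebraic cancellations among the deterministic coefficients --- those forced by the self-consistent equation \eqref{lslaw}, the criticality relation \eqref{firstorderrelation}, and the definition \eqref{defgamma} of $\gamma_0$ --- so that after cancellation the displayed $t$-derivative is $O(N^{-c})$. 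The constant $\gamma_0(t)$ enters precisely here: it is the normalization for which the leading \emph{deterministic} part of the $t$-derivative vanishes, encoding the second-order behaviour of $\phi$ at $\xi_r$, i.e. the square-root coefficient of $\rho_0$ at $\lambda_r$. Combining these steps, $\E h(\cdots)$ is $t$-independent up to $o(1)$, so the $t=1$ distribution equals the $t=0$ one, which is $F_1$.

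\emph{Main obstacle.} I expect the hard part to be the third step: organizing $\partial_t\bG_t$ so that the cancellations are manifest and proving the Optical theorems, because for a full-rank signal the deterministic equivalent of $\bG_t$ is substantially more involved than in the pure sample-covariance case, and one must propagate several families of sums over the $d_i^2$ (of the type in \eqref{firstorderrelation} and \eqref{defgamma}) through high-order cumulant expansions while tracking their dependence on $t$. A secondary difficulty is proving the Section 2 local law itself with error terms strong enough --- and uniform in $t$ --- to feed the comparison, especially near the edge, where the deterministic profile develops a near-singularity controlled by Assumption \ref{assump3}.
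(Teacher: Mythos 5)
Your proposal follows essentially the same route as the paper: rescale by $\gamma_0$, interpolate the signal matrix to the null model $\bX\bX^*$, reduce the edge distribution to a mollified integral of $\Im\operatorname{Tr}G_t$ at scale $\eta=N^{-2/3-\epsilon}$, compute $\partial_t\,\E\,h(\cdot)$ via a cumulant expansion against the local law, and invoke the Optical theorems to exhibit the deterministic cancellations that make the $t$-derivative $o(1)$, with $\gamma_0(t)$ tuned precisely so that the leading term vanishes. The paper's only cosmetic deviation from your sketch is that it parametrizes the flow by $d_i(t)=e^{-t/2}d_i$ on $t\in[0,\infty)$ (rather than $\bR_t$ on $[0,1]$) and works directly with the $\gamma(t)$-rescaled matrix $\hat{\bQ}(t)$; it also verifies your flagged checkpoint that Assumption~\ref{assump3} is preserved along the flow, since $\xi_r(t)\to\sqrt c$ while $d_1(t)^2\to 0$.
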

\begin{Rem}
If one considers $\tilde{\bQ}=(\bR+\sigma \bX)(\bR+\sigma \bX)^*$ for constant $\sigma>0$,   the Tracy-Widom distribution still holds by some modification on Assumption 3. Notice that we can write $\tilde{\bQ} = \sigma^2 (\tilde{\bR}+\bX) (\tilde{\bR}+\bX)^*$ where $\tilde{\bR}=\sigma^{-1}\bR$. Therefore by replacing $\bR$ with $\tilde{\bR}$ and  redefining $f, \phi, w, \xi_r, \lambda_r, b$ from \eqref{Linkfun} to \eqref{defb}, under an assumption similar to Assumption 3, the Tracy-Widom law still holds for the largest eigenvalue of $\tilde{\bQ}$ with a different limit and a rescaling constant.
\end{Rem}
We use a simple example to illustrate this result. Consider $\bR= I_{M\times M}$. Then we find $\xi_r = 3$, $\lambda_r = 27/4$ and $\gamma_0=(27^2/16)^{-1/3}$. We plot the histogram of $ \gamma_0 N^{2/3}(\mu_1 - \lambda_r)$ for 10000 generated matrices in Figure \ref{fig1}, we see that the simulation results fit the Tracy-Widom law very well.

\begin{figure}[]
    \includegraphics[width=10cm]{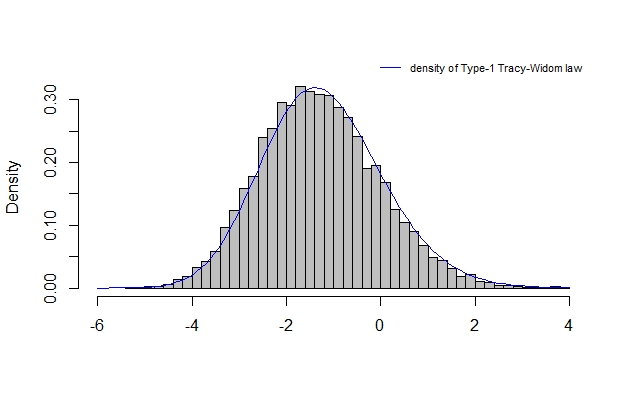}\centering
    \caption{The histogram of centralized and rescaled largest eigenvalues for $(I+X)(I+X)^*$, with $M=N=500$ and 10000 matrices.}\label{fig1}
\end{figure}

\subsection{Local law for signal plus noise matrices}
The proof of Theorem \ref{Th1twlaw} requires  accurate estimation of Green function entries of $\bQ$, known as local law. This is formulated by introducing linearization matrices \begin{eqnarray}\label{HRplusX}
H(z) := \left(\begin{array}{cc}-zI_M & {\bY} \\ {\bY^{*}} & {- I_{N}}\end{array}\right),
\end{eqnarray}
and \begin{eqnarray}\label{defG346}
G(z) := H^{-1}(z)
\end{eqnarray}

We introduce notations to label the entries of $H$ and $G$.
Define index sets
$$\mI_M:=[\![1,M]\!] , \quad \mI_N:=[\![M+1,M+N]\!],\quad \mI := \mI_M\cup \mI_N = [\![1,M+N]\!].$$
We consistently use Latin letters, e.g. $i,j$, to denote indices in $\mI_M$, and Greek letters, e.g. $\mu,\nu$, to denote indices in $\mI_N$. 
Then we label the indices of $\bY$ according to  \begin{eqnarray}\bY= (Y_{i\mu}: i \in \mI_M, \mu \in \mI_N), \end{eqnarray}
and $\bX$ is relabelled accordingly.
 Note that when $\mu=i+M$, $Y_{i\mu}=d_i+x_{i\mu}.$  We use \begin{eqnarray}
\bbi:=i+M
\end{eqnarray}
to denote such special Greek letter index corresponding to each $i\in \mI_M$, i.e, we have $Y
_{i\bbi}=d_i+x_{i\bbi}$. Furthermore, we see in  Theorem \ref{Lolaw} below that the Green function entries $G_{i\bbi}$ has a non-zero limit. We shall call $(i,\bbi)$ as index pair. \\

Let $T\subset \mI$.  We define the minor $H^{(T)}:=(H_{st}:s, t\in \mI \setminus S)$ by removing all the rows and columns of $H$ indexed by $T$. Note that we keep the original values of the matrix indices. We also write $G^{(T)}=(H^{(T)})^{-1}$.
For sufficiently small positive $\tau, \tc$, we define the domain of spectral parameter $z$ by
\begin{eqnarray}
\mathcal{D}(\tc,\tau)\equiv\mathcal{D}(\tc,\tau,N):=\{z = E+i\eta \in \mathcal{C}^{+},|z|\geq\tau,E_+ - \tc \leq E\leq \tau^{-1},N^{-1+\tau}\leq\eta\leq\tau^{-1}\}.
\end{eqnarray}
Recall $b(z)$ defined in \eqref{defb}. Let \begin{eqnarray}\label{deftb890}\tb(z) = z b(z) -(1-c).\end{eqnarray}
We are ready to provide the local law.
\begin{Th}\label{Lolaw}(Strong local law) Under the assumptions 1,2,3, for any sufficiently small $\epsilon>0$ and sufficiently large $D>0$, with probability $1-O(N^{-D})$ we have
\begin{eqnarray}\label{Lolaw1}
\bigcap_{z\in \mD(\tc,\tau)}\Big\{ |s_N(z)-s(z)|\leq \frac{N^\epsilon}{N\eta} \Big\},
\end{eqnarray} and \begin{eqnarray}\begin{aligned}\label{Lolaw2}
\bigcap_{z\in \mD(\tc,\tau)}\Big\{ & \max_{i\in \mI_M}|G_{ii}-\frac{b(z)}{d_i^2-w(z)}|+  \max_{i \in \mI_M}|G_{\bbi\bbi}-\frac{\tb(z)}{d_i^2-w(z)}|+\max_{i\in \mI_M}|G_{i \bbi}-\frac{d_i}{d_i^2-w(z)}|\\&+\max_{\mu \in [\![2M+1,M+N]\!]}|G_{\mu\mu}+\frac{1}{b(z)}| +\max_{s\neq t, (s,t)  \text{is not index pair}}|G_{st}|\leq N^\epsilon\Big(\sqrt{\frac{\Im s(z)}{N\eta}}+\frac{1}{N\eta}\Big)\Big\}.
\end{aligned}\end{eqnarray}
\end{Th}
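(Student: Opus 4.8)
The plan is to prove \eqref{Lolaw1}--\eqref{Lolaw2} by the resolvent self-consistent-equation method applied to the linearization $H(z)$ of \eqref{HRplusX}, in the style of the entrywise/anisotropic local laws for deformed sample covariance matrices. Write $H(z)=H_0(z)+W$, where $H_0$ carries the deterministic blocks $-z I_M$, $-I_N$ and the signal entries $d_i$ in the positions $(i,\bbi)$, and $W$ collects the centered i.i.d.\ noise entries $x_{i\mu}$ in the off-diagonal blocks; set $\kappa:=|E-\lambda_r|$ and $\Psi:=\sqrt{\Im s(z)/(N\eta)}+1/(N\eta)$ for $z\in\mD(\tc,\tau)$. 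First I would pin down the deterministic profile: one checks that the matrix $\Pi(z)$ with $\Pi_{ii}=b(z)/(d_i^2-w(z))$, $\Pi_{\bbi\bbi}=\tb(z)/(d_i^2-w(z))$, $\Pi_{i\bbi}=\Pi_{\bbi i}=d_i/(d_i^2-w(z))$, $\Pi_{\mu\mu}=-1/b(z)$ for unpaired $\mu$, and all other entries zero, solves the matrix self-consistent (Dyson) equation associated to $H_0$ and the covariance map of $W$; using $w=b\tb$ (which follows from \eqref{Linkfun} and \eqref{deftb890}) together with $w=z b^2-(1-c)b$, this reduces to the scalar equation \eqref{lslaw} for $s$, with $c,\rho$ replaced by $c_N,\hat\rho$. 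The essential analytic input is the quantitative stability of \eqref{lslaw} on $\mD(\tc,\tau)$: the linearization of the self-consistent map at $s(z)$ must be invertible with inverse of size $O((\kappa+\eta)^{-1/2})$. This is where Assumption \ref{assump3} enters --- it forces $d_1^2-w(\lambda_r)$, hence every $d_i^2-w(z)$ on $\mD$, to stay bounded away from $0$, so that $\phi$ in \eqref{Linkfun} has a nondegenerate critical point at $\xi_r$; the relations \eqref{defxi}, \eqref{relaofbxi} and the first-order identity \eqref{firstorderrelation} are precisely what make $\lambda_r$ a genuine square-root edge.

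Next I would derive approximate self-consistent equations for the entries of $G$. For $i\in\mI_M$ the Schur complement for $H$ gives $G_{ii}=\Big(-z-\sum_{\mu,\nu\in\mI_N}Y_{i\mu}G^{(i)}_{\mu\nu}Y_{i\nu}\Big)^{-1}$; splitting $Y_{i\mu}=d_i\delta_{\mu\bbi}+x_{i\mu}$ turns the quadratic form into $d_i^2 G^{(i)}_{\bbi\bbi}+2d_i\Re\sum_{\nu}x_{i\nu}G^{(i)}_{\bbi\nu}+\sum_{\mu\nu}x_{i\mu}G^{(i)}_{\mu\nu}x_{i\nu}$. The large-deviation bounds for linear and quadratic forms in the $x_{i\mu}$ (valid under the moment condition of Assumption \ref{assump1}) show that the last term concentrates on $N^{-1}\sum_{\mu\in\mI_N}G^{(i)}_{\mu\mu}$, which at the deterministic level equals $z\ts(z)$, while the cross term is $O(N^\epsilon\Psi)$; crucially, because removing row/column $i$ breaks the index pair, $G^{(i)}_{\bbi\bbi}$ behaves like an unpaired Greek diagonal entry, $G^{(i)}_{\bbi\bbi}\approx -1/b$. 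Analogous identities --- removing $\bbi$ for $G_{\bbi\bbi}$, expanding $G_{i\bbi}=-G_{ii}\sum_{\nu\in\mI_N}Y_{i\nu}G^{(i)}_{\nu\bbi}$, and using the usual minor-removal identities and the Ward identity $\sum_t|G_{st}|^2=\eta^{-1}\Im G_{ss}$ --- handle $G_{\bbi\bbi}$, $G_{i\bbi}$, the unpaired $G_{\mu\mu}$ and the genuinely off-diagonal entries. Summing the $G_{ii}$ equations over $i$ and using $N^{-1}\sum_{\mu\in\mI_N}G_{\mu\mu}=z s_{\tilde{\bQ}}=-(1-c_N)+c_N z s_N$ yields a closed scalar equation for $s_N$ that is an $O(N^\epsilon\Psi)$-perturbation of \eqref{lslaw}.

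Finally I would run the usual continuity/bootstrap argument in $\eta$. The bounds \eqref{Lolaw1}--\eqref{Lolaw2} are immediate at $\eta=\tau^{-1}$; assuming a weak a priori bound on $\Lambda:=\max_{s,t}|G_{st}-\Pi_{st}|$ on a fine grid of $\mD(\tc,\tau)$, the stability estimate upgrades the perturbed self-consistent equation to $|s_N-s|\le N^\epsilon\Psi$, and then, via the entrywise identities above, to $\Lambda\le N^\epsilon\Psi$; Lipschitz continuity of $G$ in $z$ propagates this down to $\eta=N^{-1+\tau}$, which is \eqref{Lolaw2}. The sharper averaged bound \eqref{Lolaw1}, $|s_N-s|\le N^\epsilon/(N\eta)$, then follows from a fluctuation-averaging lemma: averaging the per-$i$ error terms over $i\in\mI_M$ gains one extra factor of $\Psi$ from their weak dependence, turning $\sqrt{\Im s/(N\eta)}$ into $1/(N\eta)$.

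\textbf{Main obstacle.} The heart of the proof is the stability of \eqref{lslaw} near the edge together with the bookkeeping forced by the index-pair structure: one must show that removing the partner of a paired index genuinely ``unpairs'' it at the level of the deterministic equivalent --- so that $G^{(i)}_{\bbi\bbi}\approx-1/b$ whereas $G_{\bbi\bbi}\approx\tb/(d_i^2-w)$ --- and track the nonvanishing off-diagonal entries $G_{i\bbi}$ uniformly through the whole bootstrap, all while ruling out a spurious near-zero eigenvalue of the stability operator (where \eqref{firstorderrelation}, \eqref{deftb890} and Assumption \ref{assump3} do the work). A secondary difficulty, absent in finite-rank deformation models, is that $\bR$ is generically full rank, so there is no perturbative reduction to a plain sample covariance matrix and every estimate must be carried out at the level of the deformed self-consistent equation.
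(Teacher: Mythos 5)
Your proposal follows essentially the same route as the paper's: Schur complement plus large-deviation bounds for linear and quadratic forms yield approximate self-consistent equations for the diagonal and index-pair entries (with $G^{(i)}_{\bbi\bbi}\approx -1/b$ versus $G_{\bbi\bbi}\approx\tb/(d_i^2-w)$), Assumption~3 supplies edge stability with linearization coefficient $\sim\sqrt{\kappa+\eta}$, a discrete-grid bootstrap in $\eta$ starting from $\eta\sim 1$ gives a weak law, and a fluctuation-averaging lemma upgrades the averaged rate from $\sqrt{\Im s/(N\eta)}$ to $1/(N\eta)$, after which the entrywise bound follows from the averaged one. The two places where the paper works harder than your sketch suggests are the quadratic self-consistent equation $(1-R_2)[v]-R_3[v]^2=\text{error}$ near the edge, whose near-degenerate linear coefficient forces a genuine dichotomy between $\kappa+\eta\gtrsim (N\eta)^{-1/2}$ and the opposite regime rather than a single ``stability upgrade,'' and the fluctuation-averaging step, where the index-pair structure makes $\mS_i$ a quadratic form $\bx_{\bbi}^{\ast}G^{(i\bbi)}\bx_{\bbi}$ and proving $|Q_A V_i|\prec\Upsilon^{|A|}$ requires a bespoke resolvent expansion (see the paper's Lemma~\ref{fluctuation}) rather than the off-the-shelf argument for sample covariance matrices.
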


 \begin{Prop}\label{crare}(Convergence rate at right edge)
Recall the largest eigenvalue of matrix $\bQ$ is denoted by $\mu_1.$ For any sufficiently small $\epsilon>0$ and sufficiently large $D>0$, we have \begin{eqnarray}
|\mu_1-\lambda_r| \leq N^{2/3+\epsilon},
\end{eqnarray}
holds with probability $1-O(N^{-D})$.
\end{Prop}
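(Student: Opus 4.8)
The plan is to derive the convergence rate $|\mu_1 - \lambda_r| \leq N^{2/3+\epsilon}$ from the strong local law (Theorem \ref{Lolaw}) via the standard ``eigenvalue rigidity from local law'' mechanism. First I would establish the square-root edge behaviour of the limiting density: near the right edge $\lambda_r$ one has $\rho_0(E) \asymp \sqrt{\lambda_r - E}$ for $E \leq \lambda_r$, and correspondingly $\Im s(z) \asymp \sqrt{\kappa + \eta}$ (with $\kappa = |E - \lambda_r|$) when $E$ is near the edge and $\Im s(z) \asymp \eta/\sqrt{\kappa+\eta}$ when $E$ is just outside the support. This is exactly where Assumption 3 enters: the condition $\limsup d_1^2 - w(\lambda_r) < -\hat c$ keeps the poles $d_i^2 - w(z)$ bounded away from zero uniformly near $\lambda_r$, so that $\phi$ (equivalently $s$) has a genuine non-degenerate critical point at $\xi_r$, giving the clean $3/2$-power vanishing $\lambda_r - \phi(w) \asymp (w - \xi_r)^2$ that produces the square-root edge; one also records here that $\gamma_0$ defined in \eqref{defgamma} is positive and finite. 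Most of this can be quoted from \cite{loubaton2011almost} (Theorem 3 and Lemma 2) together with \eqref{firstorderrelation} and \eqref{defgamma}.

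Next I would transfer the local law on $s_N$ to control the counting function. Fix $z = E + i\eta$ with $E = \lambda_r + N^{-2/3+\epsilon/2}$ (just to the right of the edge) and $\eta = N^{-2/3+\epsilon/2}$; since $\kappa \asymp \eta$ here, the estimate $|s_N(z) - s(z)| \leq N^{\epsilon'}/(N\eta)$ from \eqref{Lolaw1} together with $\Im s(z) \asymp \sqrt{\kappa+\eta} \asymp N^{-1/3+\epsilon/4}$ shows that with probability $1 - O(N^{-D})$, $\Im s_N(z) \leq N^{-1/3 + \epsilon/3}$, say. On the other hand, if there were an eigenvalue $\mu_1 \geq \lambda_r + N^{-2/3+\epsilon}$, then $\Im s_N(z) = \frac{1}{M}\sum_i \frac{\eta}{(\mu_i - E)^2 + \eta^2} \geq \frac{1}{M}\frac{\eta}{(\mu_1 - E)^2 + \eta^2}$, but this lower bound is too weak by itself; instead the cleaner route is to use $\eta \,\Re$ or, more robustly, to integrate: the number of eigenvalues in $[E, \infty)$ is at most $C N \eta\, \Im s_N(E + i\eta)$ for appropriate choices, so choosing $\eta$ of order $N^{-2/3+\epsilon}$ and $E = \lambda_r + \frac{1}{2}N^{-2/3+\epsilon}$ forces the expected number of eigenvalues to the right of $E$ to be $o(1)$, hence (being a nonnegative integer with high probability) equal to zero. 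This yields $\mu_1 \leq \lambda_r + N^{-2/3+\epsilon}$.

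For the matching lower bound $\mu_1 \geq \lambda_r - N^{-2/3+\epsilon}$ I would argue similarly but in the other direction: take $E = \lambda_r - N^{-2/3+\epsilon}$, so that $\rho_0$ puts mass $\asymp \int_E^{\lambda_r}\sqrt{\lambda_r - t}\,dt \asymp N^{-1+3\epsilon/2} \gg 1/N$ on $[E,\lambda_r]$; by the local law the empirical measure $F^{\bQ}$ approximates $F^{c,\rho}$ on scale $\eta = N^{-1+\epsilon}$ accurately enough that $F^{\bQ}[E, \infty) > 0$ with high probability, which means there is at least one eigenvalue $\geq E$, i.e. $\mu_1 \geq \lambda_r - N^{-2/3+\epsilon}$. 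Concretely one again compares $\Im s_N$ and $\Im s$ at a well-chosen $z$ inside the bulk-edge transition region and uses that $\Im s(z) \gtrsim N^{-1/3}$ there forces $F^{\bQ}$ to have comparable mass nearby. Combining the two bounds gives the claim.

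The main obstacle I anticipate is not the probabilistic input (which is handed to us by Theorem \ref{Lolaw}) but the deterministic edge analysis: verifying rigorously that Assumption 3 implies a non-degenerate critical point of $\phi$ at $\xi_r$ and hence the exact square-root vanishing of $\rho_0$ and the correct $\eta/\sqrt{\kappa+\eta}$ versus $\sqrt{\kappa+\eta}$ dichotomy for $\Im s$, uniformly in $N$, with all constants controlled. This requires carefully differentiating the self-consistent equation \eqref{lslaw}, tracking the behaviour of $f(w) = \frac{1}{M}\mathrm{Tr}(\bR\bR^* - w)^{-1}$ and its derivatives near $w = \xi_r$ using that $d_i^2 - \xi_r$ stays bounded below, and checking that $\phi''(\xi_r) \neq 0$ with the sign that makes $\gamma_0 > 0$. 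Once that stability/regularity package is in place (much of it available from \cite{loubaton2011almost} and the relations \eqref{firstorderrelation}--\eqref{defgamma} already recorded above), the passage from local law to the $N^{2/3+\epsilon}$ rate is routine.
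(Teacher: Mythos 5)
Your outline of the strategy---use Assumption~3 to get the square-root edge, then convert the local law on $s_N$ into eigenvalue localization---matches the paper's, and the lower bound $\mu_1 \geq \lambda_r - N^{-2/3+\epsilon}$ argument (the density puts $\gg 1/N$ mass just inside the edge, so $s_N \approx s$ forces an eigenvalue there) is fine and is essentially what the paper invokes via Lemma~\ref{sqrot}. However, your upper-bound argument has a genuine gap: the basic local law estimate $|s_N-s|\prec (N\eta)^{-1}$ from~\eqref{Lolaw1} is \emph{not} strong enough to rule out eigenvalues in $[\lambda_r+N^{-2/3+\epsilon},C_r]$. Run the numbers at your chosen point $E=\lambda_r+\tfrac12 N^{-2/3+\epsilon}$, $\eta\sim N^{-2/3+\epsilon}$: since $\kappa\asymp\eta$, we have $\Im s\asymp\sqrt{\eta}\asymp N^{-1/3+\epsilon/2}$, so $M\eta\,\Im s_N \gtrsim N\cdot N^{-2/3+\epsilon}\cdot N^{-1/3+\epsilon/2} = N^{3\epsilon/2}$, which diverges rather than being $o(1)$. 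More fundamentally, the local-law error alone contributes $M\eta\cdot (N\eta)^{-1} = M/N = c_N + o(1)$, a constant, so no choice of $\eta$ makes the naive counting bound nontrivial. The "number of eigenvalues near $E$ is $\lesssim N\eta\,\Im s_N$" mechanism simply cannot close with the input you are using.

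What the paper does to fix this---and what your proposal is missing---is to first upgrade the local law \emph{outside the spectrum}. From the strong self-consistent equation (Lemma~\ref{sscon}) and the lower bound $\alpha(z)=|1-R_2(z)|\gtrsim\sqrt{\kappa+\eta}$ (Lemma~\ref{coebd}), one gets
$$\Lambda \;\prec\; \frac{1}{\sqrt{\kappa+\eta}}\left(\frac{\Im m}{N\eta}+\frac{1}{(N\eta)^2}\right),$$
which, when $E$ is at distance $\kappa\geq N^{-2/3+\epsilon}$ outside $[\,\cdot\,,\lambda_r]$ so that $\Im s\asymp\eta/\sqrt{\kappa}$, is quadratically better than $(N\eta)^{-1}$ and makes $M\eta(\Im s+\Lambda)=o(1)$ for a suitable $\eta\ll N^{-2/3+\epsilon/4}$. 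This is exactly Lemma~9.2 of Benaych-Georges--Knowles, and the paper's Proposition~\ref{crare} is completed by quoting Proposition~9.1 there. Without this improvement, a stable-point argument from the self-consistent equation in the regime $\kappa\gg\eta$, your counting argument cannot descend below the critical scale $\eta\sim N^{-2/3}$ where the edge is resolved, and the proof does not go through.

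Your "deterministic edge analysis" concern---that Assumption~3 yields a non-degenerate critical point of $\phi$ and hence the square-root vanishing and the $\Im s\asymp\eta/\sqrt{\kappa+\eta}$ vs.\ $\sqrt{\kappa+\eta}$ dichotomy---is legitimate but is already packaged in the paper as Lemma~\ref{sqrot} and Lemma~\ref{coebd}; the real obstacle you should have anticipated is the quadratic refinement of the local law outside the support, not the deterministic stability estimates.
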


\section{Preliminaries}
We use notations introduced in \cite{erdHos2013averaging} which provide a simple and systematic way to control the orders of random variables or their expectations,  and that will be used throughout this paper.
\begin{Def}\label{defprec}
Let $$X =\{ X^{(N)}(u): N\in \mathcal{N}, u\in U^{(N)}\}, Y =\{ Y^{(N)}(u): N\in \mathcal{N}, u\in U^{(N)}\}$$  be two families of nonnegative random variables and where $U^{(N)}$ is a possibly N-dependent parameter set. We say that $X$ is stochastically dominated by $Y$, uniformly in $u$, if for all fixed small $\epsilon>0$, and large $D>0$,
$$\sup_{u\in U^{(N)}} P\left(X^{(N)}(u)>N^\epsilon Y^{(N)}(u)\right) \leq N^{-D},$$ for all large $N\geq N(\epsilon,D),$ and we denote this by $X\prec Y$ or $X=O(Y)$. Moreover, if for some complex valued family $X$ we have $|X|\prec Y$, we still write $X\prec Y$ or $X=O(Y)$.
\end{Def}
We say that an event $\Xi$ holds with high probability if $1\prec 1(\Xi)$, i.e., $P(\Xi)=1-O(N^{-D})$ for any large constant $D>0$.

\begin{Rem}
We have following arithmetic rules for $\prec$.
Suppose that $X(u,v)\prec Y(u,v)$  uniformly in $u \in U$ and $v\in V$. If $|V|=O(N^C)$ for some constant $C$, then $\sum_u X(u,v)\prec \sum_u Y(u,v)$. Suppose that $X_1(u)\prec Y_1(u)$ uniformly in $u\in U$ and $X_2(u)\prec Y_2(u)$ uniformly in $u\in U$, then $X_1(u)X_2(u)\prec Y_1(u)Y_2(u)$ uniformly in $u\in U$.  These two properties can be proved by a simple union bound. Furthermore, if $\Gamma:=\Gamma(u)$ is deterministic and  $\Gamma(u)\geq N^{-C}$ for some $C>0$,  let $X=X(u)$  satisfies $X\prec \Gamma$ and $EX^2 \leq N^{C'}$, then $EX\prec \Gamma$. This follows from an application of Chebyshevis inequality.
\end{Rem}
 We also need the following large deviation estimates taken from \cite{erdHos2013local}. \begin{Lemma}\label{ldb} (Large deviation bounds). Let $\left(X_{i}\right),\left(Y_{i}\right),\left(a_{i j}\right),$ and $\left(b_{i}\right)$ be independent families of random variables, where $N \in \mathbb{N}$ and $i, j=1, \ldots, N .$ Suppose that all entries $X_{i}$ and $Y_{i}$ are independent and satisfy the conditions
$\mathbb{E} X=0, \E |X|^2=1, \E|X|^{p} \leq C_{p}$ for all p with some constants $C_p$.\\
(i) Suppose that $\left(\sum_{i}\left|b_{i}\right|^{2}\right)^{1 / 2} \prec \Psi .$ Then $\sum_{i} b_{i} X_{i} \prec \Psi.$\\
(ii) Suppose that $\left(\sum_{i \neq j}\left|a_{i j}\right|^{2}\right)^{1 / 2} \prec \Psi .$ Then $\sum_{i \neq j} a_{i j} X_{i} X_{j} \prec \Psi.$\\
(iii) Suppose that $\left(\sum_{i, j}\left|a_{i j}\right|^{2}\right)^{1 / 2} \prec \Psi .$ Then $\sum_{i, j} a_{i j} X_{i} Y_{j} \prec \Psi.$
\end{Lemma}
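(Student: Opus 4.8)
\textbf{Proof proposal for Lemma \ref{ldb} (large deviation bounds).}

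The plan is to establish each of the three bounds by a high-moment (Markov) argument, which is the standard route under the uniform moment hypothesis $\E|X|^p\le C_p$. Fix a small $\epsilon>0$ and a large $D>0$; it suffices to show that for each $p\in\mathbb{N}$, the $2p$-th moment of the relevant form is bounded by $N^{\epsilon}$ times the appropriate power of $\Psi$ (up to a $p$-dependent constant), and then to choose $p$ large enough that Markov's inequality gives the $N^{-D}$ tail. Since $\prec$ is defined modulo an $N^\epsilon$ factor and the hypotheses $(\sum|b_i|^2)^{1/2}\prec\Psi$ etc. are themselves up to $N^\epsilon$, it is cleanest first to prove the deterministic-coefficient statements (i.e. assuming $(\sum_i|b_i|^2)^{1/2}\le\Psi$ with $\Psi$ deterministic) and then to pass to the stochastic-domination version by conditioning on the event $\{(\sum_i|b_i|^2)^{1/2}\le N^{\epsilon/2}\Psi\}$, which has overwhelming probability, and absorbing the extra $N^{\epsilon/2}$ into the definition of $\prec$. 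For the conditioning step one uses that the families $(X_i),(Y_i),(a_{ij}),(b_i)$ are independent of each other, so that conditionally on the coefficient families the $X_i,Y_i$ retain all their moment bounds.

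\textbf{Part (i).} Write $S=\sum_i b_iX_i$. Because $\E X_i=0$ and the $X_i$ are independent with $\E|X_i|^2=1$, expand $\E|S|^{2p}=\sum \E\big(b_{i_1}\cdots b_{i_p}\bar b_{j_1}\cdots \bar b_{j_p}\big)\E\big(X_{i_1}\cdots X_{i_p}\bar X_{j_1}\cdots\bar X_{j_p}\big)$; a term vanishes unless every index that appears, appears at least twice, so the surviving multi-indices are governed by pair-partitions and coarsenings thereof. Bounding each factor by H\"older and $\E|X_i|^{k}\le C_k$, one gets $\E|S|^{2p}\le C_p'\big(\sum_i|b_i|^2\big)^{p}\le C_p'\Psi^{2p}$; this is the classical Marcinkiewicz--Zygmund/Rosenthal-type estimate. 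Markov then gives $P(|S|>N^\epsilon\Psi)\le N^{-2p\epsilon}\,C_p'$, and taking $p>D/(2\epsilon)$ finishes it.

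\textbf{Part (iii) and Part (ii).} For (iii), $T=\sum_{i,j}a_{ij}X_iY_j$, the same moment expansion applies but now with two independent index families; using $\E X_i=\E Y_j=0$ again forces each ``row index'' $i$ and each ``column index'' $j$ to repeat, and a careful bookkeeping of which $a_{ij}$ factors are coupled — estimating off-diagonal contributions by Cauchy--Schwarz in the form $\sum_{i,j}|a_{ij}|^2$ — yields $\E|T|^{2p}\le C_p''\big(\sum_{i,j}|a_{ij}|^2\big)^p\le C_p''\Psi^{2p}$. Part (ii) is the diagonal-free quadratic form $\sum_{i\ne j}a_{ij}X_iX_j$; one symmetrizes by decoupling (replace $X_j$ by an independent copy $X_j'$ at the cost of universal constants, reducing to the structure of (iii) with the restriction $i\ne j$), or alternatively expands the $2p$-th moment directly and uses that each appearing index must have total degree $\ge 2$, again arriving at $\E\big|\sum_{i\ne j}a_{ij}X_iX_j\big|^{2p}\le C_p'''\big(\sum_{i\ne j}|a_{ij}|^2\big)^p$. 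In all three cases the passage from the moment bound to $\prec$ is the Markov step above, and the uniformity in the parameter set $U^{(N)}$ is automatic because every constant $C_p,C_p',\dots$ depends only on $p$ and the $C_k$'s, not on $u$.

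\textbf{Main obstacle.} The only genuinely delicate point is the combinatorics in Parts (ii) and (iii): one must verify that, after imposing that every index repeats, the number of surviving index configurations at order $2p$ is bounded by a $p$-dependent constant and that each contributes at most $\big(\sum|a_{ij}|^2\big)^p$ after the H\"older/Cauchy--Schwarz bookkeeping — in particular that no configuration produces an extra factor growing in $N$. Since this is precisely the content of the cited estimates in \cite{erdHos2013local}, I would either reproduce that graph-counting argument or, more economically, invoke it directly and restrict the write-up to the decoupling reduction and the Markov-to-$\prec$ mechanics, together with the conditioning argument that upgrades deterministic coefficients to the stated stochastic-domination hypotheses.
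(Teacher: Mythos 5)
The paper does not prove this lemma at all: it is quoted verbatim as a known result from the cited reference, so there is no in-paper argument to compare against. Your sketch is the standard high-moment/Markov route used in that reference, the reduction steps (conditioning to pass from deterministic to stochastically dominated coefficients, decoupling for the quadratic form) are sound, and you correctly identify that the only substantive content is the graph-counting bound on the surviving index configurations, which you may legitimately import from the cited source exactly as the paper itself does.
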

We record the following Green function identities, see Lemma 4.4 in  \cite{knowles2017anisotropic} for instance. 
\begin{Lemma}\label{gfi}(Green function identities) Let $G \equiv G(z)$ be defined in \eqref{defG346}.We have\\
(i) For any $i\in \mI_M$,and $\mu \in \mI_N$, we have \begin{eqnarray}\label{gfidiag}\begin{aligned}
&G_{ii}=\frac{1}{-z-(\bY G^{(i)}\bY^*)_{ii}},\\
&G_{\mu\mu}=\frac{1}{-1-(\bY^* G^{(\mu)}\bY)_{\mu\mu}}.
\end{aligned}
\end{eqnarray}
(ii)For $i\neq j \in \mI_M$
\begin{eqnarray}\label{gfiij}
G_{ij}=-G_{ii}(\bY G^{(i)})_{ij}=-G_{jj}(G^{(j)}\bY^*)_{ij}=G_{ii}G_{jj}^{(i)}(\bY G^{(ij)}\bY^*)_{ij}.
\end{eqnarray}
For $\mu\neq \nu \in \mI_N$
\begin{eqnarray}\label{gfimunu}
G_{\mu\nu}=-G_{\mu\mu}(\bY^* G^{(\mu)})_{\mu\nu}=-G_{\nu\nu}(G^{(\nu)}\bY)_{\mu\nu}=G_{\mu\mu}G^{(\mu)}_{\nu\nu}(\bY^* G^{(\mu\nu)}\bY)_{\mu\nu}.
\end{eqnarray}
(iii)
For $i\in \mI_M$ and $\mu \in \mI_N$, we have
\begin{eqnarray}\label{gfiimu}\begin{aligned}
&G_{i\mu}=-G_{\mu\mu}(G^{(\mu)}\bY)_{i\mu}=-G_{ii}(\bY G^{(i)})_{i\mu},\\
&G_{\mu i}=-G_{\mu\mu}(\bY^* G^{(\mu)})_{\mu i}  =-G_{ii}(G^{(i)}\bY^*)_{\mu i}.
\end{aligned}
\end{eqnarray}
(iv) For $r\in \mI$ and $s,t \in  \mI \backslash \{k\}$, we have
\begin{eqnarray}\label{gfiminus}
G_{st}^{(r)}=G_{st}-\frac{G_{sr}G_{rt}}{G_{rr}}.
\end{eqnarray}
(v) All of the identities above hold if we replace $G$ by $G^{(T)}$.
\end{Lemma}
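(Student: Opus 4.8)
\emph{Overall strategy.} The plan is to deduce the entire lemma from one elementary ingredient — the Schur complement (block‑inverse) formula — applied to $H(z)$ and to its minors, together with the sparsity pattern of $H$. First I would record the general fact: for any invertible matrix $A$ and any partition of the index set into $T$ and $\overline T:=\mI\setminus T$, writing $A^{(T)}:=A_{\overline T\,\overline T}$ and $\sigma_T:=A_{TT}-A_{T\overline T}(A^{(T)})^{-1}A_{\overline T T}$,
\[
(A^{-1})_{TT}=\sigma_T^{-1},\qquad (A^{-1})_{T\overline T}=-\sigma_T^{-1}A_{T\overline T}(A^{(T)})^{-1},\qquad (A^{-1})_{\overline T\,\overline T}=(A^{(T)})^{-1}+(A^{(T)})^{-1}A_{\overline T T}\,\sigma_T^{-1}A_{T\overline T}(A^{(T)})^{-1},
\]
with the mirror formula for $(A^{-1})_{\overline T T}$. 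Two preliminary remarks: (a) since $\det H(z)=(-1)^{M+N}\det(zI_M-\bQ)$, and each minor $H^{(T)}$ is again the linearization of a submatrix of $\bY$ (one deletes the corresponding rows/columns of $\bY$), all of $H,H^{(T)}$ are invertible for $z\in\mathcal{C}^{+}$, so every Green function in the lemma is well defined — and the identities are purely algebraic, valid wherever the objects make sense; (b) a product such as $(\bY G^{(i)}\bY^*)_{ii}$ or $(\bY G^{(i)})_{ij}$ is read by viewing $\bY$ (resp. $\bY^*$) as the $\mI\times\mI$ matrix equal to $\bY$ on the $\mI_M\times\mI_N$ block (resp. $\bY^*$ on $\mI_N\times\mI_M$) and zero elsewhere, so e.g. $(\bY G^{(i)}\bY^*)_{ii}=\sum_{\mu,\nu\in\mI_N}Y_{i\mu}G^{(i)}_{\mu\nu}\overline{Y_{i\nu}}$.

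\emph{Identities (i)--(iii).} For (i) I would apply the formula with $T=\{i\}$, $i\in\mI_M$: then $A_{ii}=-z$, and the only nonzero off‑diagonal entries of row $i$ of $H$ are $H_{i\mu}=Y_{i\mu}$, $\mu\in\mI_N$; hence $\sigma_{\{i\}}=-z-(\bY G^{(i)}\bY^*)_{ii}$ and $G_{ii}=\sigma_{\{i\}}^{-1}$. The case $\mu\in\mI_N$ is identical with $A_{\mu\mu}=-1$ and $H_{\mu j}=(\bY^*)_{\mu j}$. For the off‑diagonal identities I would use the off‑diagonal blocks of the Schur formula with $T=\{s\}$ and then $T=\{t\}$: for $s\neq t$,
\[
G_{st}=-G_{ss}\sum_{k\neq s}H_{sk}G^{(s)}_{kt}=-\Big(\sum_{k\neq t}G^{(t)}_{sk}H_{kt}\Big)G_{tt}.
\]
Feeding in the block/sparsity pattern (a row indexed in $\mI_M$ is supported on $\mI_N$, and vice versa) collapses each sum to precisely one of the matrix products in \eqref{gfiij}, \eqref{gfimunu}, \eqref{gfiimu}. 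The remaining ``double‑minor'' forms, such as $G_{ij}=G_{ii}G^{(i)}_{jj}(\bY G^{(ij)}\bY^*)_{ij}$, I would obtain by first expanding $G_{ij}$ along row $i$, then applying the column expansion at $j$ inside $H^{(i)}$ (using $(H^{(i)})^{(j)}=H^{(ij)}$); once more the intermediate sums collapse by sparsity.

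\emph{Identity (iv) and part (v).} For (iv), with $r\in\mI$ and $s,t\in\mI\setminus\{r\}$, I would read off from the last block of the Schur formula for $T=\{r\}$ that $G_{st}=G^{(r)}_{st}+\sigma_{\{r\}}^{-1}(G^{(r)}H_{\cdot r})_s(H_{r\cdot}G^{(r)})_t$, while the other blocks give $G_{rr}=\sigma_{\{r\}}^{-1}$, $G_{sr}=-\sigma_{\{r\}}^{-1}(G^{(r)}H_{\cdot r})_s$, $G_{rt}=-\sigma_{\{r\}}^{-1}(H_{r\cdot}G^{(r)})_t$; eliminating $\sigma_{\{r\}}$ yields $G^{(r)}_{st}=G_{st}-G_{sr}G_{rt}/G_{rr}$ at once. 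For (v) I would simply observe that every step above used only (a) general matrix‑inversion identities valid for an arbitrary invertible matrix, and (b) the block structure of $H$ (diagonal blocks $-zI_M$, $-I_N$; off‑diagonal blocks $\bY$, $\bY^*$) — and both are inherited by any minor $H^{(T)}$, with $(H^{(T)})^{(r)}=H^{(T\cup\{r\})}$; so repeating the arguments with $H^{(T)}$ in place of $H$ gives the same identities for $G^{(T)}$.

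\emph{Main point of care.} There is no substantive obstacle: this is a standard resolvent‑identity lemma (indeed a specialization of Lemma~4.4 of \cite{knowles2017anisotropic}). The only thing demanding attention is the bookkeeping — keeping track of which of $\mI_M$, $\mI_N$ each summation index ranges over so that the collapsed Schur sums are matched with the correct matrix products in \eqref{gfidiag}--\eqref{gfiimu}, and keeping the labels of the removed rows/columns fixed (as prescribed by the definition of $H^{(T)}$) when iterating to double minors.
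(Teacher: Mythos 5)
Your proposal is correct. The paper offers no proof of this lemma, only a citation to Lemma~4.4 of \cite{knowles2017anisotropic}, and your Schur-complement derivation (diagonal and off-diagonal blocks of the block-inverse formula, specialized to the sparsity pattern of $H$, plus the observation that minors inherit the block structure) is precisely the standard argument behind that reference.
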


\section{Green function comparison and Proof of Theorem \ref{Th1twlaw}}
The main technical ingredient to obtain Theorem \ref{Th1twlaw} is pursuing a continuous Green function comparison near the rightmost edge. This has been successfully used to derive the Tracy Widom law for deformed Wigner matrix \cite{lee2015edge} and real sample covariance matrix with general population covariance structure \cite{lee2016tracy}.  In Subsection \ref{subsec41} we rescale $\bQ$ and state the local law of rescaled matrix for later use. In Subsection \ref{subsec42} we give the Green function comparison result and prove Theorem \ref{Th1twlaw}.
\subsection{Rescaling of the model}\label{subsec41}
We need to rescale the matrix $\bQ$ such that the eigenvalue gap of the renormalized matrix $\hat{\bQ}=\gamma \bQ$ has same eigenvalue gaps predicted by the Tracy-Widom distribution. The rescaling factor $\gamma$ for our target matrix is actually defined by $\gamma_0$ through equation \eqref{defgamma}.
We now set \begin{eqnarray}\label{ResQ1}\hat{\bY} = \gamma_0^{1/2} (\bR+\bX), \quad \hat{\bQ}:= \hat{\bY}  \hat{\bY}^*.\end{eqnarray}
The rescaled linearization matrix is  defined by
 \begin{eqnarray}\label{defH21}
\hH(z) := \left(\begin{array}{cc}-z I_M & \hat{\bY}  \\  \hat{\bY}^{*} & {- I_{N}}\end{array}\right)
\end{eqnarray}
Let
\begin{eqnarray}\label{Gre21}
\hG(z) := \hH^{-1}(z)=\left(\begin{array}{cc}G_1 & {G_1 \hat{\bY}} \\ {\hat{\bY}^{*}G_1} & {G_2}\end{array}\right),
\end{eqnarray}
where $G_1(z)= (\hat{\bY}\hat{\bY}^{*}-z I_M)^{-1}$ and $G_2=(\hat{\bY}^*\hat{\bY}/z-I_N)^{-1}$. The averaged Green function of $\hat{\bQ}$ is \begin{eqnarray}\label{avegrefun}m_N(z):=\frac{1}{M} Tr G_1(z).\end{eqnarray}
Denote the stieltjes transform of limiting spectral distribution of $\hat{\bQ}$ by $m=m(z)$, which is the unique solution of equation
\begin{eqnarray}\label{defm}
m(z)= \frac{1}{M} \sum_{i=1}^M  \frac{1}{\frac{\gamma_0 d_i^2}{1+c \gamma_0 m(z)}-z(1+c \gamma_0 m(z))+\gamma_0 (1-c)}, \quad  m \in \mathcal{C}^+.
\end{eqnarray}
$m(z)$ can be extended to $R/\{0\}$.
Define \begin{eqnarray}\label{Linkfunc2}\begin{aligned}
&\hf(\hat{w}) =\frac{1}{M} Tr(\gamma_0 \bR \bR^*-\hat{w} I_M)^{-1}\\
&\hat{\phi}(w)=w(1-c\gamma_0 \hf(\hat{w}))^2+\gamma_0
(1-c)(1-c\gamma_0 \hf(\hat{w}))\\
&\hat{w}(z)=z(1+c\gamma_0 m(z))^2-\gamma_0(1-c) (1+c\gamma_0 m(z))
\end{aligned}\end{eqnarray}
We denote by $\xi$  the largest solution of $\hat{\phi'}(w)=0$.  Denote the edge for $\hQ$ by $E_+$, satisfying $E_+ = \hat{\phi}(\xi) $, which parallels the relation in \eqref{defedge}.
Note that $\gamma_0 m(E_+)= s(\lambda_r)$. Recall  $b=b(\lambda_r)$ defined in $\eqref{defb}$, we also have
\begin{eqnarray}\label{rescb}
b=1+c\gamma_0 m(E_+),
\end{eqnarray}and
\begin{eqnarray}\label{rescxi}
\xi =  E_+ b^2-\gamma_0 (1-c) b=\gamma_0 \xi_r.
\end{eqnarray}
Denote by $\tm$ the Stieltjes transform of limiting spectral distribution of $\hat{\bY}^*\hat{\bY}$.  Define \begin{eqnarray}\label{deftb}
\tb = E_+[1+\gamma_0 \tm(E_+)].
\end{eqnarray}
It is easy to see that $\tb = E_+ b-\gamma_0(1-c)$ from the fact that  $\tm(z) =-(1-c)/z+cm$. It follows easily that \begin{eqnarray}\label{xibtb}\xi =b\tb.\end{eqnarray}

Now we can rewrite equation \eqref{firstorderrelation} and \eqref{defgamma} as
\begin{eqnarray}\label{R1}
\frac{1}{N}\sum_{i=1}^M \frac{\gamma_0  b(E_+ b^2+\xi)}{(\gamma_0 d_i^2-\xi)^2}=1,
\end{eqnarray}
and \begin{eqnarray}\label{R2}
\frac{1}{\gamma_0^2}\frac{1}{N}\sum_{i=1}^M\frac{b^2}{(\gamma_0 d_i^2-\xi)^2}=-\frac{1}{\gamma_0 b^3}-\frac{1}{N}\sum_{i=1}^M\frac{(2 E_+ b- \gamma_0(1-c))^2}{(\gamma_0 d_i^2-\xi)^3}-\frac{1}{N}\sum_{i=1}^M\frac{E_+}{(\gamma_0 d_i^2-\xi)^2}.
\end{eqnarray}
The equation \eqref{R1} can be understood as multiplying $\gamma_0^2$ on both numerator and denominator of the LHS of \eqref{firstorderrelation}. For \eqref{R2}, it is obtained by dividing $\gamma_0$ on both sides of \eqref{defgamma}.

Next, we collect some estimates on the Green functions of $\hat{\bQ}$, which will be used later in Section \ref{secgfflowprof42} . For $\tau > 0$, and sufficiently small positive constant $\tc$, define the domain
\begin{eqnarray}
\hat{\mathcal{D}}(\tc,\tau)\equiv \hat{\mathcal{D}}(\tc,\tau,N):=\{z\in \mathcal{C}_{+},|z|\geq\tau,E_+ - \tc \leq E\leq \tau^{-1},N^{-1+\tau}\leq\eta\leq\tau^{-1}\}.
\end{eqnarray}
\begin{Lemma}\label{gfcon}
With slight abuse of notations, we let $b(z)=1+\gamma_0 cm(z)$ and  $\tb(z)= zb(z)-\gamma_0(1-c)$. Under assumptions 1,2,3, we have that
\begin{eqnarray}
|m_N(z)-m(z)|\prec \frac{1}{N\eta},
\end{eqnarray}  and
\begin{eqnarray}\begin{aligned}
& \max_{i \in \mI_M}| \hG_{ii}-\frac{b(z)}{\gamma_0 d_i^2-\hat{w}(z)}|+\max_{i \in \mI_M}|\hG_{\bbi\bbi}-\frac{\tb(z)}{\gamma_0 d_i^2-\hat{w}(z)}|+ \max_{i\in \mI_M}|\hG_{ii}^{(\bbi)}+\frac{1}{\tb(z)}|+\max_{i\in \mI_M} |\hG_{\bbi\bbi}^{(i)} + \frac{1}{b(z)}|\\& +\max_{\mu \in [\![2M+1,M+N]\!]}|\hG_{\mu\mu}+\frac{1}{b(z)}|+\max_{i\in \mI_M}|\hG_{i \bbi}-\frac{\gamma_0^{1/2}d_i}{\gamma_0 d_i^2-\hat{w}(z)}|+\max_{s\neq t, (s,t)  \text{is not index pair}}|\hG_{st}|\prec \sqrt{\frac{\Im m(z)}{N\eta}}+\frac{1}{N\eta}
\end{aligned}
\end{eqnarray}
uniformly in $z \in \hat{\mD}(\tc,\tau)$
\end{Lemma}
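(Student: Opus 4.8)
The plan is to deduce this lemma directly from the strong local law of Theorem~\ref{Lolaw} via an exact rescaling of the spectral parameter, and to obtain the two additional minor entries $\hG_{ii}^{(\bbi)}$, $\hG_{\bbi\bbi}^{(i)}$ --- which do not occur in Theorem~\ref{Lolaw} --- from the Green function identity~\eqref{gfiminus}. The underlying observation is that $\hat\bQ=\gamma_0\bQ$, with $\gamma_0>0$ a fixed constant which is moreover bounded away from $0$ and $\infty$ under our assumptions. Since $\hat\bY=\gamma_0^{1/2}\bY$, one has $G_1(z)=\gamma_0^{-1}(\bY\bY^*-z/\gamma_0)^{-1}$ and $G_2(z)=(\bY^*\bY/(z/\gamma_0)-I_N)^{-1}$; writing $G(\cdot)=H^{-1}(\cdot)$ for the resolvent in~\eqref{defG346}, every entry of $\hG(z)$ is therefore a fixed power of $\gamma_0$ (namely $\gamma_0^{0}$, $\gamma_0^{-1/2}$ or $\gamma_0^{-1}$, depending on the two index blocks) times the corresponding entry of $G(z/\gamma_0)$: for instance $\hG_{ii}(z)=\gamma_0^{-1}G_{ii}(z/\gamma_0)$, $\hG_{i\bbi}(z)=\gamma_0^{-1/2}G_{i\bbi}(z/\gamma_0)$, $\hG_{\bbi\bbi}(z)=G_{\bbi\bbi}(z/\gamma_0)$ and $\hG_{\mu\mu}(z)=G_{\mu\mu}(z/\gamma_0)$. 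Comparing~\eqref{defm} with~\eqref{lslaw} gives $m(z)=\gamma_0^{-1}s(z/\gamma_0)$ and $m_N(z)=\gamma_0^{-1}s_N(z/\gamma_0)$, hence $b(z)=1+\gamma_0 c m(z)=1+cs(z/\gamma_0)$, $\hat w(z)=\gamma_0 w(z/\gamma_0)$, $\tb(z)=zb(z)-\gamma_0(1-c)=\gamma_0\tb(z/\gamma_0)$ in the notation of~\eqref{deftb890}, and $E_+=\gamma_0\lambda_r$ (with $\xi=\gamma_0\xi_r$ as in~\eqref{rescxi}). Because $\gamma_0\asymp1$, the map $z\mapsto z/\gamma_0$ sends $\hat{\mD}(\tc,\tau)$ into $\mD(\tc',\tau')$ for suitable small $\tc',\tau'$, so Theorem~\ref{Lolaw} applies at $z/\gamma_0$.

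With this dictionary in place, I would just evaluate~\eqref{Lolaw1}--\eqref{Lolaw2} at $z/\gamma_0$ and multiply through by the relevant power of $\gamma_0$. This immediately yields all the claimed estimates for $\hG_{ii}$, $\hG_{\bbi\bbi}$, $\hG_{i\bbi}$, $\hG_{\mu\mu}$ and for the off-diagonal non-pair entries, as well as $|m_N(z)-m(z)|=\gamma_0^{-1}|s_N(z/\gamma_0)-s(z/\gamma_0)|\prec 1/(N\eta)$. One checks routinely that the deterministic profiles transform into exactly those displayed --- e.g. $\gamma_0^{-1}b(z/\gamma_0)/(d_i^2-w(z/\gamma_0))=b(z)/(\gamma_0 d_i^2-\hat w(z))$ and $\gamma_0^{-1/2}d_i/(d_i^2-w(z/\gamma_0))=\gamma_0^{1/2}d_i/(\gamma_0 d_i^2-\hat w(z))$ --- and that $\sqrt{\Im s(z/\gamma_0)/(N\eta/\gamma_0)}=\gamma_0\sqrt{\Im m(z)/(N\eta)}$, so the error is of the stated order (the constant $\gamma_0$ being irrelevant for $\prec$).

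For the two minor entries I would invoke~\eqref{gfiminus} in the form $\hG_{ii}^{(\bbi)}=\hG_{ii}-\hG_{i\bbi}\hG_{\bbi i}/\hG_{\bbi\bbi}$ and $\hG_{\bbi\bbi}^{(i)}=\hG_{\bbi\bbi}-\hG_{\bbi i}\hG_{i\bbi}/\hG_{ii}$. On $\hat{\mD}(\tc,\tau)$ the quantity $\gamma_0 d_i^2-\hat w(z)$ is uniformly bounded away from $0$ (Assumption~\ref{assump3} plus continuity of $\hat w$), so $\hG_{ii}$ and $\hG_{\bbi\bbi}$ are of order $1$, the ratios above are stable under the errors already obtained, and the residual error is of order $\Psi:=\sqrt{\Im m(z)/(N\eta)}+1/(N\eta)$. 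Substituting the deterministic approximations and using the algebraic identity $\hat w(z)=zb(z)^2-\gamma_0(1-c)b(z)=b(z)\tb(z)$ (valid for all $z$; the case $z=E_+$ is~\eqref{xibtb}) gives $\hG_{ii}^{(\bbi)}=\big(b\tb-\gamma_0 d_i^2\big)/\big(\tb(\gamma_0 d_i^2-\hat w)\big)+O(\Psi)=\big(\hat w-\gamma_0 d_i^2\big)/\big(\tb(\gamma_0 d_i^2-\hat w)\big)+O(\Psi)=-1/\tb(z)+O(\Psi)$, and symmetrically $\hG_{\bbi\bbi}^{(i)}=-1/b(z)+O(\Psi)$. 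Collecting everything proves the lemma.

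I do not expect a genuine obstacle: no probabilistic input is needed beyond Theorem~\ref{Lolaw}, and the work is essentially bookkeeping --- verifying that each deterministic profile and each error exponent is carried correctly by $z\mapsto z/\gamma_0$, and that the denominators in the minor-entry step remain nondegenerate over all of $\hat{\mD}(\tc,\tau)$. (If one wanted to avoid the rescaling entirely, one could instead rerun the proof of Theorem~\ref{Lolaw} from Section~6 verbatim for $\hat\bQ$ and read $\hG_{ii}^{(\bbi)},\hG_{\bbi\bbi}^{(i)}$ off from~\eqref{gfiminus} at the end, but that is strictly more work.)
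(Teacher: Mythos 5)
Your proof is correct, and it takes a genuinely different (and slicker) route than the paper. The paper simply asserts that ``the proof is similar to that of Theorem~\ref{Lolaw} without essential difference,'' i.e.\ it asks the reader to rerun the whole self-consistent/bootstrapping argument of Section~6 for the rescaled model $\hat\bQ$. You instead observe the algebraic identity $\hH(z)=D\,H(z/\gamma_0)\,D$ with $D=\mathrm{diag}(\gamma_0^{1/2}I_M,I_N)$, which gives $\hG(z)=D^{-1}G(z/\gamma_0)D^{-1}$ (and, since minors commute with conjugation by a diagonal matrix, $\hG^{(T)}(z)=D_T^{-1}G^{(T)}(z/\gamma_0)D_T^{-1}$ for each $T$); combined with $m(z)=\gamma_0^{-1}s(z/\gamma_0)$, $b_{\mathrm{resc}}(z)=b_{\mathrm{orig}}(z/\gamma_0)$, $\hat w(z)=\gamma_0 w(z/\gamma_0)$, $\tb_{\mathrm{resc}}(z)=\gamma_0\tb_{\mathrm{orig}}(z/\gamma_0)$, $\Im s(z/\gamma_0)=\gamma_0\Im m(z)$, and $\Im(z/\gamma_0)=\eta/\gamma_0$, all the deterministic profiles and error exponents of Theorem~\ref{Lolaw} transform into exactly the ones claimed, up to irrelevant constant factors $\gamma_0^{\pm1/2}$, $\gamma_0^{\pm1}$ which are bounded by Assumptions~2--3. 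The two minor entries $\hG_{ii}^{(\bbi)}$ and $\hG_{\bbi\bbi}^{(i)}$, which do not appear in Theorem~\ref{Lolaw}, you correctly recover via \eqref{gfiminus} together with the pointwise identity $\hat w(z)=b(z)\tb(z)$; the denominators are safely bounded away from zero because $|\gamma_0 d_i^2-\hat w(z)|$, $|b(z)|$, $|\tb(z)|$ are uniformly bounded below on $\hat{\mD}(\tc,\tau)$ by Assumption~3 and (the rescaled version of) Lemma~\ref{db432}. The one bookkeeping point worth making explicit, which you flag but do not spell out, is that $z\mapsto z/\gamma_0$ maps $\hat{\mD}(\tc,\tau)$ into some $\mD(\tc',\tau')$: this is routine using $E_+=\gamma_0\lambda_r$ and $\gamma_0\asymp1$, noting that $N^{-1+\tau}/\gamma_0\geq N^{-1+\tau''}$ for any $\tau''<\tau$ once $N$ is large. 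In summary your argument replaces the paper's ``rerun Section~6'' by a clean conjugation-and-rescaling reduction to the already-proved Theorem~\ref{Lolaw}, which is shorter and less error-prone; the only small cost is the need to also verify the minor-entry estimates separately, which you do correctly.
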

\begin{proof}
The proof is similar to that of Theorem \ref{Lolaw} without essential difference.
\end{proof}

\subsection{Green function comparison}\label{subsec42}
We introduce rescaled sample covariance matrices \begin{eqnarray}\label{defW}\bW:= c^{1/6}(1+\sqrt{c})^{-4/3}XX^*,\end{eqnarray} and let $$m_\bW(z) = \frac{1}{N}Tr(\bW-z)^{-1}.$$
Recall the renormalized matrix $\hat{\bQ}$ defined in \eqref{ResQ1}. The following Green function comparison proposition is the key technical input for the proof of  Theorem \ref{Th1twlaw}.
\begin{Prop}\label{LinComp}
Let $\epsilon>0$ and set $\eta = N^{-2/3-\epsilon}.$ Denote the right edge of $\bW$  by $M_+$. Let $E_1$ and $E_2$ satisfy \begin{eqnarray}
|E_1|,  |E_2|\leq N^{-2/3+\epsilon}.
\end{eqnarray}
Let  $K: \mathbb {R} \rightarrow \mathbb {R}$ be a real function satisfying
$$\sup _{x}\left|K^{(k)}(x)\right| /(|x|+1)^{C} \leq C, \quad k=0,1,2,3,4.$$
Then there exists a constant $\phi>0$ such that for any sufficiently large $N$ and for any sufficiently small $\epsilon > 0$, we have \begin{eqnarray}
\left| EK \left(N\int_{E_1}^{E_2} \Im m_{\hat{\bQ}}(x+E_++i\eta ) \right)-EK \left(N\int_{E_1}^{E_2} \Im m_{\bW}(x+M_++i\eta) \right) \right |\leq N^{-\phi}.
\end{eqnarray}
\end{Prop}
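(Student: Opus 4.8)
The plan is to run the standard continuous Green function comparison (interpolation) argument between the two linearized models, following \cite{lee2015edge,lee2016tracy}, using the local laws in Lemma \ref{gfcon} and Theorem \ref{Lolaw} together with the Optical theorems as the two analytic inputs. First I would set up a one-parameter family of matrices $\bY_t$ interpolating between $\hat{\bY}$ at $t=0$ and the rescaled pure-noise matrix $c^{1/12}(1+\sqrt c)^{-2/3}\bX$ (so that $\bY_t\bY_t^*$ interpolates between $\hat{\bQ}$ and $\bW$) by flowing the deterministic signal part, i.e. $\bR_t := e^{-t/2}\bR$ or some comparable deterministic reduction of the signal, with the noise kept fixed; the key point is that at every time $t$ the matrix $\bY_t$ is again of signal-plus-noise form with a rectangular diagonal signal, so Lemma \ref{gfcon} (suitably re-derived with $\bR$ replaced by $\bR_t$, exactly as stated) applies uniformly in $t$. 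One must track how the edge $E_+(t)$ and the scaling $\gamma_0(t)$ move along the flow; since the endpoints are matched by construction and $\bW$ has its own $(M_+,\text{scaling})$, the self-consistent equations \eqref{defm}, \eqref{Linkfunc2}, \eqref{R1}, \eqref{R2} give $C^1$ (indeed smooth) dependence of these quantities on $t$, so shifting and rescaling the spectral parameter along the flow is harmless.

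Next, fix $z = E_+ (t) + E + i\eta$ with $|E|\le N^{-2/3+\epsilon}$, $\eta = N^{-2/3-\epsilon}$, and let $X_t := N\int_{E_1}^{E_2}\Im m_{\bY_t\bY_t^*}(x+E_+(t)+i\eta)\,dx$. I would show
\[
\frac{d}{dt}\, \E K(X_t) = O(N^{-\phi})
\]
uniformly in $t\in[0,T]$ with $T = O(\log N)$, and then integrate. Differentiating $K(X_t)$ brings down $K'(X_t)$ times $\frac{d}{dt}X_t$, which in turn is an integral over $x$ of $\frac{d}{dt}\Im m_{\bY_t\bY_t^*}$. The derivative $\frac{d}{dt} G(z,t)$ is computed from $\partial_t H_t = -\tfrac12\!\begin{psmallmatrix}0 & \bR_t\\ \bR_t^* & 0\end{psmallmatrix}$ (plus the deterministic $\partial_t$ of the $-zI$ block coming from $E_+(t)$), giving, via $\partial_t G = -G(\partial_t H)G$, an expression in Green function entries weighted by the $d_i$. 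The main mechanism is that after substituting the local law, the naive size of $\frac{d}{dt}X_t$ is $O(N^{\epsilon'})$ — too large — and one needs the cancellations encoded in the Optical theorems (Section 7) to extract an extra power of $\eta$ (equivalently $N^{-2/3}$), bringing $\frac{d}{dt}\E K(X_t)$ down to a negative power of $N$. Concretely, one expands the resolvent identities \eqref{gfidiag}–\eqref{gfiminus}, isolates the deterministic leading terms $b/(\gamma_0 d_i^2 - \hat w)$, $\tb/(\gamma_0 d_i^2-\hat w)$, $\gamma_0^{1/2}d_i/(\gamma_0 d_i^2 - \hat w)$, and uses \eqref{R1} (which is exactly the first-order self-consistent relation that makes the leading-order $t$-derivative of the edge location consistent with the flow) to kill the leading contribution; the fluctuating remainder is then bounded using the large deviation bounds of Lemma \ref{ldb} and the improved local law error $\sqrt{\Im m/(N\eta)} + 1/(N\eta)$, noting $\Im m(z) \sim \sqrt{|E|+\eta}\lesssim N^{-1/3+\epsilon/2}$ near the edge from the square-root behaviour guaranteed by Assumption \ref{assump3}.

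The hard part will be organizing this cancellation cleanly: one must show that every term in $\frac{d}{dt}\E K(X_t)$ that is not already $O(N^{-\phi})$ is exactly of the form that the Optical theorems annihilate, after taking the expectation and using the self-consistent equation \eqref{R1} (and, for the second-derivative bookkeeping needed to control $K''$, the relation \eqref{R2} which fixes $\gamma_0$). This is the step that in \cite{lee2015edge,lee2016tracy} occupies the bulk of the work, and here it is genuinely more involved because of the index-pair structure: the Green function has three distinct non-vanishing "diagonal-type" entries $G_{ii}$, $G_{\bbi\bbi}$, $G_{i\bbi}$ rather than one, so the time derivative produces more species of terms and the Optical theorems must be stated and applied for each. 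I would therefore first reduce, via the standard cumulant/integration-by-parts expansion in the (here purely deterministic) perturbation together with concentration of $X_t$ around its mean (so that $K'(X_t)$, $K''(X_t)$ can be replaced by their values at a deterministic point up to negligible error — this uses that $X_t = O(N^\epsilon)$ with high probability, itself a consequence of the local law and Proposition \ref{crare}), to a finite list of averaged Green-function polynomials, and then invoke the Optical theorems term by term. Finally, integrating $\frac{d}{dt}\E K(X_t) = O(N^{-\phi})$ over $t\in[0,O(\log N)]$ and comparing with the matched endpoints yields the claimed bound $|\E K(X_{\hat{\bQ}}) - \E K(X_{\bW})| \le N^{-\phi}$ (after adjusting $\phi$ by the $\log N$ factor).
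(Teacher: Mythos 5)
Your proposal is correct and follows essentially the same route as the paper: flow the signal by $d_i(t)=e^{-t/2}d_i$, define the time-dependent edge $E_+(t)$ and scaling $\gamma(t)$ via the self-consistent relations \eqref{R1}--\eqref{R2}, compute $\partial_t G$ via $\partial_t G = -G(\partial_t H)G$, use the local law (Lemma \ref{gfcon}) to identify the naively too-large terms, and then cancel them with the Optical theorems (Lemmas \ref{E-derivofG-formu} and \ref{imcancel}, which encode the identities in Section \ref{secoptth}) before integrating over $t\in[0,2\log N]$ and bounding the tail to $t=\infty$. The only caveat is that your explicit $\partial_t H_t$ formula omits the contribution from $\dot{\gamma}(t)$ (since $\hat{\bY}(t)=\gamma(t)^{1/2}(\bR(t)+\bX)$), which produces the extra $\dot{\gamma}$-weighted terms in \eqref{derivofdiag}; but since you explicitly say to track $\gamma(t)$ along the flow, this is an omission of notation rather than of idea.
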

\begin{Rem}
Proposition \ref{LinComp} can be easily extended to a general form, see Remark 5.1 in \cite{lee2015edge}, or Remark 4.2 in \cite{lee2016tracy}.\\
\textsc{\textbf{Proof of Theorem \ref{Th1twlaw}}}.  We need to assume the validity of Proposition \ref{LinComp} which is proved in the next section,  and require a similar result to Lemma 3.4 in \cite{lee2016tracy} using Lemma \ref{gfcon}. Then the proof is similar to that of Theorem 2.4 in \cite{lee2016tracy}, or see the proof of Theorem 1 in \cite{pillai2014universality}.
\end{Rem}

\section{Green function flow and Proof of Proposition \ref{LinComp} }\label{secgfflowprof42}
In this section we prove Proposition \ref{LinComp}.  The strategy is to consider a continuous interpolation between $\bW$ and $\hat{\bQ}$ by introducing a dynamic signal matrix $\bR$ with time parameter $t$ and track rate of change of Green functions over the time flow precisely. The key Lemmas are Lemma \ref{E-derivofG-formu} and \ref{imcancel} which estimate derivatives of Green functions accurate enough.  \\
\indent We use shorthand notation $$\sum_i^{(T)} :=\sum_{\substack{i=1\\i\notin T}}^M , \quad \quad \sum_\mu^{(T)} :=\sum_{\substack{\mu=M+1\\\mu\notin T}}^{M+N}.$$
If $T=\{i\}$, we abbreviate $(i)=(\{i\})$; similarly, when $T=\{a,b\}$, write $(ab)=(\{a,b\})$.
If there are both Latin letter indices and Greek letter indices, we use shorthand notation \begin{eqnarray}
\sum_{i,\mu,\nu}^{(\bbi)}:=\sum_{i}\sum_{\mu,\nu}^{(\bbi)},\quad \quad \sum_{i,j,\mu,\nu}^{(\bbi)}:=\sum_{i,j}\sum_{\mu,\nu}^{(\bbi)}
\end{eqnarray}
\subsection{Green Function flow}\label{subsec51}
We interpolate the deterministic signal matrix $\bR$ and zero matrix by defining
\begin{equation}\bR(t)=\left( \begin{array} { c c c c c } { d_ {1}(t) }
 & { \dots } & { 0 } & { \cdots } & { 0 } \\ { 0 } & { \ddots } & { 0 } & { \cdots } & { 0 } \\ { 0 } & { \cdots } & { d_ {M}(t) } & { \cdots } & { 0 } \end{array} \right),
\end{equation} where \begin{eqnarray}\label{evofdi}d_i(t)=e^{-t/2} d_i,  \quad i=1,\cdots, M.   \quad (t\geq 0).
\end{eqnarray}
We can define $\lambda_r(t)$, $b(t)$ and $\xi_r(t)$ according to  \eqref{defedge}, \eqref{defb} and \eqref{defxi} respectively. The only difference is that in current case, $R(t)$ evolves with time parameter $t$. Denote $\gamma(t) $ be the solution of  \begin{eqnarray}\label{defevogamma}
\frac{1}{\gamma(t)^3}\frac{1}{N}\sum_{i=1}^N\frac{b(t)^2}{(d_i(t)^2-\xi_r(t))^2}=-\frac{1}{b(t)^3}-\frac{1}{N}\sum_{i=1}^N\frac{(2\lambda_r(t) b(t)-(1-c))^2}{(d_i(t)^2-\xi_r(t))^3}-\frac{1}{N}\sum_{i=1}^N\frac{\lambda_r(t)}{(d_i(t)^2-\xi_r(t))^2}.
\end{eqnarray}

Let \begin{eqnarray}
\hat{\bY}(t)= \gamma^{1/2}(t) (\bR(t) + \bX),  \quad  \hat{\bQ}(t)=\hat{\bY}(t)\hat{\bY}(t)^*.
\end{eqnarray}
Now we consider the rescaled linearization matrix defined by
 \begin{eqnarray}\label{defH22}
\hH(z,t) := \left(\begin{array}{cc}-z I_M & \hat{\bY}(t) \\ {\hat{\bY}(t)^{*}} & {- I_{N}}\end{array}\right),
\end{eqnarray}
The resolvent
\begin{eqnarray}\label{Gre22}
\hG(z,t) := \hH(z,t)^{-1}=\left(\begin{array}{cc}G_1(z,t) & {G_1(z,t)\hat{\bY}(t)} \\ {\hat{\bY}(t)^{*}G_1(z,t)} & {G_2(z,t)}\end{array}\right),
\end{eqnarray}
where $G_1(z,t)= [\hat{\bY}(t)\hat{\bY}(t)^{*}-z I_M]^{-1}$ and $G_2=[\hat{\bY}(t)^*\hat{\bY}(t)/z-I_N]^{-1}$.

Now the matrix $\hat{\bQ}(t)$ evolves with time parameter $t$, moreover $\hat{\bQ}(0) = \hat{\bQ}$ where $\hQ$ is defined in \eqref{ResQ1} and $\hat{\bQ}(\infty)=\bW$. Recall the argument from \eqref{Linkfunc2} to \eqref{rescxi}. By replacing $d_i$ and $\gamma_0$ with $d_i(t)$ and $\gamma(t)$ in \eqref{Linkfunc2}, we can similarly
define the edge $E_+(t)$  that varies with $t$, so as the $b(t), \xi(t)$ and $\tb(t)$ appeared in \eqref{rescb}, \eqref{rescxi} and \eqref{deftb}.  With slight abuse of notations, we still use notation $E_+, b, \xi, \tb$ frequently and omit the dependence on $t$ in most cases later for simplicity.  Also for ease of notation, we use notion $\bY$, $G$ instead of $\hat{\bY}$, $\hG$ to stand for rescaled data matrix and Green function of rescaled signal plus noise matrix throughout the rest of  this section, so as in Section \ref{secoptth}. We remark that with these notations,  Lemma \ref{gfi} still holds for the rescaled model without any difference.
Choose the domain of spectral parameter $z$ as \begin{eqnarray}\label{domainofz}
\mathcal{E}_\epsilon(t)=\{z = E_+(t)+y+i\eta\in \mC^+: y\in [-N^{-2/3+\epsilon},N^{-2/3+\epsilon}], \eta =N^{-2/3-\epsilon}\}.
\end{eqnarray}
The averaged Green function $m_N(t,z)$ (defined in \eqref{avegrefun} for $\hat{\bQ}$ is defined by
\begin{eqnarray}
m_N(t,z):=\frac{1}{M}tr(\hat{\bQ}(t) -zI)^{-1}
\end{eqnarray}
Recalling Lemma \ref{gfcon}, we have
\begin{eqnarray*}
|m_N(t,z)-m(t,z) |\prec \Psi,
\end{eqnarray*}
uniformly in $\mathcal{E}_{\epsilon}(t), t\geq 0$, where
\begin{eqnarray}\label{defPsi}
\Psi:=N^{-1/3+C'\epsilon},
\end{eqnarray} for some small constant $C'$ independent of $N,\epsilon$ and $t$. Furthermore, we have $m(t,z)-m(E_+) \prec \Psi $ for $z \in \mathcal{E}_{\epsilon}(t)$, see Lemma \ref{mYZorder} below. Therefore, \begin{eqnarray}
m_N(t,z)-m(E_+)\prec \Psi \quad \text{for $z$ uniformly in} \; \mathcal{E}_{\epsilon}(t), t\geq 0
\end{eqnarray}
Similarly, we can obtain
\begin{eqnarray}\label{0011a}\begin{aligned}
& \max_{i \in \mI_M}| G_{ii}-\frac{b}{\gamma d_i^2-\xi}|+\max_{i \in \mI_M}|G_{\bbi\bbi}-\frac{\tb}{\gamma d_i^2-\xi}|+ \max_{i\in \mI_M}|G_{ii}^{(\bbi)}+\frac{1}{\tb}|+\max_{i\in \mI_M} |G_{\bbi\bbi}^{(i)} + \frac{1}{b}|\\& +\max_{\mu \in [\![2M+1,M+N]\!]}|G_{\mu\mu}+\frac{1}{b}|+\max_{i\in \mI_M}|G_{i \bbi}-\frac{\gamma^{1/2}d_i}{\gamma d_i^2-\xi}|+\max_{s\neq t, (s,t)  \text{is not index pair}}|G_{st}|\prec \Psi.
\end{aligned}
\end{eqnarray}

\subsection{Proof of Proposition \ref{LinComp}}\label{sebsec52}
In this subsection, we prove Proposition \ref{LinComp} by using an accurate estimate on the imaginary part of derivatives of Green function with respect to time parameter $t$, see \eqref{90i4o}.  \\

\noindent \textsc{\textbf{Proof of Proposition \ref{LinComp}}.} Similar to the proof of Proposition 4.1 in \cite{lee2016tracy},   we also only consider the case $K' \equiv 1$ for simplicity.
Using Lemma \ref{E-derivofG-formu} together with Lemma \ref{imcancel} below, we find \begin{eqnarray}\label{90i4o}
\E \frac{1}{N}\Im \sum_i \frac{\partial G_{ii}}{\partial t}=O(N^{1/2}\Psi^3).
\end{eqnarray}
Integrating on both sides of \eqref{90i4o} from $t=0$ to $t=2\log N$, we get \begin{eqnarray}\label{maog3}
\left| \E N\int_{E_1}^{E_2} \Im m_N(x+E_+ + i\eta)|_{t=0}dx - \E N\int \Im m_N(x+E_+ +i\eta)|_{t=2\log N}dx \right|\leq  N^{-1/6+C\epsilon},
\end{eqnarray}
for some constant $C>0$.\\
\indent From \eqref{evofdi}, at $t=\infty$, $d_i=0$ for $i=1,\cdots M$. Therefore, according to definition of $\xi_r$,$\lambda_r$, $b$ and $\gamma$ (see \eqref{Linkfun}-\eqref{defgamma}), we have
$$\xi_r(\infty)=\sqrt{c}, \quad \lambda_r=(1+\sqrt{c})^2, \quad b(\infty)=\frac{1}{1+\sqrt{c}}, \quad  \gamma(\infty)=(1+\sqrt{c})^{-4/3} c^{1/6}.$$
Let $t_0 = 2 \log N$. It can be easily shown that $\gamma(t_0)=\gamma(\infty)+O(N^{-2})$, and $z(t_0)=z(\infty)+O(N^{-2})$. Using matrix identity $A^{-1}-B^{-1}=-A^{-1}(A-B)B^{-1}$ and \eqref{0011a}, we have $$|G_{ii}(t_0)-G_{ii}(\infty)| = \left| -\sum_{a,b}G_{ia}(t_0)\left(\hat{\bQ}_{ab}(t_0)-\hat{\bQ}_{ab}(\infty)\right) G_{bi}\right|\prec N^{-2/3}.$$
It follows that \begin{eqnarray}\label{maog4}
\left| \E N\int_{E_1}^{E_2} \Im m_N(x+E_+ + i\eta)|_{t=2\log N}dx - \E N\int \Im m_N(x+E_+ +i\eta)|_{t=\infty}dx \right|\leq  N^{-1/3+C\epsilon},
\end{eqnarray}for some constant $C>0$.
Note that $m_N(x+E_+ +i\eta)|_{t=\infty}=m_{W}(x+M_+ +i\eta)$, we conclude the proof from \eqref{maog3} and \eqref{maog4}.
\subsection{Derivatives of Green functions}\label{subsec53}

We now consider the derivative of $G_{ii}$ with respect to the time parameter $t$ for $z\in \mathcal{E}_\epsilon(t)$, see \eqref{domainofz}. In the following, we denote by $\dot{z}$ the derivative of $z$ with respect to time parameter $t$, and similarly denote $\dot{d}_i$ and $\dot{\gamma}$, see \eqref{evofdi} and \eqref{defevogamma}. A direct calculation yields $\dot{d_i}=-\frac{d_i(t)}{2}$. Then we have
\begin{eqnarray}\label{derivofdiag}\begin{aligned}\frac{\partial{G_{ii}}}{\partial t} &= \dot{z}\sum_{j} G_{ij} G_{ji} - 2\sum_{j,\mu} G_{ij} \frac{\partial Y_{j\mu}}{\partial t} G_{\mu i}\\&=\dot{z}\sum_{j} G_{ij} G_{ji} +\gamma^{1/2} \sum_j G_{ij} d_j G_{\bbj i}-\gamma^{-1/2}\dot{\gamma} \sum_{j,\mu} G_{ij} (X_{j\mu}+R_{j\mu}) G_{\mu i}\\&=\dot{z}\sum_{j} G_{ij} G_{ji} +\sum G_{i\bbj} G_{\bbj i}-\gamma^{1/2}\sum_{j,p} G_{ip} x_{p\bbj}G_{\bbj i}
\\&\relphantom{EEE}-\gamma^{-1}\dot{\gamma}\sum_{j} G_{i\bbj} G_{\bbj i}+\gamma^{-1/2}\dot{\gamma}\sum_{j,p} G_{ip}x_{p\bbj}G_{\bbj i}-\gamma^{-1/2}\dot{\gamma} \sum_{j,\mu} G_{ij} x_{j\mu} G_{\mu i},
\end{aligned}\end{eqnarray}
where in the last step, we use $G_{i\bbj} = \gamma^{1/2} G_{ij} d_j+ \gamma^{1/2}\sum_{p} G_{ip} x_{p\bbj},$ which can be obtained from  \eqref{Gre21}. To obtain the desired order of the expectation of  $\E \frac{1}{N}\Im \sum_i \frac{\partial G_{ii}}{\partial t}$ as in \eqref{90i4o},  we start from calculating expectation of right side of \eqref{derigii} above and ignore those terms with order $O(N^{1/2}\Psi^3)$. \\

To handle the third term and fourth terms above, we need the following lemma which was used in \cite{khorunzhy1996asymptotic}.
\begin{Lemma} Let $\ell \in \mathbb{N}$ be fixed and assume that  $f \in C^{\ell+1}(\mathbb{R})$. Let $\xi$ be a centered random variable
with finite first $\ell+2$ moments. Then we have the expansion
$$
\E(\xi f(\xi))=\sum_{k=1}^{\ell} \frac{\kappa_{k+1}(\xi)}{k !} \E \left(f^{(k)}(\xi)\right)+C_{\ell} \E\left(|\xi|^{\ell+2}\right) \sup _{|t| \in R}\left|f^{(\ell+1)}(t)\right|
$$
where  $\kappa_{k}(\xi), k \in \mathbb{N}$, are the cumulants of $\xi.$
\end{Lemma}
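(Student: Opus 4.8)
The plan is to prove the expansion by Taylor's theorem with Lagrange remainder combined with the moment--cumulant recursion, arranged so that the polynomial parts cancel identically and only a remainder controlled by $\sup_t|f^{(\ell+1)}(t)|$ survives. Write $m_j:=\E[\xi^j]$ for $j\le \ell+2$; these are finite by hypothesis, so the cumulants $\kappa_2,\dots,\kappa_{\ell+1}$ are well defined as the usual universal polynomials in $m_1,\dots,m_{\ell+1}$, and $\kappa_1=m_1=\E\xi=0$ by centering.

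First I would expand $f$ about the origin and, separately, each derivative $f^{(k)}$ with $1\le k\le\ell$: for every realization of $\xi$,
\[
f(\xi)=\sum_{j=0}^{\ell}\frac{f^{(j)}(0)}{j!}\,\xi^{j}+r(\xi),\qquad
f^{(k)}(\xi)=\sum_{j=0}^{\ell-k}\frac{f^{(j+k)}(0)}{j!}\,\xi^{j}+r_k(\xi),
\]
with $|r(\xi)|\le \frac{\|f^{(\ell+1)}\|_\infty}{(\ell+1)!}|\xi|^{\ell+1}$ and $|r_k(\xi)|\le \frac{\|f^{(\ell+1)}\|_\infty}{(\ell-k+1)!}|\xi|^{\ell-k+1}$ (here $f^{(k)}\in C^{\ell+1-k}$, so the remainder indeed involves only $f^{(\ell+1)}$). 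Taking expectations turns these into identities among the $m_j$ and the numbers $f^{(n)}(0)$, $0\le n\le\ell$: namely $\E[\xi f(\xi)]=\sum_{j=0}^{\ell}\frac{f^{(j)}(0)}{j!}m_{j+1}+\E[\xi r(\xi)]$ and $\E[f^{(k)}(\xi)]=\sum_{j=0}^{\ell-k}\frac{f^{(j+k)}(0)}{j!}m_j+\E[r_k(\xi)]$.

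The core step is the purely combinatorial verification that the main (polynomial) parts agree. Collecting in $\sum_{k=1}^{\ell}\frac{\kappa_{k+1}}{k!}\sum_{j=0}^{\ell-k}\frac{f^{(j+k)}(0)}{j!}m_j$ the coefficient of $f^{(n)}(0)$ for each $n\in\{0,\dots,\ell\}$ gives $\sum_{k=1}^{n}\frac{\kappa_{k+1}}{k!}\frac{m_{n-k}}{(n-k)!}$, which equals $\frac{m_{n+1}}{n!}$ exactly by the moment--cumulant recursion $m_{n+1}=\sum_{k=1}^{n}\binom{n}{k}\kappa_{k+1}m_{n-k}$ (the standard recursion $m_{n+1}=\sum_{k=0}^{n}\binom{n}{k}\kappa_{k+1}m_{n-k}$ with the $k=0$ term dropped since $\kappa_1=0$). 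Hence $\sum_{j=0}^{\ell}\frac{f^{(j)}(0)}{j!}m_{j+1}=\sum_{k=1}^{\ell}\frac{\kappa_{k+1}}{k!}\sum_{j=0}^{\ell-k}\frac{f^{(j+k)}(0)}{j!}m_j$, and subtracting the two families of identities yields
\[
\E[\xi f(\xi)]-\sum_{k=1}^{\ell}\frac{\kappa_{k+1}}{k!}\E[f^{(k)}(\xi)]
=\E[\xi r(\xi)]-\sum_{k=1}^{\ell}\frac{\kappa_{k+1}}{k!}\E[r_k(\xi)].
\]

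It then remains to bound the right-hand side: the first term is at most $\frac{\|f^{(\ell+1)}\|_\infty}{(\ell+1)!}\E|\xi|^{\ell+2}$, and the $k$-th term of the sum is at most $\frac{|\kappa_{k+1}|}{k!\,(\ell-k+1)!}\|f^{(\ell+1)}\|_\infty\,\E|\xi|^{\ell-k+1}$. Since $|\kappa_{k+1}|$ is a fixed polynomial in $m_1,\dots,m_{k+1}$, and by Lyapunov's inequality every $\E|\xi|^{p}$ with $p\le\ell+2$ is bounded in terms of $\E|\xi|^{\ell+2}$ up to an $\ell$-dependent constant (the harmless low-order moments being absorbed into $C_\ell$), the whole remainder is $\le C_\ell\,\E(|\xi|^{\ell+2})\,\sup_t|f^{(\ell+1)}(t)|$. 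The only genuinely delicate point is the combinatorial identity that makes the polynomial parts cancel; once the moment--cumulant recursion is invoked, the rest is routine Taylor estimation and bookkeeping. (Alternatively one can read the expansion off the characteristic-function identity $\E[\xi e^{is\xi}]=\big(\sum_{k\ge1}\kappa_{k+1}(is)^k/k!\big)\E[e^{is\xi}]$ after a smooth cutoff of $f$, but the Taylor route above avoids the growth issues and delivers the stated remainder directly.)
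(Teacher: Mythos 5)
The paper does not prove this lemma at all: it is stated and simply attributed to \cite{khorunzhy1996asymptotic}, so there is no in-text proof to compare against. Your argument is the standard one and is essentially correct: Taylor-expand $f$ and each $f^{(k)}$ about the origin, match the polynomial parts coefficient-by-coefficient via the moment--cumulant recursion $m_{n+1}=\sum_{k=1}^{n}\binom{n}{k}\kappa_{k+1}m_{n-k}$ (using $\kappa_1=0$), and control the Taylor remainders in terms of $\|f^{(\ell+1)}\|_\infty$ and moments of $\xi$.

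One step is stated imprecisely and is worth tightening. You write that ``by Lyapunov's inequality every $\E|\xi|^p$ with $p\le\ell+2$ is bounded in terms of $\E|\xi|^{\ell+2}$ up to an $\ell$-dependent constant''; taken literally this is false (scale $\xi$ by a small $\varepsilon$ and the lower moments dominate), and the parenthetical about absorbing low-order moments into $C_\ell$ would make $C_\ell$ depend on the law of $\xi$, which the statement does not allow. What actually closes the argument is that in each remainder term $|\kappa_{k+1}|\,\E|\xi|^{\ell-k+1}$ the \emph{total} moment exponent is exactly $\ell+2$: since $m_1=0$, $\kappa_{k+1}$ is a universal polynomial in $m_2,\dots,m_{k+1}$ whose monomials $m_{j_1}\cdots m_{j_r}$ satisfy $j_1+\cdots+j_r=k+1$ with all $j_i\ge 2$; multiplying by $\E|\xi|^{\ell-k+1}$ produces a product of absolute moments with exponents summing to $\ell+2$, each $\le\ell+2$. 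Lyapunov's inequality in the form $\prod_i\E|\xi|^{a_i}\le\bigl(\E|\xi|^{n}\bigr)^{\sum_i a_i/n}=\E|\xi|^{n}$ whenever $\sum_i a_i=n$ then bounds every such product by $\E|\xi|^{\ell+2}$, yielding a $C_\ell$ that depends only on $\ell$. With that repair your proof is complete, and it is the same Taylor/moment--cumulant route used in the cited reference.
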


Notice that under Assumption \ref{assump1}, $\kappa_2(x_{ij})= 1/N$, and $\kappa_p(x_{ij})\leq C_p'/N^{p/2}$ for some $C_p'$ independent of $N$. With above lemma, we see that the expectation of the last term in  \eqref{derivofdiag} is \begin{eqnarray}\label{k3term1}
\begin{aligned}
\E\gamma^{-1/2}\sum_{j,\mu} G_{ij} x_{j\mu}G_{\mu i}&=-\frac{1}{N}\sum_{j,\mu} (2\E G_{ij}G_{j\mu}G_{\mu i}+\E G_{i\mu}G_{jj}G_{\mu i}+\E G_{ij}G_{\mu \mu}G_{ji})\\ &\relphantom{EEEE}  +\kappa_3 \gamma^{1/2} \sum_{j,\mu} 3 \E G_{ij}G_{jj}G_{\mu\mu}G_{\mu i} + O(N^{1/2}\Psi^3),
\end{aligned}
\end{eqnarray}
where we use the fact that those terms like $\sum_{j,\mu}\frac{\kappa_3}{N^{3/2}}G_{i\mu} G_{jj}G_{j\mu}G_{\mu i}$   is of $O(N^{1/2} \Psi^3)$, and $\sum_{j,\mu} \frac{\kappa_3}{N^{3/2}}G_{ij}G_{j\mu}G_{\mu j}G_{ji}$  is of $O(N^{1/2}\Psi^4)$.
For the third term $\frac{\kappa_3}{N^{3/2}}\sum 6 \E G_{ij}G_{jj}G_{\mu\mu}G_{\mu i},$ we may suppose $j\neq i$, otherwise it is of order $O(N^{-1/2}\Psi)$ which is negligible. We have the following lemma to control the order of $\E G_{ij}G_{jj}G_{\mu\mu}G_{\mu i}$.
\begin{Lemma}\label{EGiijjmumumui}If $j\neq i$,
\begin{eqnarray}\E G_{ij}G_{jj}G_{\mu\mu}G_{\mu i}=O(\Psi^3).\end{eqnarray}
\end{Lemma}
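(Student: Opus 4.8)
The plan is to start from the a priori estimate given by the local law and then gain one extra factor of $\Psi$ by passing to the expectation. By \eqref{0011a} (equivalently Lemma~\ref{gfcon}), on the domain $\mathcal{E}_\epsilon(t)$ we have $G_{jj}=O(1)$ and $G_{\mu\mu}=O(1)$, and since $i\neq j$ both lie in $\mI_N$'s complement $\mathcal{I}_M$ the pair $(i,j)$ is not an index pair, so $G_{ij}=O(\Psi)$; likewise $G_{\mu i}=O(\Psi)$ unless $\mu=\bbi$. Hence, in the generic case $\mu\notin\{\bbi,\bbj\}$, the product $G_{ij}G_{jj}G_{\mu\mu}G_{\mu i}$ is already $O(\Psi^2)$, and the point of the lemma is to improve this to $O(\Psi^3)$ in expectation. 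For the remaining values ($\mu=\bbi$, resp.\ $\mu=\bbj$) the same scheme works after one further expansion step; alternatively, and this is all that is actually used in \eqref{k3term1}, the cruder bounds $O(\Psi)$, resp.\ $O(\Psi^2)$, together with the fact that only $O(N)$ such terms occur for fixed $i$, already make their total contribution to $\frac{\kappa_3}{N^{3/2}}\sum_{j,\mu}(\cdots)$ negligible next to $N^{1/2}\Psi^3$.

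I would open up one off-diagonal factor with the resolvent identity \eqref{gfiij}: write $G_{ij}=-G_{jj}(G^{(j)}\bY^{*})_{ij}=-G_{jj}\sum_{\nu\in\mI_N}G^{(j)}_{i\nu}\,\overline{Y}_{j\nu}$, and recall $\overline{Y}_{j\nu}=\gamma^{1/2}x_{j\nu}$ for $\nu\neq\bbj$ while $\overline{Y}_{j\bbj}=\gamma^{1/2}(d_j+x_{j\bbj})$. The term $\nu=\bbj$ involves $G^{(j)}_{i\bbj}$, which by \eqref{gfiminus} and \eqref{0011a} is $O(\Psi)$ because $i\neq j$ means $(i,\bbj)$ is not an index pair; that term is thus a priori $O(\Psi^2)$, and one more expansion of $G^{(j)}_{i\bbj}$ (via \eqref{gfiimu} inside the minor) followed by the cumulant expansion below brings it to $O(\Psi^3)$. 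For $\nu\neq\bbj$ the crucial observation is that $G^{(j)}$, hence $G^{(j)}_{i\nu}$, is independent of $x_{j\nu}$, so I would apply the cumulant expansion recalled just above, in the single real variable $x_{j\nu}$, to $\E\big[x_{j\nu}\cdot G_{jj}^{2}G_{\mu\mu}G_{\mu i}\cdot G^{(j)}_{i\nu}\big]$, using $\kappa_2(x_{j\nu})=1/N$ and $|\kappa_p(x_{j\nu})|\le C_p'N^{-p/2}$ from Assumption~\ref{assump1}.

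The gain of $\Psi$ comes from the familiar cancellation mechanism. Since $x_{j\nu}$ enters $H$ only at the positions $(j,\nu)$ and $(\nu,j)$, one has $\partial_{x_{j\nu}}G_{ab}=-\gamma^{1/2}\big(G_{aj}G_{\nu b}+G_{a\nu}G_{jb}\big)$, so differentiating a diagonal factor ($G_{jj}$ or $G_{\mu\mu}$) always produces an off-diagonal entry carrying a fresh index $j$ or $\nu$, which by \eqref{0011a} is $O(\Psi)$ except for the $O(1)$-many ``coincident'' values of $\nu$ (such as $\nu=\bbi$ or $\nu=\mu$, and a bounded number of further values generated by iterated differentiation) for which the created entry is diagonal or an index pair; differentiating the off-diagonal factors is only favourable, since $\partial_{x_{j\nu}}G^{(j)}_{i\nu}=0$ and $\partial_{x_{j\nu}}G_{\mu i}$ stays $O(\Psi)$ because $G_{\mu j}$ ($\mu\neq\bbj$) and $G_{ji}$ ($i\neq j$) are $O(\Psi)$. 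Consequently the leading (second-cumulant) contribution $\frac{\gamma^{1/2}}{N}\sum_{\nu\neq\bbj}\E\,\partial_{x_{j\nu}}\big(G_{jj}^{2}G_{\mu\mu}G_{\mu i}G^{(j)}_{i\nu}\big)$ equals $\frac1N\sum_\nu O(\Psi^3)$ together with $O(1)$-many exceptional terms of size $O(\Psi^2)$, i.e.\ it is $O(\Psi^3)+O(\Psi^2/N)=O(\Psi^3)$. The higher-cumulant terms carry an extra $N^{-1/2}$ per order, so after $\frac1N\sum_\nu$ they stay well below $\Psi^3=N^{-1+O(\epsilon)}$, and the truncation remainder of the expansion is controlled with the deterministic bound $\|G\|\le N^{C_0}$ (valid since $\eta\ge N^{-1}$) and the moment bounds of Assumption~\ref{assump1}, taking the expansion order large.

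The main obstacle is the combinatorial bookkeeping in the previous paragraph: after each differentiation one must pin down exactly which finitely many values of $\nu$, and which placements of the derivatives among the five factors, fail to gain a factor $\Psi$, and then check that each such configuration either occurs $O(1)$ times with a small enough accompanying coefficient or is suppressed sufficiently by the higher cumulants $\kappa_{k+1}=O(N^{-(k+1)/2})$. This is tedious but routine once \eqref{0011a} and \eqref{gfiminus} are in hand, and is exactly the type of estimate carried out for the analogous lemmas in \cite{lee2015edge, lee2016tracy}.
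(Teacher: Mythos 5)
Your proposal is correct in outline but takes a genuinely different route from the paper's proof. The paper first replaces the diagonal factors $G_{jj}$, $G_{\mu\mu}$ and the minor $G_{\mu i}^{(j)}$ by their deterministic limits (errors absorbed into $O(\Psi^3)$ because the remaining product is a priori $O(\Psi^2)$), then expands $G_{ij}$ through the \emph{double} minor $G^{(ij)}$ via \eqref{gfiij}, identifies the terms that survive partial expectation over the $j$-th row $x_{j\cdot}$, and pushes each survivor to $O(\Psi^3)$ by one further resolvent expansion and another partial expectation (over $x_{\cdot\bbj}$ or $x_{i\cdot}$). You instead expand $G_{ij}$ through the \emph{single} minor $G^{(j)}$ and convert the extraction of the mean into a cumulant expansion in $x_{j\nu}$; the gain of a factor $\Psi$ then comes from the fact that $\partial_{x_{j\nu}}$ always creates a fresh off-diagonal entry while $G^{(j)}_{i\nu}$ is $x_{j\cdot}$-independent. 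Both approaches are standard; yours is the more systematic one but requires checking, as you acknowledge, that the second-cumulant (leading) term is in fact $O(\Psi^3)$ term by term — note the crude bound ``each derivative creates an off-diagonal factor'' gives only $O(\Psi^2)$ after the sum over $\nu$, so one really must track, as you sketch, that the undifferentiated $G_{\mu i}$ (or the two off-diagonal factors produced when $\partial$ hits $G_{\mu i}$ or $G_{jj}$) supplies the extra $\Psi$ — while the higher cumulants and the truncation remainder are then controlled by the cruder $O(\Psi^2)$ bound plus the $N^{-(k+1)/2}$ decay. A small advantage of your write-up is that you explicitly flag the exceptional values $\mu\in\{\bbi,\bbj\}$ and observe that they may instead be discarded in the application by counting: the paper's first display, which treats $G_{\mu\mu}G_{\mu i}$ as $O(\Psi)$, actually loses a power of $\Psi$ when $\mu=\bbi$, so the lemma as stated is implicitly for generic $\mu$, and your counting argument is the cleanest way to dispense with the remaining cases.
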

\begin{proof}
Using \eqref{0011a} and \eqref{gfiminus}, we see that \begin{eqnarray}\label{4q3oa}
\E G_{ij}G_{jj}G_{\mu\mu}G_{\mu i}=\frac{b\tb}{(\gamma d_a^2-\xi)^2}\E G_{ij}G_{\mu i} + O(\Psi^3)=\frac{b\tb}{(\gamma d_a^2-\xi)^2}\E G_{ij}G_{\mu i}^{(j)} + O(\Psi^3).
\end{eqnarray}
Using resolvent identity \eqref{gfiij}, we have $$G_{ij}=G_{ii}G_{jj}^{(i)}\sum_{\alpha \beta} Y_{i\alpha}G_{\alpha\beta}^{(ij)}Y_{j\beta}=\gamma\left(G_{ii}G_{jj}^{(i)}d_i d_j G_{\bbi\bbj}^{(ij)}+\sum_\alpha d_j x_{i\alpha} G_{\alpha\bbj}^{(ij)}+\sum_\beta d_i G_{\bbi\beta}^{(ij)}x_{j\beta}+\sum_{\alpha,\beta} x_{i\alpha}G_{\alpha\beta}^{(ij)}x_{j\beta}\right). $$
Substituting above into \eqref{4q3oa} and omitting $\gamma $ since we only care about order of terms, we see that the non vanishing term is \begin{eqnarray*}\E G_{ii}G_{jj}^{(i)}d_i d_j G_{\bbi\bbj}^{(ij)}G_{\mu i}^{(j)}+\E \sum_\alpha d_j x_{i\alpha} G_{\alpha\bbj}^{(ij)}G_{\mu i}^{(j)}.
\end{eqnarray*}
We have that \begin{eqnarray}\begin{aligned}
\E G_{ii}G_{jj}^{(i)}d_i d_j G_{\bbi\bbj}G_{\mu i}^{(j)}&=\E \frac{b\tb}{(\gamma d_a^2-\xi)^2}d_i d_j G_{\bbi\bbj}^{(ij)}G_{\mu i}^{(j)}+O(\Psi^3)\\&=-\E \frac{b\tb}{(\gamma d_a^2-\xi)^2}d_i d_j G_{\bbj\bbj}^{(ij)}\sum_{k} x_{k\bbj} G_{\bbi k}^{(ij\bbj)}G_{\mu i}^{(j)}+O(\Psi^3)\\&=\E \frac{\tb}{(\gamma d_a^2-\xi)^2}d_i d_j \sum_{k} x_{k\bbj} G_{\bbi k}^{(ij\bbj)}G_{\mu i}^{(j\bbj)}+O(\Psi^3)=O(\Psi^3)
\end{aligned}\end{eqnarray}
and \begin{eqnarray}\begin{aligned}
\E \sum_{\alpha} d_j x_{i\alpha} G_{\alpha\bbj}^{(ij)}G_{\mu i}^{(j)}&=-\sum_{\alpha} \E d_j x_{i\alpha} G_{\alpha\bbj}^{(ij)} G_{ii}^{(j)}\sum_{\nu} x_{i\nu}G_{\mu\nu}^{(ij)}=-\frac{1}{N}\frac{b}{\gamma d_i^2-\xi}\sum_{\alpha} \E d_j G_{\alpha\bbj}^{(ij)}G_{\mu\alpha}^{(ij)}+O(\Psi^3)\\&=-\frac{1}{N}\frac{b}{\gamma d_i^2-\xi}\sum_{\alpha} \E d_j G_{\bbj\bbj}^{(ij)}x_{k\bbj}G_{\alpha k}^{(ij\bbj)}G_{\mu\alpha}^{(ij)}+O(\Psi^3) = O(\Psi^3).
\end{aligned}
\end{eqnarray}
Therefore we conclude this lemma.
\end{proof}

We now turn back to \eqref{k3term1}. Using Lemma \ref{EGiijjmumumui}, we have \begin{eqnarray}\label{9a8gaf}
\E\gamma^{-1/2}\sum_{j,\mu} G_{ij} x_{j\mu}G_{\mu i}=-\frac{1}{N}\sum_{j,\mu} (2\E G_{ij}G_{j\mu}G_{\mu i}+\E G_{i\mu}G_{jj}G_{\mu i}+\E G_{ij}G_{\mu \mu}G_{ji})+O(N^{1/2}\Psi^3).
\end{eqnarray}
Similarly,  we have \begin{eqnarray}\label{k3term2}
\E \sum_{j,p} G_{ip} x_{p\bbj}G_{\bbj i}=-\frac{\gamma^{1/2}}{N} \sum _{j,p}\left(2\E G_{ip}G_{p\bbj}G_{\bbj i} + \E G_{i\bbj}G_{pp}G_{\bbj i} +  \E G_{ip}G_{\bbj\bbj} G_{pi} \right)+O(N^{1/2}\Psi^3).
\end{eqnarray}
Plugging \eqref{9a8gaf} and \eqref{k3term2} into \eqref{derivofdiag}, we obtain that \begin{eqnarray}\label{derigii}\begin{aligned}
\E\frac{\partial G_{ii}}{\partial t} =& \dot{z}\sum_{j}\E G_{ij} G_{ji} +\sum_j \E G_{i\bbj} G_{\bbj i}+\frac{\gamma}{N}  \sum _{j,p}\left(2\E G_{ip}G_{p\bbj}G_{\bbj i} + \E G_{i\bbj}G_{pp}G_{\bbj i} +  \E G_{ip}G_{\bbj\bbj} G_{pi} \right)\\&-\gamma^{-1}\dot{\gamma}\sum_{j} \E G_{i\bbj} G_{\bbj i}-\frac{\dot{\gamma}}{N}\sum _{j,p}\left(2\E G_{ip}G_{p\bbj}G_{\bbj i} +  \E G_{i\bbj}G_{pp}G_{\bbj i} +  \E G_{ip}G_{\bbj\bbj} G_{pi} \right)\\&+\frac{\dot{\gamma}}{N} \sum_{j,\mu} (2\E G_{ij}G_{j\mu}G_{\mu i}+\E G_{i\mu}G_{jj}G_{\mu i}+\E G_{ij}G_{\mu \mu}G_{ji})+O(N^{1/2}\Psi^3)
\\&=\dot{z}\sum_{j}\E G_{ij} G_{ji} +\sum_j \E G_{i\bbj} G_{\bbj i}+\frac{\gamma}{N}  \sum _{j,p}\left(2\E G_{ip}G_{p\bbj}G_{\bbj i} + \E G_{i\bbj}G_{pp}G_{\bbj i} +  \E G_{ip}G_{\bbj\bbj} G_{pi} \right)\\&-\gamma^{-1}\dot{\gamma}\sum_{j} \E G_{i\bbj} G_{\bbj i}+\frac{\dot{\gamma}}{N} \sum_{\mu>2M, j} \left( 2\E G_{ij}G_{j\mu}G_{\mu i}+\E G_{i\mu}G_{jj}G_{\mu i}+\E G_{ij}G_{\mu \mu}G_{ji}\right)
+O(N^{1/2}\Psi^3)
\end{aligned}
\end{eqnarray}
Actually we have a simpler expression of  $\E \frac{1}{N}\Im \sum_i \frac{\partial G_{ii}}{\partial t}$ given in Lemma \ref{E-derivofG-formu} below. We need the following notations.
Let \begin{eqnarray}\label{Xset1}\begin{aligned}
&X_{22}=\frac{1}{N}\sum_{r}G_{ir}G_{ri}, & \quad   & X_{32}=(cm(E_+)-cm_N)\frac{1}{N}\sum_{r} G_{ir}G_{ri},\\
&X_{33}=\frac{1}{N^2}\sum G_{ir}G_{rs}G_{si},&  &  X_{42}=(cm(E_+)-cm_N)^2\frac{1}{N}\sum G_{ir}G_{ri},\\&X_{43}= (cm(E_+)-cm_N)\frac{1}{N^2}\sum  G_{ir}G_{rs}G_{si},&  &X_{44}=\frac{1}{N^3}\sum_{r,s,t}G_{ir}G_{rs}G_{st}G_{ti},\\
&\tX_{44}=\frac{1}{N^3}\sum G_{ir}G_{ri}G_{st}G_{ts}.
\end{aligned}
\end{eqnarray}
Observe that the above $z$-dependent random variables satisfy
\begin{eqnarray*}
X_{22}=O(\Psi^2),\quad  X_{32}, X_{33}= O(\Psi^3), \quad X_{42}, X_{43}, X_{44}, \tX_{44}=O(\Psi^4).
\end{eqnarray*}
Some variations of above random variables are given by
\begin{eqnarray}\label{Xset2}\begin{aligned}
&X_{22}'=\frac{1}{N}\sum_{\alpha} G_{i\alpha}G_{\alpha i},& \quad & X_{32}'=(cm(E_+)-cm_N)\frac{1}{N}\sum_{\alpha} G_{i\alpha}G_{\alpha i},\\ &X_{33}'=\frac{1}{N^2}\sum G_{i\alpha}G_{\alpha k}G_{ki},&   &X_{33}''=\frac{1}{N^2}\sum G_{i\alpha}G_{\alpha\beta}G_{\beta i}, \\
&X_{42}'=(cm(E_+)-cm_N)^2\frac{1}{N}\sum G_{i\alpha}G_{\alpha i}, &  &X_{43}'= (cm(E_+)-cm_N)\frac{1}{N^2}\sum G_{i\alpha}G_{\alpha s}G_{si}\\
&X_{43}''=  (cm(E_+)-cm_N)  \frac{1}{N^2}\sum G_{i\alpha}G_{\alpha \beta}G_{\beta i},&  &X_{44}'=\frac{1}{N^3}\sum_{\alpha,s,t}G_{i\alpha}G_{\alpha s}G_{st}G_{ti},\\
&X_{44}''=\frac{1}{N^3}\sum_{\alpha,\beta,t}G_{i\alpha}G_{\alpha \beta}G_{\beta t}G_{ti},&  &X_{44}'''=\frac{1}{N^3}\sum_{\alpha,\beta,\mu}G_{i\alpha}G_{\alpha\beta}G_{\beta \mu}G_{\mu i}
\end{aligned}\end{eqnarray}

\begin{Lemma}\label{E-derivofG-formu}
Let  \begin{eqnarray}\label{defmathcalX}\begin{aligned}&\mX_3 = \sum_i ( X_{32}-X_{33}), \quad \mathfrak{X}_{4}=(cm-cm_N) +(E_+-z) \sum_i  X_{22},\\  &\mX_4=  \sum_i (3 X_{42}- 6X_{43}+12X_{44}+3 \tX_{44})
\end{aligned}\end{eqnarray}
Then we have
\begin{eqnarray}\E \frac{1}{N} \sum_i \frac{\partial G_{ii}}{\partial t}= C_0+ C_1 \E \mX_{3}+ C_2 \E \mathfrak{X}_{4} + C_3 \E \mX_4+O(N^{1/2}\Psi^3)
\end{eqnarray}
where $C_0, C_1, C_2$ and $C_3$  are explicitly given by
\begin{eqnarray}\begin{aligned}
&C_0=(\frac{h}{\gamma E_+}-\frac{1}{\gamma})(b-\frac{\dot{\gamma}}{\gamma}), \quad C_1 =\frac{2\tb b}{\gamma E_+}-2\gamma E_+-\frac{2\tb\dot{\gamma}}{\gamma^2 E_+},\\
&C_2 =\frac{2b^2(b\tb-\gamma^2 E_+^2)}{\gamma^2 E_+ h}-\gamma cm -\frac{\gamma^3(1-c)}{b^3}-\frac{2b(b\tb-\gamma^2 E_+^2)}{\gamma^3 E_+ h}\dot{\gamma}+\frac{\gamma^2(1-c)}{b^3}\dot{\gamma},\\
&C_3=\left(\frac{\gamma^3 h^4 \tb \varphi_4}{E_+} -\frac{3\tb}{h}-\frac{3\gamma^2 \tb}{b^3}-\frac{3\gamma^2 E_+\tb}{b^2 h}-\frac{E_+  b^2+\gamma^2 E_+^2 }{b h}+ \frac{h \gamma^3(1-c)}{E_+ b^4}-\frac{\gamma^3(1-c)}{b^4} \right)(b-\frac{\dot{\gamma}}{\gamma})\\&\relphantom{EE}+\dot{\gamma}\frac{\gamma^3 cm(1-c)}{b^4},
\end{aligned}
\end{eqnarray}
and where $\varphi_4$ is defined in \eqref{notatvarPhi}.
\end{Lemma}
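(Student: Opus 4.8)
My plan is to start from the expansion \eqref{derigii} for $\E\,\partial_t G_{ii}$, which already collects every contribution up to the admissible error $O(N^{1/2}\Psi^3)$, and, after summing over $i\in\mI_M$ and dividing by $N$, to re-express the six groups of terms there in the canonical families \eqref{Xset1} (and their primed variants \eqref{Xset2}) and collapse the deterministic part into the stated constants. I would work term by term.

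For each product of Green-function entries in \eqref{derigii} I would first apply the resolvent identities of Lemma~\ref{gfi}(ii)--(v) to peel off the index-pair entries $G_{i\bbi}$, $G_{\bbi\bbi}$, $G_{ii}^{(\bbi)}$, $G_{\bbi\bbi}^{(i)}$ and the remaining diagonal entries, and then use the local law \eqref{0011a} to substitute their deterministic limits, each substitution costing a power of $\Psi$ that is tracked so the accumulated remainder stays $O(N^{1/2}\Psi^3)$. What survives splits into (i) a fully deterministic piece, in which only the averages $\frac1N\sum_i(\gamma d_i^2-\xi)^{-k}$, $k=2,3$, remain; and (ii) a fluctuating piece which, by the very definitions \eqref{Xset1}--\eqref{Xset2}, is a linear combination of the $X$'s: the $\Psi^3$-order remainder assembles into $\mX_3$, the $\Psi^4$-order one into $\mX_4$, and the residual second-order piece — coming from $\dot z\sum_i X_{22}$ together with the self-consistent-equation correction that turns $m_N$ into $m$ — assembles into $\mathfrak{X}_4$.

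It then remains to close the deterministic identities. All the $d_i$-averages produced above are reduced to $C_0,C_1,C_2,C_3$ using the first-order relation \eqref{R1}, the $\gamma_0$-equation \eqref{R2}, the algebraic relations \eqref{rescb}--\eqref{xibtb} ($b=1+c\gamma m(E_+)$, $\tb=E_+b-\gamma(1-c)$, $\xi=b\tb$), and — the point where the bookkeeping is heaviest — their $t$-derivatives, obtained by implicitly differentiating the self-consistent equations \eqref{defm}, \eqref{Linkfunc2}, \eqref{defevogamma} and using $\dot d_i=-d_i/2$ and $\dot z=\dot E_+$. This is what generates the $\dot\gamma/\gamma$ and $\dot\gamma$ terms inside $C_0,\dots,C_3$ and identifies the auxiliary average $\varphi_4$ entering $C_3$. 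Regrouping according to \eqref{defmathcalX} gives the asserted formula.

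The main obstacle is organizational rather than conceptual: one must carry out the resolvent reductions consistently enough that no $\Psi^3$-order term is dropped or double-counted — which is precisely where the optical-theorem cancellations of Section~7 enter — and then verify that, after differentiating the coupled self-consistent equations, the many deterministic terms telescope exactly into $C_0,\dots,C_3$. A secondary subtlety is the last sum in \eqref{derigii}, over Greek indices $\mu>2M$ which never form an index pair: there one uses $G_{\mu\mu}\to-1/b$ and the smallness of $G_{i\mu}$, and the resulting contributions must be matched against the primed variables of \eqref{Xset2} rather than the unprimed ones.
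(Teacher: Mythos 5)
Your high-level route—start from \eqref{derigii}, reduce every term to the families \eqref{Xset1}--\eqref{Xset2} by resolvent expansion, then assemble coefficients and close the deterministic side using $t$-derivatives of the self-consistent relations—is indeed the paper's route, so there is no disagreement on strategy. But two pieces of your bookkeeping are wrong in a way that matters. First, the plain $\sum_i \E X_{22}$ contribution of order $\Psi^2$ does \emph{not} get absorbed into $\mathfrak{X}_4$: its coefficient is identically zero. Establishing that cancellation requires combining $\dot z\sum_j\E G_{ij}G_{ji}$ with the leading deterministic parts of the other terms in \eqref{derigii} using $\dot E_+=-(E_+b+\tb)+E_++\tfrac{E_+}{\gamma}\dot\gamma$ from \eqref{derivofE} together with \eqref{R1g}; this is step~(2) of the paper's proof. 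The $(E_+-z)\sum_i\E X_{22}$ that actually survives inside $\mathfrak{X}_4$ has a different origin—the identity \eqref{relofmtm} applied to the $G_{\mu\mu}$-sums—and is not a "residual" of the $\dot z X_{22}$ term. If you proceed as written you will be left with a stray $O(\Psi^2)$ piece with no place to go.

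Second, you cast the optical theorems of Section~7 as a guard against dropped errors, but in this lemma they are the computational engine, not an error bound. The identities \eqref{X22'2}, \eqref{X22'vsimple}, \eqref{x32'l2Mv}, \eqref{x32'vsimple}, \eqref{X33'fiexp}, \eqref{X33''fiexp}, \eqref{optfromx32}, \eqref{optfrommnx22} are what eliminate the primed variables $X_{22}',X_{32}',X_{33}',X_{33}''$ and the $\Phi_1,\Phi_2,\theta_4$ combinations in favour of the unprimed basis, and they are precisely what forces the coefficients of $\sum_i X_{32}$ and $-\sum_i X_{33}$ to coincide (giving $C_1$), and the coefficients of $\E(cm-cm_N)$ and $(E_+-z)\sum_i\E X_{22}$ to coincide (giving $C_2$). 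Without invoking them as exact algebraic identities, the assertion that the answer takes the form $C_0+C_1\E\mX_3+C_2\E\mathfrak{X}_4+C_3\E\mX_4$—as opposed to an unstructured linear combination in the nine-dimensional basis—is unjustified. Finally, a minor point: the $\dot\gamma$-factors in $C_0,\dots,C_3$ come directly from the $\dot\gamma$-weighted summands already present in \eqref{derigii}; the implicit derivative \eqref{deriofgamma} and the appearance of $\varphi_4$ through it are only used afterwards, in Lemma~\ref{imcancel}, not to produce the constants here.
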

\begin{proof}
 The first step is to rewrite each term in \eqref{derigii} as linear combination of $\E X_{22}, \E X_{32},\E X_{33}, \E (cm-cm_N), (E_+-z) \E X_{22}, \E X_{42}, \E X_{43}, \E X_{44}, \E \tX_{44}$. Although there exist linear relations within these terms, for example, see \eqref{optfrommnx22}, we do not pursue the cancellation between them in this step. Notice that the first term is already expressed by $\E X_{22}$. By a simple fact that $\sum_j \E G_{i\bbj}G_{\bbj i}=N\E X_{22}'-\sum_{\alpha>2M} E G_{i\alpha}G_{\alpha i} $, the second term can be rewritten  according to \eqref{X22'2} and \eqref{X22'vsimple} below.  Using \eqref{X33l2Mv} and \eqref{optfromX33} below,  we find the third term is \begin{eqnarray*}\begin{aligned}\frac{\gamma}{N} \sum _{j,p}2\E G_{ip}G_{p\bbj}G_{\bbj i}&=2 \gamma N\E X_{33}'-\frac{2\gamma}{N}\sum _{\alpha>2M, p}\E G_{ip}G_{p\alpha}G_{\alpha i}=2\gamma (E_+ -\frac{\gamma(1-c)}{b^2}) N\E X_{33}. \end{aligned}\end{eqnarray*}
To handle the fourth term, observe that
 \begin{eqnarray}\frac{\gamma}{N}  \sum _{j,p} \E G_{i\bbj}G_{pp}G_{\bbj i}=\gamma cm \sum_j \E G_{i\bbj}G_{\bbj i}- \gamma \sum_{j} \E(cm-cm_N)G_{i\bbj}G_{\bbj i}. \end{eqnarray}
Since we have  \begin{eqnarray}\begin{aligned}
&\frac{1}{N}\sum_{j} \E(cm-cm_N)G_{i\bbj}G_{\bbj i}\\&=\E X_{32}'-\frac{1}{N}\sum_{\alpha>2M} \E (cm-cm_N)G_{i\alpha}G_{\alpha i}\\&=(E_+-\frac{\gamma(1-c)}{b^2})\E X_{32}+\frac{b}{\gamma d_i^2-\xi}\E \frac{1}{N}(cm-cm_N) + \frac{\gamma^2(1-c)}{b^3}\E (-2X_{42}+2X_{43}-2X_{44})+O(\Psi^5),
\end{aligned}\end{eqnarray}
where in the second step we use \eqref{x32'l2Mv} and \eqref{x32'vsimple}, the fourth term can be transformed.
Next we deal with the fifth term. We have the following relation that can be verified without difficulty, \begin{eqnarray}\label{relofmtm}
E_+\tm(E_+)-\frac{1}{N}\sum G_{\mu\mu}=E_+(cm(E_+)-\frac{1}{N}\sum G_{jj})+(E_+-z)cm(E_+) + O(\Psi^3).
\end{eqnarray}
Using this we find
\begin{eqnarray}\begin{aligned}
\frac{\gamma}{N^2}\sum_{p,j} \E G_{ip}G_{\bbj\bbj} G_{pi}&=-\gamma E_+ \E X_{32}-\gamma(E_+ -z)cm(E_+)\E X_{22}
-\frac{\gamma}{N^2}\sum_{\mu>2M} G_{ip}G_{\mu\mu}G_{pi}+\gamma E_+\tm(E_+)\E X_{22}\\
&=-\gamma E_+ \E X_{32}-\gamma(E_+ -z)cm(E_+)\E X_{22}
+\gamma E_+\tm(E_+)\E X_{22}\\
&+\frac{\gamma(1-c)}{b}\E X_{22}+\frac{\gamma^2(1-c)}{b^2}\E X_{32}+\frac{\gamma^3(1-c)}{b^3}\E (X_{42}+4 X_{44}+\tX_{44})+O(\Psi^5),
\end{aligned}\end{eqnarray}
where in the second equality we use  \eqref{x32'l2Mv}.
The last four terms can be transformed in a similar manner to above terms, so we do not provide details here.

In the next step, using above expressions of terms in \eqref{derigii}, summing over index $i$ and dividing by $N$, we summarize the following. \\
(1). The terms do not contain expectations of Green entries are $(\frac{h}{\gamma E_+}-\frac{1}{\gamma})(b-\frac{\dot{\gamma}}{\gamma})$.\\
(2). With the help of \eqref{derivofE} and \eqref{R1g},  the coefficient of $\sum_i \E X_{22}$ is 0.\\
(3). With the help of \eqref{X22'2} and \eqref{X22'vsimple}, both the coefficients of $\sum_i \E X_{32}$ and $-\sum_i \E X_{33}$ are $$\frac{2\tb b}{\gamma E_+}-2\gamma E_+-\frac{2\tb\dot{\gamma}}{\gamma^2 E_+}.$$
(4). With the help of \eqref{optfromx32} and \eqref{optfrommnx22},
both the coefficients of $\E (cm-cm_N) $ and $(E_+-z)\sum_i \E X_{22}$ are
\begin{eqnarray}
-\frac{2b^2(\gamma^2 E_+^2-b\tb)}{\gamma^2 E_+ h}-\gamma cm-\frac{\gamma^3(1-c)}{b^3}+\frac{2b(\gamma^2 E_+^2-b\tb)}{\gamma^3 E_+ h}\dot{\gamma}+\dot{\gamma} \frac{\gamma^2(1-c)}{b^3}
\end{eqnarray}
(5). With the help of \eqref{X22'2} and \eqref{X22'vsimple}, the coefficient of  $ \sum_i \E(3 X_{42}- 6X_{43}+12X_{44}+3 \tX_{44})$ is \begin{eqnarray}\begin{aligned}
&\left\{\frac{\gamma^3 h^4 \tb \varphi_4}{E_+} -\frac{3\tb}{h}-\frac{3\gamma^2 \tb}{b^3}-\frac{3\gamma^2 E_+\tb}{b^2 h}-\frac{E_+  b^2+\gamma^2 E_+^2 }{b h}+ \frac{h \gamma^3(1-c)}{E_+ b^4} -\frac{\gamma^3(1-c)}{b^4}\right\}(b-\frac{\dot{\gamma}}{\gamma})\\&+\dot{\gamma}cm \frac{\gamma^3(1-c)}{b^4}.
\end{aligned}\end{eqnarray}
We conclude this lemma from above.
\end{proof}

\begin{Lemma}\label{imcancel}
\begin{eqnarray}\label{cancelrule}\Im  \left(C_1 \E \mX_{3}+ C_2 \E \mathfrak{X}_{4} + C_3 \E \mX_4 \right)=O(N^{1/2}\Psi^3)\end{eqnarray}
\end{Lemma}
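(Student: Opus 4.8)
The plan is to exploit the fact that the coefficients $C_1,C_2,C_3$ have explicit closed forms in terms of the deterministic quantities $b,\tb,\xi,E_+,\gamma,\dot\gamma,h,m$, while the random objects $\mathcal{X}_3,\mathfrak{X}_4,\mathcal{X}_4$ are built out of $z$-dependent Green-function sums. The key observation is that the ``dangerous'' part — the part of size $N^{1/2}\Psi^2$ that is not automatically $O(N^{1/2}\Psi^3)$ — carries an explicit dependence on the spectral parameter $z$ only through $z$ itself (not through $E$ separately), so taking the imaginary part and using $\eta=N^{-2/3-\epsilon}$, $|E-E_+|\le N^{-2/3+\epsilon}$ will produce the extra gain. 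Concretely, I would first expand each of $C_1\E\mathcal{X}_3$, $C_2\E\mathfrak{X}_4$, $C_3\E\mathcal{X}_4$ into a ``leading deterministic value at $z=E_+$'' plus a remainder, using the local law \eqref{0011a} and the stability estimate $m(t,z)-m(E_+)\prec\Psi$ together with $m_N(t,z)-m(E_+)\prec\Psi$; this is exactly the content of Lemma \ref{mYZorder} invoked earlier.

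Second, I would isolate the terms of order $N^{1/2}\Psi^2$. In $C_2\E\mathfrak{X}_4$ the relevant piece is $C_2(E_+-z)\sum_i\E X_{22}$; since $\sum_i X_{22}=O(N\Psi^2/N)=O(\Psi^2)$ after the $1/N$ normalisation is accounted for, and $E_+-z=-y-i\eta$ with $|y|,\eta\le N^{-2/3+\epsilon}$, this term is $O(N^{-2/3+\epsilon}\cdot N\cdot\Psi^2)=O(N^{1/3+\epsilon}\Psi^2)$. Taking $\Im$ of $(E_+-z)\sum_i\E X_{22}$ and writing $\sum_i\E X_{22}=\sum_i\E X_{22}\big|_{z=E_+}+O(\Psi^3\cdot N)$, the leading part becomes $\Im\big((E_+-z)\cdot(\text{real-analytic-in-}z\text{ quantity evaluated at }E_+)\big)$, and since the bracket is essentially the derivative $\partial_z m_N$ evaluated near $E_+$, I would use the square-root behaviour near the edge to show $|\partial_z m_N|\prec N^{1/3+C\epsilon}$ on $\mathcal{E}_\epsilon(t)$, whence the whole contribution is controlled. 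The genuinely new input is that after multiplying by $C_2$ (which is real up to $O(\Psi)$ corrections coming from $m$ versus $m(E_+)$) and summing, the $z$-dependence that survives is of the form $(E_+-z)\times(\text{something with bounded imaginary part at }E_+)$, so $\Im$ of it is $O(\eta)\times(\ldots)+O(|E_+-E|)\times\Im(\ldots)$, and both factors are small.

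Third — and this is where the real work lies — I would show that the analogous $N^{1/2}\Psi^2$-order contributions from $C_1\E\mathcal{X}_3$ and $C_3\E\mathcal{X}_4$ either vanish identically at the level of deterministic coefficients or cancel against the one from $C_2\E\mathfrak{X}_4$. Here is where equations \eqref{R1} and \eqref{R2} (equivalently \eqref{firstorderrelation} and \eqref{defgamma}), which pin down $\gamma_0$ and $\xi$, must be used: they are precisely the algebraic identities guaranteeing that the $O(\Psi^2)$ coefficient — i.e. the coefficient of $\sum_i\E X_{22}$ — is zero, as already noted in step (2) of the proof of Lemma \ref{E-derivofG-formu}. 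So after that cancellation the surviving non-$O(N^{1/2}\Psi^3)$ terms are all of order $N^{1/2}\Psi^3\cdot(\text{logarithmic or }N^{C\epsilon})$ except for the genuinely $z$-dependent imaginary piece handled in step two. I would organise this as: (a) rewrite $\mathcal{X}_3,\mathfrak{X}_4,\mathcal{X}_4$ in terms of the basic monomials $\E X_{22},\E X_{32},\E X_{33},\E(cm-cm_N),(E_+-z)\E X_{22},\E X_{42},\ldots$; (b) collect coefficients; (c) invoke the edge equations to kill the $\E X_{22}$ coefficient; (d) take imaginary parts, using that all the surviving coefficients are real up to $O(\Psi)$ and that $\Im X_{kk}$, $\Im(cm-cm_N)$ are themselves $O(\Psi^{k-1})$ or better on $\mathcal{E}_\epsilon(t)$ because $\Im z=\eta$ is tiny and the deterministic limits are real at $E_+$.

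The main obstacle I anticipate is bookkeeping in step three: there are many monomials ($X_{22},X_{32},X_{33},X_{42},X_{43},X_{44},\tilde X_{44}$ and their primed variants), each entering several of the transformed terms in \eqref{derigii} with $z$-dependent coefficients, and one must verify that after substituting the optical-theorem identities (the relations labelled \eqref{optfromX33}, \eqref{optfromx32}, \eqref{optfrommnx22}, \eqref{X22'2}, \eqref{X22'vsimple}, etc.) the imaginary part of the total collapses to the claimed order. The conceptual content is small — it is the combination of the two edge equations plus the fact that $\Im$ of a real-at-$E_+$ analytic function times $O(N^{-2/3+\epsilon})$ is negligible — but the calculation is long, and the danger is an arithmetic slip in one of the coefficients $C_1,C_2,C_3$ propagating into a spurious non-cancelling $\Psi^2$ term. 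A secondary subtlety is making sure the $N^{C'\epsilon}$ losses from repeated use of the local law do not accumulate past the budget $N^{1/2}\Psi^3=N^{1/2-1+3C'\epsilon}$; choosing $\epsilon$ small relative to the eventual exponent $\phi$ in Proposition \ref{LinComp} takes care of this, exactly as in \cite{lee2015edge,lee2016tracy}.
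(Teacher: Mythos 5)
Your proposal misses the actual mechanism of the proof and would not close. The quantities $\E\mathcal{X}_3=\sum_i\E(X_{32}-X_{33})$, $\E\mathcal{X}_4$, $\E\mathfrak{X}_4$ are each of size $N\Psi^3$ or $N\Psi^4$ (i.e.\ much larger than the target $N^{1/2}\Psi^3$), and their imaginary parts are \emph{not} small just because $\eta$ is tiny and the limits are ``real at $E_+$''. For instance, $\Im\bigl((E_+-z)\sum_i\E X_{22}\bigr)=O(N^{-2/3+\epsilon}\cdot N\Psi^2)=O(\Psi\cdot N^{C\epsilon})$, which is roughly $N^{1/2}$ times too large; nothing in your step two reduces it further. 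The actual proof does not win by estimating imaginary parts of individual pieces; it wins by an algebraic cancellation: one subtracts $(C_1\gamma/2)$ times the imaginary part of the optical theorem \eqref{optfromX22}, which eliminates $\Im\E\mathcal{X}_3$ \emph{exactly} (the constant $1/\gamma$ drops out under $\Im$ since $\gamma$ is real), and replaces $C_2,C_3$ by new deterministic coefficients \eqref{9b3k8} and \eqref{9bn62}. This use of \eqref{optfromX22} is the single most important ingredient, and your sketch never invokes it.

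Two further gaps. First, you attribute to this lemma the fact that ``the coefficient of $\sum_i\E X_{22}$ is zero'' via \eqref{R1}--\eqref{R2}; but that vanishing is established in step (2) of the proof of Lemma \ref{E-derivofG-formu} (using \eqref{derivofE} and \eqref{R1g}) and is \emph{not} the content of Lemma \ref{imcancel}, whose statement already has $X_{22}$ absorbed into $\mathfrak{X}_4$ and $\mathcal{X}_4$. Second, after the subtraction described above, the proof still needs two more inputs that you do not supply: (i) the chain of optical theorems \eqref{optfromx32}, \eqref{optfrommnx22}, \eqref{optfromX33}, which together yield the structural identity $\E\mathfrak{X}_4=-\E\mathcal{X}_4+O(N^{1/2}\Psi^3)$, collapsing the problem to a single coefficient; and (ii) the explicit derivative formula \eqref{deriofgamma} for $\dot\gamma$, which is what makes the combination $-\eqref{9b3k8}+\eqref{9bn62}$ vanish identically. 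You do gesture at optical theorems in passing, but you treat them as supporting bookkeeping, whereas they are the entire argument; and the role of $\dot\gamma$ is absent from your sketch. Without these the $N\Psi^3$-sized pieces survive and the claimed bound fails.
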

\begin{proof}
Subtracting ($C_1 \gamma /2$)-times imaginary part of \eqref{optfromX22} from \eqref{cancelrule}, we find the coefficient of $\Im \E \mX_3$ cancels, the coefficient of $\Im \E \mathfrak{X}_4$ is \begin{eqnarray}\label{9b3k8}-\gamma cm -\frac{\gamma^3(1-c)}{b^3}+\frac{2b E_+}{\gamma h}\dot{\gamma}+\frac{\gamma^2(1-c)}{b^3}\dot{\gamma},\end{eqnarray}
and the coefficient of $\Im \E \mX_4$ is $C_3-\frac{C_1 \gamma}{2}\theta_4$, which is explicitly given by \begin{eqnarray}\label{9bn62}\begin{aligned}
&\left(\frac{\tb}{h}+\frac{\gamma^2 \tb}{b^3}+\frac{\gamma^2 E_+ \tb}{b^2 h}-\frac{E_+ b}{h}-\frac{\gamma^2 E_+^2}{b h}\right)(b-\frac{\dot{\gamma}}{\gamma})-\frac{\gamma^2 E_+ \tb}{b h}+\frac{\gamma^3 h (1-c)}{E_+ b^3}-\frac{\gamma^3 (1-c)}{b^3}\\
&\relphantom{EE} +\gamma^5 h^4 E_+ \varphi_4- \frac{4\gamma^2 E_+^2}{h}-\frac{4\gamma^4 E_+^2}{b^3}-\frac{3 \gamma^4 E_+^3}{b^2 h}+\frac{\gamma E_+ \tb}{b^2 h}\dot{\gamma}.
\end{aligned}\end{eqnarray}
From  \eqref{optfromx32} and \eqref{optfrommnx22}, we find \begin{eqnarray*}\E \mathfrak{X}_4= -\E \mX_4  +\sum_i( - 4\E X_{43}+6\E X_{44}+2\E \tX_{44})\end{eqnarray*}
It follows from \eqref{optfromX33} that \begin{eqnarray} \E \mathfrak{X}_4 =   -\E \mX_4  +O(N^{1/2}\Psi^3).\end{eqnarray}
Now we find that to prove \eqref{cancelrule}, it suffices to show that \begin{eqnarray}-\eqref{9b3k8}+\eqref{9bn62}=0.\end{eqnarray}
Replacing $\gamma^5 h^4 E_+ \varphi_4$ in \eqref{9bn62} with $$\gamma^5 h^4 E_+ \varphi_4 = \frac{\dot{\gamma}}{\gamma}+\gamma^3 h^3\left(\frac{4E_+^2}{\gamma h^4}+\frac{2\gamma E_+^3}{b^2 h^4}+\frac{2\gamma E_+^2}{b^3 h^3}-\frac{2 b\tb}{\gamma^3 h^4}+\frac{\gamma E_+}{h^2 b^4}+\frac{1}{\gamma^3 h^3} \right)$$ which is obtained from \eqref{deriofgamma}, we find  the coefficient of $\dot{\gamma}$ vanishes, and the other terms also cancel. So we conclude this lemma.
\end{proof}

\begin{Lemma}
For the $b$ and $\gamma $ defined in ..., we have
\begin{eqnarray}\label{deriofb}
\dot{b}=\gamma^2 E_+ +b(b-1)
\end{eqnarray}
\begin{eqnarray}\label{deriofgamma}
\dot{\gamma}=\gamma^6 h^4 E_+ \varphi_4-\gamma^4 h^3\left(\frac{4E_+^2}{\gamma h^4}+\frac{2\gamma E_+^3}{b^2 h^4}+\frac{2\gamma E_+^2}{b^3 h^3}-\frac{2 b\tb}{\gamma^3 h^4}+\frac{\gamma E_+}{h^2 b^4}+\frac{1}{\gamma^3 h^3} \right)
\end{eqnarray}
\end{Lemma}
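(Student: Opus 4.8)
The plan is to prove both identities by implicit differentiation of the algebraic relations that pin down the evolving edge, namely \eqref{defxi}, \eqref{relaofbxi}, \eqref{firstorderrelation} and \eqref{defevogamma}, with $d_i$ replaced by $d_i(t)$ throughout. The $t$-dependence is elementary: from $d_i(t)^2=e^{-t}d_i^2$ we get $\frac{d}{dt}d_i(t)^2=-d_i(t)^2$, so writing $d_i(t)^2=(d_i(t)^2-\xi_r)+\xi_r$ turns every quantity that appears into a polynomial in the moments $P_k:=\frac1N\sum_{i=1}^M(d_i(t)^2-\xi_r)^{-k}$ and in $b,\lambda_r,\xi_r,\gamma$ together with their first time-derivatives. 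Setting $a_i:=d_i(t)^2-\xi_r$ one has $\dot a_i=-a_i-(\xi_r+\dot\xi_r)$, hence $\dot P_k=k\,P_k+k(\xi_r+\dot\xi_r)P_{k+1}$. Moreover \eqref{relaofbxi} reads $P_1=1-b^{-1}$, \eqref{firstorderrelation} reads $b(\lambda_r b^2+\xi_r)P_2=1$, and \eqref{defevogamma} is linear in $P_3$ and so expresses $P_3$ explicitly through $P_2,b,\lambda_r$ and $\gamma$.

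First I would differentiate \eqref{relaofbxi}, \eqref{defxi} and \eqref{firstorderrelation} to form a closed linear system for $(\dot b,\dot\lambda_r,\dot\xi_r)$. Differentiating \eqref{relaofbxi} gives $\dot b/b^2=P_1+(\xi_r+\dot\xi_r)P_2$, which solves for $\xi_r+\dot\xi_r$ in terms of $\dot b$ once $P_1,P_2$ are replaced by their closed forms; differentiating \eqref{defxi} gives $\dot\xi_r=b^2\dot\lambda_r+(2\lambda_r b-(1-c))\dot b$, which then solves for $\dot\lambda_r$ in terms of $\dot b$; and differentiating \eqref{firstorderrelation}, i.e.\ $\frac{d}{dt}\log(b(\lambda_r b^2+\xi_r))+\dot P_2/P_2=0$, gives one further scalar identity, now purely in $\dot b$ after the previous substitutions and the use of $P_1,P_2,P_3$. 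Solving that equation and simplifying with $\xi_r=\lambda_r b^2-(1-c)b$, the rescaling relations \eqref{rescxi} and \eqref{xibtb}, and the edge scaling $E_+=\gamma\lambda_r$, should collapse the answer to \eqref{deriofb}.

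For \eqref{deriofgamma} I would then differentiate \eqref{defevogamma} directly. The parameter $\gamma$ enters there only through the prefactor $\gamma^{-3}$, so $\dot\gamma$ appears linearly; differentiating the right-hand side produces $\dot P_2$ and $\dot P_3=3P_3+3(\xi_r+\dot\xi_r)P_4$. Here $P_4$ is genuinely new, not expressible through $b,\lambda_r,\xi_r,\gamma$, and it is exactly the averaged quantity that the notation $\varphi_4$ of Section \ref{secoptth}, together with the normalization $h$, is designed to carry (see \eqref{notatvarPhi}). Substituting the expressions for $\dot b,\dot\lambda_r,\dot\xi_r$ obtained above, reducing every numerator-weighted sum $\frac1N\sum_i d_i(t)^{2\ell}a_i^{-k}$ to the pure $P_k$, replacing $P_1,P_2,P_3$ by their closed forms, and solving the resulting scalar equation for $\dot\gamma$, one arrives at \eqref{deriofgamma}, with the surviving quartic contribution appearing precisely as the term $\gamma^6 h^4 E_+\varphi_4$.

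Conceptually the argument is nothing but implicit differentiation; the difficulty is purely computational, and the main obstacle is the size of the simplification. One must be systematic about eliminating $P_1,P_2,P_3$ through \eqref{relaofbxi}, \eqref{firstorderrelation}, \eqref{defevogamma}, about reducing all numerator-weighted moment sums, and about recognising which quartic average survives as $\varphi_4$, which is the same kind of bookkeeping as, and an input to, the optical-theorem computations of Section \ref{secoptth}. A convenient consistency check is the limit $t\to\infty$, where $d_i(t)\to0$, $b\to(1+\sqrt c)^{-1}$ and $\gamma\to(1+\sqrt c)^{-4/3}c^{1/6}$: both right-hand sides must vanish there, and indeed for \eqref{deriofb} one has $\gamma^2E_+=\gamma^3\lambda_r\to\sqrt c(1+\sqrt c)^{-2}$ and $b(b-1)\to-\sqrt c(1+\sqrt c)^{-2}$, whose sum is zero.
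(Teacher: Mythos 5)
Your proposal is correct and takes essentially the same approach as the paper: implicit differentiation of the edge relation together with the first- and second-order conditions, then solving the resulting linear system for $\dot b$ and $\dot\gamma$, with the unresolved quartic moment surviving as $\varphi_4$. The only cosmetic difference is that you differentiate the unrescaled relations \eqref{relaofbxi}, \eqref{defxi}, \eqref{firstorderrelation}, \eqref{defevogamma} with moments $P_k=\frac1N\sum(d_i(t)^2-\xi_r)^{-k}$, whereas the paper works directly at the rescaled level (differentiating $E_+=\hat\phi(\xi)$ using $\hat\phi'(\xi)=0$, then the identities $\varphi_2=1/(\gamma b^2 h)$ and the $\varphi_3$ relation from Lemma \ref{Phivalue}); the two are equivalent via $\xi=\gamma\xi_r$, $E_+=\gamma\lambda_r$, $\varphi_k=\gamma^{-k}P_k$.
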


\begin{proof}
Recalling the definition of $\hf$ and $E_+$ in \eqref{Linkfunc2} and the argument below it, we have  \begin{eqnarray}E_+ = \xi (1-c\gamma \hf(\xi))^2+\gamma(1-c)(1-c\gamma \hf(\xi)) .\end{eqnarray}
Then using the fact that $1-c\gamma \hf(\xi) = 1/b$ which can be deduced from \eqref{defm}, \eqref{rescb} and \eqref{rescxi}, we find
\begin{eqnarray}\label{320ag}
\dot{z}= \dot{E}_+= \dot{\xi}\frac{1}{b^2}-\frac{2\xi (\gamma c\dot{\hf}+c \hf \dot{\gamma})}{b}+\dot{\gamma}\frac{1-c}{b}-\gamma (1-c)c \gamma \dot{\hf}-\gamma (1-c) c \hf\dot{\gamma}.
\end{eqnarray}
Actually we can calculate \begin{eqnarray}\label{n6h7a}
c\dot{\hf}(\xi) = -\frac{\dot{\gamma}\psi_2}{\gamma}+\psi_2+\varphi_2 \dot{\xi}
\end{eqnarray}
Plugging \eqref{n6h7a} back into \eqref{320ag}, we find that the coefficient for $\dot{\xi}$ is 0, and get \begin{eqnarray}\label{derivofE}
\dot{z}=\dot{E}_+=-\gamma (E_+b+\tb)\psi_2+\frac{E_+}{\gamma}\dot{\gamma}=-(E_+b+\tb) + E_+ +\frac{E_+}{\gamma}\dot{\gamma}.
\end{eqnarray}
Then we compute the derivative of $\xi$  and $h$ for use later, which are\begin{eqnarray}\label{deriofxi}
\dot{\xi}=\frac{b\tb}{\gamma}\dot{\gamma}+h\dot{b}+E_+ b^2-h b^2,
\end{eqnarray}
and \begin{eqnarray}\label{deriofh}
\dot{h}=2 \dot{E}_+ b+2 E_+\dot{b}-\dot{\gamma}(1-c)=2 E_+ b-2 h b +2 E_+ \dot{b}+\frac{h}{\gamma}\dot{\gamma}.
\end{eqnarray}

We differentiate the equation $\varphi_2= 1/\gamma b^2 h$ in \eqref{relofvarphi} with respect to $t$ to get \begin{eqnarray}
\frac{2}{N}\sum_{i=1}\frac{\dot{\gamma} d_i^2-\gamma d_i^2 -\dot{\xi}}{(\gamma d_i^2-\xi)^3} = \frac{\dot{\gamma}}{\gamma^2 b^2 h}+\frac{2\dot{b}}{\gamma b^3 h}+\frac{\dot{h}}{\gamma b^2 h^2}.
\end{eqnarray}
Plugging \eqref{deriofxi} and \eqref{deriofh} into  the above and we find the coefficient of $\dot{\gamma}$ cancels, and we can obtain that
\begin{eqnarray}\label{deriofbinproof}
\dot{b}=\gamma^2 E_+ +b(b-1)
\end{eqnarray}
Similarly, by differentiating the expression of $\varphi_3$ in \eqref{relofvarphi} with respect to $t$, we find
\begin{eqnarray}\begin{aligned}
&\frac{3}{N}\sum_i \frac{\dot{\gamma} d_i^2-\gamma d_i^2 -\dot{\xi}}{(\gamma d_i^2-\xi)^4}+\frac{3\dot{\gamma}}{\gamma^4 h^3}+\frac{3\dot{h}}{\gamma^3 h^4}+\frac{\dot{\gamma}}{\gamma^2 b^3 h^2}+\frac{3\dot{b}}{\gamma b^4 h^2}\\
&\relphantom{EEE}+\frac{2 \dot{h}}{\gamma b^3 h^3}+\frac{E_+\dot{\gamma}}{\gamma^2 b^2 h^3}+\frac{2 E_+ \dot{b}}{\gamma b^3 h^3}+\frac{3 E_+ \dot{h}}{\gamma b^2 h^4}-\frac{\dot{E}_+}{\gamma b^2 h^3}=0.
\end{aligned}\end{eqnarray}
Substituting \eqref{derivofE},\eqref{deriofxi},\eqref{deriofh} and \eqref{deriofbinproof} into above, we obtain the derivative of $\dot{\gamma}$ given by \eqref{deriofgamma} through tedious calculations.

\end{proof}

\section{Proof of Theorem \ref{Lolaw} and Proposition \ref{crare}}
\subsection{Weak local law}
In this subsection, we derive a weak local law, see Theorem \ref{Wkloc} below, served as the first step for the proof of Theorem \ref{Lolaw}.
\begin{Th}\label{Wkloc}Under the Assumptions 1,2,3, for any sufficiently small $\epsilon>0$ and large $D>0$, we have  \begin{eqnarray}
\bigcap_{z\in \mD(\tc,\tau)}\Big\{
|s_N(z)-s(z)|+ \max_{s\neq t, (s,t)  \text{is not index pair}}|G_{st}(z)| \leq N^\epsilon(\frac{1}{N\eta})^{1/4}\Big\}
\end{eqnarray}holds with probability $1-O(N^{-D})$.
\subsubsection{Basic tools}

\begin{Lemma}\label{sqrot}
Under Assumptions 1,2,3, there exists a small constant $\delta>0$, independent of $N$, such that $$\rho_0(x)\sim \sqrt{\lambda_r-x}, \; \mbox{for any} \; x \in [\lambda_r-\delta,\lambda_r] .$$
Furthermore,\begin{eqnarray}\Im s(z)\sim\bigg\{
\begin{array}{ll}{\frac{\eta}{\sqrt{\kappa+\eta}},  } &{\text{if}\; E\geq \lambda_r+\eta } \\{ \sqrt{\kappa+\eta},}  &{\text{if} \; E \in [\lambda_r-\delta, \lambda_r+\eta]}\end{array}
 \end{eqnarray}
\end{Lemma}

\begin{Lemma}\label{db432}
There exists sufficiently small  constant $\tc > 0$ independent of $N$, such that for $ z \in \mD(\tc,\tau)$, \begin{eqnarray}\label{delb1}\begin{aligned}&|1+cs|\geq c_0, \quad  |1+\ts| \geq c_0\\& \text{and} \quad
|-z+\frac{d_i^2}{1+cs}-z\ts| \geq c_0, \quad \text{for any} \; i=1,\cdots,M \\
\end{aligned}
\end{eqnarray}
where $c_0$ is a positive constant.
\end{Lemma}
\begin{proof}
In the Appendix A in \cite{loubaton2011almost}, it has been proved that \begin{eqnarray}\label{lb1cm}\Re(1+cs) \geq 1/2\end{eqnarray} for $z \in \mathbb{C}^+ \cup \mathbb{R}^+.$ Therefore, $|1+cs| \geq 1/2.$ The second inequality in \eqref{delb1} can be shown by similar method for the last one, so we mainly focus on the last inequality in this lemma.

Recall $w$ defined in \eqref{Linkfun}. Notice that \begin{eqnarray}\label{n8ag5}
\frac{d_i^2}{1+cs(\lambda_r)}-\lambda_r(1+cs(\lambda_r))+1-c =
\frac{d_i^2-w(\lambda_r)}{1+cs(\lambda_r)}<-C\hat{c},
\end{eqnarray}
where in the last step, we use Assumption \ref{assump3} and \eqref{lb1cm}.
First we consider the case that $z$ is in a small disk with center $\lambda_r$ and radius $\epsilon$, denoted by $C(\lambda_r,\epsilon)$.
By analytic property of $s$, there exists small constant $\tc$, such that for any $z \in C(\lambda_r,\tc)\cap \mathbb{C}^+$,\begin{eqnarray}\label{4bao4}|s(z)-s(\lambda_r) |\leq C'\hat{c},\end{eqnarray}where the choice of $C'$ and the associated $\tc$ should guarantee the second inequality of the following,
\begin{eqnarray}
\begin{aligned}\label{4bao5}
&\left| |\frac{d_i^2}{1+cs(z)}-z(1+cs(z))+1-c | -|\frac{d_i^2}{1+cs(\lambda_r)}-\lambda_r(1+cs(\lambda_r))+1-c |\right|\\
&\leq (4d_i^2c+\lambda_rc)|s(z)-s(\lambda_r)|+|z-\lambda_r|\sup_{z\in C(\lambda_r,\tc)\cap \mathbb{C}^+}|1+cs(z)|\leq \frac{C\hat{c}}{2}.
\end{aligned}
\end{eqnarray}
Then it follows that \begin{eqnarray}\label{gab21}
 |\frac{d_i^2}{1+cs(z)}-z(1+cs(z))+1-c |>\frac{C\hat{c}}{2} \quad \text{for} \quad z \in  C(\lambda_r,\tc)\cap \mathbb{C}^+.
\end{eqnarray}
Next, if $E >\lambda_r$, $s(x)$ is negative and increasing on $(\lambda_r, 1/\tau)$, with the fact that (\ref{lb1cm}),  then
\begin{eqnarray}\begin{aligned}
\frac{d_i^2}{1+cs(E)}-E(1+cs(E))+1-c &< \frac{d_i^2}{1+cs(\lambda_r)}-\lambda_r(1+cs(\lambda_r))+1-c \\&<-C\hat{c}
\end{aligned}
\end{eqnarray}
By the uniform continuity of $s(z)$ on $\mathcal{D}(\tc,\tau)$, we can find a sufficiently small constant $\eta_0$, such that $$\sup_{E\in [\lambda_r+\tc/2, 1/\tau]}|s(E+i\eta)-s(E)| \leq C''\eta_0 \quad \text{for} \quad \eta \leq \eta_0 $$
 Following similar argument from (\ref{4bao4}) to (\ref{gab21}), we have
 \begin{eqnarray}\label{gab22}\begin{aligned}
  |\frac{d_i^2}{1+cs(z)}-&z(1+cs(z))+1-c |>\frac{C\hat{c}}{2} \\ &\; \text{for} \; z \in  \{ z| E\in  [\lambda_r+\tc/2,1/\tau], \eta \in [0,\eta_0]\}.
\end{aligned} \end{eqnarray}
Last,  since $\Im s >0$, and $\Im (zs)>0$,  \begin{eqnarray}\label{gab23}
 \Im (\frac{d_i^2}{1+cs(z)}-z(1+cs(z))+1-c )\leq -\eta_0 \quad \text{for}\quad z\in \{ z|\eta>\eta_0\}
 \end{eqnarray}
 Combining (\ref{gab21})(\ref{gab22})(\ref{gab23}) together, we conclude the last inequality.
\end{proof}

\subsubsection{A prior estimates on domain $\Omega(z)$}
We start from finding estimates for diagonal entries of G.  Using \eqref{gfidiag}, write
\begin{eqnarray}\label{34jga}
\begin{aligned}
\frac{1}{G_{ii}}&=-z-(\bY G^{(i)}\bY^*)_{ii}\\
&=-z-d_{i}^2G_{\bbi\bbi}^{(i)}-\sum_{\mu,\nu}x_{i\mu}G_{\mu\nu}^{(i)}x_{i\nu}-2 d_i\sum_{\nu}G_{\bbi\nu}^{(i)}x_{i \nu}\\
&=-z-d_{i}^2G_{\bbi\bbi}^{(i)}-z\tilde{s}_N^{(i)}-(1-E_i)\sum_{\mu,\nu}x_{i\mu}G_{\mu\nu}^{(i)}x_{i\nu}-2d_i\sum_{\nu}G_{\bbi\nu}^{(i)} x_{i \nu}\\
&=-z-d_{i}^2G_{\bbi\bbi}^{(i)}-z\ts +z(\ts-\ts_N)+\mT_i ,
\end{aligned}
\end{eqnarray}
where \begin{eqnarray}\label{bc38a}\mT_i:=z(\ts_N-\ts_N^{(i)})-(1-E_i)\sum_{\mu,\nu}x_{i\mu}G_{\mu\nu}^{(i)}x_{i\nu}-2 d_i\sum_{\nu}G_{\bbi\nu}^{(i)}x_{i \nu},\end{eqnarray}
and where $\ts_N^{(T)} := \frac{1}{N}\sum G_{\mu\mu}^{(T)}$.
Furthermore,
\begin{eqnarray}\label{rg943}
\begin{aligned}
\frac{1}{G_{\bbi\bbi}^{(i)}}&=-1-(\bY^{(i)*}G^{(i\mu_i)}\bY^{(i)})_{\bbi\bbi}\\
&=-1-\sum_{j,k}^{(i)}x_{j \bbi}G_{jk}^{(i \bbi)}x_{k \bbi}\\
&=-1-c\frac{1}{M}Tr G^{(i \bbi)}-(1-E_{\bbi})\sum_{j,k}^{(i)}x_{j \bbi}G_{jk}^{(i \bbi)}x_{k \bbi}\\
&=-1-cs+c(s-s_N)+\mS_i,
\end{aligned}
\end{eqnarray}
where \begin{eqnarray}\label{9g8a9}
\mS_i:=c(s_N-s_N^{(i \bbi)})-(1-E_{\bbi})\sum_{j,k}^{(i)}x_{j \bbi} G_{jk}^{(i \bbi)}x_{k \bbi}.
\end{eqnarray}

Define control quantities:
\begin{eqnarray}\begin{aligned}
&\Lambda_o:=\max_{s\neq t, (s,t)  \text{is not index pair}}|G_{st}|, \quad  \Lambda:=|s_N-s|, \quad \Psi_\Lambda:=\sqrt{\frac{\Lambda+\Im s}{N\eta}}+\frac{1}{N\eta}
\end{aligned}
\end{eqnarray}
and event $\Omega(z)$ by
\begin{eqnarray}\label{omey1}
\Omega(z):=\{\Lambda_o\leq N^{-\tau/10}\}\cap\{\Lambda\leq N^{-\tau/10}\}.
\end{eqnarray}

\end{Th}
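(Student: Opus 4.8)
The plan is to establish the weak law by the self-consistent equation method combined with a continuity (bootstrap) argument in $\eta$, carried out on the high-probability event $\Omega(z)$ from \eqref{omey1}.

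\emph{Step 1 (fluctuation terms).} On $\Omega(z)$ the quantities $\Lambda_o$ and $\Lambda$ are a priori at most $N^{-\tau/10}$, and Lemma \ref{db432} keeps the denominators in \eqref{34jga} and \eqref{rg943} bounded away from zero, so $G_{ii}$ and $G_{\bbi\bbi}^{(i)}$ are $O(1)$. I would then apply the large deviation bounds of Lemma \ref{ldb} to the quadratic and linear forms appearing in $\mT_i$ (see \eqref{bc38a}) and $\mS_i$ (see \eqref{9g8a9}), using the Ward identity $\sum_\nu|G_{\mu\nu}^{(i)}|^2=\eta^{-1}\Im G_{\mu\mu}^{(i)}$ and the minor-removal identity \eqref{gfiminus} to replace $\ts_N^{(T)}, s_N^{(T)}$ by $\ts_N, s_N$, obtaining $|\mT_i|+|\mS_i|\prec\Psi_\Lambda+\Lambda_o$ uniformly in $i\in\mI_M$.

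\emph{Step 2 (self-consistent equation).} Substituting the bound on $\mS_i$ into \eqref{rg943} gives $G_{\bbi\bbi}^{(i)}=-(1+cs_N)^{-1}+O(\Lambda+\Psi_\Lambda+\Lambda_o)$; feeding this and the bound on $\mT_i$ into \eqref{34jga} yields $1/G_{ii}=-z+d_i^2(1+cs_N)^{-1}-z\ts_N+\mathcal{E}_i$ with $|\mathcal{E}_i|\prec\Psi_\Lambda+\Lambda_o$. Averaging $G_{ii}$ over $i\in\mI_M$ and using $\ts_N=-(1-c_N)/z+c_N s_N$ turns this into a perturbation of the defining relation \eqref{lslaw} for $s$, say $\Phi(s_N,z)=\mathcal{E}$ with $\Phi(s,z)=0$ and $|\mathcal{E}|\prec\Psi_\Lambda+\Lambda_o$. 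In parallel, each off-diagonal $G_{st}$ with $(s,t)$ not an index pair is expanded via \eqref{gfiij}, \eqref{gfimunu}, \eqref{gfiimu} into a form to which Lemma \ref{ldb} applies, so that $\Lambda_o$ is controlled by $\Psi_\Lambda$ plus the already-estimated diagonal entries; hence $\Lambda$ and $\Lambda_o$ are bootstrapped together.

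\emph{Step 3 (stability near the edge and bootstrap).} Using the square-root behaviour of Lemma \ref{sqrot} (so the derivative of $\Phi$ in its first argument degenerates like $\sqrt{\kappa+\eta}$ at $\lambda_r$), the lower bounds of Lemma \ref{db432}, and Assumption \ref{assump3} (which keeps $d_1^2$ bounded away from $w(\lambda_r)$ so the edge is non-degenerate), I would prove the standard stability dichotomy: if $\Lambda=o(1)$ then either $\Lambda\prec|\mathcal{E}|/\sqrt{\kappa+\eta}$ or $\Lambda\prec\sqrt{|\mathcal{E}|}$. Inserting $\Psi_\Lambda=\sqrt{(\Lambda+\Im s)/(N\eta)}+1/(N\eta)$ with $\Im s\lesssim\sqrt{\kappa+\eta}$ and solving the resulting self-improving inequality, the quadratic branch gives the weak rate $\Lambda+\Lambda_o\prec(N\eta)^{-1/4}$. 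Finally the a priori event $\Omega(z)$ is removed by a continuity argument: the bound holds up to $\prec$ at $\eta\sim\tau^{-1}$, and decreasing $\eta$ along a grid of spacing $N^{-C}$, together with Lipschitz continuity of $s_N,s,G_{st}$ in $z$ and a union bound over $\mD(\tc,\tau)$, propagates it to $\eta\geq N^{-1+\tau}$; since the resulting bound self-improves the a priori $\Lambda,\Lambda_o\leq N^{-\tau/10}$, the event $\Omega(z)$ holds with probability $1-O(N^{-D})$ throughout $\mD(\tc,\tau)$.

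The main obstacle is Step 3: near the edge the self-consistent equation is only weakly stable, so the error must be tracked carefully through the degenerate linearization, and because $G_{ii}$, $G_{\bbi\bbi}^{(i)}$ and the non-trivial index-pair entries $G_{i\bbi}$ interact, the scalar equation for $s_N$ has to be extracted from a genuinely vector-valued system; checking that the off-diagonal error $\Lambda_o$ never dominates $\Psi_\Lambda$ in the joint bootstrap is the other delicate point.
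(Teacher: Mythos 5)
Your proposal follows essentially the same route as the paper: an a priori control on $\Omega(z)$, large deviation estimates for $\mS_i,\mT_i$ via the Ward identity and \eqref{gfiminus}, a quadratic self-consistent equation for $[v]=s_N-s$ whose linear coefficient degenerates like $\sqrt{\kappa+\eta}$ and whose quadratic coefficient stays bounded below near the edge (the paper's Lemmas \ref{queq1} and \ref{coebd}), followed by the dichotomy and the $\eta$-grid continuity/bootstrap argument seeded at $\eta\sim 1$. The only difference is cosmetic — you keep $\Lambda_o$ visible in the error for $\mS_i,\mT_i$ before absorbing it, and write $\Phi(s_N,z)=\mathcal{E}$ schematically rather than isolating the explicit coefficients $R_2,R_3$ — but the logic is the same, and your identification of the two genuinely delicate points (weak stability near the edge; extracting the scalar equation from the coupled system including the index pair $(i,\bbi)$) matches where the paper spends its effort.
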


\begin{Lemma}\label{lgii}
For any $t\in \mI$, the following statement holds with high probability on $\Omega(z),$
\begin{eqnarray}\label{diap1}
\quad  C^{-1} \leq |G_{tt}| \leq C
\end{eqnarray}

\end{Lemma}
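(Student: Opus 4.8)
The plan is to exploit the self-consistent structure of \eqref{34jga} and \eqref{rg943} together with the a priori bounds encoded in the event $\Omega(z)$. First I would record that on $\Omega(z)$ the error terms $\mT_i$ and $\mS_i$ are small with high probability: each is a sum of a martingale-type fluctuation and a concentration remainder, so applying the large deviation bounds of Lemma \ref{ldb} (parts (ii) and (iii)) to the quadratic forms $\sum_{\mu\nu}x_{i\mu}G^{(i)}_{\mu\nu}x_{i\nu}$ and $\sum_{jk}^{(i)}x_{j\bbi}G^{(i\bbi)}_{jk}x_{k\bbi}$, and using $\Lambda_o,\Lambda\leq N^{-\tau/10}$ plus the interlacing-type estimates $|\ts_N-\ts_N^{(i)}|\prec 1/(N\eta)$, I get $\mT_i,\mS_i\prec \Psi_\Lambda + N^{-\tau/10}\ll 1$ with high probability, uniformly in $i$. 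The same kind of bound handles $G^{(i)}_{\bbi\bbi}$ versus the diagonal value, so I can treat $G^{(i)}_{\bbi\bbi}$ as $O(1)$ once I have the bound for $G_{\bbi\bbi}$ itself (or I bound it directly from \eqref{rg943}).

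Next I would plug these estimates into \eqref{34jga} and \eqref{rg943} to obtain, on $\Omega(z)$ and with high probability,
\begin{eqnarray*}
\frac{1}{G_{ii}} = -z - d_i^2 G^{(i)}_{\bbi\bbi} - z\ts + o(1), \qquad \frac{1}{G^{(i)}_{\bbi\bbi}} = -(1+cs) + o(1).
\end{eqnarray*}
From the second equation, Lemma \ref{db432} gives $|1+cs|\geq c_0$, so $|1/G^{(i)}_{\bbi\bbi}|\geq c_0/2$ for large $N$, i.e. $|G^{(i)}_{\bbi\bbi}|\leq 2/c_0$; moreover $G^{(i)}_{\bbi\bbi} = -1/(1+cs) + o(1)$, so it stays close to the explicit value $\tb/(d_i^2-w)$ up to $o(1)$ (using the identities \eqref{defb}--\eqref{deftb890}). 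Substituting into the first equation yields $1/G_{ii} = -z + d_i^2/(1+cs) - z\ts + o(1)$, whose magnitude is bounded below by the third inequality of Lemma \ref{db432}, namely $|{-z}+d_i^2/(1+cs)-z\ts|\geq c_0$. Hence $|1/G_{ii}|\geq c_0/2$, giving the upper bound $|G_{ii}|\leq C$; and since $1/G_{ii}$ is also bounded above (all the terms $z,\ts, d_i^2 G^{(i)}_{\bbi\bbi}$ are $O(1)$ on the domain and $d_i$ is bounded by Assumption \ref{assump3} together with $|1+cs|\geq c_0$), we get the matching lower bound $|G_{ii}|\geq C^{-1}$.

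For the Greek indices $\mu\in\mI_N$, an entirely parallel argument runs from the second identity in \eqref{gfidiag}: write $1/G_{\mu\mu} = -1 - (\bY^* G^{(\mu)}\bY)_{\mu\mu}$, split off the fluctuation via Lemma \ref{ldb}, and use the bounds $|1+cs|\geq c_0$, $|1+\ts|\geq c_0$ (second inequality of Lemma \ref{db432}) to pin $1/G_{\mu\mu}$ both above and below, separating the cases $\mu\in[\![2M+1,M+N]\!]$ (where the limit is $-1/(1+cs)$, using no signal term) and $\mu=\bbi$ (where the limit is $(d_i^2-w)/\tb$, handled as above but without removing the $i$-th row). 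The main obstacle is the bookkeeping in the first step: establishing that all the quadratic-form fluctuations and the minor-difference terms in $\mT_i,\mS_i$ are genuinely $o(1)$ uniformly in the index on the whole domain $\mathcal{D}(\tc,\tau)$, which requires the entrywise control $\Lambda_o\leq N^{-\tau/10}$ to propagate through $(\bY G^{(i)}\bY^*)_{ii}$ without blowing up — this is where the structure of $\Omega(z)$ and a careful union bound over $i$ and over a fine net in $z$ are essential.
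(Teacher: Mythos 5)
Your proposal follows essentially the same route as the paper: bound $\mS_i$ and $\mT_i$ by the large deviation estimates of Lemma \ref{ldb} together with the a priori smallness of $\Lambda,\Lambda_o$ on $\Omega(z)$, then read off $1/G_{\bbi\bbi}^{(i)}$ and $1/G_{ii}$ from \eqref{rg943} and \eqref{34jga} and invoke Lemma \ref{db432} for the lower bounds on the resulting denominators. One small point of order you gloss over but the paper handles carefully: the cross term $2d_i\sum_\nu G_{\bbi\nu}^{(i)}x_{i\nu}$ inside $\mT_i$ is controlled through $\Im G_{\bbi\bbi}^{(i)}$, so the bound on $\mS_i$ and $G_{\bbi\bbi}^{(i)}$ from \eqref{rg943} (which has no such cross term) genuinely must come first—your parenthetical ``(or I bound it directly from \eqref{rg943})'' is in fact the obligatory path, not an alternative.
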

\begin{proof}
We only prove the case that $t\in \mI_M$. For the case $t \in \mI_N$, the proof is similar.
First, we bound $\mS_i$ defined above. By Lemma \ref{ldb}  we have
\begin{eqnarray}\label{920en}
\begin{aligned}
|(1-\E_{\bbi})\sum_{j,k}^{(i)}x_{j \bbi} G_{jk}^{(i \bbi)}x_{k \bbi}|&\leq \sum_j^{(i)}(|x_{j \bbi}|^2-\frac{1}{N})|G_{jj}^{(i \bbi)}|+|\sum_{j\neq k}^{(i)} x_{j \bbi}^*G_{jk}^{(i \bbi)}x_{k \bbi}|\\
&\prec(\frac{1}{N^2}\sum_{j,k}^{(i)}|G_{jk}^{(i \bbi)}|^2)^{1/2}\leq (\frac{1}{N^2\eta}\sum_{j}^{(i)}\Im G_{jj}^{(i \bbi)})^{1/2}\\
&\leq \Big(\frac{\sum_j^{(i)}\Im (G_{jj}^{(i \bbi)}-G_{jj})+M\Im s_N}{N^2\eta}\Big)^{1/2}\\
&\leq \Big( \frac{\Lambda_o^2}{N\eta}+\frac{c\Lambda+c\Im s}{N\eta}\Big)^{1/2}
\end{aligned}
\end{eqnarray}
where in the last step we use (\ref{gfiminus}). And the above term can also be bounded by $\Psi_\Lambda$ if we use interlacing relationship of eigenvalues of Hermitian matrices in the last step above.
Therefore, on $\Omega$ we have \begin{eqnarray}\label{ew23a}|\mS_i|= o(1),\end{eqnarray} and
\begin{eqnarray} \label{ew32b}
|\frac{1}{G_{\bbi\bbi}^{(i)}}|=|-1-cs|+o(1)
\end{eqnarray}
Moreover we have \begin{eqnarray}\label{sn43j}\Im G_{\bbi\bbi}^{(i)}\prec \Im s +\Lambda+\Psi_\Lambda, \end{eqnarray} which can be deduced from (\ref{rg943}),(\ref{920en}) and (\ref{ew32b}). 

Then, we consider $1/G_{ii}$. Since
\begin{eqnarray*}
\begin{aligned}
|(1-\E_i)\sum_{\mu,\nu}x_{i\mu}G_{\mu\nu}^{(i)}x_{i \nu}|&\prec
(\frac{1}{N^2}\sum_{\mu,\nu}|G_{\mu\nu}^{(i)}|^2)^{1/2}\prec\frac{1}{N\eta}+\sqrt{\frac{\Im \ts+\Lambda}{N\eta}},
\end{aligned}
\end{eqnarray*}
and\begin{eqnarray}\label{0ab3m}
|\sum_{\nu}G_{\bbi\nu}^{(i)}x_{i \nu}|\prec(\frac{1}{N}\sum_{\nu}|G_{\bbi\nu}^{(i)}|^2)^{1/2}\leq (\frac{\Im G_{\bbi\bbi}^{(i)}}{N\eta})^{1/2}\prec \Psi_\Lambda,
\end{eqnarray}
where we use \eqref{sn43j} in the last step,  we then have\begin{eqnarray}
 |\frac{1}{G_{ii}}|=|-z-d_i^2G_{\bbi\bbi}^{(i)}-z\ts|+o(1)=|-z+\frac{d_i^2}{1+cs}-z\ts|+o(1)
\end{eqnarray}
holds with high probability on $\Omega(z).$ Using Lemma \ref{db432} we conclude that $G_{ii}$ is bounded from both above and below.
\end{proof}

\begin{Lemma}\label{lemma123} For all $z \in \mathcal{D}(\tc,\tau)$, we have for $i\in \mI_M$, \begin{eqnarray}
(1(\Omega)+1(\eta\geq 1))(\Lambda_o+|\mS_i|+|\mT_i|)\prec \Psi_\Lambda
\end{eqnarray}and
\begin{eqnarray}\label{nxw34}
(1(\Omega)+1(\eta\geq 1))\left (|G_{ii}-\frac{b(z)}{d_i^2-w(z)}|+|G_{i \bbi}-\frac{d_i}{d_i^2-w(z)}|+|G_{\bbi\bbi}-\frac{\tb(z)}{d_i^2-w(z)}|  \right) \prec \Psi_\Lambda+\Lambda.
\end{eqnarray}
For $\mu\in [\![2M+1,M+N]\!]$, \begin{eqnarray}\label{nxw35}
(1(\Omega)+1(\eta\geq 1))|G_{\mu\mu}+\frac{1}{b(z)}| \prec \Psi_\Lambda+\Lambda.
\end{eqnarray}
\end{Lemma}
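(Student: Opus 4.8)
The plan is to establish Lemma~\ref{lemma123} by combining the a priori control of the diagonal Green function entries from Lemma~\ref{lgii} with large deviation estimates for the off-diagonal entries and the Schur-complement expansions \eqref{34jga}, \eqref{rg943}. On $\Omega(z)$ one has $\Lambda,\Lambda_o\le N^{-\tau/10}$, hence $\Psi_\Lambda=o(1)$ since $N\eta\ge N^\tau$, while for $\eta\ge 1$ one has $\|G\|\le\eta^{-1}\le 1$; so the two indicators play the same role, and I would argue on $\Omega(z)$ and only indicate where the regime $\eta\ge1$ is simpler. The first step is to bound the fluctuation terms $\mS_i,\mT_i$ of \eqref{9g8a9}, \eqref{bc38a}. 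For the quadratic forms occurring in them, Lemma~\ref{ldb}(ii) gives a bound $(N^{-2}\sum_{j,k}|G^{(\cdot)}_{jk}|^2)^{1/2}$, and a Ward identity together with eigenvalue interlacing turns the inner sum into at most $c_N(\Im s+\Lambda)(N\eta)^{-1}+C(N\eta)^{-2}$, so these forms are $\prec\Psi_\Lambda$; the differences $c(s_N-s_N^{(i\bbi)})$ and $z(\ts_N-\ts_N^{(i)})$ are $\prec(N\eta)^{-1}$ by interlacing, and the cross term $2d_i\sum_\nu G^{(i)}_{\bbi\nu}x_{i\nu}$ is $\prec(\Im G^{(i)}_{\bbi\bbi}/(N\eta))^{1/2}\prec\Psi_\Lambda$ by Lemma~\ref{ldb}(i) and the estimate $\Im G^{(i)}_{\bbi\bbi}\prec\Im s+\Lambda+\Psi_\Lambda$ obtained inside the proof of Lemma~\ref{lgii}. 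Hence $1(\Omega)\,(\,|\mS_i|+|\mT_i|\,)\prec\Psi_\Lambda$.

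The crux is the off-diagonal bound. For $s\ne t$ with $(s,t)$ not an index pair I would expand $G_{st}$ through the symmetric three-resolvent form of \eqref{gfiij}, \eqref{gfimunu} or \eqref{gfiimu}, writing $G_{st}=G_{ss}G^{(s)}_{tt}(\bY^{\#}G^{(st)}\bY^{\#})_{st}$ with the two factors equal to $\bY$ or $\bY^*$ according to the index types of $s,t$, then substitute $\bY=\bR+\bX$ and bound each of the resulting linear and bilinear forms in the independent entries of rows $s$ and $t$ of $\bX$ by Lemma~\ref{ldb}. The ``noise times noise'' and ``noise times signal'' contributions are $\prec\Psi_\Lambda$ by Ward identities, bounded diagonal entries (Lemma~\ref{lgii}), and the boundedness of the $d_i$ that follows from Assumption~\ref{assump3}; the remaining ``signal times signal'' contribution is a $d_sd_t$-multiple of an off-diagonal entry of the two-index minor $G^{(st)}$ with both indices in $\mI_N$, and one repeats the expansion inside that minor until only forms controllable by Lemma~\ref{ldb} remain, so this term too is $\prec\Psi_\Lambda$. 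Taking the maximum over all such $(s,t)$ gives $1(\Omega)\,\Lambda_o\prec\Psi_\Lambda$, which together with the previous paragraph is the first displayed bound. I expect this to be the main obstacle: one has to go through all cases of where $s,t$ and their paired Greek indices sit, re-run the argument of Lemma~\ref{lgii} on each minor that appears to get bounded diagonal entries and the $\Im(\cdot)\prec\Im s+\Lambda+\Psi_\Lambda$ estimates, and keep everything uniform over $z\in\mD(\tc,\tau)$.

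The approximate entrywise formulas then follow by plugging these bounds into the exact identities. From \eqref{rg943}, $1/G^{(i)}_{\bbi\bbi}=-(1+cs)+c(s-s_N)+\mS_i=-b(z)+O(\Lambda+\Psi_\Lambda)$, and since $|b|\ge c_0$ by Lemma~\ref{db432} this inverts to $G^{(i)}_{\bbi\bbi}=-1/b+O(\Lambda+\Psi_\Lambda)$. Inserting this and $|\mT_i|\prec\Psi_\Lambda$ into \eqref{34jga}, together with the algebraic identities $z+z\ts=\tb(z)$ and $w(z)=b(z)\tb(z)$ (both immediate from $b=1+cs$, $\ts=-(1-c)/z+cs$, \eqref{deftb890} and \eqref{Linkfun}), yields $1/G_{ii}=(d_i^2-w(z))/b(z)+O(\Lambda+\Psi_\Lambda)$; the right side is bounded below by Lemma~\ref{db432} and $|G_{ii}|$ is bounded above by Lemma~\ref{lgii}, so $G_{ii}=b/(d_i^2-w)+O(\Lambda+\Psi_\Lambda)$. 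Next, $G_{i\bbi}=-G_{ii}\big(d_iG^{(i)}_{\bbi\bbi}+\sum_\nu x_{i\nu}G^{(i)}_{\nu\bbi}\big)$ with $\sum_\nu x_{i\nu}G^{(i)}_{\nu\bbi}\prec\Psi_\Lambda$ (Lemma~\ref{ldb}(i) and a Ward identity) gives $G_{i\bbi}=d_i/(d_i^2-w)+O(\Lambda+\Psi_\Lambda)$, and $G_{\bbi\bbi}=G^{(i)}_{\bbi\bbi}+G_{i\bbi}^2/G_{ii}$ from \eqref{gfiminus} together with $w/b=\tb$ gives $G_{\bbi\bbi}=\tb/(d_i^2-w)+O(\Lambda+\Psi_\Lambda)$, which is \eqref{nxw34}. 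Finally, for $\mu\in[\![2M+1,M+N]\!]$ the entry $Y_{j\mu}=x_{j\mu}$ has no signal part, so \eqref{gfidiag} gives $1/G_{\mu\mu}=-1-(\bY^*G^{(\mu)}\bY)_{\mu\mu}=-1-cs_N+O(\Psi_\Lambda)=-b+O(\Lambda+\Psi_\Lambda)$, hence $G_{\mu\mu}=-1/b+O(\Lambda+\Psi_\Lambda)$, which is \eqref{nxw35}; the same argument with $\|G\|\le1$ replacing $\Omega(z)$ handles the $\eta\ge1$ part of each bound.
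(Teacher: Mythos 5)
Your treatment of $\mS_i$, $\mT_i$ and of the approximate diagonal formulas \eqref{nxw34}, \eqref{nxw35} is correct and matches the paper's argument: the algebraic identities $z+z\ts=\tb$ and $w=b\tb$ do hold, and inverting $1/G_{ii}$ after substituting the first-order expansion of $G^{(i)}_{\bbi\bbi}$ is exactly what the paper does via \eqref{seco1}. Your Latin--Latin and Greek--Greek off-diagonal bounds are also in line with the paper: apply the two-resolvent form of \eqref{gfiij}/\eqref{gfimunu}, split $\bY=\bR+\bX$, and for the signal-times-signal term $d_id_jG^{(ij)}_{\bbi\bbj}$ apply the two-resolvent form once more inside the minor, which now produces a purely noisy quadratic form in the independent columns $\bx_{\bbi},\bx_{\bbj}$ that is handled by Lemma~\ref{ldb}(iii); this loop does terminate after one step.

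The gap is precisely where you flagged the main obstacle, in the mixed Latin--Greek case. The symmetric two-resolvent identity $G_{st}=G_{ss}G^{(s)}_{tt}(\bY^{\#}G^{(st)}\bY^{\#})_{st}$ simply does not exist for $s=i\in\mI_M$, $t=\mu\in\mI_N$; Lemma~\ref{gfi}(iii) only provides the one-resolvent forms $G_{i\mu}=-G_{ii}(\bY G^{(i)})_{i\mu}=-G_{\mu\mu}(G^{(\mu)}\bY)_{i\mu}$, because $H_{i\mu}=Y_{i\mu}$ links $i$ and $\mu$ directly. Even if one manufactures a variant with an extra $Y_{i\mu}$-term, the signal-times-signal piece that survives when $\mu=\bbj$ is $d_id_jG^{(i\mu)}_{\bbi j}$, which is another mixed Latin--Greek off-diagonal of the same kind: the recursion you propose does not close. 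The paper instead closes the loop with a self-consistent \emph{linear} equation for $G_{\mu i}$: starting from $G_{\mu i}=-G_{ii}(d_iG^{(i)}_{\mu\bbi}+\sum_\nu G^{(i)}_{\mu\nu}x_{i\nu})$, it uses \eqref{gfiminus} to write $G^{(i)}_{\mu\bbi}=G_{\mu\bbi}-G_{\mu i}G_{i\bbi}/G_{ii}$, then expands $G_{i\bbi}$ via \eqref{ofda3}, obtaining \eqref{ofda4}
\begin{equation*}
\bigl(1+d_i^2G_{ii}G^{(i)}_{\bbi\bbi}+d_iG_{ii}\textstyle\sum_\nu x_{i\nu}G^{(i)}_{\nu\bbi}\bigr)G_{\mu i}=-d_iG_{ii}G_{\mu\bbi}-G_{ii}\textstyle\sum_\nu G^{(i)}_{\mu\nu}x^*_{\nu i},
\end{equation*}
whose RHS is controlled (using the already-bounded Greek--Greek entry $G_{\mu\bbi}$) and whose coefficient is bounded below because $1+d_i^2G_{ii}G^{(i)}_{\bbi\bbi}=-z(1+\ts)G_{ii}+z(\ts-\ts_N)G_{ii}+G_{ii}\mT_i$ by \eqref{34jga}, with $|z(1+\ts)|$ and $|G_{ii}|$ of order one by Lemmas~\ref{db432} and~\ref{lgii}. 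Without this closure, your argument for the mixed entries does not produce a $\Psi_\Lambda$ bound. A secondary, minor point: the Latin--Latin and Greek--Greek steps naturally yield $\prec\Psi_\Lambda+(N\eta)^{-1/2}\Lambda_o$, and one still needs the smallness of $(N\eta)^{-1/2}\le N^{-\tau/2}$ to absorb the $\Lambda_o$ back and conclude $\Lambda_o\prec\Psi_\Lambda$; you state the conclusion without this self-improvement step.
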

\begin{Rem}
By the definition of $\prec$, the result can be directly extended to maximum of variables over countable indices, such as $\max_i |\mS_i|\prec \Psi_{\Lambda}$, and  $\max_i |G_{ii}-b(z)/(d_i^2-w(z))|\prec \Psi_{\Lambda}+\Lambda$.
\end{Rem}
\begin{proof}
We consider the estimation of quantities in this lemma under the event $\Omega$ first and we omit the indicator function for simplicity.
We start with $G_{ij}$ for $i \neq j \in \mI_M$.  Applying \eqref{gfiij}, we have
\begin{eqnarray}\label{gna02}
\begin{aligned}
G_{ij}&=G_{ii}G_{jj}^{(i)}(YG^{(ij)}Y^*)_{ij}\\
&=G_{ii}G_{jj}^{(i)}\big(d_{i}G_{\bbi \bbj}^{(ij)}d_{j}+d_{i}\sum_{\nu}G_{\bbi\nu}^{(ij)}x_{j \nu}+d_{j}\sum_{\nu}x_{i\nu}G_{\nu \bbj}^{(ij)}+\sum_{\mu,\nu}x_{i\mu}G_{\mu\nu}^{(ij)}x_{j \nu}\big).
\end{aligned}
\end{eqnarray}
For $G_{\bbi \bbj}^{(ij)}$, according to (ii) and (v) of Lemma \ref{gfi} it can be written as
\begin{eqnarray}
\begin{aligned}
G_{\bbi \bbj}^{(ij)}&=G_{\bbi\bbi}^{(ij)}G_{\bbj \bbj}^{(ij \bbi)}(Y^{*(ij)}G^{(ij \bbi \bbj)}Y^{(ij)})_{\bbi \bbj}\\&=G_{\bbi \bbi}^{(ij)}G_{\bbj \bbj}^{(ij \bbi)}\sum_{k,l}^{(ij)}x_{k \bbi}^{(ij)}
G_{kl}^{(ij \bbi \bbj)}x_{l \bbj}^{(ij)}.
\end{aligned}
\end{eqnarray}
By Lemma \ref{ldb}, $|\sum_{k,l}^{(ij)}x_{k \bbi}^{(ij)}
G_{kl}^{(ij \bbi \bbj)}x_{l \bbj}^{(ij)}|\prec \Psi_\Lambda $. We also have that $G_{\bbi\bbi}^{(ij)}$ and $G_{\bbj \bbj}^{(ij \bbi)}$ are of constant order from (\ref{ew32b}) and (\ref{gfiminus}). Then the first term inside the bracket of (\ref{gna02}) is stochastically dominated by $\Psi_\Lambda$. For the second term, $$|\sum_{\nu}G_{\bbi\nu}^{(ij)}x^*_{\nu j}|\prec \sqrt{\frac{\Im G_{\bbi\bbi}^{(ij)}}{N\eta}}\prec \Psi_\Lambda+\sqrt{\frac{\Lambda_o^2}{N\eta}},$$
where in the last step we use (\ref{sn43j}) and  (\ref{gfiminus}). The last term in \eqref{gna02} is also bounded by $\Psi_\Lambda$.
Therefore, $|G_{ij}|\prec\Psi_\Lambda +(N\eta)^{-1/2}\Lambda_o\prec \Psi_\Lambda+N^{-\tau/4}\Lambda_o.$ The proof of $|G_{\mu\nu}|\prec \Psi_\Lambda + N^{-\tau/4}\Lambda_o$ for $\mu\neq \nu \in \mI_N$ is similar. \\

To bound $G_{\mu i}$ for $\mu \in \mI_N$ and $i\in \mI_M$, by (\ref{gfiimu}) we have \begin{eqnarray}\label{ofda1}
G_{\mu i}=-G_{ii}(d_{i}G_{\mu \bbi}^{(i)}+\sum_\nu G_{\mu\nu}^{(i)}x_{i \nu}).\end{eqnarray}
Using \eqref{gfiminus} and \eqref{gfiimu} we find \begin{eqnarray}\label{ofda2}
G_{\mu \bbi}^{(i)} = G_{\mu \bbi}-\frac{G_{\mu i}G_{i \bbi}}{G_{ii}},
\end{eqnarray}
\begin{eqnarray}\label{ofda3}
G_{i \bbi}=-G_{ii}(d_{i} G_{\bbi\bbi}^{(i)}+\sum _\nu x_{i\nu}G_{\nu \bbi}^{(i)}).
\end{eqnarray}
Substituting (\ref{ofda3}) into (\ref{ofda2}), and then together with (\ref{ofda1}), we get \begin{eqnarray}\label{ofda4}
(1+d_{i}^2G_{ii}G_{\bbi\bbi}^{(i)} + d_{i}G_{ii}\sum_\nu x_{i\nu}G_{\nu \bbi}^{(i)})G_{\mu i}=-d_{i}G_{ii}G_{\mu \bbi}-G_{ii}\sum_{\nu}G_{\mu\nu}^{(i)}x_{\nu i}^*.
\end{eqnarray}
By equation (\ref{34jga}),
\begin{eqnarray}
1+d_{i}^2G_{ii}G_{\bbi\bbi}^{(i)}=-z(1+\ts)G_{ii}+z(\ts-\ts_N)G_{ii} + G_{ii}\mT_{i}.
\end{eqnarray}
Therefore, if $\mu \neq \bbi$, the absolute value of coefficient of $G_{\mu i}$ in (\ref{ofda4}) is bounded from below by a constant based on (\ref{delb1}), (\ref{omey1}), (\ref{diap1}) and similar argument to (\ref{0ab3m}). The right hand side in (\ref{ofda4}) is stochastically controlled by $\Psi_\Lambda$. Then we get that $|G_{\mu i}|\prec \Psi_\Lambda.$ Similarly, it can be shown that $|G_{i\mu}|\prec \Psi_\Lambda,$ for $\mu\neq \bbi$.  We conclude that $$\Lambda_o\prec \Psi_\Lambda.$$
$|\mS_i|\prec \Psi_\Lambda $ has been shown in Lemma \ref{lgii}, and for $|\mT_i|$, the proof is similar.

What remains is the estimate \eqref{nxw34} and \eqref{nxw35}. Recall $w$  and $\tb$ defined in \eqref{Linkfun} and \eqref{deftb890}. Using the expansion \eqref{seco1} below  it follows easily that
 $|G_{ii}-\frac{b}{d_i^2-w}|\prec\Psi_{\Lambda}+\Lambda$. By similar strategy, we can control $G_{\bbi\bbi}$ and $G_{\mu\mu}$. To estimate $G_{i\bbi}$, we use \eqref{ofda3}. Since $|G_{ii}-\frac{b}{d_i^2-w}|\prec \Psi_{\Lambda}+\Lambda$ and $|G_{\bbi\bbi}^{(i)}-\frac{1}{-b}|\prec\Psi_{\Lambda}$ where the latter is inferred from \eqref{rg943}, together with \eqref{0ab3m},  we can get \begin{eqnarray*}
\left|G_{i \bbi}- \frac{d_i}{d_i^2-w}\right|\prec \Psi_\Lambda+\Lambda.
\end{eqnarray*}

The proof under the event $1(\eta\geq 1)$ can be proceeded similar, but in some steps we use trivial bound  $\|G^{(T)}\|\leq 1$. We omit these steps and conclude the proof.
\end{proof}

Denote $[v]=\frac{1}{M}\sum_i G_{ii} - s$. We now put $[v]$ into a form that we may control it, see Lemma \ref{queq1} for a rough control that is sufficient for the proof of Theorem \ref{Wkloc}.  Recall $ b(z)= 1+cs(z)$ and for simplicity we omit the dependence on $z$. Let $$t_i :=(-z+\frac{d_i^2}{b}-z\ts)^{-1}.$$ Define \begin{eqnarray}\label{defR23}R_2(z):=\frac{1}{M}\sum_{i=1}^M c \left(\frac{d_i^2}{b^2}+z\right)t_i^2, \quad  R_3(z):=\frac{1}{M}\sum_{i=1}^M\left( -c^2\frac{d_i^2}{b^3}t_i^2+(\frac{d_i^2 c}{b^2}+cz)^2 t_i^3 \right)\end{eqnarray}
\begin{Lemma}\label{queq1}
We have on $\Omega(z)$ or $1(\eta\geq 1)$, with high probability,$$|(1-R_2)[v]-R_3[v]^2| \leq C(\max_i\{{|\mT_i|+|\mS_i|+|\Lambda|^3+|\mS_i||\Lambda|}\})$$
\end{Lemma}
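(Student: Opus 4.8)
The plan is to turn the Schur-complement identities \eqref{34jga} and \eqref{rg943} into a single self-consistent equation for $[v]=\frac1M\sum_{i\in\mI_M}G_{ii}-s=s_N-s$, carrying every correction in powers of $[v]$, $\mS_i$ and $\mT_i$. First, from $\ts=-(1-c)/z+cs$ one has $-z-z\ts=-zb+(1-c)$, so that
\begin{eqnarray*}
t_i^{-1}=-z+\frac{d_i^2}{b}-z\ts=\frac{d_i^2}{b}-zb+(1-c)=\frac{d_i^2-w}{b},
\end{eqnarray*}
and hence $\frac1M\sum_i t_i=s$ by the self-consistent equation \eqref{lslaw}. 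Next, \eqref{rg943} together with $s-s_N=-[v]$ gives $G_{\bbi\bbi}^{(i)}=(-b-c[v]+\mS_i)^{-1}$, while the relation between $\ts_N$ and $s_N$ (see the lines following \eqref{lslaw}) turns $z(\ts-\ts_N)$ into $-zc[v]$. Substituting these into \eqref{34jga} and Taylor-expanding $d_i^2/(b+c[v]-\mS_i)$ to second order in $c[v]-\mS_i$ yields
\begin{eqnarray*}
\frac{1}{G_{ii}}=t_i^{-1}-a_i[v]+e_i,\qquad a_i:=c\Big(\frac{d_i^2}{b^2}+z\Big),
\end{eqnarray*}
where $e_i=\frac{d_i^2}{b^2}\mS_i+\mT_i+\frac{c^2d_i^2}{b^3}[v]^2+r_i'$ and $r_i'=O\big(|\mS_i|^2+|\mS_i||[v]|+|[v]|^3\big)$ on $\Omega(z)$.

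On $\Omega(z)$ — or when $\eta\ge1$, using the trivial bound $\|G^{(T)}\|\le1$ in place of the smallness afforded by $\Omega(z)$ — Lemma \ref{db432} and Assumption \ref{assump3} make $|t_i|$, $|b|$ and the $d_i$ uniformly bounded, and Lemma \ref{lgii} with the definition of $\Omega(z)$ makes $|[v]|,|\mS_i|,|\mT_i|$ all $o(1)$ with high probability, so $G_{ii}=t_i(1-t_ia_i[v]+t_ie_i)^{-1}$ expands as a convergent geometric series,
\begin{eqnarray*}
G_{ii}=t_i+t_i^2a_i[v]-t_i^2e_i+t_i^3a_i^2[v]^2+r_i,\qquad r_i=O\big(|e_i|(|e_i|+|[v]|)+(|e_i|+|[v]|)^3\big).
\end{eqnarray*}
Averaging over $i\in\mI_M$, using $\frac1M\sum_i t_i=s$ and $\frac1M\sum_i G_{ii}-s=[v]$, recognising $\frac1M\sum_i a_i t_i^2=R_2$, and combining $[v]^2\frac1M\sum_i a_i^2 t_i^3$ with the $[v]^2$-part of $-\frac1M\sum_i t_i^2 e_i$, namely $-[v]^2\frac1M\sum_i \frac{c^2 d_i^2}{b^3}t_i^2$, which together form exactly $R_3[v]^2$, I arrive at
\begin{eqnarray*}
(1-R_2)[v]-R_3[v]^2=-\frac1M\sum_i t_i^2\Big(\frac{d_i^2}{b^2}\mS_i+\mT_i\Big)+\mathcal{E},\qquad \mathcal{E}:=\frac1M\sum_i\big(r_i-t_i^2 r_i'\big).
\end{eqnarray*}

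It remains to bound the right-hand side. Since $|t_i|$ and $|d_i^2 t_i^2/b^2|$ are bounded, the explicit term is $O(\max_i(|\mS_i|+|\mT_i|))$. For $\mathcal{E}$, write $\Lambda=|[v]|\le N^{-\tau/10}$ and $|e_i|\le C(|\mS_i|+|\mT_i|+\Lambda^2)$; expanding $|e_i|(|e_i|+|[v]|)$, $(|e_i|+|[v]|)^3$ and $r_i'$ into monomials and using the elementary inequalities $|\mS_i|^2\le|\mS_i|$, $|\mT_i|^2\le|\mT_i|$, $\Lambda^4\le\Lambda^3$, $\Lambda^2|\mT_i|\le|\mT_i|$, $\Lambda^2|\mS_i|\le\Lambda|\mS_i|$ (all valid on $\Omega(z)$ with high probability), every resulting monomial is dominated by one of $|\mT_i|$, $|\mS_i|$, $\Lambda^3$, $|\mS_i|\Lambda$. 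Hence $|\mathcal{E}|\le C\max_i(|\mT_i|+|\mS_i|+|\Lambda|^3+|\mS_i||\Lambda|)$, and the claimed estimate follows; the case $\eta\ge1$ is handled identically.

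The main obstacle is the algebraic bookkeeping in the averaging step: one must check that the surviving quadratic coefficient is \emph{exactly} $R_3$ as in \eqref{defR23}, i.e. that the $-c^2 d_i^2 t_i^2/b^3$ term produced by the second-order expansion of $G_{\bbi\bbi}^{(i)}$ enters with the right sign and weight relative to $\frac1M\sum_i a_i^2 t_i^3$; and one must ensure that the inverted quantities $t_i^{-1}=(d_i^2-w)/b$ and $b+c[v]-\mS_i$ never degenerate, which is exactly what the lower bounds of Lemma \ref{db432}, Assumption \ref{assump3}, and the $N^{-\tau/10}$ thresholds in the definition \eqref{omey1} of $\Omega(z)$ are there to guarantee.
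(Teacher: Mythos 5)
Your proposal is correct and is essentially the paper's argument: expand $1/G_{ii}$ via the Schur-complement identities \eqref{34jga} and \eqref{rg943}, invert by a truncated geometric (equivalently, the paper's $\tfrac{1}{x-y}=\tfrac1x+\tfrac{y}{x^2}+\tfrac{y^2}{x^3}+\tfrac{y^3}{(x-y)x^3}$) expansion, average over $i$, and match the coefficients of $[v]$ and $[v]^2$ to $R_2$ and $R_3$. Your write-up is a bit more explicit than the paper's in tracking how the two contributions $-c^2 d_i^2 t_i^2/b^3$ (from the second-order part of $d_i^2/(b+c[v]-\mS_i)$) and $a_i^2 t_i^3$ (from squaring the linear term) combine to produce exactly $R_3$, but the underlying decomposition and the error bookkeeping via Lemmas~\ref{db432}, \ref{lgii} and \ref{lemma123} are the same.
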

\begin{proof}
Using (\ref{34jga}), (\ref{rg943}) and trivial equality $$\frac{1}{x-y}=\frac{1}{x}+\frac{y}{x^2}+\frac{y^2}{x^3}+\frac{y^3}{(x-y)x^3},$$
we have that \begin{eqnarray}\begin{aligned}
\frac{1}{G_{ii}}&=-z+\frac{d_i^2}{b-\mS_i+c(s-s_N)}-z\ts_N+\mT_i\\
&=-z+\frac{d_i^2}{b}-z\ts+\frac{d_i^2(\mS_i+c(s-s_N))}{b^2}+
\frac{d_i^2(\mS_{i}+c(s-s_N))^2}{b^3}\\ &\relphantom{EEE}+\frac{d_i^2(\mS_{i}+c(s-s_N))^3}{b^3[(b-\mS_i -c(s-s_N)]}+z(\ts-\ts_N)+\mT_i.
\end{aligned}\end{eqnarray}
Denote $$D_i=\frac{d_i^2(\mS_i+c(s-s_N))}{b^2}+
\frac{d_i^2(\mS_{i}+c(s-s_N))^2}{b^3}+\frac{d_i^2(\mS_{i}+c(s-s_N))^3}{b^3[(b-\mS_i -c(s-s_N)]}$$
Then it follows that \begin{eqnarray}\label{seco1}
\begin{aligned}
G_{ii}&=t_i-
\left[z(\ts-\ts_N)+D_i+\mT_i\right] t_i^2+[z(\ts-\ts_N)+D_i+\mT_i]^2 t_i^3+G_{ii}^{-1}[z(\ts-\ts_N)+D_i+\mT_i]^3 t_i^3
\end{aligned}
\end{eqnarray}
Since $|-z+\frac{d_i^2}{b}-z\ts|$ is bounded from below known from Lemma \ref{db432}, we can derive a second order equation of $[v]$ from above equation by averaging over $G_{ii}, i =1 \cdots, M$. Recall that $|[v]|=\Lambda$, and  use lemma \ref{lemma123},   we have following with high probability:
\begin{eqnarray}
|(1-R_2)[v]-R_3[v]^2| \leq O(\max_i\{|\mT_i|+|\mS_i|+|\Lambda|^3+|\mS_i||\Lambda|\})
\end{eqnarray}
\end{proof}

\begin{Lemma}\label{coebd}
Let $\alpha(z) \equiv \alpha:=|1-R_2|$, we have following estimates for $R_2$ and $R_3$ in \eqref{defR23}:
\begin{enumerate}
\item $\alpha(z)\sim \sqrt{\kappa+\eta}$ \text{for} $z \in \mathcal{D}(\tc,\tau).$
\item $|R_3(z)|$ is uniformly bounded above for $z\in \mathcal{D}(\tc,\tau)$. And there exist small constants $c'$ and $\epsilon_0$, such that  $|R_3(z)|\geq c'$ whenever $z\in \mathcal{D}(\tc,\tau)$ satisfies $|z-\lambda_r| < \epsilon_0$.
\end{enumerate}
\end{Lemma}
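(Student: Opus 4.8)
The plan is to analyze $R_2$ and $R_3$ through their relation to the self-consistent equation \eqref{lslaw} for $s(z)$ and to the behavior of the density $\rho_0$ near the edge $\lambda_r$, which by Lemma \ref{sqrot} has a square-root vanishing. First I would observe that, by differentiating the self-consistent equation for $s$ with respect to $z$, the quantity $1-R_2(z)$ is exactly (up to a nonvanishing prefactor) the coefficient that governs the stability of that equation: writing the equation in the form $s = \frac{1}{M}\sum_i t_i$ with $t_i = (-z + d_i^2/b - z\ts)^{-1}$ and $b = 1+cs$, $\ts = -(1-c)/z + cs$, a direct implicit differentiation shows that $s'(z)(1-R_2(z))$ equals an explicit expression that is bounded and bounded away from zero on $\mathcal{D}(\tc,\tau)$ (using Lemma \ref{db432} to control $t_i$ and $b$). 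Since the square-root edge behavior of $\rho_0$ forces $|s'(z)| \sim (\kappa+\eta)^{-1/2}$ on $\mathcal{D}(\tc,\tau)$ (this is the standard consequence of Lemma \ref{sqrot}, e.g. by the inverse Stieltjes formula and $\Im s \sim \sqrt{\kappa+\eta}$ inside the bulk-edge regime), we conclude $\alpha(z) = |1-R_2(z)| \sim \sqrt{\kappa+\eta}$. This proves item 1.

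For item 2, the upper bound on $|R_3(z)|$ is immediate: by Lemma \ref{db432} each $t_i$ is bounded in modulus by $c_0^{-1}$ uniformly on $\mathcal{D}(\tc,\tau)$, $|b| \geq c_0$, $|z| \leq \tau^{-1}$, and $d_i^2 \leq d_1^2$ is bounded (by Assumption \ref{assump3} together with the boundedness of $w(\lambda_r)$), so each summand in the definition of $R_3$ in \eqref{defR23} is $O(1)$ and hence $|R_3(z)| = O(1)$. For the lower bound near $\lambda_r$, I would evaluate $R_3$ at $z = \lambda_r$ (where $s(\lambda_r)$, $b = b(\lambda_r)$, $\xi_r$, $\lambda_r$ are all real and finite, with $d_i^2 - \xi_r$ bounded away from $0$ by Assumption \ref{assump3}) and show $R_3(\lambda_r) \neq 0$. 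This should follow from the fact that $R_3(\lambda_r)$ is, up to explicit nonvanishing algebraic factors in $b, \xi_r, \lambda_r, c$, precisely the second-derivative coefficient appearing in \eqref{defgamma}: indeed $\gamma_0$ is defined so that a certain combination of sums $\frac{1}{N}\sum_i (d_i^2-\xi_r)^{-2}$, $\frac{1}{N}\sum_i (d_i^2-\xi_r)^{-3}$ equals a specific nonzero value, and the same sums (after the change of variables $t_i(\lambda_r) = b/(d_i^2 - \xi_r)$) express $R_3(\lambda_r)$; since $\gamma_0$ is well-defined and finite, $R_3(\lambda_r) \neq 0$. Then by continuity of $R_3$ on $\mathcal{D}(\tc,\tau) \cup \{\lambda_r\}$ (which holds because $s$ extends continuously to $\lambda_r$ and all the denominators stay bounded away from zero), there exist $c' > 0$ and $\epsilon_0 > 0$ with $|R_3(z)| \geq c'$ for $|z - \lambda_r| < \epsilon_0$.

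The main obstacle I anticipate is item 1, specifically making rigorous and uniform the equivalence $\alpha(z) \sim \sqrt{\kappa+\eta}$: one must verify both an upper and a lower bound on the stability coefficient $1-R_2(z)$ simultaneously across the whole domain $\mathcal{D}(\tc,\tau)$, including the transition region where $\eta$ dominates $\kappa$ and the region $E > \lambda_r$. The cleanest route is probably to show $1 - R_2(z) = c_*(z)\,\sqrt{\lambda_r - w(z)}\cdot(1+o(1))$ for an explicitly nonvanishing analytic $c_*$, by factoring the edge singularity out of $\phi'(w)$ via $\phi'(\xi_r) = 0$ and $\phi''(\xi_r) \neq 0$ (this last nonvanishing is what the square-root behavior in Lemma \ref{sqrot} encodes), and then relating $\sqrt{\lambda_r - w(z)}$ to $\sqrt{\kappa + \eta}$ through the established edge expansion $\xi_r = w(\lambda_r)$ and $w'(\lambda_r)$-type estimates. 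The algebra here parallels the corresponding lemmas in \cite{lee2016tracy} for sample covariance matrices, so I would follow that template closely, substituting the link functions \eqref{Linkfun} of the present model.
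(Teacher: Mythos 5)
Your proposed route for item~1 is genuinely different from the paper's and, if completed, structurally valid: implicit differentiation of the self-consistent equation does yield the exact identity $s'(z)\bigl(1-R_2(z)\bigr)=b(z)\,\frac{1}{M}\sum_i t_i(z)^2$, so item~1 would indeed follow from (a) a two-sided bound on $\bigl|\frac{1}{M}\sum_i t_i^2\bigr|$ and (b) $|s'(z)|\sim(\kappa+\eta)^{-1/2}$. The paper instead works directly with $R_2$, using the self-consistent equation to show $|1-g(b)-zG(b)|=O(\sqrt{\kappa+\eta})$ and $|g(b)+zG(b)-R_2|=O(\Im b)$ for the upper bound, and then a case analysis (imaginary-part sign considerations near the edge, and an explicit formula from \cite{dozier2007empirical} elsewhere) for the lower bound.

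However, your argument glosses over the nontrivial half of step~(a): Lemma~\ref{db432} only controls each $|t_i|$ individually, which gives an upper bound on $\bigl|\frac{1}{M}\sum_i t_i^2\bigr|$ but not a lower bound — in principle the $t_i^2$ could cancel. To rule that out one needs a uniform sign condition (e.g.\ that $\Re t_i$ is negative and bounded away from $0$, or a control on the argument of $t_i$), and proving that is essentially the same kind of sign estimate \eqref{nmb21}--\eqref{nmb23} that the paper carries out for its own lower bound on $\alpha$. So you have not actually avoided the hard step; you have relocated it. Also, $|s'|\sim(\kappa+\eta)^{-1/2}$ is not quite "the standard consequence of Lemma~\ref{sqrot}" as stated — Lemma~\ref{sqrot} only controls $\Im s$; you additionally need the square-root density asymptotics together with the Stieltjes representation, which is true here but should be spelled out rather than asserted.

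For item~2, the lower bound argument is circular. You claim $R_3(\lambda_r)\neq 0$ because "$\gamma_0$ is well-defined and finite." But the definition \eqref{defgamma} solves for $\gamma_0$ only if its right-hand side is nonzero, and that right-hand side is (after the change of variables $t_i(\lambda_r)=b/(d_i^2-\xi_r)$) an expression of the same type as $R_3(\lambda_r)$; so the well-definedness of $\gamma_0$ is not a free fact you get to invoke — it is equivalent to what you are trying to prove. The paper's actual argument is to rewrite $R_3(\lambda_r)$ as in \eqref{gab83}, then show each summand has a definite sign: by \eqref{n8ag5} (i.e.\ Assumption~\ref{assump3}) one has $d_i^2-\xi_r<0$ and hence $t_i(\lambda_r)^3<0$, while $b(\lambda_r)>0$ and $c\le 1$ give $\lambda_r^2>2 d_i^2(1-c)/b^3$, so the prefactor $\frac{3\lambda_r d_i^2}{b^2}-\frac{d_i^2(1-c)}{b^3}+\lambda_r^2$ is positive. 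This sign argument — not the finiteness of $\gamma_0$ — is the actual content, and your proposal is missing it.
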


\begin{proof}
We first introduce some notations for simplicity of presentation:
\begin{eqnarray}
\begin{aligned}
&g(b):=\frac{1}{M}\sum_{i=1}^M\frac{cd_i^2| t_i|^2}{|b|^2}
&G(b):=\frac{1}{M}\sum_{i=1}^M c| t_i|^2
\end{aligned}
\end{eqnarray}
Both $g(b)$ and $G(b)$ are bounded from above and below, which can be inferred from  Lemma \ref{db432}.

For statement 1,  we first consider upper bound for $\alpha(z)$, we first show that for $z \in \mD(\tc,\tau)$, \begin{eqnarray}\label{geb49}
|1- g(b)-zG(b)| = O(\sqrt{\kappa+\eta}).
\end{eqnarray}
Using \eqref{lslaw} where we replace $\rho$ by $\hat{\rho}$, we obtain $$c\Im s=\ g(b)\Im b+G(b) \Im(zb),$$ then it follows that \begin{eqnarray}\begin{aligned}|1- g(b)-zG(b)| &=|1-\frac{c\Im s-G(b)\Im(zb)}{\Im b}-zG(b)|=|\frac{G(b)\Im(zb)}{\Im b}-zG(b)|\\&=|\frac{(E\Im b+\eta \Re b)G(b)}{\Im b}-\frac{(E+i\eta) \Im bG(b)}{\Im b}|\\&=|\frac{\eta G(b)\bar{b}}{\Im b}|.
\end{aligned}\end{eqnarray}
Then Lemma \ref{sqrot} implies (\ref{geb49}).
Next, we consider \begin{eqnarray}\begin{aligned}
g(b)+zG(b)-R_2&=\frac{1}{M}\sum_{i=1}^M(\frac{cd_i^2|t_i|^2}{|b|^2}+cz|t_i|^2 )-\frac{1}{M}\sum_{i=1}^M (\frac{c d_i^2 t_i^2}{b^2}+cz t_i^2)\\&=\frac{1}{M}\sum_{i=1}^M c d_i^2|t_i|^2 (\frac{1}{|b|^2}-\frac{1}{b^2})+\frac{1}{M}\sum_{i=1}^M c(\frac{d_i^2}{b^2}+z)(|t_i|^2-t_i^2)\\
&=A_1+A_2
\end{aligned}
\end{eqnarray}
 Using a simple fact \begin{eqnarray}
\frac{1}{|z|^2}-\frac{1}{z^2}=\frac{2(\eta+E i)\eta}{|z|^4
},
\end{eqnarray}
it follows that $$|A_1| =O(\Im b), \quad  |A_2| \leq C\Im (\frac{d_i^2}{b}-zb) =O(\Im b).$$
Then \begin{eqnarray}\label{geb50}
|g(b)+zG(b)-R_2|= O(\Im b)=O(\sqrt{\kappa+\eta}).
\end{eqnarray}
We then have $|\alpha(z)|\leq C(\sqrt{\kappa+\eta})$ from \eqref{geb49} and \eqref{geb50} .\\

For the lower bound of $\alpha(z)$, we first consider the case that $\lambda_r-\tc \leq E \leq \lambda_r$, and $0 < \eta < \eta_0$, where $\eta_0$ is determined by the way in Lemma \ref{db432}. We first collect some basic control on real and imaginary part of some terms used later.  Based on the proof of Lemma \ref{db432}, and recall that $\omega=zb^2-(1-c)b$. we have \begin{eqnarray}\label{nmb21}
\max\{ \Re\frac{d_i^2-\omega}{b}, \Re(d_i^2-\omega)\} < -C\hat{c}.
\end{eqnarray}
We also have \begin{eqnarray}\label{nmb22}\begin{aligned}
\Im(d_i^2-w)&=\Im(\frac{d_i^2-\omega}{b}b)=2(\Re\frac{d_i^2-\omega}{b}\Im b+\Im \frac{d_i^2-\omega}{b} \Re b)\\ &\leq -C\hat{c}\Im b,
\end{aligned}
\end{eqnarray}
and \begin{eqnarray}\label{nmb23}
\Im (\frac{b}{d_i^2-\omega})^2 = -C\Im (\frac{d_i^2-\omega}{b})^2=-2C\Re \frac{d_i^2-\omega}{b}\Im \frac{d_i^2-\omega}{b}<0.
\end{eqnarray}
Now, for $\alpha(z)$, it can be bounded from below by
\begin{eqnarray}\label{ba4dg}
|\alpha(z)| = |1-R_2(z)|\geq |-\frac{1}{M}\sum_{i=1}^M \Im \frac{d_i^2+zb^2}{(d_i^2-\omega)^2}|.
\end{eqnarray}
For the first imaginary part term, we have\begin{eqnarray}
-\frac{1}{M}\sum_{i=1}^M \Im\frac{d_i^2}{(d_i-\omega)^2}=\frac{1}{M}\sum_{i=1}^M\frac{2d_i^2 \Re(d_i^2-\omega)\Im(d_i^2-\omega)}{|d_i^2-\omega|^4}\geq C\hat{c}^2  \Im b \geq C\sqrt{\kappa+\eta},
\end{eqnarray}
where in the second step, we use (\ref{nmb21}) and (\ref{nmb22}), and in the last step, we use Lemma \ref{sqrot}. For the second part in (\ref{ba4dg}), we have \begin{eqnarray}
-\frac{1}{M}\sum_{i=1}^M \Im\frac{zb^2}{(d_i-\omega)^2}=-\frac{1}{M}\sum_{i=1}^M E\Im (\frac{b}{d_i^2-\omega})^2 - \eta \Re (\frac{b}{d_i^2-\omega})^2>-C\eta
\end{eqnarray}
where in the last step, we use (\ref{nmb23}) and $\Re(\frac{b}{d_i^2-\omega})^2 =O(1)$.  Hence, we can choose a small constant $\eta_0^\prime$ smaller than $\eta_0$ such that $|\alpha(z)|\geq C\sqrt{\kappa+\eta}$ for all $z$ satisfying  $\lambda_r-\tc \leq E \leq \lambda_r$, and $0 < \eta < \eta_0^\prime$.\\
\indent For other cases, we are going to obtain the lower bound from \begin{eqnarray}
|1-R_2(z)|\geq 1-g(b)-|z|G(b)=\frac{[1+(1-c)G(b)]\eta G(b)}{\Im b[1-g(b)+|z|G(b)]},
\end{eqnarray}
where the second step is obtained from the argument between (2.3) and (2.5) in \cite{dozier2007empirical}. We have $0<1-g(b)+|z|G(b)\leq C$ inferred from (2.5) in \cite{dozier2007empirical}. Recall that $c\leq 1$, $G(b)$ is bounded from below, then, if $\lambda_r-\tc \leq E \leq \lambda_r$, and $\eta> \eta_0^\prime$, we have \begin{eqnarray*}
|1-R_2(z)|\geq C\frac{\eta}{\sqrt{\kappa+\eta}}\geq C\sqrt{\kappa+\eta}
\end{eqnarray*}
and if $\lambda_r \leq E \leq \tau^{-1}$, regardless of what's the order of $\eta$, we always have \begin{eqnarray*}
|1-R_2(z)|\geq C\sqrt{\kappa+\eta}
\end{eqnarray*}

For statement 2, the upper bound can be easily derived from Lemma \ref{db432}. For the lower bound, we have that\begin{eqnarray}\label{gab83}
R_3(\lambda_r)=\frac{c^2}{M}\sum_{i=1}^M \Big(\frac{3\lambda_r d_i^2}{b(\lambda_r)^2}-\frac{d_i^2(1-c)}{b(\lambda_r)^3}+\lambda_r^2\Big) (t_i(\lambda_r))^3
\end{eqnarray}
By \eqref{n8ag5} and recall that $b(\lambda_r)>0$ and $c\leq 1$, we have \begin{eqnarray*}\begin{aligned}
\lambda_r^2 > \big( \frac{d_i^2}{b(\lambda_r)^2}+\frac{1-c}{b(\lambda_r)}\big)^2 & \geq \frac{d_i^4}{b(\lambda_r)^4}+\frac{(1-c)^2}{b(\lambda_r)^2} \geq 2\frac{d_i^2(1-c)}{b(\lambda_r)^3}. \end{aligned}\end{eqnarray*}
Combining above with (\ref{gab83}), and use Lemma \ref{db432}, we have $|R_3(\lambda_r)|$ is bounded from below. Since $R_3(z)$ is an analytic function of $z$ in the neighborhood of $\lambda_r$, this proves the statement 2 in the lemma.
\end{proof}

We also need the following estimate for $\Lambda$ when $\eta\sim 1$ to get the bootstrapping procedure started.
\begin{Lemma}\label{etlar}
We can find  $\eta_b$  sufficiently large, such that for any $z$ in $\mD$ satisfying $\eta \geq \eta_b$, and any $\epsilon>0$, $$\Lambda\leq \frac{N^\epsilon}{\sqrt{N}}$$ holds with high probability.
\end{Lemma}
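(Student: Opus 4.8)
The plan is to run the self-consistent-equation analysis of Lemma~\ref{queq1} in the regime where $\eta$ is a large fixed constant, where all error terms are controlled by deterministic a priori bounds and the nonlinear terms in $[v]$ are simply absorbed. First I would fix a large constant $\eta_b$, to be pinned down at the end, and take $\tau\leq 1/\eta_b$ so that $\mD(\tc,\tau)$ contains spectral parameters with $\eta\geq\eta_b$; fix such a $z$. For these $z$ we have the trivial a priori bounds: since $s_N(z)$ and $s(z)$ are Stieltjes transforms of probability measures on $[0,\infty)$, $|s_N(z)|,|s(z)|\leq 1/\eta$, hence $\Lambda\leq 2/\eta\leq 2/\eta_b$ and $\Im s\leq 1/\eta$.

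Next I would invoke the estimates already established on the event $1(\eta\geq 1)$, which is active here. By Lemma~\ref{lemma123}, $\Lambda_o+|\mS_i|+|\mT_i|\prec\Psi_\Lambda$ for all $i\in\mI_M$ with high probability, and by Lemma~\ref{queq1}, with high probability
\[
|(1-R_2)[v]-R_3[v]^2|\leq C\max_i\big\{|\mT_i|+|\mS_i|+\Lambda^3+|\mS_i|\Lambda\big\},
\]
so by the a priori bound $\Lambda\leq 2/\eta_b$ the right-hand side is $\prec\Psi_\Lambda$. By Lemma~\ref{coebd}, deterministically $|1-R_2|\geq c_1\sqrt{\kappa+\eta}\geq c_1\sqrt{\eta}$ and $|R_3|\leq C_2$. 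Plugging these in and again using $\Lambda\leq 2/\eta_b$ to estimate $C_2\Lambda^2\leq (2C_2/\eta_b)\Lambda$ and the cubic term likewise, I would choose $\eta_b$ large (depending only on $c_1,C_2,C$, not on $N$ or $\tau$) so that the $\Lambda$-linear correction on the right is at most $\tfrac12 c_1\sqrt{\eta_b}\,\Lambda$; absorbing it into the left gives $c_1\sqrt{\eta}\,\Lambda\prec\Psi_\Lambda$, hence $\Lambda\prec\Psi_\Lambda$. Finally I would feed this back: since $\Psi_\Lambda=\sqrt{(\Lambda+\Im s)/(N\eta)}+1/(N\eta)$ with $\Lambda,\Im s\leq 2/\eta_b$ and $\eta\geq\eta_b\geq1$, one gets $\Psi_\Lambda\leq \sqrt{3/(N\eta_b)}+1/(N\eta_b)\leq CN^{-1/2}$, so $\Lambda\prec N^{-1/2}$, which by Definition~\ref{defprec} is precisely the claim that $\Lambda\leq N^\epsilon/\sqrt N$ with high probability for every $\epsilon>0$.

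The main thing to watch is the bookkeeping with the nonlinear terms: the quadratic $R_3[v]^2$ and the cubic $\Lambda^3$ from Lemma~\ref{queq1} must be handled as genuinely subleading, which is legitimate only because the trivial a priori bound $\Lambda\leq 2/\eta_b$ is already small once $\eta_b$ is a large constant; thus, in contrast to the full bootstrap in $\eta$ carried out afterwards, no multiscale continuity argument is needed here. One should also verify that $\eta_b$ (large) and $\tau$ (small) are mutually consistent with $\eta_b\leq\tau^{-1}$---immediate, since $\eta_b$ is an absolute constant and $\tau$ is chosen afterwards---and that the $1(\eta\geq1)$ forms of Lemmas~\ref{lemma123} and~\ref{queq1} hold with no conditioning on $\Omega$, which is exactly how those lemmas are stated.
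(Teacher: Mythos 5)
Your proposal is correct and follows essentially the same route as the paper: combine the self-consistent inequality of Lemma~\ref{queq1} with the coefficient bounds of Lemma~\ref{coebd}, use the deterministic a priori bound $\Lambda\leq 2/\eta$ to absorb the quadratic and cubic terms for $\eta_b$ large, deduce $\Lambda\prec\Psi_\Lambda$, and then bound $\Psi_\Lambda=O(N^{-1/2})$ using the same a priori bound. (Minor slip: the last display should read $\sqrt{3/(N\eta_b^2)}$ rather than $\sqrt{3/(N\eta_b)}$, but both are $O(N^{-1/2})$ so the conclusion is unaffected.)
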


\begin{proof}
Using Lemma \ref{queq1} and  Lemma \ref{coebd}, we have $\sqrt{\eta}\Lambda
\leq C\Lambda^2 +CN^\epsilon\Psi_\Lambda$ holds with high probability, where C is a large constant independent of choice of $z$.
Using trivial estimates $$|G_{st}^{(\mathbb{T})}|\leq \frac{1}{\eta}, \quad |s_N^{(\mathbb{T})}|\leq \frac{1}{\eta},   \quad |s|\leq \frac{1}{\eta},$$
it follows that $\Lambda \leq 2/\eta.$
If we can find sufficiently large $\eta$ such that $2C/\eta \leq \sqrt{\eta}/2$, then
$\sqrt{\eta}\Lambda\leq 2C\Lambda/\eta+CN^\epsilon \Psi_\Lambda\leq \sqrt{\eta}\Lambda/2+CN^{\epsilon} \Psi_\Lambda$, which implies the lemma.
\end{proof}

\subsubsection{Proof of Theorem \ref{Wkloc}}
 Let $\beta(z):=(N\eta)^{-1/4}.$  Fix $\epsilon \in (0, \tau/20)$. For any fixed $E$, since $\sqrt{\kappa+\eta}$ is increasing and $\beta(E+i\eta)$ is decreasing in $\eta$, the equation \begin{eqnarray}\label{97fb4}
 \sqrt{\kappa+\eta}=N^{2\epsilon}\beta(E+i\eta) \end{eqnarray} has a unique solution $\tilde{\eta}$. Note that $\tilde{\eta} \ll 1$.

 We have that \begin{eqnarray}
 \Psi_\Lambda^2\leq 2(\frac{\Lambda+\Im s}{N\eta}+\frac{1}{(N\eta)^2})\leq 2(\beta^4\Lambda +\beta^4\alpha +\beta^8)
 \end{eqnarray}
 which implies \begin{eqnarray}
 \Psi_\Lambda \leq 2\sqrt{\beta^4\Lambda+\beta^4\alpha+\beta^8}=2\beta\sqrt{\beta^2\Lambda+\beta^2\alpha+\beta^6}\leq \beta^2\Lambda+\beta^2\alpha+2\beta^2.
 \end{eqnarray}
 Before proceeding the main proof, we need a dichotomy based on whether $\eta \geq \tilde{\eta}$ or $\eta<\tilde{\eta}$ first.
To make the following statement clear, we specify some constants to control the $\alpha(z)$ and $R(z)$ in Lemma (\ref{coebd}). Specifically, we suppose $C_1\sqrt{\kappa+\eta}\leq \alpha(z)\leq C_2\sqrt{\kappa+\eta}$, and $ |R_3(z)|\leq C_3$ for any $z\in\mD(\tc,\tau).$Then depending on the relative size of $\alpha$ and $\beta$, we consider the bound for $\Lambda$ separately.\\
Case 1.  $\eta\geq \tilde{\eta}$. On $\Omega(z)$, with probability $1-N^{-D}$, where $D$ is any positive constant,  by Lemma \ref{queq1}, we have that for sufficiently large $N$, $$\alpha\Lambda \leq C_3\Lambda^2 + N^\epsilon( \beta^2\Lambda+\beta^2\alpha+2\beta^2)$$
Using $N^\epsilon \beta \ll \alpha$, it follows that $$\alpha \Lambda \leq C_3\Lambda^2+C_4\alpha \beta.$$
Thus, we either have \begin{eqnarray}\label{3q5on}
\Lambda \geq\frac{\alpha}{2C_3} \quad  or \quad \Lambda \leq 2C_4\beta.
\end{eqnarray}
Case 2. $\eta<\tilde{\eta}$.  In this case,note that $\sqrt{\kappa+\eta} < N^{2\epsilon}\beta(E+i\eta)$=o(1), then $\kappa = o(1)$, and it follows that $R_3(z) \geq c^\prime.$
Using Lemma \ref{queq1}, $$c^\prime \Lambda^2 \leq \alpha\Lambda +CN^\epsilon (\beta^2\Lambda+\beta^2\alpha+2\beta^2)\leq N^{2\epsilon}\beta\Lambda + CN^\epsilon(\beta^2 \Lambda + \beta^2\alpha+2\beta^2)\leq \frac{c^\prime\Lambda^2}{2}+4N^{4\epsilon}\beta^2,$$which implies \begin{eqnarray}\label{4q3jb}
\Lambda \leq \sqrt{\frac{8}{c^\prime}}N^{2\epsilon}\beta.
\end{eqnarray}

After obtaining the dichotomy, we come to the main proof.
Define discrete set $\hat{D}=\mD(\tc,\tau)\cap (N^{-3}\mathbb{Z}^2).$We will first show that for $z\in \hat{D}$, $\Lambda \prec (N\eta)^{-1/4}$. For fixed $E$, let $z_0 = E+\eta_b i$, and define $z_k:= E+\eta_k i$ with $\eta_k = \eta_b-kN^{-3}.$ Define the event $\Omega_k := \Omega(z_k)\cap\{\Lambda(z_k)\leq C^{(k)}N^{2\epsilon}\beta(z_k)\}$, where \begin{eqnarray}C^{(k)}:=\Bigg\{
\begin{array}{ll}
{C_1/4C_3,}  &{\text{if} \quad \eta_k\geq \tilde{\eta}}, \\{ \sqrt{8/c^\prime},}  &{\text{if} \quad \eta_k <\tilde{\eta}}.
\end{array}
\end{eqnarray}
Next we will show that for any $D>0$ and any $k$, \begin{eqnarray}\label{vb2w5}P(\Omega_k^c)\leq 3kN^{-D}.\end{eqnarray}
We use induction method. When $k=1$, this has been proved by Lemma \ref{lemma123} and Lemma \ref{etlar}. Assume that (\ref{vb2w5}) holds for some $k\geq 1$, then $$P(\Omega_{k+1}^c) \leq P(\Omega_k\cap\Omega(z_{k+1})\cap \Omega_{k+1}^c)+P(\Omega_k\cap (\Omega(z_{k+1})^c)+P(\Omega_k^c):=B+A+P(\Omega_k^c),$$
where \begin{eqnarray}\begin{aligned}
&A:=P(\{\Omega_k\cap \{\Lambda(z_{k+1})>N^{-\frac{\tau}{10}})\}\}\cup\{\Omega_k \cap \{\Lambda_o(z_{k+1})>N^{-\frac{\tau}{10}}\}\})\\
&B:=P(\Omega_k\cap\Omega(z_{k+1})\cap\{\Lambda(z_{k+1})>C^{(k+1)}N^{2\epsilon}\beta(z_{k+1})\}).
\end{aligned}
\end{eqnarray}
We estimate $A$ first. By Lipschitz continuity of resolvent map: $z \mapsto G(z),z \in \mathbb{C}^{+}$, we have \begin{eqnarray}
|G_{st}(z_{k+1})-G_{st}(z_k)| \leq |z_{k+1}-z_k|\sup_{z\in \mD}|G_{st}^\prime(z)|\leq N^{-1}.
\end{eqnarray}
Thus $$\Lambda(z_{k+1})\leq \Lambda(z_k)+N^{-1}\leq CN^{2\epsilon}\beta_k < N^{\frac{\tau}{10}}N^{-\frac{\tau}{4}}< N^{-\frac{\tau}{10}},$$
and similarly, $\Lambda_o(z_{k+1}) < N^{-\frac{\tau}{10}}$. Thus $A\leq N^{-D}$.  To estimate $B$, recall that $\tilde{\eta}$ is solution to equation in (\ref{97fb4}), suppose that $\eta_k>\eta_{k+1}>\tilde{\eta}$, then \begin{eqnarray}
\Lambda(z_{k+1})\leq \Lambda(z_k)+N^{-1} \leq \frac{C_1}{4C_3}N^{2\epsilon} \beta_k +N^{-1} < \frac{\alpha}{2C_3}.
\end{eqnarray}
By the dichotomy argument in (\ref{3q5on}), we have that with probability $1-N^{-D}$, $\Lambda(z_{k+1})\leq 2C_4 \beta$.  If  $\eta_k<\tilde{\eta}$, (\ref{4q3jb}) implies $\Lambda(z_{k+1})\leq \sqrt{8/c^\prime} N^{2\epsilon}\beta_{k+1}.$ Thus $B\leq N^{-D}$.
By the inductive assumption, (\ref{vb2w5}) concludes.\\
\indent To finish the proof, we need to extend the result for $z \in \hat{D}$ to $z \in \mD(\tc,\tau)$. Similar to argument for the proof of Corollary 3.24 in \cite{lee2013local}, we can also show that there exists a constant C,
\begin{eqnarray}\label{3fonc} P(\bigcup_{z\in \mD(\tc,\tau)}\Omega(z)^c) + P(\bigcup_{z\in \mD(\tc,\tau)}\{\Lambda(z) > CN^{\epsilon}\beta(z)\}) \leq N^{-D^\prime}\end{eqnarray}
for arbitrary large $D^\prime$. This can be proceeded by Lipschitz continuity of  resolvent map: $z \mapsto G(z),z \in \mathbb{C}^{+}$ and error parameter $\beta$. Therefore, we conclude the proof for $\Lambda$ part. We can also conclude the proof of $\Lambda_o$ part by using Lemma \ref{lemma123},  (\ref{3fonc}) and lattice argument.
\qed

\subsection{Fluctuation Lemma and Strong Local Law}
In this subsection we first prove a fluctuation lemma (see Lemma \ref{fluctuation} below). Similar results for other random matrix models appear in \cite{erdHos2013averaging, erdHos2013local,lee2013local}, e.t.c., see also Section 6 of \cite{benaych2016lectures} whose argument we follow. However, there exist slight differences between our cases with previous results mainly because the particularity of  $\bY_{i\bbi}$, thus we need to analyze carefully. Then combining the fluctuation lemma with the Weak local law in Theorem \ref{Wkloc}, we finish the proof of Theorem \ref{Lolaw}.
\subsubsection{Fluctuation Lemma}
Recall $H$ defined in \eqref{HRplusX}. For $s\in \mI$, we define operations $P_{s}$ and $Q_s$ by
$$P_s[\, \cdot \,]:= E[\, \cdot\, | H^{(s)}], \quad \quad  Q_s:=1-P_s.$$
\begin{Lemma}\label{fluctuation}
Suppose that for $z\in \mD(\tc,\tau)$, we have $\Lambda_o \prec \Upsilon$, where $N^{-1/2}\leq \Upsilon \leq N^{-d}$ for some $d>0$.
Then for $z\in \mD(\tc,\tau)$, we have  \begin{eqnarray}\label{fluct}
\begin{aligned}&\frac{1}{M}\sum_{i=1}^M Q_i(\frac{1}{G_{ii}}) =O_\prec (\Upsilon^2),\\&\frac{1}{M}\sum_{i=1}^M g_{1i}\mT_i =O_\prec (\Upsilon^2),  \\& \frac{1}{M}\sum_{i=1}^M g_{2i}\mS_i =O_\prec (\Upsilon^2),
\end{aligned}
\end{eqnarray}

where $(g_{1i})$ and $(g_{2i})$ are uniformly bounded constants.
\end{Lemma}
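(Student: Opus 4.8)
The plan is to reduce all three bounds in \eqref{fluct} to a single weighted fluctuation‑averaging statement and then to run the moment method of Section~6 of \cite{benaych2016lectures}. First I would use the hypothesis $\Lambda_o\prec\Upsilon$ together with the a priori entrywise control at this stage (Lemma~\ref{lgii} and Lemma~\ref{lemma123}): on the relevant high‑probability event the diagonal entries $G_{tt},G^{(T)}_{tt}$ and the index‑pair entries $G_{i\bbi},G^{(T)}_{i\bbi}$ are of order one, while $G_{st}$ and $G^{(T)}_{st}$ are $O_\prec(\Upsilon)$ for $s\neq t$ not an index pair (the passage to minors being handled by \eqref{gfiminus}). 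With this, \eqref{gfiminus} gives $\ts_N-\ts_N^{(i)}=\frac{1}{N G_{ii}}\sum_{\mu}G_{i\mu}G_{\mu i}$, where the single term $\mu=\bbi$ is $O(1/N)$ and each of the remaining terms is $O_\prec(\Upsilon^2)$; since $\Upsilon\geq N^{-1/2}$ this yields $z(\ts_N-\ts_N^{(i)})=O_\prec(\Upsilon^2)$, and an entirely analogous computation gives $c(s_N-s_N^{(i\bbi)})=O_\prec(\Upsilon^2)$. Hence the corresponding pieces of $\mT_i$ and $\mS_i$ (see \eqref{bc38a} and \eqref{9g8a9}) contribute $O_\prec(\Upsilon^2)$ to the averages in \eqref{fluct} termwise. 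What remains of $\mT_i$ is precisely $Q_i(1/G_{ii})=-(1-\E_i)\sum_{\mu\nu}x_{i\mu}G^{(i)}_{\mu\nu}x_{i\nu}-2d_i\sum_\nu G^{(i)}_{\bbi\nu}x_{i\nu}$ (apply $Q_i$ to \eqref{34jga}), and what remains of $\mS_i$ is $-(1-\E_{\bbi})\sum_{j,k}^{(i)}x_{j\bbi}G^{(i\bbi)}_{jk}x_{k\bbi}$; the former is $Q_i$‑measurable, the latter is $Q_{\bbi}$‑measurable, and by the large deviation bounds of Lemma~\ref{ldb} both are $O_\prec(\Upsilon)$. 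So all three statements in \eqref{fluct} follow from the single claim: if $(\zeta_s)_{s\in S}$, $S\subseteq\mI$ with $|S|\asymp M$, satisfies $Q_s\zeta_s=\zeta_s$ and $\zeta_s\prec\Upsilon$ uniformly, and $(g_s)$ are deterministic with $|g_s|\leq C$, then $Z:=|S|^{-1}\sum_{s\in S}g_s\zeta_s$ satisfies $Z\prec\Upsilon^2$.

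To prove this claim I would bound $\E|Z|^p$ for even $p$ by expanding it as a sum over multi‑indices $(s_1,\dots,s_p)\in S^p$ of $|S|^{-p}\big(\prod_k g_{s_k}\big)\E\big[\prod_{k\le p/2}\zeta_{s_k}\prod_{k>p/2}\overline{\zeta_{s_k}}\big]$. The two ingredients are (i) the self‑adjointness $\E[Q_s(A)B]=\E[A\,Q_s(B)]$ of $Q_s$ with respect to $\E$; and (ii) whenever an index $s_k$ occurs exactly once ("a lone index"), moving $Q_{s_k}$ onto the product of the other factors and expanding every resolvent entry occurring there via \eqref{gfiminus} into the corresponding entry of $G^{(s_k)}$ — which is $P_{s_k}$‑measurable, hence annihilated by $Q_{s_k}$ — plus a correction, one is left only with terms carrying at least one correction factor of order $O_\prec(\Upsilon^2)$, so that each lone index produces an extra gain $\Upsilon$. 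Iterating this exactly as in Section~6 of \cite{benaych2016lectures} gives $\E|Z|^p\leq(Cp)^{Cp}\Upsilon^{2p}$ for every even $p$: configurations in which every index has multiplicity at least two involve at most $|S|^{p/2}$ choices and, via $\prod_k\zeta_{s_k}\prec\Upsilon^p$, contribute $\prec|S|^{-p/2}\Upsilon^p\leq\Upsilon^{2p}$ (again using $|S|^{-1/2}\leq\Upsilon$), while configurations with lone indices gain enough extra powers of $\Upsilon$ to absorb the larger number of index choices. A Markov inequality with $p=p(\epsilon,D)$ chosen large then yields $Z\prec\Upsilon^2$.

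The step I expect to be the main obstacle is verifying point (ii) — the stability of the minors under index removal — in the presence of the index‑pair entries, which is the particularity of $Y_{i\bbi}$ alluded to before the lemma. Unlike the Wigner or sample‑covariance setting, removing a single index $s_k$ from a resolvent entry can create an order‑one correction factor, namely $G^{(T)}_{s_k' s_k}$ with $s_k'$ the index‑pair partner of $s_k$, rather than an $O_\prec(\Upsilon)$ factor. One therefore has to check that any such order‑one index‑pair factor is always accompanied either by a genuinely off‑diagonal $O_\prec(\Upsilon)$ factor — which is automatic because the lone index $s_k$ is distinct from every other index occurring in the term — or by a single unsummed noise entry $x_{\cdot\cdot}\prec N^{-1/2}\leq\Upsilon$ whose appearance simultaneously pins a would‑be summation index; in either case the net correction remains $O_\prec(\Upsilon^2)$, as required. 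Carrying this bookkeeping cleanly through the iterated expansion is the technical heart of the argument; the surrounding combinatorics is that of \cite{benaych2016lectures}.
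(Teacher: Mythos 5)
Your proposal follows the same two-stage strategy as the paper: first peel off the resolvent-difference pieces of $\mT_i$ and $\mS_i$ as $O_\prec(\Lambda_o^2+1/N)=O_\prec(\Upsilon^2)$ (using $\Upsilon\geq N^{-1/2}$), so that what remains of $\mT_i$ is exactly $Q_i(1/G_{ii})$ and what remains of $\mS_i$ is $-(1-\E_{\bbi})\sum_{j,k}^{(i)}x_{j\bbi}G^{(i\bbi)}_{jk}x_{k\bbi}$; second, run the high-moment fluctuation-averaging argument of Section~6--7 of \cite{benaych2016lectures}, i.e.\ expand $\E|Z|^p$ over multi-indices, use $\prod_k(P_{i_k}+Q_{i_k})$, the combinatorial inequality $\sum_s|A_s|\geq 2|[\mathbf i]|$, the key estimate $|Q_A V_i|\prec\Upsilon^{|A|}$, and $M^{-1/2}\leq\Upsilon$. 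This is precisely what the paper does for the $\mS_i$ part (the paper simply cites the literature for the $Q_i(1/G_{ii})$ part and reduces $\mT_i$ to it); the weights $g_{1i},g_{2i}$ are handled by the same observation you make.

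The one place where your route genuinely diverges from the paper's is in how the key estimate $Q_A V_i\prec\Upsilon^{|A|}$ is organized. You propose to prove it by expanding every resolvent entry separately via \eqref{gfiminus} after appending each $Q_{\mu_t}$, and you correctly anticipate that this naive expansion produces Latin--Greek corrections of the form $G^{(T)}_{s_k's_k}$ with $s_k'$ the index-pair partner of $s_k$, which are order one rather than $O_\prec(\Upsilon)$. Your sketch of why this is harmless (pinning of a summation index by an explicit $x_{\cdot\cdot}\prec N^{-1/2}\leq\Upsilon$, or pairing with an automatic off-diagonal factor) is plausible, but it is bookkeeping you would have to carry through every level of the iteration. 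The paper sidesteps this issue entirely by a more structured algebra: removing the Latin index $i$ once and for all (so $\by_{\mu_1}$ restricted to the minor has no deterministic $d_i e_i$ part and coincides with $\bx_{\mu_1}$), and then using the resolvent identity $G^{(T)}_{\mu_j\mu_k}=G^{(T)}_{\mu_j\mu_j}G^{(T\mu_j)}_{\mu_k\mu_k}\by_{\mu_k}^*G^{(T\mu_j\mu_k)}\bx_{\mu_1}$ so that at every step the expression remains a sum of ratios of Greek--Greek resolvent entries (off-diagonal in the numerator, never an index pair) times quadratic forms $\by_\mu^*G^{(T)}\bx_{\mu_1}$. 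Within that class the index-pair entries never surface, and each correction manifestly picks up a factor $\Upsilon$. Your approach would work, and your identification of the index-pair obstruction is exactly the "particularity of $Y_{i\bbi}$" the paper alludes to, but the paper's choice of expansion variables makes the verification of $Q_A V_i\prec\Upsilon^{|A|}$ cleaner. Also, as a small caution: the "single abstract claim" in your first paragraph ($Q_s\zeta_s=\zeta_s$ and $\zeta_s\prec\Upsilon$ imply $Z\prec\Upsilon^2$) is not sufficient by itself; it needs the full iterated control $Q_A\zeta_s\prec\Upsilon^{|A|}$, which you do supply in step~(ii) but which is the real content rather than the stated hypotheses.
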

\begin{proof}
The first estimate in \eqref{fluct} can be proved by similar method used in the proof of Proposition 6.1 in \cite{benaych2016lectures}. One can also refer to the proofs of Lemma 4.9 in \cite{bloemendal2014isotropic} or Lemma 7.1 in \cite{pillai2014universality}. Recall $\mT_i$ introduced in \eqref{bc38a}. For the second estimate in \eqref{fluct}, we find \begin{eqnarray}
\mT_i = Q_i(\frac{1}{G_{ii}})+z(\tilde{s}_N-\tilde{s}_N^{(i)})= Q_i(\frac{1}{G_{ii}})-cz(s_N-s_N^{(i)}).
\end{eqnarray}
Then we have
\begin{eqnarray}
\frac{1}{M}\sum_{i=1}^M \mT_i =\frac{1}{M}\sum_{i=1}^M Q_i(\frac{1}{G_{ii}})+O(\Lambda_o^2)
=\frac{1}{M}\sum_{i=1}^M Q_i(\frac{1}{G_{ii}})+O_\prec(\Upsilon^2
)
\end{eqnarray}
where in the first equality we use \eqref{gfiminus}, and in the second equality we use Lemma \ref{lgii} and $M^{-1/2}\leq \Upsilon$.
Then if $g_{1i}=1, i=1,\cdots, M$, the second estimate in \eqref{fluct} follows. For general uniformly bounded $(g_{1i})$, the conclusion also holds since we used high moment  method to estimate the $\sum_{i=1}^M Q_i(\frac{1}{G_{ii}})$, and we can deal with $\sum_{i=1}^M g_{1i} Q_i(\frac{1}{G_{ii}})$ similarly.  \\

For the third estimate, by similar reason above, we just need to consider $g_{2i}=1, i=1,\cdots, M$. Recall $\mS_i$ introduced in \eqref{9g8a9}, we have \begin{eqnarray}\label{bnc45}\frac{1}{M}\sum_{i=1}^M \mS_i=-\frac{1}{M}\sum_{i=1}^M(1-E_{\bbi})\sum_{j,k}^{(i)}x_{j\bbi}^*G_{jk}^{(i\mu_i)}x_{k\bbi}+O(\Lambda_o^2). \end{eqnarray}  In the rest of this proof, we let $\by_{\mu}$  be the $\mu$-th column of $\bY$, and $\bx_{\mu}$ is defined similarly.  Denote $\tilde{V}_{i}:= \bx_{\bbi}^*G^{(i\mu_i)}\bx_{\bbi}$ and $V_{i}:= Q_{\bbi}\tilde{V}_{i} $. Next we are going to bound $E|\frac{1}{M}\sum_{i} V_{i}|^p $. In the following, we let $p$  be an even integer and denote $V_{i_s}:=V_{i_s} $ for $s\leq p/2$ and $V_{i_s}:=\bar{V}_{i_s}$ for $s>p/2$.   For a set $A\subset \{\mu_1, \cdots,\mu_M\}$ where $\mu_i = \bbi$,  we use notations $P_A:=\prod_{\mu\in A} P_\mu$ and $Q_A:=\prod_{\mu\in A} Q_\mu$. Let ${\bf{i}}= (i_1, \cdots, i_p)$ and $[{\bf{i}}]=\{i_1,\cdots,i_p\}$. We have \begin{eqnarray}\label{b6f20}\begin{aligned}
E|\frac{1}{M}\sum_{i} V_{i}|^p = & \frac{1}{M^p}\sum_{i_1,\cdots, i_p} E\prod_{s=1}^pV_{i_s}=\frac{1}{M^p}\sum_{i_1,\cdots, i_p} E\prod_{s=1}^p\left(\prod_{r=1}^s (P_{i_r}+Q_{i_r})V_{i_s}\right)\\= &\frac{1}{M^p}\sum_{\bf{i}}\sum_{A_1,\cdots, A_p \subset [{\bf{i}}]} E \prod_{s=1}^p(P_{A_s^c}Q_{A_s}V_{i_s}).
\end{aligned}
\end{eqnarray}
Following arguments from (7.21) to (7.23) in                                                        \cite{benaych2016lectures}, we have \begin{eqnarray}\label{gam93}\sum_{s=1}^p |A_s| \geq 2|[\bf i]|.\end{eqnarray}

The next step is to show that \begin{eqnarray}\label{nb7fg}|Q_A V_i|\prec \Upsilon^{|A|}. \end{eqnarray} If $A=|1|$ (corresponding to the case $A=\{\bbi\}$), it follows from Lemma \ref{ldb} directly. For the case $|A|\geq 2$ we assume that $i=1$ and $A=\{\mu_1,\mu_2,\cdots,\mu_t\}$ with $t\geq 2$.  We have \begin{eqnarray}\begin{aligned}&Q_{\mu_2}(\bx_{\mu_1}^*G^{(1\mu_1)}\bx_{\mu_1} )\\&=Q_{\mu_2}\bx_{\mu_1}^*\left(G^{(1\mu_1\mu_2)}-\frac{G^{(1\mu_1\mu_2)}\by_{\mu_2}\by_{\mu_2}^*G^{(1\mu_1\mu_2)}}{1+\by_{\mu_2}^*G^{(1\mu_1\mu_2)}\by_{\mu_2}}\right)\bx_{\mu_1}\\&=Q_{\mu_2}\bx_{\mu_1}^*G^{(1\mu_1\mu_2)}\by_{\mu_2}\by_{\mu_2}^*G^{(1\mu_1\mu_2)}\bx_{\mu_1}G_{\mu_2\mu_2}^{(1\mu_1)}\\&=Q_{\mu_2}\left(\frac{G_{\mu_1\mu_2}^{(1)}}{G_{\mu_1\mu_1}^{(1)}}\by_{\mu_2}^* G^{(1\mu_1\mu_2)}\bx_{\mu_1}\right),\end{aligned}\end{eqnarray}
where in the second equality,  $Q_{\mu_2(}\bx_{\mu_1}^*G^{(1\mu_1\mu_2)}\bx_{\mu_1})$ vanishes because $\bx_{\mu_1}^*G^{(1\mu_1\mu_2)}\bx_{\mu_1}$ is measurable with respect to $H^{(\mu_2)}$, and in the last equality, we use $G_{\mu_1\mu_2}^{(1)}=G_{\mu_1\mu_1}^{(1)}G_{\mu_2\mu_2}^{(1\mu_1)}\by_{\mu_2}^* G^{(1\mu_1\mu_2)}\bx_{\mu_1}$.
For the term $\by_{\mu_2}^* G^{(1\mu_1\mu_2)}\bx_{\mu_1}$, by using Lemma \ref{ldb}, we have
\begin{eqnarray}
 \by_{\mu_2}^* G^{(1\mu_1\mu_2)}\bx_{\mu_1}=\sum_{j}^{(1)}  \bx_{\mu_2 j}^* G_{jk}^{(1\mu_1\mu_2)}\bx_{k\mu_1}+d_2\sum_k^{(1)} G_{2k}^{(1\mu_1\mu_2)}\bx_{k\mu_1}\prec \Upsilon.
\end{eqnarray}
Therefore $Q_{\mu_2}(\bx_{\mu_1}^*G^{(1\mu_1)}\bx_{\mu_1} )\prec \Upsilon^2.$
Furthermore, we have \begin{eqnarray}\label{3gaon}
\begin{aligned}
&Q_{\mu_3}Q_{\mu_2}(\bx_{\mu_1}^*G^{(1\mu_1)}\bx_{\mu_1} )\\=& Q_{\mu_3}Q_{\mu_2}\left(\frac{G_{\mu_1\mu_2}^{(1)}}{G_{\mu_1\mu_1}^{(1)}}\by_{\mu_2}^* G^{(1\mu_1\mu_2)}\bx_{\mu_1}\right)
\\=&Q_{\mu_2}Q_{\mu_3}\left\{\left(G_{\mu_1\mu_2}^{(1\mu_3)}+\frac{G_{\mu_1\mu_3}^{(1)}G_{\mu_3\mu_2}^{(1)}}{G_{\mu_3\mu_3}^{(1)}}\right)
\left(\frac{1}{G_{\mu_1\mu_1}^{(1\mu_3)}}-\frac{G_{\mu_1\mu_3}^{(1)}G_{\mu_3\mu_1}^{(1)}}{G_{\mu_1\mu_1}^{(1)}G_{\mu_1\mu_1}^{(1\mu_3)}G_{\mu_3\mu_3}^{(1)}}\right)\right. \\ &\left.  \times \by_{\mu_2}^*\left(G^{(1\mu_1\mu_2\mu_3)}-G^{(1\mu_1\mu_2\mu_3)}\by_{\mu_3}\by_{\mu_3}^*G^{(1\mu_1\mu_2\mu_3)}G_{\mu_3\mu_3}^{(1\mu_1\mu_2)}\right) \bx_{\mu_1}\right\}
\\=& Q_{\mu_2}Q_{\mu_3}\left\{\frac{G_{\mu_1\mu_2}^{(1\mu_3)}}{G_{\mu_1\mu_1}^{(1\mu_3)}}\by_{\mu_2}^*G^{(1\mu_1\mu_2\mu_3)}  \bx_{\mu_1}-\frac{G_{\mu_1\mu_2}^{(1\mu_3)}}{G_{\mu_1\mu_1}^{(1\mu_3)}}\frac{G_{\mu_2\mu_3}^{(1\mu_1)}}{G_{\mu_2\mu_2}^{(1\mu_1)}}\by_{\mu_3}^*G^{(1\mu_1\mu_2\mu_3)}\bx_{\mu_1}\right.
\\&\left. +\left(\frac{G_{\mu_1\mu_3}^{(1)}G_{\mu_3\mu_2}^{(1)}}{G_{\mu_3\mu_3}^{(1)}G_{\mu_1\mu_1}^{(1\mu_3)}}-\frac{G_{\mu_1\mu_2}^{(1\mu_3)}G_{\mu_1\mu_3}^{(1)}G_{\mu_3\mu_1}^{(1)}}{G_{\mu_1\mu_1}^{(1)}G_{\mu_1\mu_1}^{(1\mu_3)}G_{\mu_3\mu_3}^{(1)}}-\frac{(G_{\mu_1\mu_3}^{(1)})^2 G_{\mu_3\mu_2}^{(1)} G_{\mu_3\mu_1}^{(1)}}{G_{\mu_1\mu_1}^{(1)}G_{\mu_1\mu_1}^{(1\mu_3)}(G_{\mu_3\mu_3}^{(1)})^2}\right) \by_{\mu_2}^*G^{(1\mu_1\mu_2\mu_3)}  \bx_{\mu_1} \right.
\\ & - \left. \left(\frac{G_{\mu_1\mu_3}^{(1)}G_{\mu_3\mu_2}^{(1)}}{G_{\mu_3\mu_3}^{(1)}G_{\mu_1\mu_1}^{(1\mu_3)}}-\frac{G_{\mu_1\mu_2}^{(1\mu_3)}G_{\mu_1\mu_3}^{(1)}G_{\mu_3\mu_1}^{(1)}}{G_{\mu_1\mu_1}^{(1)}G_{\mu_1\mu_1}^{(1\mu_3)}G_{\mu_3\mu_3}^{(1)}}-\frac{(G_{\mu_1\mu_3}^{(1)})^2 G_{\mu_3\mu_2}^{(1)} G_{\mu_3\mu_1}^{(1)}}{G_{\mu_1\mu_1}^{(1)}G_{\mu_1\mu_1}^{(1\mu_3)}(G_{\mu_3\mu_3}^{(1)})^2}\right)\frac{G_{\mu_2\mu_3}^{(1\mu_1)}}{G_{\mu_2\mu_2}^{(1\mu_1)}}\by_{\mu_3}^*G^{(1\mu_1\mu_2\mu_3)}\bx_{\mu_1}
\right\},
\end{aligned}
\end{eqnarray}
where in the last step, we use $\frac{G_{\mu_2\mu_3}^{(1\mu_1)}}{G_{\mu_2\mu_2}^{(1\mu_1)}}=G_{\mu_3\mu_3}^{(1\mu_1\mu_2)}\by_{\mu_2}^* G^{(1\mu_1\mu_2\mu_3)}\by_{\mu_3}.$
For the term $Q_{\mu_2}Q_{\mu_3}\frac{G_{\mu_1\mu_2}^{(1\mu_3)}}{G_{\mu_1\mu_1}^{(1\mu_3)}}\by_{\mu_2}^*G^{(1\mu_1\mu_2\mu_3)}  \bx_{\mu_1}$ in the last step of \eqref{3gaon}, it vanishes because $\frac{G_{\mu_1\mu_2}^{(1\mu_3)}}{G_{\mu_1\mu_1}^{(1\mu_3)}}\by_{\mu_2}^*G^{(1\mu_1\mu_2\mu_3)}  \bx_{\mu_1}$ is measurable with respect to $H^{(\mu_3)}$. Noting that the other terms are all fractions of $G$ entries multiplying with quadratic form of $\by_{\mu_j}^*G^{(T)}\bx_{\mu_1} (j = 2,\cdots, M) $, and the fractions have property  that the entries in the numerators are off-diagonal and the entries in the denominator are diagonal. Furthermore, the numbers of off-diagonal entries in each term are at least two,  and $\by_{\mu_j}^*G^{(T)}\bx_{\mu_1} (j = 2,\cdots, M) $ appears once. Therefore,  we have $Q_{\mu_3}Q_{\mu_2}(\bx_{\mu_1}^*G^{(1\mu_1)}\bx_{\mu_1} )\prec \Upsilon^3.$\\
\indent We may continue in this manner. At the step when $Q_{\mu_t}$ is appended,  we expand the Green entries by \eqref{gfiminus}, change the order of $Q$ operators (attach the $Q_{\mu_t}$ to expended formulas), and keep the non-vanishing terms. After each step,  the number of off-diagonal G entries in the numerator in each term increases by one. Consequently, we get $$Q_{\mu_t}\cdots Q_{\mu_2}(\bx_{\mu_1}^*G^{(1\mu_1)}\bx_{\mu_1} )\prec \Upsilon^t.$$ Therefore, we conclude \eqref{nb7fg}.\\

Using  \eqref{b6f20},\eqref{gam93} and \eqref{nb7fg}, we find \begin{eqnarray}\begin{aligned}
E|\frac{1}{M}\sum_{i} V_{i}|^p\prec C_p\frac{1}{M^p}\sum_{\bf i} \Upsilon^{2|[\bf i]|}=C_p &\sum_{u=1}^p\Upsilon^{2u}\frac{1}{M^p}\sum_{\bf i} {\bf{1}}(|[{\bf i}]| = u)\\& \leq C_p\sum_{u=1}^p\Upsilon^{2u}M^{u-p}\leq C_p(\Upsilon+M^{-1/2})^{2p}\leq C_p \Upsilon^{2p},
\end{aligned}\end{eqnarray}
where in the fourth step we use the inequality $a^m b^n\leq (a+b)^{m+n}$ for positive $a,b$. Then it follows that $$\frac{1}{M}\sum_i V_i  =O_\prec (\Upsilon^2).$$ This together with \eqref{bnc45} concludes the third estimate in \eqref{fluct}.
\end{proof}

Then we obtain an equation involving $[v]$, which is an improvement on Lemma \ref{queq1}.
\begin{Lemma}\label{sscon}
(Strong self-consistent equation)
The following equation holds with high probability, uniformly for $z\in \mD(\tc,\tau)$:
\begin{eqnarray}\label{strse}
\begin{aligned}
(1-R_2)[v]-R_3[v]^2 =& \frac{1}{M}\sum_{i=1}^M\frac{d_i^2 t_i^2\mS_i}{b^2}-\frac{1}{M}\sum_{i=1}^M t_i^2 \mT_i + \frac{1}{M}\sum_{i=1}^M(\frac{2c t_i^2 d_i^2 \mS_i }{b^3}-\frac{2c t_i^3 d_i^4 \mS_i }{b^4}-\\& \frac{2c t_i^3 z d_i^2 \mS_i }{b^2}-\frac{2c t_i^2 d_i^2 \mT_i }{b^2})[v]+
O(\Lambda^3)+O(\max_i (|\mS_i|^2)+O(\max_i|\mT_i|^2))
\end{aligned}
\end{eqnarray}
\end{Lemma}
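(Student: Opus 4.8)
The plan is to rerun the expansion from the proof of Lemma~\ref{queq1}, keeping this time --- rather than throwing into the error --- the terms linear in $\mS_i$ or $\mT_i$ and the cross terms $\mS_i[v]$, $\mT_i[v]$, and to read off the coefficients of $[v]$ and $[v]^2$ as $R_2$ and $R_3$. As there, I would work on the event $\Omega(z)$, and separately under $1(\eta\ge1)$ using the trivial bounds $\|G^{(T)}\|\le1$; I abbreviate $W_i:=z(\ts-\ts_N)+D_i+\mT_i$, so that \eqref{seco1} becomes $G_{ii}=t_i-t_i^2W_i+t_i^3W_i^2+O(|W_i|^3)$, the remainder being $O(|W_i|^3)$ because $G_{ii}$ and $G_{ii}^{-1}$ are bounded by Lemma~\ref{lgii}. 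Averaging over $i\in\mI_M$ and subtracting $s$, the zeroth-order term $\frac1M\sum_i t_i-s$ vanishes, since $t_i^{-1}=-z+d_i^2/b-z\ts$ is exactly the denominator in the self-consistent equation \eqref{lslaw} for $s$ (with $\rho,c$ replaced by $\hat\rho,c_N$), so $\frac1M\sum_i t_i=s$.

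Next I would expand $W_i$ around its small pieces. With $y_i:=\mS_i+c(s-s_N)=\mS_i-c[v]$ and the geometric expansion $\frac1{b-y_i}=\frac1b+\frac{y_i}{b^2}+\frac{y_i^2}{b^3}+O(|y_i|^3)$ already used for Lemma~\ref{queq1}, together with the identity $z(\ts-\ts_N)=-cz[v]+O(|c-c_N|)$ coming from the definitions of $\ts$ and $\ts_N$, one obtains that $W_i$ equals $-c\left(z+\frac{d_i^2}{b^2}\right)[v]+\frac{d_i^2}{b^2}\mS_i+\mT_i+\frac{d_i^2 y_i^2}{b^3}$ up to terms of order $O(|y_i|^3)+O(|c-c_N|)$. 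Substituting this into $G_{ii}=t_i-t_i^2W_i+t_i^3W_i^2+\dots$, summing over $i$, dividing by $M$ and subtracting $s$, I would sort the result as follows: the $[v]$-linear contributions --- only from $-t_i^2W_i$ --- add up to $R_2[v]$; the $[v]^2$ contributions --- from the $d_i^2y_i^2/b^3$ term inside $-t_i^2W_i$ and from the square of the $[v]$-linear part of $W_i$ inside $t_i^3W_i^2$ --- add up to $R_3[v]^2$, reproducing \eqref{defR23}; the $\mS_i$- and $\mT_i$-linear terms form the first two sums of \eqref{strse}; and the cross terms $\mS_i[v]$ (from $d_i^2y_i^2/b^3$ and from the product of the $\mS_i$-linear and $[v]$-linear parts of $W_i$ inside $W_i^2$) and $\mT_i[v]$ (from the product of the $\mT_i$-linear and $[v]$-linear parts of $W_i$ inside $W_i^2$) assemble into the bracket-times-$[v]$ sum of \eqref{strse}. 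Moving $R_2[v]+R_3[v]^2$ to the left gives \eqref{strse}.

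It then remains to verify that everything discarded is $O(\Lambda^3)+O(\max_i|\mS_i|^2)+O(\max_i|\mT_i|^2)$. The discarded terms are of the types $\mS_i^2$, $\mT_i^2$, $\mS_i\mT_i$, $[v]^3$, $[v]^2\mS_i$, $\mS_i[v]^2$, the $O(|W_i|^3)$ tail, the $O(|c-c_N|)$ corrections, and the piece $z(\ts_N-\ts_N^{(i)})$ sitting inside $\mT_i$. Using Lemma~\ref{lemma123} (which gives $\Lambda_o+\max_i|\mS_i|+\max_i|\mT_i|\prec\Psi_\Lambda$ on $\Omega(z)$), Lemma~\ref{db432} (the $t_i$ are bounded above and below) and Lemma~\ref{lgii}, together with $|[v]|=\Lambda\le N^{-\tau/10}$, $|\mS_i|\le1$ and Young's inequality (for instance $\Lambda^2|\mS_i|\le\frac23\Lambda^3+\frac13|\mS_i|^3$), each of these falls into the stated error; the same computation runs verbatim under $1(\eta\ge1)$.

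The main obstacle is the bookkeeping: one must track every $\mS_i[v]$ and $\mT_i[v]$ contribution through both the geometric expansion of $\frac1{b-y_i}$ and the square $W_i^2$, and check that the coefficients of $[v]$ and $[v]^2$ come out to be exactly the $R_2$, $R_3$ of \eqref{defR23}. A point that needs care is the term $z(\ts_N-\ts_N^{(i)})$ inside $\mT_i$: expanding it with \eqref{gfiminus} produces the summand $zN^{-1}G_{i\bbi}G_{\bbi i}/G_{ii}$, and since $(i,\bbi)$ is an index pair this entry does not vanish --- one has to note that it is nonetheless $O(N^{-1})$ and therefore absorbed into the error.
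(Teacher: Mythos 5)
Your approach is essentially the paper's own proof, which is given in one line ("follows from Theorem~\ref{Wkloc} and \eqref{seco1}"), made explicit: feed \eqref{34jga} and \eqref{rg943} into \eqref{seco1}, average over $i$ using the identity $\frac1M\sum_i t_i = s$ coming from the non-asymptotic form of \eqref{lslaw}, and control the remainder on $\Omega(z)$ (and under $1(\eta\ge1)$) through Lemma~\ref{lemma123}. Your reading of the $[v]$- and $[v]^2$-coefficients as $R_2$ and $R_3$ of \eqref{defR23} is correct. Two small points first: if every $c$ in the non-asymptotic expressions is read consistently as $c_N$ (as the non-asymptotic self-consistent equation forces), then $z(\ts-\ts_N)=-c_Nz[v]$ is an exact identity and no $O(|c-c_N|)$ correction appears; and the subtlety you raise about the index-pair summand $G_{i\bbi}G_{\bbi i}/G_{ii}$ inside $z(\ts_N-\ts_N^{(i)})$ does not actually enter this proof, because $\mT_i$ is carried whole as a single symbol and is already bounded by $\Psi_\Lambda$ through Lemma~\ref{lemma123}, so its internal structure is never opened here.

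Where you should be more careful is the claim that the remaining linear and cross terms "assemble into" the first three sums of \eqref{strse} exactly. If you actually do the bookkeeping you will not reproduce the printed statement: since $\mS_i$ and $\mT_i$ both enter $W_i$ with a $+$ sign, the contribution $-t_i^2W_i$ yields $-\frac1M\sum\frac{d_i^2t_i^2\mS_i}{b^2}-\frac1M\sum t_i^2\mT_i$ with the same sign on both sums, whereas \eqref{strse} as printed has opposite signs; and the only source of a $\mT_i[v]$ cross term is $t_i^3W_i^2$, whose coefficient is $-2ct_i^3\bigl(z+d_i^2/b^2\bigr)$ rather than the $-2ct_i^2d_i^2/b^2$ appearing in the bracket. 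These are typos in the paper's display and they are harmless downstream, since the proof of Theorem~\ref{Lolaw} only uses that these weights are uniformly bounded when it invokes Lemma~\ref{fluctuation}; but since you present the exact match as something you would verify, you should actually carry out the verification and record the discrepancy rather than assert agreement.
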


\begin{proof}
This follows from Theorem \ref{Wkloc} and \eqref{seco1}.
\end{proof}

\subsubsection{Proof of Theorem \ref{Lolaw}}
The main step is to show that for any $\sigma \in [1/4,1]$,    $\Lambda\prec (N\eta)^{-\sigma} $ will imply $\Lambda \prec  (N\eta)^{-\frac{1}{2}-\frac{\sigma}{2}}$.
Denote \begin{eqnarray}
\begin{aligned}
g_{1i}=t_i^2 ,\quad   g_{2i}=\frac{2c t_i^2 d_i^2 }{b^2},\quad g_{3i}=\frac{d_i^2 t_i^2}{b^2}, \quad g_{4i}=\frac{2c t_i^2 d_i^2}{b^3}-\frac{2c t_i^3 d_i^4 }{b^4}- \frac{2c t_i^3 z d_i^2}{b^2}
\end{aligned}
\end{eqnarray}
Fix  $\epsilon \in (0, \tau/10)$ and $\sigma = 1/4$.  Define an event $\Xi$ as the intersection of following events:
\begin{eqnarray}\label{Eveset}\begin{aligned} &E_1 =\{ \text{the strong self-consistent equation in Lemma \ref{sscon} holds}\}, \\& E_2=\{ \Lambda +\Lambda_o \leq N^{\epsilon/2}(N\eta)^{-\sigma}\}, \\& E_3=\Big\{\Lambda_o+\max_i |\mS_i|+\max_i |\mT_i| \leq N^{\epsilon/2}\sqrt{\frac{\Im m+\Lambda}{N\eta}}\Big\},
\\& E_4=\Big\{\frac{1}{M}\left(|\sum_{i=1}^M g_{1i}\mZ_i|+ |\sum_{i=1}^M g_{2i}\mZ_i| +|\sum_{i=1}^M g_{3i}\mY_i| +|\sum_{i=1}^M g_{4i}\mY_i|\right) \leq N^\epsilon \frac{\Im m+\Lambda}{N\eta}\Big\}
\end{aligned}\end{eqnarray}
We know that $\Xi$ holds with high probability.
For the following, we fix an realization of $H \in \Xi$.
Denote $\alpha_0=N^\epsilon (N\eta)^{-\frac{1}{2}-\frac{\sigma}{2}}$. \\
\indent First consider the case $\alpha\leq \alpha_0$.  We find that \begin{eqnarray}\begin{aligned}
|[v]|^2 & \leq\left |[v]^2-\frac{(1-R_2)[v]}{R_3}\right|+|\frac{(1-R_2)[v]}{R_3}| \\ &\leq N^\epsilon \frac{\Im m+\Lambda}{N\eta}+\frac{|[v]|^2}{4 N^{\epsilon/2}}+N^{\epsilon/2}( N^{\epsilon}\frac{\Im m+\Lambda}{N\eta})^2+\frac{\alpha |[v]|}{c'}\\ & \leq N^\epsilon \frac{C\alpha +N^{\epsilon/2}(N\eta)^{-\sigma}}{N\eta} + o(1)|[v]|^2+ \frac{\alpha |[v]|}{c'},
\end{aligned}\end{eqnarray}
where in the second step we use  \eqref{strse} and the second statement in Lemma \ref{coebd}, and in the last step we use $\Im m \sim \sqrt{\kappa+\eta} \sim \alpha$ and $N^{2\epsilon}\frac{\Im m+\Lambda}{N\eta}\ll 1 $
and hence \begin{eqnarray}
(\Lambda-\frac{\alpha}{2c'})^2\leq N^\epsilon \frac{C\alpha +N^{\epsilon/2}(N\eta)^{-\sigma}}{N\eta}+\frac{\alpha^2}{4c'}.
\end{eqnarray}
Taking square root, recalling that $\alpha\leq \alpha_0$ and using the fact that $\sigma\leq 1$, we get \begin{eqnarray}
\Lambda \leq C \alpha_0.
\end{eqnarray}

Next, we consider the case $\alpha>\alpha_0.$ Suppose $|R_3(z)|\leq C_3 $ for $z \in D(\tc,\tau)$. Assume first that $\Lambda \leq
\alpha/2C_3$. By using \eqref{strse},   we get \begin{eqnarray}
\alpha \Lambda \leq C_3 \Lambda^2+N^\epsilon \frac{\Im m+\Lambda}{N\eta}+o(1)\Lambda^2\leq \frac{\alpha\Lambda}{2}+N^\epsilon \frac{C\alpha +N^{\epsilon/2}(N\eta)^{-\sigma}}{N\eta}.
\end{eqnarray}
This implies that \begin{eqnarray}
\Lambda\leq CN^{\epsilon}\frac{1}{N\eta}+\frac{\alpha_0^2}{N^{\epsilon/2}\alpha}.
\end{eqnarray}
Since $\alpha>\alpha_0
$, we have $\Lambda \ll \alpha_0 < \alpha$. Thus if  $\alpha>\alpha_0$, we have either $\Lambda > \alpha/2C_3$ or $\Lambda \ll \alpha$. By the continuity of $\Lambda(z)$ in $\eta$, and the fact that $\Lambda \ll 1\sim \alpha $ when $\eta\sim 1$, we must have $\Lambda \ll \alpha$. Thus $\Lambda \leq \alpha/2C_3$, and it follows that $\Lambda \ll \alpha_0$.

We conclude that for any $\sigma\in[1/4, 1]$,  $\Lambda \prec (N\eta)^{-\sigma}$ implies $\Lambda\prec (N\eta)^{-\frac{1}{2}-\frac{\sigma}{2}}.$ Therefore, we can start from $\sigma=1/4$ which is guaranteed by Theorem \ref{Wkloc}, and iterate the procedure finite number of times to get $\Lambda \prec (N\eta)^{-1}.$ \eqref{Lolaw2} follows from Lemma \eqref{Lolaw1} and Lemma \ref{lemma123}. This concludes the proof of Theorem \ref{Lolaw}.
\qed

\subsection{Proof of Proposition \ref{crare}}
The proof relies on the strong local law Theorem \ref{Lolaw}, and is analogous to the counter part for Wigner matrix case in \cite{benaych2016lectures} or sample covariance matrix in \cite{bao2015universality}.\\

Note that by Lemma \ref{sqrot}, it is easy to obtain that $\mu_1-\lambda_r>  -N^{-2/3+\epsilon}$ holds with high probability. Therefore, we just need to show that $\mu_1\leq \lambda_r+N^{-2/3+\epsilon}$. It is well known that the largest singular value of $\bX$ is bounded above with high probability. Together with the assumption on the boundedness of the largest singular value of $\bR$, we know that there exists $C_r$ such that $\mu_1\leq C_r$. It remains to show that, for any fixed $\epsilon>0$, there is no eigenvalue of $\bQ$ in \begin{eqnarray}
I :=[\lambda_r+N^{-2/3+\epsilon}, C_r]
\end{eqnarray} with high probability.
Recall \eqref{strse} and the event set defined in \eqref{Eveset}, we have that \begin{eqnarray*}
\Lambda \leq \frac{1}{\alpha}\left(\Lambda^2 + N^\epsilon \frac{\Im m+\Lambda}{N\eta}\right).
\end{eqnarray*}
Thus by the Theorem \ref{Lolaw}, we have \begin{eqnarray}
\Lambda \prec \frac{1}{\sqrt{\kappa+\eta}}\left(\frac{\Im m}{N\eta} +\frac{1}{(N\eta)^2}\right),
\end{eqnarray}
which is exactly the same as Lemma 9.2 in \cite{benaych2016lectures}. Therefore we can proceed as the proof of Proposition 9.1 in \cite{benaych2016lectures}. \qed

\section{Optical Theorems}\label{secoptth}
\subsection{Preliminaries}
We list some notations and useful expression that we will frequently use later. First we consider then expansion of term $G_{aa}$.
Using schur complement formula, we have \begin{eqnarray}\label{Gaainver}\begin{aligned}
&\frac{1}{G_{aa}}=-z-\gamma d_a^2 G_{\bba\bba}^{(a)} - \gamma \sum_{\mu,\nu}x_{a\mu} G_{\mu\nu}^{(a)}x_{a\nu}-2 \gamma d_a\sum_{\mu} G_{\bba\mu}^{(a)}x_{a\mu},\\
&\frac{1}{G_{\bba\bba}^{(a)}}=-1-\gamma\sum_{j,k}^{(a)} x_{j\bba}G_{jk}^{(a\bba)} x_{ka}.
\end{aligned}\end{eqnarray}
Recall notations $b$ in \eqref{rescb} and $\tb$ in \eqref{deftb}.
Denote \begin{eqnarray}\label{defmZmY}\begin{aligned}
& \beta_a:=\frac{1}{-\gamma d_a^2 G_{\bba\bba}^{(a)}-\tb}, \quad \mY_{a}: =\gamma \left(cm(E_+)- \sum_{j,k}^{(a)}x_{j\bba}G_{jk}^{(a\bba)}x_{k\bba}\right),\\
&\mZ_{a}: =E_+-z+\gamma \left (E_+\tm(E_+)-\sum_{\mu,\nu}x_{a\mu} G_{\mu\nu}^{(a)}x_{a\nu}-2 d_a\sum_{\mu} G_{\bba\mu}^{(a)}x_{a\mu}\right).\\
\end{aligned}
\end{eqnarray}
\begin{Lemma}\label{mYZorder}For $z$ in $\mathcal{E}_\epsilon(t)$ defined in \eqref{domainofz},
\begin{eqnarray}\label{89agi}m(E_+)-m(z)=O(\Psi),\end{eqnarray}
and \begin{eqnarray}\label{mYmZbound}\mY_a, \mZ_a \prec \Psi. \end{eqnarray}
\end{Lemma}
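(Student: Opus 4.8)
The plan is to derive both statements directly from three inputs that are already available: the square-root edge behaviour of the limiting density (the rescaled, time-$t$ version of Lemma~\ref{sqrot}), the large deviation bounds of Lemma~\ref{ldb}, and the entrywise local law \eqref{0011a} together with the minor expansion \eqref{gfiminus}. Throughout I would fix the constant $C'$ in the definition \eqref{defPsi} of $\Psi$ to be large (say $C'\ge 1$), since every error term below is a power of $N^{\epsilon/2}$ times $\Psi$ that must be reabsorbed into $\Psi$.

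\emph{Step 1: the estimate \eqref{89agi}.} By \eqref{domainofz}, every $z\in\mathcal{E}_\epsilon(t)$ satisfies $|z-E_+|\le 2N^{-2/3+\epsilon}$. Since $E_+=E_+(t)$ is the right edge of the limiting law of $\hat{\bQ}(t)$, at which the density vanishes like a square root --- this is the rescaled, time-$t$ analogue of Lemma~\ref{sqrot}, whose only structural input is Assumption~\ref{assump3}, and that assumption is preserved along the flow \eqref{evofdi} after the rescaling in \eqref{Linkfunc2} --- the Stieltjes transform $m(t,\cdot)$ is $\tfrac12$-H\"older in a fixed neighbourhood of $E_+$, so $|m(E_+)-m(z)|\le C|z-E_+|^{1/2}\le CN^{-1/3+\epsilon/2}\le\Psi$, which is \eqref{89agi}. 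I would also record the consequences used later: because $\rho_0(E_+)=0$ forces $m(E_+)\in\mathbb{R}$, one gets $\Im m(z)=O(\Psi)$ on $\mathcal{E}_\epsilon(t)$, hence $\Im\tm(z)$, $\Im b(z)$, $\Im\tb(z)$, $\Im\xi$, $\Im\hat w$ are all $O(\Psi)$ there (each being a Lipschitz function of $m$ near $E_+$).

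\emph{Step 2: the bounds \eqref{mYmZbound}.} For $\mY_a$ I would condition on $H^{(a\bba)}$ (which is independent of $\{x_{j\bba}\}_j$) and split the quadratic form into its conditional mean $\tfrac1N\sum_j^{(a)}G_{jj}^{(a\bba)}$ and a fluctuation. By Lemma~\ref{ldb}(i)--(ii) the fluctuation is $\prec(\tfrac1{N^2}\sum_{j,k}^{(a)}|G_{jk}^{(a\bba)}|^2)^{1/2}$, which by the Ward identity (exactly as in \eqref{920en}) equals $(\tfrac1{N^2\eta}\sum_j^{(a)}\Im G_{jj}^{(a\bba)})^{1/2}$; the local law \eqref{0011a} with \eqref{gfiminus} (using $|G_{ja}|,|G_{j\bba}|\prec\Psi$ for $j\neq a$ and $|G_{aa}|\sim 1$) gives $\Im G_{jj}^{(a\bba)}\prec\Psi$ via Step~1, so the fluctuation is $\prec(\Psi/N\eta)^{1/2}\prec\Psi$. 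For the mean, \eqref{gfiminus} gives $|G_{jj}^{(a\bba)}-G_{jj}|\prec\Psi^2$ ($j\neq a$), so $\tfrac1N\sum_j^{(a)}G_{jj}^{(a\bba)}=c_Nm_N(z)+O(\Psi)=cm(E_+)+O(\Psi)$ by the local law $m_N-m\prec\Psi$, by \eqref{89agi}, and by $c_N\to c$ (reading $m,E_+,b,\ldots$ as the non-asymptotic objects built from $c_N$ and $d_i(t)$, so that $c_Nm_N$ and $cm(E_+)$ agree up to $O(\Psi)$). Hence $\mY_a\prec\Psi$. The argument for $\mZ_a$ is parallel: $E_+-z=O(N^{-2/3+\epsilon})=O(\Psi)$; for $\gamma\big(E_+\tm(E_+)-\sum_{\mu,\nu}x_{a\mu}G_{\mu\nu}^{(a)}x_{a\nu}\big)$ the fluctuation is $\prec(\tfrac1{N^2}\sum_{\mu,\nu}|G_{\mu\nu}^{(a)}|^2)^{1/2}$, and the Ward identity for the $\mI_N$-block of $G^{(a)}$ together with \eqref{0011a} (each $G_{\mu\mu}$ converges to $\tb/(\gamma d_i^2-\xi)$ or $-1/b$, all with imaginary part $O(\Psi)$ by Step~1) bounds it by $(\Psi/N\eta)^{1/2}\prec\Psi$, while $\tfrac1N\sum_\mu G_{\mu\mu}^{(a)}=\tfrac1N\sum_\mu G_{\mu\mu}+O(\Psi)=z\tm_N(z)+O(\Psi)=E_+\tm(E_+)+O(\Psi)$ using $\tm=-(1-c)/z+cm$, the local law, and \eqref{89agi}; finally $\gamma d_a\sum_\mu G_{\bba\mu}^{(a)}x_{a\mu}\prec(\tfrac1N\sum_\mu|G_{\bba\mu}^{(a)}|^2)^{1/2}\le(C\Im G_{\bba\bba}^{(a)}/N\eta)^{1/2}$ with $\Im G_{\bba\bba}^{(a)}=\Im(-1/b)+O(\Psi)=O(\Psi)$ by \eqref{0011a}, hence $\prec\Psi$. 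Adding the three pieces gives $\mZ_a\prec\Psi$.

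\emph{Main obstacle.} There is no genuine conceptual difficulty; the work is entirely in the exponent bookkeeping, and the tightest estimate is the quadratic-form fluctuation $(\Psi/N\eta)^{1/2}=N^{-1/3+(C'+1)\epsilon/2}$, which is $\le\Psi$ only because $C'$ may be taken large --- so one must commit to $C'\ge 1$ at the outset. Two points need a little care: (i) the $c$-versus-$c_N$ discrepancy in the definitions \eqref{defmZmY} of $\mY_a,\mZ_a$, resolved by interpreting $m,\tm,E_+,b,\xi$ as the non-asymptotic quantities attached to $c_N$ and $d_i(t)$; and (ii) checking that \eqref{0011a} and the rescaled Lemma~\ref{sqrot} hold uniformly for $z\in\mathcal{E}_\epsilon(t)$ and uniformly in $t\ge 0$ --- but that uniformity is exactly what was invoked to assert \eqref{0011a} in the first place, so no new input is required.
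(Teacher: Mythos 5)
Your proof is correct, and for part \eqref{mYmZbound} it follows essentially the same path as the paper: the paper telescopes $\mY_a$ from $cm(E_+)$ through $\tfrac1N\sum_jG_{jj}$ and $\tfrac1N\sum_j^{(a)}G_{jj}^{(a\bba)}$ to the quadratic form, which is the same decomposition as your conditional-mean-plus-fluctuation split, and both use Lemma~\ref{ldb} with the Ward identity for the fluctuation and the local law \eqref{0011a}, the minor expansion \eqref{gfiminus}, and \eqref{89agi} for the deterministic part. For \eqref{89agi}, though, you take a somewhat different route than the paper. The paper works directly on the inverse map: it Taylor-expands $z=\hat\phi(w)$ around $w=\xi$, verifies by explicit computation that $\hat\phi''(\xi)>c_1>0$ (using Assumptions~\ref{assump2} and~\ref{assump3}), obtains $|w(z)-w(E_+)|=O(\sqrt{|z-E_+|})$, and then reads off \eqref{89agi} since $m$ depends smoothly on $w$. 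You instead invoke the (rescaled, time-$t$) square-root edge behaviour of Lemma~\ref{sqrot} and deduce $\tfrac12$-H\"older continuity of $m$ near $E_+$. These are really the same fact viewed in different coordinates --- the non-degeneracy $\hat\phi''(\xi)>0$ is exactly what produces the square-root density, and the H\"older-$\tfrac12$ bound on $m$ is its integrated consequence --- but your version leans on the rescaled Lemma~\ref{sqrot} as a black box, whose own proof would in any case require the $\hat\phi''>0$ computation that the paper carries out directly here; so you save no work, only relocate it. Your explicit remarks about needing $C'\ge 1$ in \eqref{defPsi} and about reading $c,m,b,\xi$ as the $N$-dependent (i.e.\ $c_N$-based) quantities are both correct; the paper leaves these points implicit.
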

\begin{proof}
We show \eqref{89agi} first. Recalling that  $z= \hat{\phi}(w)$, $E_+ = \hat{\phi}(\xi)$ and $\hat{\phi}'(\xi)=0$, we have for some sufficiently small constant $\epsilon'$, \begin{eqnarray}
z-E_+ = \frac{1}{2}\hat{\phi}''(\xi)(w-\xi)^2 + O(|w-\xi |)^3, \; \mbox{for} \;  |z-E_+|\leq \epsilon_1,
\end{eqnarray}
where we use a simple fact  $\hat{\phi}'''(\xi) < C$, where $C$ is a large constant.
By calculations, we find there exists a positive constant $c_1$ such that  \begin{eqnarray}
\hat{\phi}''(\xi) = \frac{1}{M}\sum_i \frac{\frac{-4\gamma^2 d_i^2}{b}-2(1-c)\gamma^2}{(\gamma d_i^2-\xi)^3} + 2\xi \frac{\gamma^2}{M}\left (\sum_i \frac{1}{(\gamma d_i^2-\xi)^2}\right)^2>c_1>0,
\end{eqnarray}
where we use Assumptions \ref{assump2} and \ref{assump3}.  Therefore, we get $$|w(z)-w(E_+)|= \sqrt{|z-E_+|}(1+o(1))=O( \Psi). $$
Now \eqref{89agi} easily follows from this.\\
\indent Then we deal with $\mY_a$.  Write \begin{eqnarray} \mY_a= (cm-\frac{1}{N}\sum_j G_{jj})+(\frac{1}{N}\sum_j G_{jj}-\frac{1}{N}\sum_{j}^{(a)} G_{jj}^{(a\bba)} )+(\frac{1}{N}\sum_{j}^{(a)} G_{jj}^{(a\bba)}-\sum_{j,k}^{(a)}x_{j\bba}G_{jk}^{(a\bba)}x_{k\bba}).\end{eqnarray}
From \eqref{0011a} and \eqref{gfiminus}, we see that terms in the first two brackets are of order $O(\Psi)$. Using \eqref{ldb} and Ward identity, we find for the last term that $$ \left|\frac{1}{N}\sum_{j}^{(a)} G_{jj}^{(a\bba)}-\sum_{j,k}^{(a)}x_{j\bba}G_{jk}^{(a\bba)}x_{k\bba}\right|\prec \left(\frac{1}{N^2}\sum_{j,k}^{(a)}|G_{jk}^{(a\bba)}|^2\right)^{1/2}\prec \Psi,$$
Therefore we conclude $\mY_a\prec \Psi$.   $\mZ_a$ can be dealt by a similar argument to $\mY_a$ with the help of \eqref{0ab3m}.
\end{proof}

From \eqref{Gaainver}, \eqref{defmZmY}, and using \eqref{mYmZbound}, it is easy to obtain the following from Taylor's expansion, \begin{eqnarray}\label{taylorofGaa}\begin{aligned}
&G_{aa}=\beta_{a}-\beta_{a}^2 \mZ_a+\beta_{a}^3 \mZ_a^2 + O(\Psi^3),\\
&G_{\bba\bba}^{(a)}=-\frac{1}{b}-\frac{1}{b^2}\mY_a- \frac{1}{b^3}\mY_a^2 + O(\Psi^3)
\end{aligned}
\end{eqnarray}
Notice that $\xi = b\tb$,  using Taylor's expansion again, we get \begin{eqnarray}\label{taylorofbetaa}
\begin{aligned}
&\beta_a =\frac{b}{\gamma d_a^2-\xi} -\frac{\gamma d_a^2 }{(\gamma d_a^2-\xi)^2}\mY_a+\frac{\gamma d_a^2 \tb }{(\gamma d_a^2-\xi)^3}\mY_a^2+O(\Psi^3),\\
&\beta_a G_{\bba\bba}^{(a)2}=\frac{1}{(\gamma d_a^2-\xi)b} + \frac{\gamma d_a^2-2\xi}{(\gamma d_a^2-\xi)^2 b^2
}\mY_a+O(\Psi^2),\\
&\beta_a^2 G_{\bba\bba}^{(a)2}=\frac{1}{(\gamma d_a^2-\xi)^2}-\frac{2\tb}{(\gamma d_a^2-\xi)^3}\mY_a+ \frac{3\tb^2}{(\gamma d_a^2-\xi)^4}\mY_a^2+O(\Psi^3).
\end{aligned}
\end{eqnarray}

Next, we can expand $G_{\bba\bba}$ similarly. Let \begin{eqnarray}
\begin{aligned}
&\alpha_a := \frac{1}{-\gamma d_a^2 G_{aa}^{(\bba)}-b},\quad  \mU_a := E_+-z+\gamma \left( E_+\tm(E_+) -\sum_{\mu,\nu}^{(\bba)} x_{a\mu}G_{\mu\nu}^{(a\bba)}G_{a\nu}\right),\\
&\mV_a := \gamma\left(cm(E_+) -\sum_{j,k} x_{j\bba} G_{jk}^{(\bba)} x_{k\bba} -2d_a \sum_j G_{aj}^{(\bba)}x_{j\bba}\right).
\end{aligned}
\end{eqnarray}
Then it is easy to obtain that \begin{eqnarray}\label{taylorofGbba}\begin{aligned}
&G_{\bba\bba}=\alpha_{a}-\alpha_{a}^2 \mV_a+\alpha_{a}^3 \mV_a^2 + O(\Psi^3),\\
&G_{aa}^{(\bba)}=-\frac{1}{\tb}-\frac{1}{\tb^2}\mU_a- \frac{1}{\tb^3}\mU_a^2 + O(\Psi^3).
\end{aligned}\end{eqnarray}
And we have \begin{eqnarray}\label{taylorofalphaa}\begin{aligned}
&\alpha_a =\frac{\tb}{\gamma d_a^2-\xi} -\frac{\gamma d_a^2 }{(\gamma d_a^2-\xi)^2}\mU_a+\frac{\gamma d_a^2 b }{(\gamma d_a^2-\xi)^3} \mU_a^2+O(\Psi^3),\\
&\alpha_a G_{aa}^{(\bba)2}=\frac{1}{(\gamma d_a^2-\xi)\tb} + \frac{\gamma d_a^2-2\xi}{(\gamma d_a^2-\xi)^2 \tb^2}\mU_a+O(\Psi^2),\\
&\alpha_a^2 G_{aa}^{(\bba)2}=\frac{1}{(\gamma d_a^2-\xi)^2}-\frac{2b}{(\gamma d_a^2-\xi)^3}\mU_a+\frac{3b^2}{(\gamma d_a^2-\xi)^4}\mU_a + O(\Psi^3).
\end{aligned}
\end{eqnarray}
For $\alpha>2M$, we have \begin{eqnarray}\label{taylorofGalphaalpha}
G_{\alpha\alpha}=-\frac{1}{b}-\frac{\mV_\alpha}{b^2}-\frac{\mV_{\alpha}^2}{b^3}+O(\Psi^3),
\end{eqnarray}
where $$\mV_\alpha = \gamma(cm-\sum_{j,k} x_{j\alpha}G_{jk}^{(\alpha)}G_{k\alpha}).$$
Similar to Lemma \ref{mYZorder}, we have \begin{eqnarray}\label{mUVorder}
\mU_a, \mV_a \prec \Psi, \; a \in \mI_M; \quad    \mV_\alpha \prec \Psi, \; \alpha \in [\![2M+1, M+N]\!].
\end{eqnarray}

We repeat equations \eqref{R1} and \eqref{R2} here, since they play crucial role in deriving the optical theorems, \begin{eqnarray}\label{R1g}
\frac{1}{N}\sum_{i=1}^M \frac{\gamma  b(E_+ b^2+\xi)}{(\gamma d_i^2-\xi)^2}=1,
\end{eqnarray}
and \begin{eqnarray}\label{R2g}
\frac{1}{\gamma^2}\frac{1}{N}\sum_{i=1}^M\frac{b^2}{(\gamma d_i^2-\xi)^2}=-\frac{1}{\gamma b^3}-\frac{1}{N}\sum_{i=1}^M\frac{(2 E_+ b- \gamma(1-c))^2}{(\gamma d_i^2-\xi)^3}-\frac{1}{N}\sum_{i=1}^M\frac{E_+}{(\gamma d_i^2-\xi)^2}
\end{eqnarray}

For $k \in \mathbb{N}$, let \begin{eqnarray}\label{notatvarPhi}
\begin{aligned}
&\varphi_k:=\frac{1}{N}\sum_{a}\frac{1}{(\gamma d_a^2-\xi)^k},\; \psi_k:=\frac{1}{N}\sum_{a}\frac{\gamma d_a^2}{(\gamma d_a^2-\xi)^k},\;
\varpi_2:=\sum \frac{\tb^2}{(\gamma d_a^2-\xi)^2}+\frac{1-c}{b^2}.\\
&\Phi_1 := \frac{1}{N}\sum \left(\frac{\gamma^3 d_a^2 \tb}{(\gamma d_a^2-\xi)^3}+ \frac{2\gamma^3  d_a^2 b E_+}{(\gamma d_a^2-\xi)^3}+\frac{\gamma^2 b^3 E_+^2}{(\gamma d_a^2-\xi)^3}\right),\\
&\Phi_2 := \frac{1}{N}\sum \left(\frac{\gamma^3  d_a^2 b E_+^2}{(\gamma d_a^2-\xi)^3}+\frac{2\gamma^3 d_a^2 \tb E_+}{(\gamma d_a^2-\xi)^3}+\frac{\gamma^2\tb^3}{(\gamma d_a^2-\xi)^3}\right)-\gamma^2 (1-c)\frac{1}{b^3}.
\end{aligned}
\end{eqnarray}
The above quantities $\varphi_2, \psi_2,\varpi_2,\Phi_1, \Phi_2$ will appear naturally in the derivation of optical theorems, and the following lemma will help simplify the calculations, see e.g. \eqref{optfromX22}.
\begin{Lemma}\label{Phivalue} Denote $h=E_+ b+\tb$. We have
\begin{eqnarray}\label{relofvarphi}
\varphi_2 = \frac{1}{\gamma b^2 h}, \quad \psi_2 = \frac{1}{\gamma}-E_+ b^2 \varphi_2, \quad  \varphi_3= -\left(\frac{1}{\gamma^3 h^3}+\frac{1}{\gamma b^3 h^2}+\frac{E_+}{\gamma b^2 h^3}\right) \quad \varpi_2=E_+^2 b^2 \varphi_2
\end{eqnarray}
\begin{eqnarray}\label{relofPhi}\Phi_1= -b^3 \varphi_2, \; \mbox{and}\quad   \Phi_2 = -\tb b^2 \varphi_2.
\end{eqnarray}
\end{Lemma}
\begin{proof}
From \eqref{R1g} and \eqref{xibtb}, we see that $\varphi_2 = 1/\gamma b^2 h.$
It follows that $\psi_2 = \frac{1}{\gamma}-E_+ b^2 \varphi_2$ by using a simple fact $\varphi_1 = (b-1)/\gamma b$. The third relation in \eqref{relofvarphi} is obtained from \eqref{R2g}. The last one in \eqref{relofvarphi} can be verified without difficulty. Next we verify $\Phi_1$,
\begin{eqnarray*}\begin{aligned}\Phi_1 &= \frac{1}{N}\sum_a \left(\frac{\gamma^3 d_a^2 \tb}{(\gamma d_a^2-\xi)^3}+ \frac{2\gamma^3 b d_a^2 E_+}{(\gamma d_a^2-\xi)^3}+\frac{\gamma^2 b^3 E_+^2}{(\gamma d_a^2-\xi)^3}\right)\\
&=\frac{1}{N}\sum_a \left(\frac{\gamma^2  \tb}{(\gamma d_a^2-\xi)^2}+ \frac{2\gamma^2 b E_+}{(\gamma d_a^2-\xi)^2}+\frac{\gamma^2 b \tb^2 +2\gamma^2 b^2 \tb E_+ + \gamma^2 b^3 E_+^2}{(\gamma d_a^2-\xi)^3}\right)\\
&=\frac{\gamma^2 }{N}\sum_a \left(\frac{\tb+E_+ b}{(\gamma d_a^2-\xi)^2}+ \frac{E_+ b}{(\gamma d_a^2-\xi)^2}+\frac{b(\tb+E_+ b)^2}{(\gamma d_a^2-\xi)^3}\right)\\
&=\frac{\gamma}{b^2}+\frac{\gamma^2 }{N}\sum_a \left( \frac{E_+ b}{(\gamma d_a^2-\xi)^2}+\frac{b(\tb+E_+ b)^2}{(\gamma d_a^2-\xi)^3}\right)\\
&=-b^3 \varphi_2,
 \end{aligned}\end{eqnarray*}
 where in the last step we use \eqref{R2g}.
 Similarly, we can get $\Phi_2 =-\tb b^2 \varphi_2$.
\end{proof}


There are some useful relations between the quantities defined in \eqref{Xset1} and \eqref{Xset2}. We list those that will be used frequently later, for example, see \eqref{03tm8} below.
\begin{Lemma}\label{multiErel}
\begin{eqnarray}\label{EX44}
\E X_{44}'''=E_+ \E X_{44}'' +O(\Psi^5)= E_+^2 \E X_{44}'+O(\Psi^5) = E_+^3 \E X_{44} +O(\Psi^5),
\end{eqnarray}
\begin{eqnarray}\label{EX43}
\E X_{43}''=E_+  \E X_{43}'+O(\Psi^5) = E_+^2 \E X_{43} +O(\Psi^5),
\end{eqnarray}
\begin{eqnarray}\label{EX42}
\E X_{42}'=E_+ \E X_{42} + O(\Psi^5),\quad  (E_+-z)\E X_{22}'=E_+(E_+ -z)\E X_{22}+O(\Psi^5).
\end{eqnarray}
\end{Lemma}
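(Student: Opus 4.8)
The plan is to establish each identity by inserting the appropriate resolvent expansion (Lemma \ref{gfi}, parts (iv) and (v)) to peel off one Greek-letter index at a time, and then to replace the resulting ``extra'' resolvent diagonal entry by its deterministic approximation from \eqref{0011a}, paying for each replacement an error of size $O(\Psi)$. Since every quantity $X_{44}^{\bullet}$, $X_{43}^{\bullet}$, $X_{42}^{\bullet}$, $X_{22}^{\bullet}$ is already $O(\Psi^4)$ (respectively $O(\Psi^2)$ for the $X_{22}$ family), a multiplicative $O(\Psi)$ correction contributes only $O(\Psi^5)$, which is exactly the stated error. The key structural point is that, for indices $\mu \in [\![2M+1,M+N]\!]$, the factor $G_{\mu\mu}$ is well-approximated by $-1/b(z)$, whereas when we want to compare, say, $\frac{1}{N^3}\sum G_{i\alpha}G_{\alpha\beta}G_{\beta\mu}G_{\mu i}$ with $\frac{1}{N^3}\sum G_{i\alpha}G_{\alpha\beta}G_{\beta t}G_{ti}$ (the difference between $X_{44}'''$ and $E_+ X_{44}''$), one uses the off-diagonal identity $G_{\beta\mu} = -G_{\mu\mu}(\bY^*G^{(\mu)})_{\beta\mu}$ together with the relation $E_+\tm(E_+) = -1 + $ (lower order), i.e. the self-consistent equation in the form encoded by \eqref{deftb}–\eqref{xibtb}, to convert a sum over a Greek index into $E_+$ times a sum over a Latin index.

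Concretely, for \eqref{EX44} I would start from $\E X_{44}'''$ and write $G_{\beta\mu}$ and $G_{\mu i}$ via \eqref{gfimunu}/\eqref{gfiimu}, expand $G_{\mu\mu} = -1/b + O(\Psi)$, and perform the large-deviation average over the $\mu$-summation using Lemma \ref{ldb}; the leading term produces $\frac{1}{N^2}\sum G_{i\alpha}G_{\alpha\beta}\big(\text{diagonal resolvent}\big)$, and collecting the deterministic pieces together with \eqref{relofmtm}-type identities yields $E_+\E X_{44}'' + O(\Psi^5)$. Iterating the same move on the $\beta$- and then the $\alpha$-index (each time converting one Greek sum into a factor $E_+$ times a Latin sum, using the corresponding entrywise law $G_{\beta\beta} \approx \tb/(\gamma d_\beta^2-\hat w)$ and the identity $\xi = b\tb$) gives the chain down to $E_+^3\,\E X_{44}$. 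The identities \eqref{EX43} and \eqref{EX42} are proved the same way but with one fewer (resp. two fewer) peeling steps; the second identity in \eqref{EX42} is slightly different in that the prefactor $(E_+-z)$ is deterministic of size $O(\Psi)$, so one only needs $\E X_{22}' = E_+\E X_{22} + O(\Psi^4)$ and then multiplies through, the $O(\Psi)\cdot O(\Psi^4)=O(\Psi^5)$ bookkeeping being automatic.

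I expect the main obstacle to be \emph{not} any single algebraic step but the careful verification that, at each peeling, the subleading terms generated by \eqref{gfiminus} (the $G_{sr}G_{rt}/G_{rr}$ corrections when passing from $G^{(\cdot)}$ to $G$) and by the Taylor remainders in $G_{\mu\mu} = -1/b + O(\Psi)$ are genuinely $O(\Psi^5)$ and not merely $O(\Psi^4)$; this requires counting off-diagonal Green function factors (each off-diagonal entry is $O(\Psi)$ by \eqref{0011a}) in every error monomial and checking one never loses a power. A secondary, purely notational nuisance is keeping the index-set superscripts consistent when several indices are removed simultaneously, but this is handled uniformly by Lemma \ref{gfi}(v). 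Once the power counting is in place, the deterministic limits match by the relations $\xi = b\tb$, \eqref{deftb}, and the definition of $E_+$, and the lemma follows.
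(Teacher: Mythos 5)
Your broad strategy --- peel Greek indices one at a time via the resolvent identities of Lemma \ref{gfi} and replace diagonal entries by their deterministic limits from \eqref{0011a} --- is the right one, and your power counting (an $O(\Psi)$ multiplicative correction against an $O(\Psi^4)$ base lands in the allowed $O(\Psi^5)$ error) is sound. However, the mechanism you propose for extracting the factor $E_+$ has a gap at precisely the step where the algebra is non-trivial.

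The factor $E_+$ does not come from directly substituting $G_{\mu\mu}\approx -1/b$ and invoking a relation of the form ``$E_+\tm(E_+)=-1+\text{lower order}$'' (this is not an identity that holds here in the form you need). The Greek sum $\sum_\mu$ splits into $\mu\in[\![2M+1,M+N]\!]$, where indeed $G_{\mu\mu}\approx -1/b$, and the index-pair positions $\mu=\bbj$, where $G_{\bbj\bbj}\approx\tb/(\gamma d_j^2-\xi)$ and $Y_{j\mu}$ carries the deterministic piece $d_j$. After expanding a Greek pair $G_{\beta\mu}G_{\mu i}$ via \eqref{gfimunu}/\eqref{gfiimu} and splitting $Y_{j\mu}=d_j\mathbf{1}(\mu=\bbj)+x_{j\mu}$, the $d_j^2$ contributions regenerate a Greek-indexed quantity of the \emph{same} type, so one does not obtain the conclusion directly but rather a linear self-consistent relation
\begin{eqnarray*}
\E X_{44}'=\gamma\psi_2\,\E X_{44}'+\gamma\varpi_2\,\E X_{44}+O(\Psi^5)
\end{eqnarray*}
that must then be solved --- this is exactly what the paper does in \eqref{ju641}--\eqref{ju643}. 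The prefactor $E_+=\gamma\varpi_2/(1-\gamma\psi_2)$ emerges only because of the identities $1-\gamma\psi_2=\gamma E_+ b^2\varphi_2$ and $\gamma\varpi_2=\gamma E_+^2 b^2\varphi_2$ from Lemma \ref{Phivalue}, which rest not merely on $\xi=b\tb$ and the definition of $E_+$ but on the critical-point equation $\hat\phi'(\xi)=0$ encoded in \eqref{R1g}. Your outline omits both the self-consistent recursion and the input of Lemma \ref{Phivalue}, so the factor $E_+$ has no derivation in your argument; you need to set up the recursion explicitly and cite \eqref{R1g}/Lemma \ref{Phivalue} as the essential algebraic ingredient. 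The remainder of your outline (iterating over the number of Greek indices, the $O(\Psi^5)$ error count, the treatment of the $(E_+-z)\E X_{22}'$ case) is consistent with the paper.
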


\begin{Rem}
Actually, the relations depend on how many Greek lower indices in the quantity. When the number of Greek letters in the lower indices decreases by one, meanwhile the number of Latin indices increases by one,  the expectation increases by a factor of $E_+$ with an negligible error of order $O(\Psi^5).$
\end{Rem}

The following Lemma is also useful in the calculations to derive optical theorems, for example, see \eqref{bal3g} below.
\begin{Lemma}\label{kao4o}
We have
\begin{eqnarray}\label{zo9g2}
\begin{aligned}
&\E \frac{1}{N^2}\sum_{\alpha,\beta}^{(\bba)}G_{i\alpha}^{(a\bba)}G_{\alpha\beta}^{(a\bba)}G_{\beta i}^{(a\bba)}=\E  X_{33}''- \frac{ \gamma E_+^2(E_+ b+\tb)}{\gamma d_a^2-\xi}\E X_{44}+O(\Psi^5)\\
&\E \frac{1}{N^2}\sum_{j,\beta}G_{ij}^{(a\bba)}G_{j\beta}^{(a\bba)}G_{\beta i}^{(a\bba)}=\E  X_{33}'- \frac{ \gamma E_+(E_+ b+\tb)}{\gamma d_a^2-\xi}\E X_{44}+O(\Psi^5)\\
&\E \frac{1}{N^2}\sum_{j,k}G_{ij}^{(a\bba)}G_{jk}^{(a\bba)}G_{k i}^{(a\bba)}=\E  X_{33}- \frac{ \gamma (E_+ b+\tb)}{\gamma d_a^2-\xi}\E X_{44}+O(\Psi^5)\\
&\E \frac{1}{N} (cm-cm_N^{(a\bba)}) \sum_\alpha G_{i\alpha}^{(a\bba)}G_{\alpha i}^{(a\bba)}=\E X_{32}'+\frac{\gamma E_+ (E_+ b+\tb)}{\gamma d_a^2-\xi}(\E \tX_{44}-2\E X_{43})+O(\Psi^5)\\
&\E \frac{1}{N} (cm-cm_N^{(a\bba)}) \sum_j G_{ij}^{(a\bba)}G_{ji}^{(a\bba)}=\E  X_{32}+\frac{\gamma  (E_+ b+\tb)}{\gamma d_a^2-\xi}(\E \tX_{44}-2\E X_{43})+O(\Psi^5)
\end{aligned}
\end{eqnarray}
\end{Lemma}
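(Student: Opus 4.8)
All five identities are proved by the same argument, so I describe the first one; the others differ only in whether the inner summation indices lie in $\mI_M$ or in $\mI_N$, which merely interchanges the roles of $b$ and $\tb$, inserts powers of $E_+$, and replaces $X_{44},X_{43}$ by their primed relatives. The plan is to strip the two minor indices $a,\bba$ from every factor $G^{(a\bba)}$ by iterating the resolvent expansion \eqref{gfiminus}, i.e.\ $G^{(a\bba)}_{st}=G_{st}-G_{sa}G_{at}/G_{aa}-G^{(a)}_{s\bba}G^{(a)}_{\bba t}/G^{(a)}_{\bba\bba}$ together with $G^{(a)}_{s\bba}=G_{s\bba}-G_{sa}G_{a\bba}/G_{aa}$, and so on. Substituting this into $\frac1{N^2}\sum^{(\bba)}_{\alpha,\beta}G^{(a\bba)}_{i\alpha}G^{(a\bba)}_{\alpha\beta}G^{(a\bba)}_{\beta i}$ and expanding, the ``zeroth order'' piece $\frac1{N^2}\sum^{(\bba)}_{\alpha,\beta}G_{i\alpha}G_{\alpha\beta}G_{\beta i}$ equals $X_{33}''$ up to the terms with $\alpha=\bba$ or $\beta=\bba$; since $i\neq a$ the pair $(i,\bba)$ is not an index pair, so by the strong local law \eqref{0011a} each such term is $O(\Psi^3)$ and there are $O(N)$ of them, contributing $O(\Psi^3/N)=o(\Psi^5)$. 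What remains are the ``cross'' terms, each carrying at least one explicit factor from $G_{ia},G_{a\alpha},G_{\bba a},G_{a\bba},G^{(a)}_{\bba\bba},G_{aa},\dots$; the crucial point is that $G_{a\bba}$ and $G_{\bba a}$ are \emph{index-pair} entries and hence of order one (cf.\ \eqref{taylorofbetaa}, \eqref{taylorofalphaa}), whereas the genuinely off-diagonal factors are $O(\Psi)$, so the cross terms do not all vanish to the order we want.

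\textbf{Handling the cross terms.} For each cross term one counts orders, using that a free inner $\mI_N$-index in a two--Green-function partial sum costs at most a factor $N\Psi^2$ (Cauchy--Schwarz and the Ward identity), that a stray noise sum $\sum_\mu x_{\cdot\mu}G_{\cdot\mu}$ costs only $O(\Psi)$ (Lemma \ref{ldb}), and that each diagonal or index-pair factor costs $O(1)$; this shows that exactly the cross terms of order $\Psi^4$ survive and everything else is absorbed into $O(\Psi^5)$. For a surviving cross term, say $-\frac{G_{ia}}{G_{aa}}\,\frac1{N^2}\sum_{\alpha,\beta}G_{a\alpha}G_{\alpha\beta}G_{\beta i}$, one replaces $G_{aa},G^{(a)}_{\bba\bba},G_{a\bba}$ by their $z=E_+$ limits $b/(\gamma d_a^2-\xi),\ -1/b,\ \gamma^{1/2}d_a/(\gamma d_a^2-\xi)$ from \eqref{taylorofGaa}--\eqref{taylorofalphaa} and \eqref{taylorofGalphaalpha} (the fluctuating corrections $\mY_a,\mZ_a,\mU_a,\mV_a$ multiply an already $O(\Psi^3)$ factor and feed the error), expands the remaining off-diagonal factors one more step by Lemma \ref{gfi} to expose the noise entries $x_{a\cdot},x_{\cdot\bba}$, and takes expectations using the independence of a minor $G^{(T)}$ from the removed rows/columns of $\bX$ together with $\E x_{a\mu}x_{a\nu}=N^{-1}\delta_{\mu\nu}$ --- exactly as in the proof of Lemma \ref{EGiijjmumumui}. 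The resulting quadruple sums are recognised as $X_{44},X_{44}',X_{44}'',X_{44}''',X_{43},X_{43}',\dots$, and Lemma \ref{multiErel} trades the primes for powers of $E_+$, so everything collapses onto $\E X_{44},\E X_{43},\E\tX_{44}$; the rational prefactors --- involving the combination $h=E_+b+\tb$ of Lemma \ref{Phivalue}, with the $E_+^2$ in the first identity being precisely the price of two such trades --- come out of the limiting values just substituted. Summing the surviving contributions yields the stated identity.

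\textbf{Main obstacle.} No new idea beyond \eqref{gfiminus}, the strong local law, the Taylor expansions \eqref{taylorofGaa}--\eqref{taylorofalphaa}, \eqref{taylorofGalphaalpha}, and Lemma \ref{multiErel} is needed; the difficulty is entirely organisational. One must (i) enumerate the cross terms and power-count carefully enough that nothing of order exactly $\Psi^4$ is discarded and nothing of order $\Psi^5$ retained, and (ii) for each retained term carry the expansion one step further and perform the noise pairing correctly, so that the sharper $O(\Psi^5)$ error --- rather than the crude $O(\Psi^4)$ that a bare local-law bound would give --- is actually attained and the coefficients $\gamma E_+^2 h/(\gamma d_a^2-\xi)$, $\gamma E_+(E_+b+\tb)/(\gamma d_a^2-\xi)$, etc.\ match across all five cases.
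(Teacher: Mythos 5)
Your proposal reproduces the paper's proof essentially verbatim: expand each $G^{(a\bba)}$ via the minor resolvent identity \eqref{gfiminus}, identify the zeroth-order piece with $X_{33}''$ (respectively $X_{33}'$, $X_{33}$, $X_{32}'$, $X_{32}$) up to an $O(\Psi^{3}/N)=O(\Psi^{5})$ boundary term, then compute each cross term by exposing the noise entries $x_{a\cdot},x_{\cdot\bba}$ with Lemma \ref{gfi}, pairing by independence as in Lemma \ref{EGiijjmumumui}, substituting the deterministic leading values of diagonal and index-pair Green entries from \eqref{taylorofGaa}--\eqref{taylorofalphaa}, and trading Greek for Latin indices via Lemma \ref{multiErel}. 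The only cosmetic difference is that the paper organizes the cross terms through the telescoping identity \eqref{bu8af} and singles out the vanishing of single-noise cross terms in \eqref{32or5}, both of which your power-counting already subsumes.
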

The proof of Lemma \ref{multiErel} and \ref{kao4o} are postponed to the end of Section 7.

\subsection{Optical theorem from $X_{22}$}
We start from expanding $\E G_{ia}G_{ai}$ with respect to index $a$. When $a=i$, \begin{eqnarray}\label{X22spe}\begin{aligned}
\E G_{ii}^2 = \frac{b^2}{(\gamma d_i^2-\xi)^2}-\frac{2\gamma^2 b d_i^2 }{(\gamma d_i^2-\xi)^3}\E (cm-cm_N)-\frac{2\gamma b^3 E_+}{(\gamma d_i^2-\xi)^3}\E (cm-cm_N)+O(\Psi^2).
\end{aligned}\end{eqnarray}
For $a\neq i$, using \eqref{gfiij}, we write
\begin{eqnarray}\begin{aligned}
 G_{ia}G_{ai}&=\sum_{\alpha, \beta} G_{aa}^2 G_{i\alpha}^{(a)}Y_{a\alpha}Y_{a\beta} G_{i\beta}^{(a)}\\&=\gamma   G_{aa}^2 G_{i\bba}^{(a)}d_a^2 G_{\bba i}^{(a)}+ 2\gamma \sum_{\alpha}  G_{aa}^2  G_{i\alpha}^{(a)} x_{a\alpha}d_a G_{i\bba}^{(a)}+\gamma \sum_{\alpha,\beta} G_{aa}^2 G_{i\alpha}^{(a)}x_{a\alpha}x_{a\beta}G_{i\beta}^{(a)}.
\end{aligned}\end{eqnarray}
Denote \begin{eqnarray}\label{90afu}
B_1:=  G_{aa}^2 G_{i\bba}^{(a)}d_a^2 G_{\bba i}^{(a)}, \quad B_2:= \sum_{\alpha}  G_{aa}^2  G_{i\alpha}^{(a)} x_{a\alpha}d_a G_{i\bba}^{(a)}, \quad B_3:= \sum_{\alpha,\beta} G_{aa}^2 G_{i\alpha}^{(a)}x_{a\alpha}x_{a\beta}G_{i\beta}^{(a)}.
\end{eqnarray}
\subsubsection{Expansion of $B_1$}
 Consider $B_1$ first. Using \eqref{gfiimu}, we write
\begin{eqnarray}
\begin{aligned}
B_1 = \gamma \sum_{p,q}^{(a)} G_{aa}^2 G_{\bba\bba}^{(a)2} d_a^2  G_{ip}^{(a\bba)}x_{p\bba}x_{q\bba}G_{iq}^{(a\bba)}
\end{aligned}
\end{eqnarray}
Using \eqref{taylorofGaa} and \eqref{taylorofbetaa}, we decompose $B_1$ into summation of six terms defined below: \begin{eqnarray}\label{B1123456}\begin{aligned}
&B_{11}:=\sum_{p,q}^{(a)}  \frac{\gamma d_a^2}{(\gamma d_a^2-\xi)^2} G_{ip}^{(a\bba)}x_{p\bba}x_{q\bba}G_{iq}^{(a\bba)},  \quad B_{12}:=\sum_{p,q}^{(a)} \frac{-2\gamma \tb d_a^2 \mY_a}{(\gamma d_a^2-\xi)^3} G_{ip}^{(a\bba)}x_{p\bba}x_{q\bba}G_{iq}^{(a\bba)},\\
&B_{13}:=\sum_{p,q}^{(a)} \frac{-2\gamma b d_a^2 \mZ_a }{(\gamma d_a^2-\xi)^3}G_{ip}^{(a\bba)}x_{p\bba}x_{q\bba}G_{iq}^{(a\bba)},\quad
B_{14}:=\sum_{p,q}^{(a)}  \frac{3\gamma \tb^2 d_a^2 \mY_a^2}{(\gamma d_a^2-\xi)^4} G_{ip}^{(a\bba)}x_{p\bba}x_{q\bba}G_{iq}^{(a\bba)},\\
&B_{15}:=\sum_{p,q}^{(a)} \frac{(2\gamma^2 d_a^4+4 \gamma \xi d_a^2)\mY_a \mZ_a}{(\gamma d_a^2-\xi)^4} G_{ip}^{(a\bba)}x_{p\bba}x_{q\bba}G_{iq}^{(a\bba)}, \quad
B_{16} :=\sum_{p,q}^{(a)}  \frac{3\gamma b^2 d_a^2 \mZ_a^2}{(\gamma d_a^2-\xi)^4} G_{ip}^{(a\bba)}x_{p\bba}x_{q\bba}G_{iq}^{(a\bba)}.
\end{aligned}
\end{eqnarray}
In the following, we analyze these terms one by one. We start from $B_{11}$ that requires more calculations to expand to desired order.

\textbf{Expansion of $B_{11}$.}
By taking partial expectation $\E_{\bba}$ of $B_{11}$, we obtain \begin{eqnarray}\label{w33rf}\E_{\bba}B_{11} =\frac{1}{N}\sum_{p}^{(a)} \frac{\gamma d_a^2}{(\gamma d_a^2-\xi)^2} G_{ip}^{(a\bba)}G_{pi}^{(a\bba)}. \end{eqnarray}
Using \eqref{gfiminus}, write\begin{eqnarray}\label{na97a}
G_{ip}^{(a\bba)}G_{pi}^{(a\bba)} = \left(G_{ip}-\frac{G_{i\bba}G_{\bba p}}{G_{\bba\bba}}-\frac{G_{ia}^{(\bba)}G_{ap}^{(\bba)}}{G_{aa}^{(\bba)}}\right) G_{pi}-G_{ip}^{(a\bba)}\frac{G_{i\bba}G_{\bba p}}{G_{\bba\bba}}-G_{ip}^{(a\bba)}\frac{G_{ia}^{(\bba)}G_{ap}^{(\bba)}}{G_{aa}^{(\bba)}}.\end{eqnarray}
Substituting \eqref{na97a} into \eqref{w33rf}, we decompose $E_{\bba} B_{11}$ into summation of following five terms:
\begin{eqnarray}\label{ExpB11}\begin{aligned}
&B_{111}:=\frac{1}{N}\sum_{p} \frac{\gamma d_a^2}{(\gamma d_a^2-\xi)^2}G_{ip}G_{pi}, \quad  B_{112}:=\frac{1}{N}\sum_{p}\frac{-\gamma d_a^2}{(\gamma d_a^2-\xi)^2}\frac{G_{i\bba}G_{\bba p}G_{pi}}{G_{\bba\bba}},\\
&B_{113}:=\frac{1}{N}\sum_{p} \frac{-\gamma d_a^2}{(\gamma d_a^2-\xi)^2}\frac{G_{ia}^{(\bba)}G_{ap}^{(\bba)}G_{pi}}{G_{aa}^{(\bba)}}, \quad B_{114}:=\frac{1}{N}\sum_{p}\frac{-\gamma d_a^2}{(\gamma d_a^2-\xi)^2}\frac{G_{i\bba}G_{\bba p}G_{pi}^{(a\bba)}}{G_{\bba\bba}},\\
&B_{115}:=\frac{1}{N}\sum_{p} \frac{-\gamma d_a^2}{(\gamma d_a^2-\xi)^2}\frac{G_{ia}^{(\bba)}G_{ap}^{(\bba)}G_{pi}^{(a\bba)}}{G_{aa}^{(\bba)}}.\\
\end{aligned}\end{eqnarray}
We have changed summation index sets by removing the restrictions $p\neq a$. This is feasible because the difference term is of  negligible order $O(\Psi^5).$ Notice that in $B_{114}$ and $B_{115}$, the summation index $p$ cannot be $a$ since $a$ appears as an upper index. However, we use the full summation silently and similarly in the following for notational simplicity.


Notice that in $B_{111}$, the index $a$ has been decoupled from resolvent entries. Thus there is no need to further expand $B_{111}$. Next we turn to $B_{114}$, which is simpler than $B_{112}$.
Using \eqref{gfiimu}, we find \begin{eqnarray}\label{B114}\begin{aligned}
B_{114}&=\frac{1}{N}\sum_{p,r,s} \frac{-\gamma d_a^2}{(\gamma d_a^2-\xi)^2}G_{\bba\bba}G_{ir}^{(\bba)}Y_{r\bba}Y_{s\bba}G_{sp}^{(\bba)}G_{pi}^{(a\bba)}\\
&=\frac{1}{N}\sum_{p} \frac{-\gamma^2 d_a^2}{(\gamma d_a^2-\xi)^2}G_{\bba\bba}G_{ia}^{(\bba)}d_a^2 G_{ap}^{(\bba)}G_{pi}^{(a\bba)}+\frac{1}{N}\sum_{p,r} \frac{-\gamma^2 d_a^2}{(\gamma d_a^2-\xi)^2}G_{\bba\bba}G_{ir}^{(\bba)}x_{r\bba} d_a G_{pa}^{(\bba)}G_{pi}^{(a\bba)}\\
&\relphantom{EE}+\frac{1}{N}\sum_{p,s} \frac{-\gamma^2 d_a^2}{(\gamma d_a^2-\xi)^2}G_{\bba\bba}G_{ia}^{(\bba)}d_a x_{s\bba}G_{ps}^{(\bba)}G_{pi}^{(a\bba)}+\frac{1}{N}\sum_{p,r,s} \frac{-\gamma^2 d_a^2}{(\gamma d_a^2-\xi)^2}G_{\bba\bba}G_{ir}^{(\bba)}x_{r\bba} x_{s\bba}G_{ps}^{(\bba)}G_{pi}^{(a\bba)}.
\end{aligned}\end{eqnarray}
By \eqref{gfiij}, \eqref{taylorofGbba}, \eqref{taylorofalphaa} and \eqref{mUVorder} we find that the first term in the right side of \eqref{B114} is \begin{eqnarray}\label{B1141}
\begin{aligned}
&\frac{1}{N}\sum_{p} \frac{-\gamma^2 d_a^2}{(\gamma d_a^2-\xi)^2}G_{\bba\bba}G_{ia}^{(\bba)}d_a^2 G_{ap}^{(\bba)}G_{pi}^{(a\bba)}\\
&=\frac{1}{N}\sum_{p} \frac{-\gamma^3 d_a^4}{(\gamma d_a^2-\xi)^2}G_{\bba\bba}G_{aa}^{(\bba)2}\sum_{\mu,\nu}^{(\bba)}G_{i\mu}^{(a \bba)}x_{a\mu}x_{a\nu} G_{p\nu}^{(a \bba)}G_{pi}^{(a\bba)}\\
&=\frac{1}{N}\sum_{p,\mu,\nu}^{(\bba)}\frac{-\gamma^3 d_a^4}{(\gamma d_a^2-\xi)^2}\left[ \frac{1}{(\gamma d_a^2-\xi)\tb}+\frac{(\gamma d_a^2-2\xi)\mU_a}{(\gamma d_a^2-\xi)^2\tb^2}-\frac{\mV_a}{(\gamma d_a^2-\xi)^2}
\right]G_{i\mu}^{(a \bba)}x_{a\mu}x_{a\nu} G_{p\nu}^{(a \bba)}G_{pi}^{(a\bba)}+O(\Psi^5).
\end{aligned}
\end{eqnarray}
Then we handle the three terms in the last step of \eqref{B1141} separately.
For the first term, by taking expectation, we get  \begin{eqnarray}\label{bal3g}
\begin{aligned}
&\E \frac{1}{N}\sum_{p,\mu,\nu}\frac{-\gamma^3 d_a^4}{(\gamma d_a^2-\xi)^3 \tb}G_{i\mu}^{(a \bba)}x_{a\mu}x_{a\nu} G_{p\nu}^{(a \bba)} G_{pi}^{(a\bba)} \\
&=  \E \frac{1}{N^2}\sum_{p,\mu}\frac{-\gamma^3 d_a^4}{(\gamma d_a^2-\xi)^3 \tb}G_{i\mu}^{(a\bba)}G_{\mu p}^{(a \bba)}G_{p i}^{(a \bba)} \\
&=\frac{-\gamma^3 d_a^4}{(\gamma d_a^2-\xi)^3 \tb}\E X_{33}'+ \frac{ 3\gamma^4 d_a^4 E_+(E_+ b+\tb)}{(\gamma d_a^2-\xi)^4\tb}\E X_{44}+O(\Psi^5).
\end{aligned}
\end{eqnarray}
where in the last step, we use the second equation in Lemma \ref{kao4o}.
We find the second term in \eqref{B1141} equals \begin{eqnarray}\label{03tm8}\begin{aligned}
&\frac{1}{N}\sum_{p}\frac{-\gamma^3 d_a^4(\gamma d_a^2-2\xi)}{(\gamma d_a^2-\xi)^4\tb^2}\left[E_+-z+\gamma \left( E_+\tm -\sum x_{a\alpha}G_{\alpha\beta}^{(a\bba)}G_{a\beta}\right)\right]\sum_{\mu,\nu}^{(\bba)}G_{ip}^{(\bba)}G_{i\mu}^{(a \bba)}x_{a\mu}x_{a\nu} G_{p\nu}^{(a \bba)}\\
&=\frac{1}{N}\sum_{p}\frac{-\gamma^3 d_a^4(\gamma d_a^2-2\xi)}{(\gamma d_a^2-\xi)^4\tb^2}\gamma \left( E_+\tm -\sum x_{a\alpha}G_{\alpha\beta}^{(a\bba)}G_{a\beta}\right)\sum_{\mu,\nu}^{(\bba)}G_{ip}^{(a\bba)}G_{i\mu}^{(a \bba)}x_{a\mu}x_{a\nu} G_{p\nu}^{(a \bba)}+O(\Psi^5).
\end{aligned}\end{eqnarray}
 Using \eqref{relofmtm}, \eqref{EX44} and \eqref{EX43}, its expectation equals
\begin{eqnarray}\label{h9agm}
\left(\frac{-\gamma^4 d_a^4 E_+^2}{(\gamma d_a^2-\xi)^3\tb^2}+\frac{\gamma^4 d_a^4 b E_+^2}{(\gamma d_a^2-\xi)^4 \tb}\right)(\E X_{43}-2\E X_{44})+O(\Psi^5).
\end{eqnarray}
Here we address that we will use Lemma \ref{multiErel} frequently later to express the terms by using $\E X_{42}$,$\E X_{43}$ and $\E X_{44}$ instead of their variations defined in \eqref{Xset2}.
The last term in \eqref{B1141} equals \begin{eqnarray}\label{h9agn}
\frac{\gamma^4 d_a^4 E_+}{(\gamma d_a^2-\xi)^4} \E X_{43}+O(\Psi^5)
\end{eqnarray}
Combing \eqref{bal3g}, \eqref{h9agm} and \eqref{h9agn}, the expectation of the first term in \eqref{B114} is
\begin{eqnarray}\label{B1141v}\begin{aligned}
&\E\frac{1}{N}\sum_{p} \frac{-\gamma^2 d_a^2}{(\gamma d_a^2-\xi)^2}G_{\bba\bba}G_{ia}^{(\bba)}d_a^2 G_{ap}^{(\bba)}G_{pi}^{(a\bba)}\\
=&\frac{-\gamma^3 d_a^4}{(\gamma d_a^2-\xi)^3 \tb}\E X_{33}'+\left(\frac{-\gamma^4 d_a^4 E_+^2}{(\gamma d_a^2-\xi)^3\tb^2}+\frac{\gamma^4 d_a^4 b E_+^2}{(\gamma d_a^2-\xi)^4 \tb}+\frac{\gamma^4 d_a^4 E_+}{(\gamma d_a^2-\xi)^4}\right)\E X_{43}\\&\relphantom{EEEE}+\left(\frac{ \gamma^4 d_a^4 b E_+^2}{(\gamma d_a^2-\xi)^4\tb} + \frac{ 3\gamma^4 d_a^4  E_+}{(\gamma d_a^2-\xi)^4} + \frac{2\gamma^4 d_a^4 E_+^2}{(\gamma d_a^2-\xi)^3\tb^2}\right)\E X_{44}+O(\Psi^5).
\end{aligned}\end{eqnarray}

For the second term in \eqref{B114}, expanding the $G_{\bba\bba}$ by using \eqref{taylorofGbba} and \eqref{taylorofalphaa}, we get\begin{eqnarray}\label{B1142}\begin{aligned}
& \frac{1}{N}\sum_{p,r} \frac{-\gamma^2 d_a^2}{(\gamma d_a^2-\xi)^2}G_{\bba\bba}G_{ir}^{(\bba)}x_{r\bba} d_a G_{pa}^{(\bba)}G_{pi}^{(a\bba)}\\
&= \frac{1}{N}\sum_{p,r} \frac{-\gamma^2 d_a^2}{(\gamma d_a^2-\xi)^2} \left[\frac{\tb}{\gamma d_a^2-\xi} -\frac{\gamma d_a^2 }{(\gamma d_a^2-\xi)^2}\mU_a-\frac{\tb^2}{(\gamma d_a^2-\xi)^2}\mV_a \right]G_{ir}^{(\bba)}x_{r\bba} d_a G_{pa}^{(\bba)}G_{pi}^{(a\bba)}+O(\Psi^5).
\end{aligned}\end{eqnarray}
The expectations of the first and second term in \eqref{B1142} vanish by taking partial expectation $\E_{\bba}$ first.
For the third term above, after taking expectation, we obtain
\begin{eqnarray}\begin{aligned}
&\E \frac{1}{N}\sum_{p,r} \frac{\gamma^3 d_a^2 \tb^2}{(\gamma d_a^2-\xi)^4}\left(cm-\sum x_{j\bba}G_{jk}^{(a\bba)}x_{k\bba}-2d_a \sum G_{aj}^{(\bba)}x_{j\bba}\right)G_{ir}^{(\bba)}x_{r\bba} d_a G_{pa}^{(\bba)}G_{pi}^{(a\bba)}\\
&=\frac{1}{N^{5/2}}\sum_{p,r} \frac{-2\gamma^3 d_a^3 \tb^2}{(\gamma d_a^2-\xi)^4}\E x_{r\bba}^3\E G_{ir}^{(\bba)}G_{rr}^{(a\bba)}G_{pa}^{(\bba)}G_{pi}^{(a\bba)}+\frac{1}{N^2}\sum_{p,r} \frac{-2\gamma^3 d_a^4 \tb^2}{(\gamma d_a^2-\xi)^4}\E G_{ir}^{(\bba)}G_{ra}^{(\bba)}G_{ap}^{(\bba)}G_{pi}^{(a\bba)}.
\end{aligned}\end{eqnarray}
The first term above is of order $O(N^{-1/2}\Psi^3)$, which is negligible. For the second term, by expanding Green entries that have lower index $a$, we obtain that \begin{eqnarray}\label{xyeig}\begin{aligned}
&\frac{1}{N^2}\sum_{p,r} \frac{-2\gamma^3 d_a^2 \tb^2}{(\gamma d_a^4-\xi)^4}\E G_{ir}^{(\bba)}G_{ra}^{(\bba)}G_{ap}^{(\bba)}G_{pi}^{(a\bba)}\\&=\frac{1}{N^2}\sum_{p,r,\mu,\nu} \frac{-2\gamma^4 d_a^4 \tb^2}{(\gamma d_a^2-\xi)^4}\E G_{ir}^{(\bba)} G_{aa}^{(\bba)2}G_{r\mu}^{(a\bba)}x_{a\mu}x_{a\nu}G_{p\nu}^{(a\bba)}G_{pi}^{(a\bba)}\\
&=\frac{1}{N^3}\sum_{p,r,\mu} \frac{-2\gamma^4 d_a^4 }{(\gamma d_a^2-\xi)^4}\E G_{ir}^{(a\bba)}G_{r\mu}^{(a\bba)}G_{\mu p}^{(a\bba)}G_{pi}^{(a\bba)}\\
&= \frac{-2\gamma^4 d_a^4 E_+ }{(\gamma d_a^2-\xi)^4}\E X_{44}+O(\Psi^5).
\end{aligned}\end{eqnarray}
Therefore, we conclude that the expectation of the second term in \eqref{B114} is \begin{eqnarray}\label{B1142v}
 \frac{-2\gamma^4 d_a^4 E_+ }{(\gamma d_a^2-\xi)^4}\E X_{44}+O(N^{-1/2}\Psi^3).
\end{eqnarray}

For the third term in \eqref{B114}, the estimation is essentially the same as the argument from \eqref{B1142} to \eqref{xyeig}, and we obtains that \begin{eqnarray}\label{B1143v}
\E \frac{1}{N}\sum_{p,s} \frac{-\gamma^2 d_a^2}{(\gamma d_a^2-\xi)^2}G_{\bba\bba}G_{ia}^{(\bba)}d_a x_{s\bba}G_{ps}^{(\bba)}G_{pi}^{(a\bba)}= \frac{-2\gamma^4 d_a^4 E_+ }{(\gamma d_a^2-\xi)^4}\E X_{44}+O(\Psi^5).
\end{eqnarray}

For the last term in \eqref{B114}, we have
\begin{eqnarray}\label{B1144}\begin{aligned}
&\frac{1}{N}\sum_{p,r,s} \frac{-\gamma^2 d_a^2}{(\gamma d_a^2-\xi)^2}G_{\bba\bba}G_{ir}^{(\bba)}x_{r\bba} x_{s\bba}G_{ps}^{(\bba)}G_{pi}^{(a\bba)}\\
&=\frac{1}{N}\sum_{p,r,s} \frac{-\gamma^2 d_a^2}{(\gamma d_a^2-\xi)^2}\left[\frac{\tb}{\gamma d_a^2-\xi} -\frac{\gamma d_a^2 }{(\gamma d_a^2-\xi)^2}\mU_a-\frac{\tb^2}{(\gamma d_a^2-\xi)^2}\mV_a \right]G_{ir}^{(\bba)}x_{r\bba} x_{s\bba}G_{ps}^{(\bba)}G_{pi}^{(a\bba)}+O(\Psi^5).
\end{aligned}\end{eqnarray}
The expectation of the first term of the right side of  \eqref{B1144} is \begin{eqnarray}\label{B11441v}
\begin{aligned}
&\frac{1}{N^2}\sum_{p,r} \frac{-\gamma^2 d_a^2 \tb}{(\gamma d_a^2-\xi)^3}\E G_{ir}^{(\bba)}G_{rp}^{(\bba)}G_{pi}^{(a\bba)}\\
&=\frac{1}{N^2}\sum_{p,r} \frac{-\gamma^2 d_a^2 \tb}{(\gamma d_a^2-\xi)^3}\E G_{ir}^{(\bba)}G_{rp}^{(\bba)}G_{pi}^{(\bba)}+\frac{1}{N}\sum_{p,r} \frac{\gamma^2 d_a^2 \tb}{(\gamma d_a^2-\xi)^3}\E G_{ir}^{(\bba)}G_{rp}^{(\bba)}\frac{G_{pa}^{(\bba)}G_{ai}^{(\bba)}}{G_{aa}^{(\bba)}}\\
&= \frac{-\gamma^2 d_a^2 \tb}{(\gamma d_a^2-\xi)^3}\E X_{33}+3\left(\frac{\gamma^4 d_a^4 E_+}{(\gamma d_a^2-\xi)^4}+ \frac{\gamma^3 d_a^2\tb^2}{(\gamma d_a^2-\xi)^4}\right) \E X_{44}-\frac{\gamma^3 d_a^2 E_+}{(\gamma d_a^2-\xi)^3}\E X_{44}+O(\Psi^5).
\end{aligned}
\end{eqnarray}
where in the second equality, we use the fact that
  \begin{eqnarray}\label{nn3ca}
\E\frac{1}{N^2}\sum_{j,k} G_{ij}^{(\bba)}G_{jk}^{(\bba)}G_{ki}^{(\bba)}=\E X_{33}-3\left(\frac{\gamma^2 d_a^2 E_+}{(\gamma d_a^2-\xi)\tb}+ \frac{\gamma \tb}{\gamma d_a^2-\xi}\right) \E X_{44}+O(\Psi^5).
\end{eqnarray}
To derive \eqref{nn3ca}, we write\begin{eqnarray}\label{0agj4}\begin{aligned}
&\E\frac{1}{N^2}\sum G_{ij}^{(\bba)}G_{jk}^{(\bba)}G_{ki}^{(\bba)}=\E\frac{1}{N^2}\sum \left(G_{ij}-\frac{G_{i\bba}G_{\bba j}}{G_{\bba\bba}}\right) G_{jk}G_{ki}\\&\relphantom{EEEEEEE}-\E\frac{1}{N^2}\sum G_{ij}^{(\bba)}\frac{G_{j\bba}G_{\bba k}}{G_{\bba\bba}}G_{ki}-\E\frac{1}{N^2}\sum G_{ij}^{(\bba)}G_{jk}^{(\bba)}\frac{G_{ka}G_{\bba i}}{G_{\bba\bba}}.
\end{aligned}\end{eqnarray}
Similar to \eqref{89afl} below, we calculate \begin{eqnarray}\begin{aligned}
\E\frac{1}{N^2}\sum \frac{G_{i\bba}G_{\bba j}}{G_{\bba\bba}} G_{jk}G_{ki}=\frac{\gamma^2 d_a^2 E_+}{(\gamma d_a^2-\xi)\tb}\E X_{44}+ \frac{\gamma \tb}{\gamma d_a^2-\xi}\E X_{44}+O(\Psi^5).
\end{aligned}\end{eqnarray}
The same estimation also hold for the second and third term in \eqref{0agj4}. Therefore, \eqref{nn3ca} holds true.\\
\indent The expectation of the second term of the right side of \eqref{B1144}
is \begin{eqnarray}\label{B11442v}
\E\frac{1}{N}\sum_{p,r,s} \frac{\gamma^3 d_a^4}{(\gamma d_a^2-\xi)^4}\mU_aG_{ir}^{(\bba)}x_{r\bba} x_{s\bba}G_{ps}^{(\bba)}G_{pi}^{(a\bba)}=\frac{\gamma^4 d_a^4 E_+}{(\gamma d_a^2-\xi)^4}\E X_{43}+O(\Psi^5).
\end{eqnarray}
And the expectation of the third term of the right side of \eqref{B1144} is
\begin{eqnarray}\label{B11443v}\begin{aligned}
&\E \frac{1}{N}\sum_{p,r,s} \frac{\gamma^2 d_a^2 \tb^2}{(\gamma d_a^2-\xi)^4}\mV_a G_{ir}^{(\bba)}x_{r\bba} x_{s\bba}G_{ps}^{(\bba)}G_{pi}^{(a\bba)}\\& =-\frac{2\gamma^3 d_a^2 \tb^2}{(\gamma d_a^2-\xi)^4}\E X_{44}+\frac{\gamma^3 d_a^2 \tb^2}{(\gamma d_a^2-\xi)^4}\E X_{43}+O(N^{-1/2}\Psi^3).
\end{aligned}\end{eqnarray}
Then, combining \eqref{B11441v},\eqref{B11442v} and \eqref{B11443v} together, we have that the expectation of the last term of \eqref{B114} is \begin{eqnarray}\label{B1144v}\begin{aligned}
&\E\frac{1}{N}\sum_{p,r,s} \frac{-\gamma^2 d_a^2}{(\gamma d_a^2-\xi)^2}G_{\bba\bba}G_{ir}^{(\bba)}x_{r\bba} x_{s\bba}G_{ps}^{(\bba)}G_{pi}^{(a\bba)}\\
&=\frac{-\gamma^2 d_a^2 \tb}{(\gamma d_a^2-\xi)^3}\E X_{33}+\left(\frac{3\gamma^4 d_a^4 E_+}{(\gamma d_a^2-\xi)^4}+ \frac{\gamma^3 d_a^2\tb^2}{(\gamma d_a^2-\xi)^4}-\frac{\gamma^3 d_a^2 E_+}{(\gamma d_a^2-\xi)^3}\right)\E X_{44}\\
&\relphantom{EEE}+\left(\frac{\gamma^3 d_a^2 \tb^2}{(\gamma d_a^2-\xi)^4}+\frac{\gamma^4 d_a^4 E_+}{(\gamma d_a^2-\xi)^4}\right)\E X_{43}+O(N^{-1/2}\Psi^3).
\end{aligned}\end{eqnarray}
Therefore, combining \eqref{B1141v},\eqref{B1142v},\eqref{B1143v} and \eqref{B1144v}, we have  \begin{eqnarray}\label{B114v}\begin{aligned}
&\E B_{114}=\frac{-\gamma^2 d_a^2 \tb}{(\gamma d_a^2-\xi)^3}\E X_{33}+\frac{-\gamma^3 d_a^4}{(\gamma d_a^2-\xi)^3 \tb}\E X_{33}'+\left(\frac{-\gamma^4 d_a^4 E_+^2}{(\gamma d_a^2-\xi)^3\tb^2}+\frac{\gamma^4 d_a^4 b E_+^2}{(\gamma d_a^2-\xi)^4 \tb}+\frac{2\gamma^4 d_a^4 E_+}{(\gamma d_a^2-\xi)^4}+\frac{\gamma^3 d_a^2 \tb^2}{(\gamma d_a^2-\xi)^4}\right)\E X_{43}\\&\relphantom{EEEE}+\left(\frac{ \gamma^4 d_a^4 b E_+^2}{(\gamma d_a^2-\xi)^4\tb} + \frac{ 2\gamma^4 d_a^4  E_+}{(\gamma d_a^2-\xi)^4} + \frac{2\gamma^4 d_a^4 E_+^2}{(\gamma d_a^2-\xi)^3\tb^2}+\frac{\gamma^3 d_a^2\tb^2}{(\gamma d_a^2-\xi)^4}-\frac{\gamma^3 d_a^2 E_+}{(\gamma d_a^2-\xi)^3}\right)\E X_{44}+O(N^{-1/2}\Psi^3).
\end{aligned}\end{eqnarray}

Then we consider $B_{112}$ in \eqref{ExpB11}. Using \eqref{gfiminus}, we find
\begin{eqnarray}\label{B112}
\E B_{112}=\E B_{114}+\frac{1}{N}\sum_{p}\frac{-\gamma d_a^2}{(\gamma d_a^2-\xi)^2}\frac{G_{i\bba}G_{\bba p}G_{p\bba}G_{\bba i}}{G_{\bba\bba}^2}+\frac{1}{N}\sum_{p}\frac{-\gamma d_a^2}{(\gamma d_a^2-\xi)^2}\frac{G_{i\bba}G_{\bba p}G_{pa}^{(\bba)}G_{a i}^{(\bba)}}{G_{\bba\bba}G_{aa}^{(\bba)}}
\end{eqnarray}
To calculate the expectation of the second term on the right side of \eqref{B112}, we consider
\begin{eqnarray}\label{miy11}\begin{aligned}
&\E \frac{1}{N}\sum_{p}\frac{G_{i\bba}G_{\bba p}G_{p\bba}G_{\bba i}}{G_{\bba\bba}^2}\\&=\E\frac{1}{N}\sum_{p}G_{\bba\bba}^2\gamma^2 \left(G_{ia}^{(\bba)}d_a^2 G_{pa}^{(\bba)}+\sum G_{ir}^{(\bba)}x_{r\bba}d_a G_{pa}^{(\bba)}+\sum G_{ia}^{(\bba)}d_a x_{s\bba}G_{ps}^{(\bba)}+\sum G_{ir}^{(\bba)}x_{r\bba}x_{s\bba}G_{ps}^{(\bba)}\right)^2
\end{aligned}\end{eqnarray}
By expanding the square, we find that
\begin{eqnarray}\label{miy12}\begin{aligned}
&\E \frac{\gamma^2}{N}\sum_{p}G_{\bba\bba}^2G_{ia}^{(\bba)}d_a^2 G_{pa}^{(\bba)}G_{ia}^{(\bba)}d_a^2 G_{pa}^{(\bba)}= \frac{\gamma^4 d_a^4 E_+^2}{(\gamma d_a^2-\xi)^2 \tb^2}(2\E X_{44}+\E \tX_{44}),\\
&\E \frac{\gamma^2}{N}\sum_{p}G_{\bba\bba}^2 \sum_{r,s} G_{ir}^{(\bba)}x_{r\bba}d_a G_{pa}^{(\bba)}G_{is}^{(\bba)}x_{s\bba}d_a G_{pa}^{(\bba)}=\frac{\gamma^3 d_a^2 E_+}{(\gamma d_a^2-\xi)^2}\E \tX_{44},\\
&\E \frac{\gamma^2}{N}\sum_{p}G_{\bba\bba}^2\sum_{r,s} G_{ia}^{(\bba)}d_a x_{r\bba}G_{pr}^{(\bba)}G_{ia}^{(\bba)}d_a x_{s\bba}G_{ps}^{(\bba)}=\frac{\gamma^3 d_a^2 E_+}{(\gamma d_a^2-\xi)^2}\E \tX_{44},\\
&\E \frac{\gamma^2}{N}\sum_{p}G_{\bba\bba}^2\sum_{r,s,j,k} G_{ir}^{(\bba)}x_{r\bba}x_{s\bba}G_{ps}^{(\bba)}G_{ij}^{(\bba)}x_{j\bba}x_{k\bba}G_{pk}^{(\bba)}=\frac{\gamma^2 \tb^2 }{(\gamma d_a^2-\xi)^2}(2\E X_{44}+\E \tX_{44}),\\
&\E \frac{2\gamma^2}{N}\sum_{p}G_{\bba\bba}^2G_{ia}^{(\bba)}d_a^2 G_{pa}^{(\bba)}\sum G_{ir}^{(\bba)}x_{r\bba}x_{s\bba}G_{ps}^{(\bba)}=\frac{2\gamma^3 d_a^2 E_+}{(\gamma d_a^2-\xi)^2}\E X_{44}\\
&\E \frac{2\gamma^2}{N}\sum_{p}G_{\bba\bba}^2\sum G_{ir}^{(\bba)}x_{r\bba}d_a G_{pa}^{(\bba)} G_{ia}^{(\bba)}d_a x_{s\bba}G_{ps}^{(\bba)}=\frac{2\gamma^3 d_a^2 E_+}{(\gamma d_a^2-\xi)^2}\E X_{44}.
\end{aligned}\end{eqnarray}
And for the other terms that contain one or three  entries of $X$ have negligible expectation. It follows that \begin{eqnarray}\label{miy13}\begin{aligned}
&\E \frac{1}{N}\sum_{p}\frac{-\gamma d_a^2}{(\gamma d_a^2-\xi)^2}\frac{G_{i\bba}G_{\bba p}G_{p\bba}G_{\bba i}}{G_{\bba\bba}^2}\\
&=\left(\frac{-\gamma^5 d_a^4 E_+^2}{(\gamma d_a^2-\xi)^4 \tb^2}+\frac{-\gamma^3 d_a^2\tb^2}{(\gamma d_a^2-\xi)^4}+\frac{-2\gamma^4 d_a^4 E_+}{(\gamma d_a^2-\xi)^4}\right)(2 \E X_{44}+\E \tX_{44})+O(N^{-1/2}\Psi^3).
\end{aligned}\end{eqnarray}
The expectation of the last term in \eqref{B112} equals \begin{eqnarray}\label{cmle4}\begin{aligned}
&\E\frac{1}{N}\sum_{p}\frac{-\gamma d_a^2}{(\gamma d_a^2-\xi)^2}\frac{G_{i\bba}G_{\bba p}G_{pa}^{(\bba)}G_{a i}^{(\bba)}}{G_{\bba\bba}G_{aa}^{(\bba)}}\\
&=\E\frac{1}{N}\sum\frac{-\gamma^2 d_a^2}{(\gamma d_a^2-\xi)^2}G_{\bba\bba}G_{aa}^{(\bba)}G_{ir}^{(\bba)}Y_{r\bba}Y_{s\bba}G_{ps}^{(\bba)}G_{p\mu}^{(a\bba)}x_{a\mu}x_{a\nu}G_{\nu i}^{(a\bba)}\\
&=\left(\frac{2\gamma^4 d_a^4 E_+^2}{(\gamma d_a^2-\xi)^3\tb^2}+\frac{\gamma^3 d_a^2 E_+}{(\gamma d_a^2-\xi)^3}\right) \E X_{44}+\frac{\gamma^4 d_a^4 E_+^2}{(\gamma d_a^2-\xi)^3\tb^2}\E \tX_{44}+O(\Psi^5).
\end{aligned}
\end{eqnarray}
Therefore we have  \begin{eqnarray}\label{B112v}\begin{aligned}
\E B_{112}&=\E B_{114}+\left(\frac{-2\gamma^4 d_a^4 b E_+^2}{(\gamma d_a^2-\xi)^4 \tb}+\frac{-2\gamma^3 d_a^2\tb^2}{(\gamma d_a^2-\xi)^4}+\frac{-4\gamma^4 d_a^4 E_+}{(\gamma d_a^2-\xi)^4}+\frac{\gamma^3 d_a^2 E_+}{(\gamma d_a^2-\xi)^3} \right)\E X_{44}\\
&\relphantom{EEEE}+\left(\frac{-\gamma^4 d_a^4 b E_+^2}{(\gamma d_a^2-\xi)^4 \tb}+\frac{-\gamma^3 d_a^2\tb^2}{(\gamma d_a^2-\xi)^4}+\frac{-2\gamma^4 d_a^4 E_+}{(\gamma d_a^2-\xi)^4} \right)\E \tX_{44}+O(N^{-1/2}\Psi^3).
\end{aligned}
\end{eqnarray}

\vspace{1cm}
We now turn to $B_{115}$ in \eqref{ExpB11}, which is similar to the first term in \eqref{B114}.  Write \begin{eqnarray}
B_{115}=\frac{1}{N}\sum_{p} \frac{-\gamma^2 d_a^2}{(\gamma d_a^2-\xi)^2}G_{aa}^{(\bba)}\sum_{\mu,\nu}^{(\bba)}G_{i\mu}^{(a \bba)}x_{a\mu}x_{a\nu} G_{p\nu}^{(a \bba)}G_{pi}^{(a\bba)}.
\end{eqnarray}
Using the second expansion in  \eqref{taylorofGbba},  and dealing in a similar manner to the expansion of the first term in \eqref{B114}, we have  \begin{eqnarray}\label{B115v}\begin{aligned}
\E B_{115}&=\frac{1}{N}\sum_{p} \frac{-\gamma^2 d_a^2}{(\gamma d_a^2-\xi)^2}\left(-\frac{1}{\tb}-\frac{1}{\tb^2}\mU_a\right) \sum_{\mu,\nu}^{(\bba)}G_{i\mu}^{(a \bba)}x_{a\mu}x_{a\nu} G_{p\nu}^{(a \bba)}G_{pi}^{(a\bba)}+ O(\Psi^5)\\
&=\frac{\gamma d_a^2}{(\gamma d_a^2-\xi)^2 \tb}\E X_{33}'-\frac{3\gamma^3 d_a^2 E_+(E_+b+\tb)}{(\gamma d_a^2-\xi)^3\tb}\E X_{44}-\frac{2\gamma^3 d_a^2 E_+^2}{(\gamma d_a^2-\xi)^2\tb^2}\E X_{44}+\frac{\gamma^3 d_a^2 E_+^2}{(\gamma d_a^2-\xi)^2\tb^2}\E X_{43}+O(\Psi^5).
\end{aligned}\end{eqnarray}

Next, we consider $B_{113}$. Notice that \begin{eqnarray}\label{B113}
B_{113}=B_{115}+\frac{1}{N}\sum \frac{-\gamma d_a^2}{(\gamma d_a^2-\xi)^2}\frac{G_{ia}^{(\bba)}G_{ap}^{(\bba)}G_{p\bba}G_{\bba i}}{G_{aa}^{(\bba)}G_{\bba\bba}}+\frac{1}{N}\sum \frac{-\gamma d_a^2}{(\gamma d_a^2-\xi)^2}\frac{G_{ia}^{(\bba)}G_{ap}^{(\bba)}G_{pa}^{(\bba)}G_{a i}^{(\bba)}}{G_{aa}^{(\bba)2}}
\end{eqnarray}
The expansion of the second term of \eqref{B113} was already done in \eqref{cmle4}. For the third term, we have  \begin{eqnarray}\begin{aligned}
&\E \frac{1}{N}\sum \frac{-\gamma d_a^2}{(\gamma d_a^2-\xi)^2}\frac{G_{ia}^{(\bba)}G_{ap}^{(\bba)}G_{pa}^{(\bba)}G_{a i}^{(\bba)}}{G_{aa}^{(\bba)2}}\\
&=\E \frac{1}{N}\sum \frac{-\gamma^3 d_a^2}{(\gamma d_a^2-\xi)^2}G_{aa}^{(\bba)2}G_{i\alpha}^{(a\bba)}x_{a\alpha}x_{a\beta}G_{p\beta}^{(a\bba)}G_{p\mu}^{(a\bba)}x_{a\mu}x_{x\nu}G_{i\nu}^{(a\bba)}\\
&=\frac{-\gamma^3 d_a^2 E_+^2}{(\gamma d_a^2-\xi)^2 \tb^2}(2\E X_{44}+\E \tX_{44})+O(\Psi^5).
\end{aligned}\end{eqnarray}
Therefore, we get  \begin{eqnarray}\label{B113v}
\E B_{113}=\E B_{115}+ \left(\frac{2\gamma^3 d_a^2 b E_+^2}{(\gamma d_a^2-\xi)^3\tb}+\frac{\gamma^3 d_a^2 E_+}{(\gamma d_a^2-\xi)^3}\right) \E X_{44}+\frac{\gamma^3 d_a^2 b E_+^2}{(\gamma d_a^2-\xi)^3\tb}\E \tX_{44}+O(\Psi^5).
\end{eqnarray}
Combining \eqref{B114v}, \eqref{B112v}, \eqref{B115v}, \eqref{B113v},
we conclude  \begin{eqnarray}\begin{aligned}
\E B_{11}&= \frac{\gamma d_a^2}{(\gamma d_a^2-\xi)^2}\E X_{22}+\frac{-2\gamma^2 d_a^2 \tb}{(\gamma d_a^2-\xi)^3}\E X_{33}+  \frac{-2\gamma^2 d_a^2 b}{(\gamma d_a^2-\xi)^3 }\E X_{33}'\\&+\left(\frac{4\gamma^4 d_a^4 E_+}{(\gamma d_a^2-\xi)^4}+\frac{2\gamma^3 d_a^2 b^2 E_+^2}{(\gamma d_a^2-\xi)^4} +\frac{2\gamma^3 d_a^2 \tb^2}{(\gamma d_a^2-\xi)^4} \right)\E X_{43}+  \frac{-6\gamma^3 d_a^2 E_+}{(\gamma d_a^2-\xi)^3}\E X_{44}\\
&+\left(\frac{-2\gamma^4 d_a^4 E_+}{(\gamma d_a^2-\xi)^4}+\frac{-\gamma^3 d_a^2 b^2 E_+^2}{(\gamma d_a^2-\xi)^4} +\frac{-\gamma^3 d_a^2 \tb^2}{(\gamma d_a^2-\xi)^4} \right)\E \tX_{44}+O(N^{-1/2}\Psi^3).
\end{aligned}\end{eqnarray}

\textbf{Expansions of $B_{12}, B_{13},\cdots, B_{16}$}.  For $B_{12}$ in \eqref{B1123456}, we have \begin{eqnarray}\label{B12v}\begin{aligned}
\E B_{12}&=\E \sum_{p,q} \frac{-2\gamma^2\tb d_a^2}{(\gamma d_a^2-\xi)^3}(cm-\sum x_{j\bba}G_{jk}^{(a\bba)}x_{k\bba})G_{ip}^{(a\bba)}x_{p\bba}x_{q\bba}G_{iq}^{(a\bba)}\\
&=\E\frac{1}{N}\sum_{p,q} \frac{-2\gamma^2\tb d_a^2}{(\gamma d_a^2-\xi)^3}\left(cm-\frac{1}{N}\sum G_{jj}^{(a\bba)}\right)G_{ip}^{(a\bba)}G_{pi}^{(a\bba)}+\E \frac{1}{N^2}\sum_{p,q} \frac{4\gamma^2\tb d_a^2}{(\gamma d_a^2-\xi)^3}G_{ip}^{(a\bba)}G_{pq}^{(a\bba)}G_{qi}^{(a\bba)}\\
&= \frac{-2\gamma^2\tb d_a^2}{(\gamma d_a^2-\xi)^3}\E X_{32}+ \frac{-2\gamma^3\tb d_a^2(E_+ b+\tb)}{(\gamma d_a^2-\xi)^4}(\E \tX_{44}-2\E X_{43}) \\
&\relphantom{EEEE}+ \frac{4\gamma^2\tb d_a^2}{(\gamma d_a^2-\xi)^3}\E X_{33}-\frac{12\gamma^3\tb d_a^2(E_+ b+\tb)}{(\gamma d_a^2-\xi)^4}\E X_{44}+O(\Psi^5),
\end{aligned}\end{eqnarray}
where in the last step, we use Lemma \ref{kao4o}.

For $B_{13}$ in \eqref{B1123456}, write
\begin{eqnarray}\begin{aligned}
B_{13}&=\frac{-2\gamma b d_a^2}{(\gamma d_a^2-\xi)^3}(E_+-z) \sum _{p,q}G_{ip}^{(a\bba)}x_{p\bba}x_{q\bba}G_{iq}^{(a\bba)}\\
&\relphantom{EEEE}+\frac{-2\gamma b d_a^2}{(\gamma d_a^2-\xi)^3}\left( E_+\tm-\sum_{\mu,\nu} x_{a\mu}G_{\mu\nu}^{(a)}x_{a\nu}-2 d_a \sum_{\mu} G_{\bba\mu}^{(a)}x_{a\mu}\right) \sum_{p,q}G_{ip}^{(a\bba)}x_{p\bba}x_{q\bba}G_{iq}^{(a\bba)}.
\end{aligned}\end{eqnarray}
The expectation of the first term is \begin{eqnarray}\label{mzpg3}
\frac{-2\gamma b d_a^2}{(\gamma d_a^2-\xi)^3} (E_+-z) \E X_{22} + O(\Psi^5).
\end{eqnarray}
For the second term,  we have
\begin{eqnarray}\label{ng82b}\begin{aligned}
&\E \frac{-2\gamma b d_a^2}{(\gamma d_a^2-\xi)^3}\left( E_+\tm-\sum x_{a\mu}G_{\mu\nu}^{(a)}x_{a\nu}-2 d_a \sum G_{\bba\mu}^{(a)}x_{a\mu}\right) \sum_{p,q}G_{ip}^{(a\bba)}x_{p\bba}x_{q\bba}G_{iq}^{(a\bba)}\\
&=\E \frac{-2\gamma^2 b d_a^2}{(\gamma d_a^2-\xi)^3}(E_+\tm -\frac{1}{N}\sum G_{\mu\mu}^{(a)})\sum_{p,q} G_{ip}^{(a\bba)}x_{p\bba}x_{q\bba}G_{iq}^{(a\bba)}\\
&=\E \frac{-2\gamma^2 b d_a^2}{(\gamma d_a^2-\xi)^3}(E_+\tm -\frac{1}{N}\sum G_{\mu\mu}^{(a\bba)})\sum_{p,q} G_{ip}^{(a\bba)}x_{p\bba}x_{q\bba}G_{iq}^{(a\bba)}\\
& \relphantom{EEEE}+\E \frac{2\gamma^2 b d_a^2}{(\gamma d_a^2-\xi)^3} \frac{G_{\mu\bba}^{(a)}G_{\bba\mu}^{(a)}}{G_{\bba\bba}^{(a)}}\sum_{p,q} G_{ip}^{(a\bba)}x_{p\bba}x_{q\bba}G_{iq}^{(a\bba)}\\
&= \frac{-2\gamma^2 b d_a^2 E_+}{(\gamma d_a^2-\xi)^3}\E X_{32}+\frac{-2\gamma^3 b d_a^2 E_+  (E_+ b+\tb)}{(\gamma d_a^2-\xi)^4}(\E \tX_{44}-2\E X_{43})\\
&\relphantom{EEEE}+\frac{-2\gamma^3  d_a^2 E_+}{(\gamma d_a^2-\xi)^3}(2\E X_{44}+\E \tX_{44})+O(\Psi^5).
\end{aligned}\end{eqnarray}
Combining \eqref{mzpg3} and \eqref{ng82b}, we have \begin{eqnarray}\label{B13v}\begin{aligned}
\E B_{13}&=\frac{-2\gamma^2 b d_a^2 E_+}{(\gamma d_a^2-\xi)^3}\E X_{32}+\frac{-2\gamma b d_a^2}{(\gamma d_a^2-\xi)^3} (E_+-z) \E X_{22}+\frac{4\gamma^3 b d_a^2 E_+  (E_+ b+\tb)}{(\gamma d_a^2-\xi)^4}\E X_{43}\\
&\relphantom{EEEE}+\frac{-4\gamma^3  d_a^2 E_+}{(\gamma d_a^2-\xi)^3}\E X_{44}+\left(\frac{-2\gamma^3  d_a^2 b^2 E_+^2}{(\gamma d_a^2-\xi)^3}+\frac{-2\gamma^4 d_a^4 E_+}{(\gamma d_a^2-\xi)^4}\right)\E \tX_{44}+O(\Psi^5)
\end{aligned}\end{eqnarray}

For $B_{14}$ in \eqref{B1123456}, we have \begin{eqnarray}\label{B14v}\begin{aligned}
\E B_{14}&=\E \sum_{p,q}^{(a)}  \frac{3\gamma^3 \tb^2 d_a^2 }{(\gamma d_a^2-\xi)^4} \left(cm(E_+)- \sum_{j,k}^{(a)}x_{j\bba}G_{jk}^{(a\bba)}x_{k\bba}\right)^2G_{ip}^{(a\bba)}x_{p\bba}x_{q\bba}G_{iq}^{(a\bba)}\\
&=\frac{3\gamma^3 \tb^2 d_a^2}{(\gamma d_a^2-\xi)^4}\left(\E X_{42}-4\E X_{43}+8\E X_{44}+2\E \tX_{44}\right)+O(\Psi^5).
\end{aligned}\end{eqnarray}

For $B_{15}$ in \eqref{B1123456}, the expectation equals \begin{eqnarray}\label{B15v}\begin{aligned}
\E B_{15} &= \E \sum_{p,q}^{(a)} \frac{(2\gamma^2 d_a^4+4 \gamma \xi d_a^2)\mY_a \mZ_a}{(\gamma d_a^2-\xi)^4} G_{ip}^{(a\bba)}x_{p\bba}x_{q\bba}G_{iq}^{(a\bba)}\\
&=\frac{(2\gamma^4 d_a^4 E_++4 \gamma^3 \xi d_a^2 E_+)}{(\gamma d_a^2-\xi)^4}\E X_{42}-\frac{(4\gamma^4 d_a^4 E_+ +  8\gamma^3 \xi d_a^2 E_+)}{(\gamma d_a^2-\xi)^4}\E X_{43}+O(\Psi^5)\\
&=\left(\frac{6\gamma^4 d_a^4 E_+}{(\gamma d_a^2-\xi)^4}-\frac{4 \gamma^3 \xi d_a^2 E_+}{(\gamma d_a^2-\xi)^3}\right)\E X_{42}-\left(\frac{12\gamma^4 d_a^4 E_+}{(\gamma d_a^2-\xi)^4}-\frac{8 \gamma^3 \xi d_a^2 E_+}{(\gamma d_a^2-\xi)^3}\right)\E X_{43}+O(\Psi^5).
\end{aligned}\end{eqnarray}
For $B_{16}$ in \eqref{B1123456}, the expectation is
\begin{eqnarray}\begin{aligned}
\E B_{16}&=\E\sum_{p,q}^{(a)}  \frac{3\gamma b^2 d_a^2 \mZ_a^2}{(\gamma d_a^2-\xi)^4} G_{ip}^{(a\bba)}x_{p\bba}x_{q\bba}G_{iq}^{(a\bba)}\\
&=\E\sum_{p,q}^{(a)} \frac{3\gamma^3 b^2 d_a^2 }{(\gamma d_a^2-\xi)^4} \left (E_+\tm(E_+)-\sum_{\mu,\nu}x_{a\mu} G_{\mu\nu}^{(a)}x_{a\nu}-2 d_a\sum G_{\bba\mu}^{(a)}x_{a\mu}\right)^2 G_{ip}^{(a\bba)}x_{p\bba}x_{q\bba}G_{iq}^{(a\bba)}+O(\Psi^5)\\
&=\E\sum_{p,q}^{(a)} \frac{3\gamma^3 b^2 d_a^2}{(\gamma d_a^2-\xi)^4}\left (E_+\tm(E_+)-\sum_{\mu,\nu}x_{a\mu} G_{\mu\nu}^{(a)}x_{a\nu}\right)^2 G_{ip}^{(a\bba)}x_{p\bba}x_{q\bba}G_{iq}^{(a\bba)}\\
&\relphantom{EEEEE}+ \E\sum_{p,q}^{(a)} \frac{3\gamma^3 b^2 d_a^2 \mZ_a^2}{(\gamma d_a^2-\xi)^4}\left(2 d_a\sum G_{\bba\mu}^{(a)}x_{a\mu}\right)^2 G_{ip}^{(a\bba)}x_{p\bba}x_{q\bba}G_{iq}^{(a\bba)}+O(N^{-1/2}\Psi^3)\\
&= \frac{3\gamma^3 b^2 d_a^2 E_+^2}{(\gamma d_a^2-\xi)^4}\E X_{42}  + \frac{24\gamma^4 d_a^4 E_+}{(\gamma d_a^2-\xi)^4}\E X_{44}+\left(\frac{6\gamma^3 b^2 d_a^2 E_+^2}{(\gamma d_a^2-\xi)^4}+\frac{12\gamma^4 d_a^4 E_+}{(\gamma d_a^2-\xi)^4}\right)\E \tX_{44}+O(N^{-1/2}\Psi^3).
\end{aligned}\end{eqnarray}
Up to now we finish the expansion of $B_1$.

\subsubsection{Expansion of $B_2$}
We turn to estimate $B_2$ in \eqref{90afu}.  Using \eqref{taylorofGaa}
,write \begin{eqnarray}\label{B2}\begin{aligned}
&\sum_{\alpha}  G_{aa}^2  G_{i\alpha}^{(a)} x_{a\alpha}d_a G_{i\bba}^{(a)}\\
&=\sum_{\alpha}  (\beta_a ^2-2\beta_a^3\mZ_a+3\beta_a^4 \mZ_a^2)  G_{i\alpha}^{(a)} x_{a\alpha}d_a G_{i\bba}^{(a)}+O(\Psi^5).
\end{aligned}\end{eqnarray}

After taking expectation, the first term vanishes. Using \eqref{taylorofbetaa}, the expectation of the second term in \eqref{B2} equals
\begin{eqnarray}\begin{aligned}
&\E \sum_{ \mu}4\beta_a^3 \gamma d_a^2 G_{i\mu}^{(a)}G_{\mu\bba}^{(a)} G_{\bba i}^{(a)}- \E \sum_{\alpha} 2\beta_a^3 \gamma(E_+ \tm- \sum x_{a\mu}G_{\mu\nu}^{(a)}x_{a\nu})G_{i\alpha}^{(a)} x_{a\alpha}d_a G_{i\bba}^{(a)}\\
& =\E \sum_{ \mu} \frac{4b^3 \gamma d_a^2}{(\gamma d_a^2-\xi)^3}G_{i\mu}^{(a)}G_{\mu\bba}^{(a)} G_{\bba i}^{(a)}+\E \sum_{ \mu}\frac{-12\gamma^3 d_a^4 b^2}{(\gamma d_a^2-\xi)^4}(cm-\sum x_{j\bba}G_{jk}^{(a\bba)}x_{k\bba})G_{i\mu}^{(a)}G_{\mu\bba}^{(a)} G_{\bba i}^{(a)}+O(N^{-1/2}\Psi^3)\\
&=\E \sum_{ \mu} \frac{4 \gamma^2 b^3  d_a^2}{(\gamma d_a^2-\xi)^3}G_{i\mu}^{(a)}
G_{\bba\bba}^{(a)2}  \sum G_{\mu r}^{(a\bba)}x_{r\bba}x_{s\bba}G_{si}^{(a\bba)}\\&\relphantom{EEE}+ \E \sum_{ \mu}\frac{-12\gamma^4 d_a^4 b^2}{(\gamma d_a^2-\xi)^4}(cm-\sum x_{j\bba}G_{jk}^{(a\bba)}x_{k\bba})G_{i\mu}^{(a)}G_{\bba\bba}^{(a)2}   \sum G_{\mu r}^{(a\bba)}x_{r\bba}x_{s\bba}G_{si}^{(a\bba)}+O(N^{-1/2}\Psi^3)\\
&= \E \sum_{ \mu}\frac{4 \gamma^2 b^3 d_a^2}{(\gamma d_a^2-\xi)^3}\left[G_{i\mu}^{(a\bba)}\left(\frac{1}{b^2}+\frac{2\mY_a}{b^3}\right)  +\frac{G_{i\bba}^{(a)}G_{\bba\mu}^{(a)}}{G_{\bba\bba}^{(a)}}\frac{1}{b^2}\right]\sum G_{\mu r}^{(a\bba)}x_{r\bba}x_{s\bba}G_{si}^{(a\bba)}+\\&\relphantom{EEE}+\E \sum_{ \mu}\frac{-12\gamma^4 d_a^4 }{(\gamma d_a^2-\xi)^4}(cm-\sum x_{j\bba}G_{jk}^{(a\bba)}x_{k\bba})G_{i\mu}^{(a\bba)} \sum G_{\mu r}^{(a\bba)}x_{r\bba}x_{s\bba}G_{si}^{(a\bba)}+O(N^{-1/2}\Psi^3)\\
&=\frac{4 \gamma^2 b^3 d_a^2}{(\gamma d_a^2-\xi)^3}\E X_{33}'+\frac{-12\gamma^3 d_a^2 b E_+(E_+ b+\tb)}{(\gamma d_a^2-\xi)^4}\E X_{44}+\frac{8\gamma^3 d_a^2 E_+}{(\gamma d_a^2-\xi)^3}\E X_{43}+\frac{-16\gamma^3 d_a^2 E_+}{(\gamma d_a^2-\xi)^3}\E X_{44}\\
&\relphantom{EEE} +\frac{-4\gamma^3 d_a^2 E_+}{(\gamma d_a^2-\xi)^3}(2\E X_{44}+\E \tX_{44})+\frac{-12\gamma^4 d_a^4 E_+ }{(\gamma d_a^2-\xi)^4}(\E X_{43}-2\E X_{44})+O(N^{-1/2}\Psi^3).
\end{aligned}\end{eqnarray}

For the third term in \eqref{B2}, after expanding square of $\mZ_a$ and taking expectation, we find that all the terms with odd number of $X$ entries have negligible expectations, and therefore
\begin{eqnarray}\begin{aligned}
&\E \sum_{\alpha} 3\beta_a^4 \mZ_a^2  G_{i\alpha}^{(a)} x_{a\alpha}d_a G_{i\bba}^{(a)}\\
&=\E \sum_{\alpha} \frac{-12 \gamma^2 d_a^2 b^4 }{(\gamma d_a^2-\xi)^4}(E_+\tm-\frac{1}{N}\sum G_{\mu\mu}^{(a)})G_{i\alpha}^{(a)}G_{\alpha \bba}^{(a)}G_{\bba i}^{(a)}+\E \sum_{\alpha} \frac{24 \gamma^2 d_a^2 b^4 }{(\gamma d_a^2-\xi)^4} G_{i\alpha}^{(a)}G_{\alpha \beta}^{(a)} G_{\beta \bba}^{(a)}G_{\bba i}^{(a)}\\
&=\frac{-12 \gamma^3 d_a^2 b^2 E_+^2 }{(\gamma d_a^2-\xi)^4}\E X_{43}+\frac{24 \gamma^3 d_a^2 b^2 E_+^2 }{(\gamma d_a^2-\xi)^4}\E X_{44}+O(N^{-1/2}\Psi^3).
\end{aligned}\end{eqnarray}
Then, we have \begin{eqnarray}\label{B2v}\begin{aligned}
\E B_2 &=\frac{4 \gamma^2 b^3 d_a^2}{(\gamma d_a^2-\xi)^3}\E X_{33}'+\left(\frac{-12\gamma^4 d_a^4 E_+ }{(\gamma d_a^2-\xi)^4}+\frac{-12 \gamma^3 d_a^2 b^2 E_+^2 }{(\gamma d_a^2-\xi)^4}+\frac{8\gamma^3 d_a^2 E_+}{(\gamma d_a^2-\xi)^3}\right)\E X_{43}\\
&\relphantom{EEE}+\left(\frac{12\gamma^4 d_a^4 E_+ }{(\gamma d_a^2-\xi)^4}+\frac{12 \gamma^3 d_a^2 b^2 E_+^2 }{(\gamma d_a^2-\xi)^4}+\frac{-12\gamma^3 d_a^2 E_+}{(\gamma d_a^2-\xi)^3}\right)\E X_{44}+\frac{-4\gamma^3 d_a^2 E_+}{(\gamma d_a^2-\xi)^3}\E \tX_{44}+O(N^{-1/2}\Psi^3).
\end{aligned}\end{eqnarray}

\subsubsection{Expansion of $B_3$}

Next we estimate the expectation of $B_3$ in \eqref{90afu}. Using \eqref{taylorofGaa} and \eqref{taylorofbetaa}, we
write $B_3$ into summation of the following six terms: \begin{eqnarray}\label{B3set}\begin{aligned}
&B_{31}:=\frac{b^2}{(\gamma d_a^2-\xi)^2}\sum_{\alpha,\beta}G_{i\alpha}^{(a)}x_{a\alpha}x_{a\beta}G_{i\beta}^{(a)}, \quad B_{32}:=\frac{-2\gamma d_a^2 b \mY_a}{(\gamma d_a^2-\xi)^3}\sum_{\alpha,\beta}G_{i\alpha}^{(a)}x_{a\alpha}x_{a\beta}G_{i\beta}^{(a)},\\
&B_{33}:=\frac{-2b^3 \mZ_a}{(\gamma d_a^2-\xi)^3}\sum_{\alpha,\beta}G_{i\alpha}^{(a)}x_{a\alpha}x_{a\beta}G_{i\beta}^{(a)}, \quad B_{34}:=\frac{\gamma^2 d_a^4+2\gamma \xi d_a^2}{(\gamma d_a^2-\xi)^4}\mY_a^2 \sum_{\alpha,\beta}G_{i\alpha}^{(a)}x_{a\alpha}x_{a\beta}G_{i\beta}^{(a)}\\
&B_{35}:=\frac{6\gamma d_a^2 b^2}{(\gamma d_a^2-\xi)^4}\mY_a \mZ_a \sum_{\alpha,\beta}G_{i\alpha}^{(a)}x_{a\alpha}x_{a\beta}G_{i\beta}^{(a)}, \quad B_{36}:=\frac{3b^4}{(\gamma d_a^2-\xi)^4}\mZ_a^2 \sum_{\alpha,\beta}G_{i\alpha}^{(a)}x_{a\alpha}x_{a\beta}G_{i\beta}^{(a)}
\end{aligned}\end{eqnarray}
We handle the above six terms one by one. For $B_{31}$, taking partial expectation $\E_a$ and using \eqref{gfiminus}, we get \begin{eqnarray}\label{EB31start}\begin{aligned}
\E_a B_{31}&=\frac{1}{N}\sum_\alpha \frac{b^2}{(\gamma d_a^2-\xi)^2}G_{i\alpha}^{(a)}G_{\alpha i}^{(a)}\\
&=\frac{1}{N}\sum_\alpha \frac{b^2}{(\gamma d_a^2-\xi)^2}\left(G_{i\alpha}G_{\alpha i}-\frac{2 G_{ia}G_{a\alpha}}{G_{aa}}G_{\alpha i}^{(a)}-\frac{G_{ia}G_{a\alpha}G_{\alpha a}G_{ai}}{G_{aa}^2}\right)
\end{aligned}\end{eqnarray}
Expanding  the second term in the lower index $a$,
we can write it into summation of four terms: \begin{eqnarray*}\begin{aligned}
&B_{311}:=\frac{-2}{N}\sum_\alpha \frac{\gamma b^2}{(\gamma d_a^2-\xi)^2}G_{aa}G_{i\bba}^{(a)}d_a^2 G_{\alpha \bba}^{(a)}G_{\alpha i}^{(a)},\quad  B_{312}:=\frac{-2}{N}\sum_{\alpha,\nu} \frac{\gamma b^2}{(\gamma d_a^2-\xi)^2}G_{aa}G_{i\bba}^{(a)}d_a x_{a\nu}G_{\alpha\nu}^{(a)}G_{\alpha i}^{(a)},\\
&B_{313}:=\frac{-2}{N}\sum_{\alpha,\mu} \frac{\gamma b^2}{(\gamma d_a^2-\xi)^2}G_{aa}G_{i\mu}^{(a)}x_{a\mu}d_a G_{\alpha \bba}^{(a)}G_{\alpha i}^{(a)}, \quad  B_{314}:=\frac{-2}{N}\sum_{\alpha,\mu,\nu} \frac{\gamma b^2}{(\gamma d_a^2-\xi)^2}G_{aa}G_{i\mu}^{(a)}x_{a\mu}x_{a\nu}G_{\alpha \nu}^{(a)}G_{\alpha i}^{(a)}
\end{aligned}\end{eqnarray*}
We have
\begin{eqnarray*}\begin{aligned}
\E B_{311}&=\E\sum \frac{-2 \gamma^2 b^2}{(\gamma d_a^2-\xi)^2}G_{aa}G_{\bba\bba}^{(a)2}d_a^2 G_{ip}^{(a\bba)}x_{p\bba}x_{q\bba}G_{\alpha q}^{(a\bba)}\left(G_{\alpha i}^{(a\bba)}+\frac{G_{\alpha\bba}^{(a)}G_{\bba i}^{(a)}}{G_{\bba\bba}^{(a)}}\right)\\
&=\E \sum \frac{-2\gamma^2 b^2 d_a^2}{(\gamma d_a^2-\xi)^2}\left(\frac{1}{(\gamma d_a^2-\xi)b}+\frac{(\gamma d_a^2-2\xi)\mY_a}{(\gamma d_a^2-\xi)^2 b^2}+\frac{-\mZ_a}{(\gamma d_a^2-\xi)^2}\right)G_{ip}^{(a\bba)}x_{p\bba}x_{q\bba}G_{\alpha q}^{(a\bba)}G_{\alpha i}^{(a\bba)}\\
&\relphantom{EEE}+\E \sum \frac{2\gamma^3  b^2 d_a^2}{(\gamma d_a^2-\xi)^3}G_{ip}^{(a\bba)}x_{p\bba}x_{q\bba}G_{\alpha q}^{(a\bba)}G_{\alpha\bba}^{(a)}G_{\bba i}^{(a)} \\
&=\frac{-2\gamma^2 b d_a^2}{(\gamma d_a^2-\xi)^3}\E X_{33}'+ \left(\frac{6\gamma^3 d_a^2 b^2 E_+^2}{(\gamma d_a^2-\xi)^4}+\frac{6\gamma^3 \xi d_a^2 E_+}{(\gamma d_a^2-\xi)^4}\right)\E X_{44}+ \frac{2\gamma^3 (\gamma d_a^4-2\xi d_a^2)E_+}{(\gamma d_a^2-\xi)^4}(2\E X_{44}-\E X_{43})\\& \relphantom{EE}+ \frac{2\gamma^3 b^2 d_a^2 E_+^2}{(\gamma d_a^2-\xi)^4}\E X_{43}+ \frac{2\gamma^3 d_a^2 E_+}{(\gamma d_a^2-\xi)^3}(2\E X_{44}+\E \tX_{44})+O(\Psi^5).
\end{aligned}\end{eqnarray*}
\begin{eqnarray*}
\E B_{312}=\E B_{313}=\frac{-4\gamma^3 b^2 d_a^2 E_+^2}{(\gamma d_a^2-\xi)^4}\E X_{44}+O(N^{-1/2}\Psi^3).
\end{eqnarray*}
\begin{eqnarray*}\begin{aligned}
\E B_{314}&=\frac{-2}{N}\sum \frac{\gamma b^2}{(\gamma d_a^2-\xi)^2}\left(\frac{b}{\gamma d_a^2-\xi}-\frac{\gamma d_a^2\mY_a}{(\gamma d_a^2-\xi)^2}-\frac{b^2 \mZ_a}{(\gamma d_a^2-\xi)^2}\right)G_{i\mu}^{(a)}x_{a\mu}x_{a\nu}G_{\alpha \nu}^{(a)}G_{\alpha i}^{(a)}+O(\Psi^5)\\
&= \frac{-2\gamma b^3}{(\gamma d_a^2-\xi)^3}\E X_{33}''+  \left(\frac{6\gamma^3 b^2 d_a^2 E_+^2}{(\gamma d_a^2-\xi)^4}+\frac{6\gamma^2 b^4 E_+^3}{(\gamma d_a^2-\xi)^4}\right) \E X_{44}\\
&\relphantom{EE}+ \frac{2\gamma^3 b^2 d_a^2 E_+^2}{(\gamma d_a^2-\xi)^4}\E X_{43}+ \frac{2\gamma^2 b^4 E_+^3}{(\gamma d_a^2-\xi)^4}(\E X_{43}-2\E X_{44})+O(N^{-1/2}\Psi^3).
\end{aligned}\end{eqnarray*}
For the last term in \eqref{EB31start} it can be handled by similar steps to  \eqref{miy11}, \eqref{miy12} and \eqref{miy13}. We obtain
\begin{eqnarray}\label{EB31end}\begin{aligned}
&\E \frac{1}{N}\sum \frac{b^2}{(\gamma d_a^2-\xi)^2}\frac{-G_{ia}G_{a\alpha}G_{\alpha a}G_{ai}}{G_{aa}^2}\\
&=- \left(\frac{\gamma^4 d_a^4 E_+}{(\gamma d_a^2-\xi)^4}+\frac{2\gamma^3 d_a^2 b^2 E_+^2}{(\gamma d_a^2-\xi)^4}+\frac{\gamma^2 b^4 E_+^3}{(\gamma d_a^2-\xi)^4}\right) (2\E X_{44}+\E \tX_{44})+O(\Psi^5).
\end{aligned}\end{eqnarray}
Then we conclude $\E B_{31}$ with negligible order $O(N^{-1/2}\Psi^3)$ from the argument between \eqref{EB31start} and \eqref{EB31end}.

We turn to find other terms defined in \eqref{B3set}. By analogous decoupling  procedure as before, we can obtain that
\begin{eqnarray}\begin{aligned}
\E B_{32}&= \frac{-2\gamma^2 b d_a^2}{(\gamma d_a^2-\xi)^3}\E X_{32}'+ \left(\frac{4\gamma^3 d_a^2 b^2 E_+^2}{(\gamma d_a^2-\xi)^4}+\frac{4\gamma^3 d_a^2 \xi E_+}{(\gamma d_a^2-\xi)^4}+\frac{4\gamma^3 d_a^2 E_+}{(\gamma d_a^2-\xi)^3}\right)\E X_{43}\\
&\relphantom{EE} +\frac{-8\gamma^3 d_a^2 E_+}{(\gamma d_a^2-\xi)^3}\E X_{44}+ \left(\frac{-2\gamma^3 d_a^2 b^2 E_+^2}{(\gamma d_a^2-\xi)^4}+\frac{-2\gamma^3 d_a^2 \xi E_+}{(\gamma d_a^2-\xi)^4}\right)\E \tX_{44}+O(\Psi^5).
\end{aligned}\end{eqnarray}
\begin{eqnarray}\begin{aligned}
\E B_{33}&= \frac{-2\gamma b^3 E_+}{(\gamma d_a^2-\xi)^3}\E X_{32}'+ \frac{4\gamma b^3}{(\gamma d_a^2-\xi)^3}\E X_{33}'' + \frac{-2\gamma b^4}{(\gamma d_a^2-\xi)^3}(E_+-z)\E X_{22}'\\&\relphantom{EE}+ \left(\frac{2\gamma^3 d_a^2 b^2 E_+^2}{(\gamma d_a^2-\xi)^4}+\frac{2\gamma^2 b^4 E_+^3}{(\gamma d_a^2-\xi)^4}  \right)(2\E X_{43}-6\E X_{44}-\E \tX_{44})+O(\Psi^5).
\end{aligned}\end{eqnarray}

\begin{eqnarray}
\E B_{34}=\frac{\gamma^4 d_a^4 E_+ + 2\gamma^3 \xi d_a^2 E_+}{(\gamma d_a^2-\xi)^4}(\E X_{42}+2\E \tX_{44})+O(\Psi^5).
\end{eqnarray}
\begin{eqnarray}
\E B_{35}=\frac{6\gamma^3 d_a^2 b^2 E_+^2}{(\gamma d_a^2-\xi)^4}(\E X_{42}-2\E X_{43})+ O(\Psi^5).
\end{eqnarray}

\begin{eqnarray}\begin{aligned}
\E B_{36}&= \frac{3\gamma^2 b^4 E_+^3}{(\gamma d_a^2-\xi)^4}\E X_{42} + \frac{-12\gamma^2 b^4 E_+^3}{(\gamma d_a^2-\xi)^4}\E X_{43}+ \left(\frac{24\gamma^2 b^4 E_+^3}{(\gamma d_a^2-\xi)^4}+\frac{24\gamma^2 b^2 d_a^2 E_+^2}{(\gamma d_a^2-\xi)^4}\right)\E X_{44} \\
&\relphantom{EE}+\left( \frac{6\gamma^2 b^4 E_+^3}{(\gamma d_a^2-\xi)^4}+\frac{12\gamma^3 b^2 d_a^2 E_+^2}{(\gamma d_a^2-\xi)^4}\right) \E \tX_{44}+O(\Psi^5).
\end{aligned}\end{eqnarray}
We conclude that  \begin{eqnarray}\begin{aligned}
\E B_3 &= \frac{b^2}{(\gamma d_a^2-\xi)^2}\E X_{22}' +\left(\frac{-2\gamma^2  b d_a^2}{(\gamma d_a^2-\xi)^3}+\frac{-2\gamma b^3 E_+}{(\gamma d_a^2-\xi)^3}\right)\E X_{32}' +\frac{-2\gamma^2 b d_a^2}{(\gamma d_a^2-\xi)^3}\E X_{33}'+\\
&+\left(\frac{-2\gamma^2 b d_a^2}{(\gamma d_a^2-\xi)^3}+\frac{4\gamma b^3}{(\gamma d_a^2-\xi)^3}\right) \E X_{33}''+ \left(\frac{\gamma^4 d_a^4 E_+}{(\gamma d_a^2-\xi)^4}+\frac{6 \gamma^3 b^2 d_a^2 E_+^2}{(\gamma d_a^2-\xi)^4}+\frac{3\gamma^2 b^4 E_+^2}{(\gamma d_a^2-\xi)^4} +\frac{2\gamma^3 \xi d_a^2 E_+}{(\gamma d_a^2-\xi)^4}\right)\E X_{42}\\
&+\left(\frac{2\gamma^4 d_a^4 E_+}{(\gamma d_a^2-\xi)^4}+\frac{-6\gamma ^2 b^4 E_+^3}{(\gamma d_a^2-\xi)^4} +\frac{4\gamma^3 d_a^2 \xi E_+}{(\gamma d_a^2-\xi)^4}\right) \E X_{43}+\left(\frac{3\gamma^4 d_a^4 E_+}{(\gamma d_a^2-\xi)^4} +\frac{6 \gamma^3 b^2 d_a^2 E_+^2}{(\gamma d_a^2-\xi)^4}+\frac{3 \gamma^2 b^4 E_+^3}{(\gamma d_a^2-\xi)^4} \right)\E \tX_{44}
\\&+\left(\frac{-2\gamma^4 d_a^4 E_+}{(\gamma d_a^2-\xi)^4}+\frac{12 \gamma^3 b^2 d_a^2 E_+^2}{(\gamma d_a^2-\xi)^4}+\frac{12\gamma^2 b^4 E_+^3}{(\gamma d_a^2-\xi)^4}+\frac{2\gamma^3 \xi d_a^2 E_+}{(\gamma d_a^2-\xi)^4}\right)\E X_{44}+\frac{-2\gamma b^4 E_+}{(\gamma d_a^2-\xi)^3}(E_+-z)\E X_{22}+O(N^{-1/2}\Psi^3).
\end{aligned}\end{eqnarray}

\subsubsection{Optical theorem from $X_{22}$}
By tedious calculations, we summarize that
\begin{eqnarray}\label{X22}\begin{aligned}
&\frac{1}{N}\E \sum_a^{(i)} G_{ia}G_{ai}=  \gamma \psi_2 \E X_{22}+\gamma b^2 \varphi_2 \E X_{22}'-  \frac{1}{N}\sum \left(\frac{2\gamma ^3\tb d_a^2}{(\gamma d_a^2-\xi)^3}+ \frac{2\gamma^3 b d_a^2 E_+}{(\gamma d_a^2-\xi)^3}\right) \E X_{32}\\
& -\frac{1}{N}\sum\left(\frac{2\gamma ^3 b d_a^2}{(\gamma d_a^2-\xi)^3}+\frac{2\gamma^2 b^3 E_+}{(\gamma d_a^2-\xi)^3}\right) \E X_{32}'+\frac{1}{N}\sum\frac{2\gamma ^3\tb d_a^2}{(\gamma d_a^2-\xi)^3}\E  X_{33}+\frac{1}{N}\sum\frac{4\gamma^3 b d_a^2 }{(\gamma d_a^2-\xi)^3}\E X_{33}'\\
&\relphantom{E}+\frac{1}{N}\sum\frac{2\gamma^2 b^3}{(\gamma d_a^2-\xi)^3} \E X_{33}''-\frac{1}{N}\sum\left(\frac{2\gamma^2 b^2 d_a^2}{(\gamma d_a^2-\xi)^3}+\frac{2\gamma b^4 E_+}{(\gamma d_a^2-\xi)^3}\right)(E_+-z) \E X_{22}\\
&\relphantom{E}+\frac{1}{N}\sum \left(\frac{\gamma^3 E_+(\gamma d_a^2+E_+ b^2
)^2}{(\gamma d_a^2-\xi)^4}+\frac{\gamma^4 d_a^2(E_+ b+\tb)^2}{(\gamma d_a^2-\xi)^4}\right) (3\E X_{42}-\E 6X_{43}+\E 12X_{44}+3\E \tX_{44}) +O(N^{-1/2}\Psi^3).
\end{aligned}\end{eqnarray}

By similar procedure, we obtain that
\begin{eqnarray}\label{X22'1}\begin{aligned}
&\frac{1}{N}\E \sum_a^{(i)} G_{i\bba}G_{\bba i}=\gamma\psi_2 \E X_{22}'+ \gamma \tb^2 \varphi_2 \E  X_{22}-\frac{1}{N}\sum \left(\frac{2\gamma^3 \tb d_a^2 E_+}{(\gamma d_a^2-\xi)^3}+\frac{2\gamma^2 \tb^3}{(\gamma d_a^2-\xi)^3}\right) \E X_{32}\\
&-\frac{1}{N}\sum\left( \frac{2\gamma^3 b d_a^2 E_+}{(\gamma d_a^2-\xi)^3}+ \frac{2\gamma^3 \tb d_a^2 }{(\gamma d_a^2-\xi)^3}\right)\E X_{32}'+\frac{1}{N}\sum \frac{2\gamma^3 d_a^2 b}{(\gamma d_a^2-\xi)^3}\E X_{33}''+\frac{1}{N}\sum \frac{4\gamma^3 d_a^2 \tb}{(\gamma d_a^2-\xi)^3}\E X_{33}'
\\&+\frac{1}{N}\sum \frac{2\gamma^2 \tb^3}{(\gamma d_a^2-\xi)^3}\E X_{33}-\frac{1}{N}\sum\left( \frac{2\gamma^2 \xi d_a^2}{(\gamma d_a^2-\xi)^3}+\frac{2\gamma^2 b^2 d_a^2 E_+}{(\gamma d_a^2-\xi)^3}\right)(E_+-z) \E X_{22}\\
&+\frac{1}{N}\sum \left(\frac{\gamma^3 (\gamma E_+ d_a^2+\tb^2)^2}{(\gamma d_a^2-\xi)^4}+\frac{\gamma^4 E_+ d_a^2 (E_+ b+\tb)^2}{(\gamma d_a^2-\xi)^4}\right)(3\E X_{42}-6\E X_{43}+12\E X_{44}+3\E \tX_{44})+O(N^{-1/2}\Psi^3).
\end{aligned}\end{eqnarray}

\indent It is easy to obtain that
\begin{eqnarray}\label{X22'spe}
\E G_{i\bbi}^2 =\frac{\gamma d_i^2}{(\gamma d_i^2-\xi)^2}-\frac{2\gamma^2 b d_i^2 E_+}{(\gamma d_i^2-\xi)^3}\E (cm-cm_N)-\frac{2\gamma^2 \tb d_i^2}{(\gamma d_i^2-\xi)^3}\E (cm-cm_N)+O(\Psi^2),
\end{eqnarray} and
\begin{eqnarray}\label{X22'2}\begin{aligned}
\frac{1}{N}\E \sum_{\alpha>2M} G_{i\alpha}G_{\alpha i}&=\frac{\gamma(1-c)}{b^2}\E X_{22}+\frac{2\gamma^2 (1-c)}{b^3}\E X_{32}-\frac{2\gamma^2 (1-c)}{b^3}\E X_{33}\\
&+\frac{\gamma^3(1-c)}{b^4}(3\E X_{42}-6\E X_{43}+12\E X_{44}+3\E \tX_{44})+O(\Psi^5)
\end{aligned}\end{eqnarray}
We remark that in \eqref{X22} and \eqref{X22'1},  terms contain $\E X_{32}'$ can be further represented as linear combination of $\E X_{32}$ and $\E(cm-cm_N)$, see \eqref{x32'vsimple} below. Terms contain $\E X_{33}'$ and $\E X_{33}''$ can be replaced with $E_+\E X_{33}$ and $E_+^2 \E X_{33}$ respectively according to \eqref{X33'fiexp} and \eqref{X33''fiexp}.

Let $$\theta_4: =\frac{1}{N}\sum_a \left(\frac{\gamma^3 E_+^2(\gamma d_a^2+E_+ b^2)^2}{(\gamma d_a^2-\xi)^4}+\frac{2\gamma^4 d_a^2 E_+ (E_+ b+\tb)^2}{(\gamma d_a^2-\xi)^4}+\frac{\gamma^3 (\gamma E_+ d_a^2+\tb^2)^2}{(\gamma d_a^2-\xi)^4}+\frac{\gamma^3(1-c)}{b^4}\right)$$
Multiplying $E_+$ on both sides of \eqref{X22} and \eqref{X22spe}, then summing with \eqref{X22'1},\eqref{X22'spe} and \eqref{X22'2} together, we find that the coefficient of $\E X_{22}$ and $\E X_{22}'$ cancel by \eqref{relofvarphi}, and obtain  \begin{eqnarray}\begin{aligned}
&\frac{E_+ b^2}{(\gamma d_i^2-\xi)^2}+\frac{\gamma d_i^2}{(\gamma d_i^2-\xi)^2}-\left(\frac{2\gamma^2 \tb d_i^2}{(\gamma d_i^2-\xi)^3}+\frac{4\gamma^2 b d_i^2 E_+}{(\gamma d_i^2-\xi)^3}+\frac{2\gamma b^3 E_+^2}{(\gamma d_i^2-\xi)^3} \right) \E (cm-cm_N)\\&-2\Phi_2 X_{32} -2\Phi_1 \E X_{32}' +(2\Phi_1 E_+ +2\Phi_2)\E  X_{33}-\frac{2b \Phi_1}{\gamma }(E_+-z)\E X_{22}\\&+\theta_4 (3\E X_{42}-6\E X_{43}+12\E X_{44}+3\E \tX_{44})=O(N^{-1/2}\Psi^3)
\end{aligned}
\end{eqnarray}
Summing over index $i$,  and using \eqref{x32'vsimple}, we have that \begin{eqnarray}\begin{aligned}
E_+ b^2 \varphi_2 &+\psi_2+(2\Phi_2+2E_+\Phi_1)(-X_{32}+X_{33})-\frac{2b \Phi_1}{\gamma }\E(cm-cm_N)-\frac{2b \Phi_1}{\gamma }(E_+-z)X_{22}\\& +\theta_4 (3X_{42}-6X_{43}+12X_{44}+3\tX_{44})=O(N^{-1/2}\Psi^3).
\end{aligned}\end{eqnarray}
Actually, through tedious calculations, we find that
Recall \eqref{defmathcalX}. Using Lemma \ref{Phivalue}, we get the optical theorem from $X_{22}$,
\begin{eqnarray}\label{optfromX22}
\frac{1}{\gamma}+\frac{2}{\gamma}\E \mathcal{X}_{3}+\frac{2b^2}{\gamma^2 h}\E \mathfrak{X}_{4}+\theta_4 \E \mathcal{X}_4=O(N^{-1/2}\Psi^3),
\end{eqnarray}
where \begin{eqnarray}
\theta_4=\gamma^3 h^4 \varphi_4-\frac{4E_+ }{h}-\frac{4\gamma^2 E_+}{b^2}-\frac{2\gamma^2 E_+^2}{b^2 h}+\frac{\gamma^3(1-c)}{b^4}.
\end{eqnarray}
The above expression of $\theta_4$ is obtained through simple but lengthy calculations with the help of Lemma \ref{Phivalue}.

Combining \eqref{X22'1},\eqref{X22'spe} and \eqref{X22'2}, we have\begin{eqnarray*}\begin{aligned}
\frac{1}{N}\sum_i \E X_{22}'=& E_+ \frac{1}{N}\sum_i  \E X_{22} + \frac{1}{N}\sum_i  \frac{2\tb}{\gamma E_+}\E[X_{32}-X_{33}]+\frac{\psi_2}{1-\gamma \psi_2}\\&-\frac{1}{N^2}\sum_{i,a}\frac{2\gamma^2 d_a^2 b (E_+b+\tb)^2}{(\gamma d_a^2-\xi)^3 E_+} \left[\E(cm-cm_N)+ (E_+-z)\E X_{22} \right]\\&+\frac{1}{1-\gamma \psi_2}\left\{\frac{1}{N}\sum_{a} \left(\frac{\gamma^3 (\gamma E_+ d_a^2+\tb^2)^2}{(\gamma d_a^2-\xi)^4}+\frac{\gamma^4 E_+ d_a^2 (E_+ b+\tb)^2}{(\gamma d_a^2-\xi)^4}\right)+\frac{\gamma^3(1-c)}{b^4}\right\}\\&\times \frac{1}{N}\sum_i   \E (3X_{42}-6X_{43}+12X_{44}+3\tX_{44})+O(N^{-1/2}\Psi^3).
\end{aligned}\end{eqnarray*}
Using Lemma \ref{Phivalue}, we have  \begin{eqnarray}\label{X22'vsimple}\begin{aligned}
&\frac{1}{N}\sum_i \E X_{22}' = E_+ \frac{1}{N}\sum_i  \E X_{22} + \frac{2\tb}{\gamma E_+}\E \mathcal{X}_{3} +\left(\frac{h}{\gamma E_+}-\frac{1}{\gamma}\right)-\frac{2b(\gamma^2 E_+^2-b\tb)}{\gamma^2 E_+ h}\E \mathfrak{X}_{4}\\
&+ \left( \frac{\gamma^3 h^4 \tb \varphi_6}{E_+} -\frac{3\tb}{h}-\frac{3\gamma^2 \tb}{b^3}-\frac{3\gamma^2 E_+\tb}{b^2 h}-\frac{E_+  b^2+\gamma^2 E_+^2 }{b h}+ \frac{h \gamma^3(1-c)}{E_+ b^4}\right)\E  \mathcal{X}_4+O(N^{-1/2}\Psi^3)
\end{aligned}\end{eqnarray}

\subsection{Optical theorem from $X_{33}$}
Recall $$X_{33}=\frac{1}{N^2}\sum_{a,b} G_{ia}G_{ab}G_{bi}.$$
Since the contributions to the sum when $a=i$  or $a=b$ are of negligible order $O(\Psi^2)$, we can assume that $i,b\neq a.$
Expanding $X_{33}$ with respect to index $a$, we have
\begin{eqnarray}\begin{aligned}\label{optx331}
  \frac{1}{N^2}\sum_{a,b}G_{ia}G_{ab}G_{bi} & =\frac{1}{N^2}\sum_{a,b,\alpha,\beta}G_{aa}^2 G_{i\alpha}^{(a)}Y_{a\alpha}Y_{a\beta}G_{b\beta}^{(a)}G_{bi}\\
  &=\frac{\gamma}{N^2}\sum_{a,b} G_{aa}^2 G_{i\bar{a}}^{(a)}d_a^2 G_{b\bar{a}}^{(a)}G_{bi}+\frac{\gamma}{N^2}\sum_{a,b,\alpha} G_{aa}^2 G_{i\alpha}^{(a)}x_{a\alpha}d_a  G_{b\bar{a}}^{(a)}G_{bi}\\ &\relphantom{EEEE} +\frac{\gamma}{N^2}\sum_{a,b,\beta} G_{aa}^2 G_{i\bar{a}}^{(a)}d_a  x_{a\beta}G_{b\beta}^{(a)}G_{bi}+\frac{\gamma}{N^2}\sum_{a,b,\alpha,\beta} G_{aa}^2 G_{i\alpha}^{(a)}x_{a\alpha} x_{a\beta} G_{b\beta}^{(a)}G_{bi}
\end{aligned}
\end{eqnarray}
By decomposing $G_{bi}$ into sum of $G_{bi}^{(\bba)}$ and $G_{b\bba}G_{\bba i}/G_{\bba\bba}$ for the first three terms,  and sum of $G_{bi}^{(a)}$ and $G_{ba} G_{ai}/G_{aa}$ for the last term, we can write $X_{33}$ into sum of eight terms:
\begin{eqnarray}\begin{aligned}
&D_1:=\frac{\gamma}{N^2}\sum_{a,b} G_{aa}^2 G_{i\bba}^{(a)}d_a^2 G_{b\bba}^{(a)}G_{bi}^{(\bba)}, \quad  D_2:=\frac{\gamma}{N^2}\sum_{a,b} G_{aa}^2 G_{i\bba}^{(a)}d_a^2 G_{b\bba}^{(a)}\frac{G_{b\bba}G_{\bba i}}{G_{\bba\bba}},\\
&D_3:=\frac{\gamma}{N^2}\sum_{a,b,\alpha} G_{aa}^2 G_{i\alpha}^{(a)}x_{a\alpha}d_a  G_{b\bba}^{(a)}G_{bi}^{(\bba)},\quad D_4:=\frac{\gamma}{N^2}\sum_{a,b,\alpha} G_{aa}^2 G_{i\alpha}^{(a)}x_{a\alpha}d_a  G_{b\bba}^{(a)}\frac{G_{b\bba}G_{\bba i}}{G_{\bba\bba}},\\
&D_5:=\frac{\gamma}{N^2}\sum_{a,b,\beta} G_{aa}^2 G_{i\bba}^{(a)}d_a  x_{a\beta}G_{b\beta}^{(a)}G_{bi}^{(\bba)}, \quad  D_6:=\frac{\gamma}{N^2}\sum_{a,b,\beta} G_{aa}^2 G_{i\bba}^{(a)}d_a  x_{a\beta}G_{b\beta}^{(a)}\frac{G_{b\bba}G_{\bba i}}{G_{\bba\bba}},\\
&D_7:=\frac{\gamma}{N^2}\sum_{a,b,\alpha,\beta} G_{aa}^2 G_{i\alpha}^{(a)}x_{a\alpha} x_{a\beta} G_{b\beta}^{(a)}G_{bi}^{(a)},\quad  D_8:=\frac{\gamma}{N^2}\sum_{a,b,\alpha,\beta} G_{aa}^2 G_{i\alpha}^{(a)}x_{a\alpha} x_{a\beta} G_{b\beta}^{(a)}\frac{G_{ba}G_{ai}}{G_{aa}}.
\end{aligned}\end{eqnarray}

We analyze above terms one by one.  $D_1$ can be written as
\begin{eqnarray}\label{D1}\begin{aligned}
 D_1&=  \frac{\gamma^2}{N^2}\sum_{a,b,j,k} G_{aa}^2 G_{\bba\bba}^{(a)2}d_a^2 G_{ij}^{(a\bba)}x_{j\bba}x_{k\bba}G_{bk}^{(a\bba)}G_{bi}^{(\bba)}\\
&=\frac{\gamma^2}{N^2}\sum \left(\frac{1}{(\gamma d_a^2-\xi)^2}+\frac{-2\tb \mY_a}{(\gamma d_a^2-\xi)^3}+\frac{-2b \mZ_a}{(\gamma d_a^2-\xi)^3} \right)d_a^2 G_{ij}^{(a\bba)}x_{j\bba}x_{k\bba}G_{bk}^{(a\bba)}G_{bi}^{(\bba)}+O(\Psi^5).
\end{aligned}\end{eqnarray}
The expectation of the first term on the right side of \eqref{D1} equals \begin{eqnarray}\begin{aligned}
&\E \frac{1}{N^3}\sum_{a,b,j} \frac{\gamma^2 d_a^2}{(\gamma d_a^2-\xi)^2}G_{ij}^{(a\bba)}G_{jb}^{(a\bba)}G_{bi}^{(\bba)} \\
&=\E \frac{1}{N^3}\sum_{a,b,j} \frac{\gamma^2 d_a^2}{(\gamma d_a^2-\xi)^2}G_{ij}^{(a\bba)}G_{jb}^{(a\bba)}G_{bi}^{(a\bba)}+\E \frac{1}{N^3}\sum_{a,b,j} \frac{\gamma^2 d_a^2}{(\gamma d_a^2-\xi)^2}G_{ij}^{(a\bba)}G_{jb}^{(a\bba)}\frac{G_{ba}^{(\bba)}G_{ai}^{(\bba)}}{G_{aa}^{(\bba)}}\\
&=\frac{1}{N}\sum_a \frac{\gamma^2 d_a^2}{(\gamma d_a^2-\xi)^2}\E X_{33} +\frac{1}{N}\sum_a \frac{-3\gamma ^3 d_a^2(E_+ b+\tb)}{(\gamma d_a^2-\xi)^3}\E X_{44}+\frac{1}{N}\sum_a \frac{-\gamma^3 d_a^2 E_+}{(\gamma d_a^2-\xi)^2\tb}\E X_{44}+O(\Psi^5),
\end{aligned}\end{eqnarray}
where we use the third equation in \eqref{zo9g2} to handle the first term in the second line above.
The expectation of the other two terms in \eqref{D1} equals\begin{eqnarray}
\frac{1}{N}\sum_a \frac{-2\gamma^3 d_a^2 \tb }{(\gamma d_a^2-\xi)^3}(\E X_{43}-2\E X_{44})+\frac{1}{N}\sum_a \frac{-2\gamma^3 d_a^2 b E_+}{(\gamma d_a^2-\xi)^3}\E X_{43}+O(\Psi^5).
\end{eqnarray}
Then we get $\E D_1$ from above easily.
For the other terms, we find \begin{eqnarray}
\E D_2=\frac{1}{N}\sum_a \frac{\gamma^4  d_a^4 E_+}{(\gamma d_a^2-\xi)^3\tb}\E X_{44}+\frac{1}{N}\sum_a \frac{2\gamma^3 d_a^2 \tb}{(\gamma d_a^2-\xi)^3}\E X_{44}+\frac{1}{N}\sum_a \frac{\gamma^3 d_a^2 \tb}{(\gamma d_a^2-\xi)^3}\E \tX_{44}+O(\Psi^5).
\end{eqnarray}
\begin{eqnarray}
\E D_3=\E D_5=\frac{1}{N}\sum_a \frac{4 \gamma^3 d_a^2 b E_+}{(\gamma d_a^2-\xi)^3}\E X_{44}+O(N^{-1/2}\Psi^3).
\end{eqnarray}
\begin{eqnarray}
\E D_4=\E D_6=\frac{1}{N}\sum_a  \frac{\gamma^3 d_a^2 b E_+}{(\gamma d_a^2-\xi)^3}(\E X_{44}+\E \tX_{44})+O(\Psi^5).
\end{eqnarray}

\begin{eqnarray}\begin{aligned}
\E D_7&=\frac{1}{N}\sum_a \frac{\gamma b^2}{(\gamma d_a^2-\xi)^2}\E X_{33}' +\frac{1}{N}\sum_a \left(\frac{-3\gamma^3 d_a^2 b E_+ }{(\gamma d_a^2-\xi)^3}+ \frac{-3\gamma^2 b^3 E_+^2}{(\gamma d_a^2-\xi)^3}\right)\E X_{44}\\
&\relphantom{EEE}+\frac{1}{N}\sum_a \left( \frac{-2 \gamma^3 d_a^2 b E_+}{(\gamma d_a^2-\xi)^3}+\frac{-2\gamma^2 b^3 E_+^2}{(\gamma d_a^2-\xi)^3}\right)\E X_{43}+\frac{1}{N}\sum_a \frac{4\gamma^2 b^3 E_+^2}{(\gamma d_a^2-\xi)^3}\E X_{44}+O(\Psi^5).
\end{aligned}\end{eqnarray}

\begin{eqnarray}
\E D_8=\frac{1}{N}\sum_a \frac{\gamma^3 d_a^2 b E_+}{(\gamma d_a^2-\xi)^3}\E_{44}+\frac{1}{N}\sum_a \frac{\gamma^2 b^3 E_+^2}{(\gamma d_a^2-\xi)^3} (2\E X_{44}+\E \tX_{44})+O(\Psi^5).
\end{eqnarray}
We conclude from above that
\begin{eqnarray*}
\E X_{33}=\gamma \psi_2 \E X_{33}+\gamma b^2 \varphi_2 \E X_{33}'+\Phi_1 (-2\E X_{43}+3\E X_{44}+\E \tX_{44})+O(N^{-1/2}\Psi^3).
\end{eqnarray*}
Using Lemma \ref{Phivalue}, it follows that\begin{eqnarray}\label{X33v}
\E X_{33}=\frac{1}{E_+}\E X_{33}'+\frac{-b}{\gamma E_+}(-2\E X_{43}+3\E X_{44}+\E \tX_{44})+O(N^{-1/2}\Psi^3).
\end{eqnarray}

By similar procedure of expanding $X_{33}$, we can derive
\begin{eqnarray}\label{X33l2Mv}
\frac{1}{N^2}\sum_{\alpha>2M,k} G_{i\alpha}G_{\alpha k}G_{ki}= \frac{\gamma(1-c)}{b^2}\E X_{33}+\frac{-\gamma^2(1-c)}{b^3}(-2\E X_{43}+3\E X_{44}+\E \tX_{44})+O(N^{-1/2}\Psi^3).
\end{eqnarray}
And \begin{eqnarray*}
\E X_{33}'= \gamma \psi_2 \E X_{33}'+\gamma \varpi_2\E X_{33} + \Phi_2 (-2\E X_{43}+3\E X_{44}+\E \tX_{44})+O(N^{-1/2}\Psi^3).
\end{eqnarray*}
It follows that \begin{eqnarray}\label{X33v'}
\E X_{33}'=E_+ \E X_{33}+\frac{-\tb}{\gamma E_+}(-2\E X_{43}+3\E X_{44}+\E \tX_{44})+O(\Psi^5).
\end{eqnarray}
From \eqref{X33v} and \eqref{X33v'}, we get the optical theorem obtained from $X_{33}$, \begin{eqnarray}\label{optfromX33}
-2\E X_{43}+3\E X_{44}+\E \tX_{44}=O(N^{-1/2}\Psi^3).
\end{eqnarray}
Substituting \eqref{optfromX33} back into \eqref{X33v'}, we get \begin{eqnarray}\label{X33'fiexp}
\E X_{33}'= E_+ X_{33}+ O(N^{-1/2}\Psi^3).
\end{eqnarray}
Similarly, by expanding  $\E X_{33}'$ with respect to the Latin lower index and an $\E X_{33}''$  with respect to a Greek lower index, together with the optical law \eqref{optfromX33}, we can get \begin{eqnarray}\label{X33''fiexp}\E X_{33}''=E_+ \E X_{33}'+O(N^{-1/2}\Psi^3).\end{eqnarray}

\subsection{Optical theorem from $X_{32}$}
Recall $$X_{32}= (cm(E_+)-cm_N)\frac{1}{N}\sum_{r} G_{ir}G_{ri}$$. Expanding with lower index $r$, through tedious calculations, we obtain
\begin{eqnarray}\label{x32fo}
\E X_{32}-\frac{1}{N}\E(cm-cm_N)G_{ii}^2 = \gamma \psi_2 \E X_{32}+\gamma b^2 \varphi_2 \E X_{32}'+\Phi_1\E (-2X_{42}+2X_{43}-2X_{44})+O(\Psi^5).
\end{eqnarray}
Expanding of the left side of the following with lower index $\alpha$, we get
\begin{eqnarray}\label{x32'l2Mv}
\frac{1}{N}\sum_{\alpha>2M}\E (cm-cm_N)G_{i\alpha}G_{\alpha i}= \frac{\gamma(1-c)}{b^2}\E X_{32}-\frac{\gamma^2(1-c)}{b^3}\E (-2X_{42}+2X_{43}-2X_{44})+O(\Psi^5).
\end{eqnarray}
Similar to \eqref{x32fo}, we can obtain
\begin{eqnarray}\label{x32'fo}
\E X_{32}'-\frac{1}{N}\E(cm-cm_N)G_{i\bbi}^2 = \gamma \psi_2\E X_{32}'+\gamma \varpi_2 \E X_{32}+\Phi_2\E (-2X_{42}+2X_{43}-2X_{44})+O(\Psi^5).
\end{eqnarray}
From \eqref{x32'fo}, we have \begin{eqnarray}\label{349b8}
\E X_{32}'= E_+ \E X_{32}+\frac{ \gamma d_i^2 (E_+b +\tb) }{ E_+  (\gamma d_i^2-\xi)^2} \E\frac{1}{N} (cm-cm_N)-\frac{\tb}{\gamma E_+}\E (-2X_{42}+2X_{43}-2X_{44})
\end{eqnarray}
Using Lemma \ref{Phivalue}, we find from \eqref{x32fo} and  \eqref{349b8} that \begin{eqnarray}\label{9n3jg}\begin{aligned}
\frac{E_+ b^2+\gamma d_i^2}{(\gamma d_i-\xi)^2} \E \frac{1}{N}(cm-cm_N) &= -(E_+\Phi_1+\Phi_2)\E (-2X_{42}+2X_{43}-2X_{44})+O(\Psi^5)\\&=\frac{1}{\gamma}\E (-2X_{42}+2X_{43}-2X_{44})+O(\Psi^5)
\end{aligned}\end{eqnarray}
Substituting \eqref{9n3jg} back into \eqref{349b8}, we have
\begin{eqnarray}\label{x32'vsimple}
\E X_{32}'= E_+ \E X_{32}+\frac{b}{\gamma d_i^2-\xi}\E \frac{1}{N}(cm-cm_N)+O(\Psi^5)
\end{eqnarray}
Moreover, if summing over $i$ and taking the average for \eqref{9n3jg}, we get the optical theorem derived from $X_{32}$,\begin{eqnarray}\label{optfromx32}
E\frac{1}{N}(cm-cm_N)=\frac{1}{N}\sum_i \E (-2X_{42}+2X_{43}-2X_{44})+O(\Psi^5).
\end{eqnarray}

\subsection{Optical theorem from $m_N X_{22}$}
In this subsection we show that $(E_+-z)\E X_{22}$ can be decomposed as linear combinations of $\E X_{42}, \E X_{44}$ and $\E \tX_{44}$, which is an optical theorem derived from $m_N X_{22}$. Recall that $m_N = \frac{1}{M}\sum_a G_{aa}$. Using  \eqref{taylorofGaa} and \eqref{taylorofbetaa}, we get
 \begin{eqnarray}\label{mnx22exp}\begin{split}
\frac{1}{N^2}\sum_{a,j} G_{aa}G_{ij}G_{ji} &=\frac{1}{N^2}\sum _{a,j}\left(\frac{b}{\gamma d_a^2-\xi}+\frac{-\gamma d_a^2 \mY_a}{(\gamma d_a^2-\xi)^2} + \frac{-b^2 \mZ_a}{(\gamma d_a^2-\xi)^2} \right. \\&+
\left. \frac{\gamma d_a^2 \tb \mY_a^2}{(\gamma d_a^2-\xi)^3}+\frac{ 2\gamma d_a^2 b \mY_a \mZ_a }{(\gamma d_a^2-\xi)^3}+\frac{b^3 \mZ_a^2}{(\gamma d_a^2-\xi)^3}\right) \times G_{ij}G_{ji}+O(\Psi^5).
\end{split}\end{eqnarray}
After taking expectation, we find the first term on the right side of \eqref{mnx22exp} is already fully expanded with index $a$.
Recall notations in \eqref{notatvarPhi}. By calculations, we find that the second to the sixth terms in \eqref{mnx22exp} are respectively \begin{eqnarray*}
\E \frac{1}{N^2}\sum_{a,j}\frac{-\gamma d_a^2 \mY_a}{(\gamma d_a^2-\xi)^2}G_{ij}G_{ji}= -\gamma \psi_2 \E X_{32}+ 4\gamma^2 \tb \psi_3 \E X_{44} - \gamma^2 (E_+ b+\tb) \psi_3 \E \tX_{44}+O(\Psi^5),
\end{eqnarray*}
\begin{eqnarray*}\begin{aligned}
\E \frac{1}{N^2}\sum_{a,j} \frac{-b^2 \mZ_a}{(\gamma d_a^2-\xi)^2} G_{ij}G_{ji} &= -b^3\varphi_2(E_+ -z)\E X_{22}-\gamma E_+ b^2 \varphi_2 \E X_{32}+8\gamma^2 E_+ b \psi_3 \E X_{44}\\&-(\gamma^2 E_+ b \psi_3 +\gamma^2 E_+^2 b^3 \varphi_3)\E \tX_{44}+O(N^{-1/2}\Psi^3),
\end{aligned}\end{eqnarray*}
\begin{eqnarray*}
\frac{1}{N^2}\sum_{a,j} \frac{\gamma d_a^2 \tb \mY_a^2}{(\gamma d_a^2-\xi)^3}G_{ij}G_{ji} = \gamma^2 \tb \psi_3(\E X_{42}+2\E \tX_{44})+O(\Psi^5),
\end{eqnarray*}
\begin{eqnarray*}
\frac{1}{N^2}\sum_{a,j} \frac{ 2\gamma d_a^2 b \mY_a \mZ_a }{(\gamma d_a^2-\xi)^3}G_{ij}G_{ji}=2\gamma^2 E_+ b \psi_3 \E X_{42}+O(\Psi^5),
\end{eqnarray*}
\begin{eqnarray*}
\frac{1}{N^2}\sum_{a,j} \frac{b^3 \mZ_a^2}{(\gamma d_a^2-\xi)^3}G_{ij}G_{ji} =\gamma^2 E_+^2 b^3\varphi_3 (\E X_{42}+2\E \tX_{44})+4\gamma^2 E_+ b \psi_3 \E \tX_{44}+O(N^{-1/2}\Psi^3).\end{eqnarray*}
Combining above with \eqref{mnx22exp}, we find \begin{eqnarray}\begin{aligned}
\E cm_N X_{22}&= b\varphi_1 \E X_{22} -b^3 \varphi_2 (E_+-z)\E X_{22} -(\gamma\psi_2+\gamma E_+ b^2 \varphi_2)\E X_{32}\\&+\Phi_1(\E X_{42}+4\E X_{44}+\E \tX_{44})+O(N^{-1/2}\Psi^3)
\end{aligned}\end{eqnarray}
Since $cm_N X_{22} = cm X_{22}- X_{32}$, using Lemma \ref{Phivalue} and $\varphi_1 = cm/ b$, we obtain
\begin{eqnarray}\label{optfrommnx22}(E_+-z)\E X_{22} =-\E X_{42} -4\E X_{44}-\E \tX_{44}+O(N^{-1/2}\Psi^3).
\end{eqnarray}

\subsection{Proof of Lemma \ref{multiErel} and Lemma \ref{kao4o}}
\noindent \textsc{\textbf{Proof of Lemma \ref{multiErel}}.} We only show that $\E X_{44}' =E_+ \E X_{44} +O(\Psi^5)$ and the others are similar. By expanding with lower index $\alpha$ using \eqref{gfiimu}, we find \begin{eqnarray}\label{91c7j}\begin{aligned}\sum_\alpha G_{i\alpha} G_{\alpha s}& = \sum_\alpha \gamma G_{\alpha\alpha}^2 G_{ij}^{(\alpha)}Y_{j\alpha}Y_{k\alpha}G_{ks}^{(\alpha)}= \sum_a \gamma G_{\bba\bba}^2 d_a^2 G_{ia}^{(\bba)}G_{as}^{(\bba)} +\sum_a \gamma G_{\bba\bba}^2 G_{ia}^{(\bba)}d_a x_{k\bba}G_{ks}^{(\bba)}\\ & +\sum_a \gamma G_{\bba\bba}^2 G_{ij}^{(\bba)}x_{j\bba}d_a G_{a s}^{(\bba)}+\sum_\alpha \gamma G_{\alpha\alpha}^2 G_{ij}^{(\alpha)} x_{j\alpha} x_{k\alpha} G_{ks}^{(\alpha)}.\end{aligned}\end{eqnarray}
Since we have  \begin{eqnarray*}\begin{aligned}
\E \frac{1}{N^3}\sum_{a,s,t} \gamma G_{\bba\bba}^2 G_{ia}^{(\bba)}d_a x_{k\bba}G_{ks}^{(\bba)}G_{st}G_{ti}&=\E \frac{1}{N^3}\sum_{a,s,t} \gamma G_{\bba\bba}^2 G_{ia}^{(\bba)}d_a x_{k\bba}G_{ks}^{(\bba)}G_{st}^{(\bba)}G_{ti}^{(\bba)}+O(\Psi^5)\\&=\E \frac{1}{N^3}\sum_{a,s,t} \frac{\gamma \tb^2}{(\gamma d_a^2-\xi)^2}G_{ia}^{(\bba)} d_a x_{k\bba}G_{ks}^{(\bba)}G_{st}^{(\bba)}G_{ti}^{(\bba)}+O(\Psi^5)=O(\Psi^5),
\end{aligned}\end{eqnarray*}
where in the first step we use \eqref{gfiminus} and in the second step we use   \eqref{taylorofGbba}, \eqref{taylorofalphaa} and \eqref{mUVorder}, it follows by substituting \eqref{91c7j} into $X_{44}'$ that
\begin{eqnarray}\label{ju641}\begin{aligned} \E \frac{1}{N^3}\sum_{\alpha,s,t} G_{i\alpha}G_{\alpha s}G_{st}G_{ti}&= \E\sum_{a,s,t}  \frac{\gamma d_a^2}{N^3}G_{\bba\bba}^2 G_{ia}^{(\bba)}G_{as}^{(\bba)} G_{st}G_{ti} + \E\sum_{\alpha,s,t} \frac{\gamma }{N^3}G_{\alpha\alpha}^2 G_{ij}^{(\alpha)} x_{j\alpha} x_{k\alpha} G_{ks}^{(\alpha)}G_{st}G_{ti}+O(\Psi^5)\end{aligned}\end{eqnarray}
Recall $\psi_2$ and $\varpi_2$ defined in \eqref{notatvarPhi}. By using \eqref{gfiij}, \eqref{gfiminus}, \eqref{taylorofGbba} and  \eqref{taylorofalphaa} the first term on the right side above equals \begin{eqnarray}\label{ju642}
\E \sum_{a,s,t,\alpha} \frac{\gamma^2 d_a^2}{N^4(\gamma d_a^2-\xi)^2}G_{i\alpha}^{(a\bba)}G_{\alpha s}^{(a\bba)}G_{st}^{(a)} G_{ti}^{(a)}=\gamma \psi_2 \E X_{44}'+O(\Psi^5).
\end{eqnarray}
Using  \eqref{taylorofGbba} and  \eqref{taylorofalphaa}  and \eqref{taylorofGalphaalpha}, the second term equals \begin{eqnarray}\label{ju643}
\E \sum  \frac{\gamma \tb^2}{N^4(\gamma d_a^2-\xi)2}G_{ij}^{(\bba)}G_{js}^{(\bba)}G_{st}^{(\bba)}G_{ti}^{(\bba)}+\E\sum_{\alpha>2M, j,s,t}  \frac{\gamma}{N^4 b^2}G_{ij}^{\alpha}G_{js}^{(\alpha)}G_{st}^{(\alpha)}G_{ti}^{(\alpha)}= \gamma \varpi_2 \E X_{44}+O(\Psi^5).
\end{eqnarray}
Combining \eqref{ju641}, \eqref{ju642} and \eqref{ju643} together with the help of \eqref{relofvarphi}, we conclude that  $\E X_{44}'=E_+ \E X_{44}+O(\Psi^5)$.\\

\noindent \textsc{\textbf{Proof of Lemma \ref{kao4o}}.} We only show the first one and the others can be proved similarly. Using \eqref{gfiminus}, we have \begin{eqnarray}\label{bu8af}\begin{aligned}
&\frac{1}{N^2}\sum_{\alpha,\beta}^{(\bba)} G_{i\alpha}^{(a\bba)}G_{\alpha\beta}^{(a\bba)}G_{\beta i}^{(a\bba)}=\frac{1}{N^2}\sum_{\alpha,\beta}^{(\bba)}  \left(G_{i\alpha}-\frac{G_{i\bba}G_{\bba \alpha}}{G_{\bba\bba}}-\frac{G_{ia}^{(\bba)}G_{a \alpha}^{(\bba)}}{G_{aa}^{(\bba)}} \right)G_{\alpha\beta} G_{\beta i} \\
&- \frac{1}{N^2}\sum_{a,\alpha,\beta}^{(\bba)}G_{i\alpha}^{(a\bba)}\left(\frac{G_{\alpha \bba}G_{\bba \beta}}{G_{\bba\bba}}+\frac{G_{\alpha a}^{(\bba)}G_{a\beta}^{(\bba)}}{G_{aa}^{(\bba)}}\right)G_{\beta i}-\frac{1}{N^2}\sum_{a,\alpha,\beta}^{(\bba)} G_{i\alpha}^{(a\bba)}G_{\alpha\beta}^{(a\bba)}\left(\frac{G_{\beta \bba}G_{\bba i}}{G_{\bba\bba}}+\frac{G_{\beta a}^{(\bba)}G_{ai}^{(\bba)}}{G_{aa}^{(\bba)}}\right).
\end{aligned}\end{eqnarray}
We find that  \begin{eqnarray}\label{89afl}\begin{aligned}
&\E \frac{1}{N^2}\sum_{\alpha,\beta}^{(\bba)} \frac{G_{i\bba}G_{\bba \alpha}}{G_{\bba\bba}}G_{\alpha\beta} G_{\beta i}\\
&=\E \frac{1}{N^2}\sum G_{\bba\bba} G_{ip}^{(\bba)}Y_{p\bba}Y_{q\bba}G_{q\alpha}^{(\bba)}G_{\alpha\beta}^{(\bba)}G_{\beta i}^{(\bba)}+O(\Psi^5)\\
&=\E\frac{1}{N^2}\sum  \gamma^2 G_{\bba\bba}G_{aa}^{(\bba)2} d_a^2 G_{i\mu}^{(\bba)}x_{a\mu}x_{a\nu}G_{\nu\alpha}^{(\bba)}G_{\alpha\beta}^{(\bba)}G_{\beta i}^{(\bba)}\\
&\relphantom{EE}+\E \frac{1}{N^2}\sum \gamma G_{\bba\bba} G_{ip}^{(\bba)}x_{p\bba}x_{q\bba}G_{q\alpha}^{(\bba)}G_{\alpha\beta}^{(\bba)}G_{\beta i}^{(\bba)}+O(\Psi^5)\\
&=\E\frac{1}{N^3}\sum \frac{\gamma^2 d_a^2}{(\gamma d_a^2-\xi)\tb}G_{i\mu}^{(\bba)}G_{\mu \alpha}^{(\bba)}G_{\alpha \beta}^{(\bba)}G_{\beta i}^{(\bba)}+\E\frac{1}{N^3}\sum \frac{\gamma \tb}{\gamma d_a^2-\xi}G_{ip}^{(\bba)}G_{p\alpha}^{(\bba)}G_{\alpha\beta}^{(\bba)}G_{\beta i}^{(\bba)}+O(\Psi^5)\\
&=\frac{\gamma^2 d_a^2 E_+^3}{(\gamma d_a^2-\xi)\tb}\E X_{44}+ \frac{\gamma \tb E_+^2}{\gamma d_a^2-\xi}\E X_{44}+O(\Psi^5)
\end{aligned}
\end{eqnarray}
where in the second equality we use that \begin{eqnarray}\label{32or5}\begin{aligned}
&\E \frac{1}{N^2}\sum \gamma G_{\bba\bba}G_{ia}^{(\bba)}d_a x_{q\bba}G_{q\alpha}^{(\bba)}G_{\alpha\beta}^{(\bba)}G_{\beta i}^{(\bba)}\\&=\E \frac{1}{N^2}\sum  \frac{\gamma \tb}{\gamma d_a^2-\xi}G_{ia}^{(\bba)} d_a  x_{q\bba}G_{q\alpha}^{(\bba)}G_{\alpha\beta}^{(\bba)}G_{\beta i}^{(\bba)}+O(\Psi^5)=O(\Psi^5).
\end{aligned}\end{eqnarray}
And we also have \begin{eqnarray}\label{43ca0}\begin{aligned}
\E \frac{1}{N^2}\sum_{\alpha,\beta}^{(\bba)} \frac{G_{ia}^{(\bba)}G_{a \alpha}^{(\bba)}}{G_{aa}^{(\bba)}} G_{\alpha\beta} G_{\beta i} =&\E \frac{1}{N^2}\sum  \gamma G_{aa}^{(\bba)}G_{i\mu}^{(\bba)}x_{a\mu}x_{a\nu}G_{\nu\alpha}^{(\bba)}G_{\alpha\beta}G_{\beta i}\\
&=-\E\frac{1}{N^3}\sum \frac{\gamma}{\tb}G_{i\mu}^{(\bba)}G_{\mu\alpha}^{(\bba)}G_{\alpha\beta}^{(\bba)}G_{\beta i}^{(\bba)}+O(\Psi^5)\\
&=-\frac{ \gamma E_+^3}{\tb}\E X_{44}+O(\Psi^5).
\end{aligned}\end{eqnarray}
Combining \eqref{89afl} and \eqref{43ca0}, we find \begin{eqnarray}
\E \frac{1}{N^2}\sum_{\alpha,\beta}^{(\bba)}\left( \frac{G_{i\bba}G_{\bba \alpha}}{G_{\bba\bba}}+\frac{G_{ia}^{(\bba)}G_{a \alpha}^{(\bba)}}{G_{aa}^{(\bba)}} \right)G_{\alpha\beta} G_{\beta i}= \frac{\gamma E_+^2(E_+ b+\tb)}{\gamma d_a^2-\xi}\E X_{44}+O(\Psi^5).
\end{eqnarray}
By similar steps, we obtain \begin{eqnarray}
\E \frac{1}{N^2}\sum_{\alpha,\beta}^{(\bba)}  G_{i\alpha}^{(a\bba)}\left(\frac{G_{\alpha \bba}G_{\bba \beta}}{G_{\bba\bba}}+\frac{G_{\alpha a}^{(\bba)}G_{a\beta}^{(\bba)}}{G_{aa}^{(\bba)}}\right)G_{\beta i}= \frac{\gamma E_+^2(E_+ b+\tb)}{\gamma d_a^2-\xi}\E X_{44}+O(\Psi^5),
\end{eqnarray}
and \begin{eqnarray}
\E \frac{1}{N^2}\sum_{\alpha,\beta}^{(\bba)} G_{i\alpha}^{(a\bba)}G_{\alpha\beta}^{(a\bba)}\left(\frac{G_{\beta \bba}G_{\bba i}}{G_{\bba\bba}}+\frac{G_{\beta a}^{(\bba)}G_{ai}^{(\bba)}}{G_{aa}^{(\bba)}}\right)= \frac{\gamma E_+^2(E_+ b+\tb)}{\gamma d_a^2-\xi}\E X_{44}+O(\Psi^5).
\end{eqnarray}
From \eqref{bu8af} and above three equations, we conclude the first one in the lemma. \qed


\end{document}